\definecolor{ghcolor}{RGB}{0, 150, 200} 
\definecolor{winestain}{rgb}{0.5,0,0}
 \newtheorem{thm}{Theorem}[section]
\newtheorem{theorem}[thm]{Theorem}
\newtheorem{prop}[thm]{Proposition}
\newtheorem{lem}[thm]{Lemma}
\newtheorem{lemma}[thm]{Lemma}
\newtheorem{cor}[thm]{Corollary}
\theoremstyle{definition}
\newtheorem{rmk}[thm]{Remark}
\newtheorem{rem}[thm]{Remark}
\newtheorem{remark}[thm]{Remark}
\newtheorem{dfn}[thm]{Definition}
\newtheorem{convention}[thm]{Convention}
\newtheorem{observation}[thm]{Observation}
\newtheorem{defn}[thm]{Definition}
\newtheorem{notation}[thm]{Notation}
\newtheorem{Notation}[thm]{Notation}
\newtheorem{construction}[thm]{Construction}
\numberwithin{equation}{section}
\newcommand{\frakB}{{\mathfrak B}}
\newcommand{\frakS}{{\mathfrak S}}
\newcommand{\bA}{{\mathbb A}}
\newcommand{\bM}{{\mathbb M}}
\newcommand{\bN}{{\mathbb N}}
\newcommand{\bQ}{{\mathbb Q}}
\newcommand{\bZ}{{\mathbb Z}}
\newcommand{\calA}{{\mathcal A}}
\newcommand{\calI}{{\mathcal I}}
\newcommand{\calM}{{\mathcal M}}
\newcommand{\calO}{{\mathcal O}}
\newcommand{\calS}{{\mathcal S}}
\newcommand{\rC}{{\mathrm C}}
\newcommand{\rd}{{\mathrm d}}
\newcommand{\rH}{{\mathrm H}}
\newcommand{\rR}{{\mathrm R}}
\newcommand{\rW}{{\mathrm W}}
\newcommand{\Zp}{{\bZ_p}}
\newcommand{\Qp}{{\bQ_p}}
\newcommand{\Ainf}{{\mathrm{A_{inf}}}}
\newcommand{\End}{{\mathrm{End}}}           
\newcommand{\id}{{\mathrm{id}}}             
\newcommand{\Ima}{{\mathrm{Im}}}            
\newcommand{\Ker}{{\mathrm{Ker}}}           
\newcommand{\Rep}{{\mathrm{Rep}}}           
\newcommand{\RGamma}{{\mathrm{R\Gamma}}}    
\newcommand{\Rlim}{{\mathrm{R}\underleftarrow{\lim}}} 
\newcommand{\Spa}{{\mathrm{Spa}}}           
\newcommand{\Vect}{{\mathrm{Vect}}}          
\newcommand{\GL}{{\mathrm{GL}}}             
\newcommand{\an}{{\mathrm{an}}}             
\newcommand{\cris}{{\mathrm{cris}}}         
\newcommand{\pd} {{\mathrm{pd}}}                           
\newcommand{\perf}{\mathrm{perf}}           
\DeclareSymbolFontAlphabet{\mathbb}{AMSb} 
\DeclareSymbolFontAlphabet{\mathbbl}{bbold}
\newcommand{\Prism}{{\mathlarger{\mathbbl{\Delta}}}} 
\newcommand{\ya}{{\rangle}}
\newcommand{\za}{{\langle}}
\newcommand{\okprism}{{(\ok)_\prism}}
\newcommand{\okpris}{{(\mathcal{O}_K)_\prism}}
\newcommand{\okprislog}{{(\mathcal{O}_K)_{\prism, \mathrm{log}}}}
\newcommand{\okprisast}{{(\mathcal{O}_K)_{\prism, \ast}}}
\newcommand{\okprisperfast}{{(\mathcal{O}_K)^{\mathrm{perf}}_{\prism, \ast}}}
\newcommand{\opris}{{\mathcal{O}_\prism}}
\newcommand{\baroprism}{{\overline{\O}_\prism}}
 \newcommand{\Strat}{\mathrm{Strat}}
 \renewcommand{\O}{{\mathcal{O}}} 
 \renewcommand{\o}{{{\mathcal{O}}}}
\newcommand \into {\hookrightarrow }
\renewcommand \to {\rightarrow}
\renewcommand{\projlim}{\varprojlim}
\newcommand{\vect}{\mathrm{Vect}}
\def\Mat{\mathrm{Mat}}
\newcommand{\rep}{{\mathrm{Rep}}}
\def\inf{{\mathrm{inf}}}
\def\sup{\mathrm{sup}}
\newcommand{\gal}{{\mathrm{Gal}}}
\newcommand{\spf}{{\mathrm{Spf}}}
\def\an{\mathrm{an}}
\def\perf{\mathrm{perf}}
\newcommand{\Lie}{\mathrm{Lie}}
\newcommand{\la}{{\mathrm{la}}}
\newcommand{\dan}{\text{$\mbox{-}\mathrm{an}$}}
\newcommand{\dla}{\text{$\mbox{-}\mathrm{la}$}}
\newcommand{\dpa}{\text{$\mbox{-}\mathrm{pa}$}}
\renewcommand{\log}{\mathrm{log}}
\newcommand{\Kpinfty}{{K_{p^\infty}}}
\newcommand{\kpinfty}{{K_{p^\infty}}}
\newcommand{\hatkpinfty}{{\widehat{K_{p^\infty}}}}
\newcommand{\Kinfty}{{K_{\infty}}}
\newcommand{\kinfty}{{K_{\infty}}}
\newcommand{\gammak}{{\Gamma_K}}
\newcommand{\gk}{{G_K}}
\newcommand\HT{{\mathrm{HT}}}
\newcommand\Sen{{\mathrm{Sen}}}
\newcommand\sen{{\mathrm{Sen}}}
\newcommand\rig{{\mathrm{rig}}}
\newcommand{\ainf}{{\mathbf{A}_{\mathrm{inf}}}}
\newcommand{\bcrisplus}{{\mathbf{B}^+_{\mathrm{cris}}}}
\newcommand{\bdrplus}{{\mathbf{B}^+_{\mathrm{dR}}}}
\newcommand{\nht}{{\mathrm{nHT}}}
\newcommand{\bm}{\mathbb{M}}
\newcommand{\smat}[1]{\left( \begin{smallmatrix} #1 \end{smallmatrix} \right)}
\newcommand{\dacc}[1]{\{\!\{ #1 \}\!\}}
\newcommand{\bbdrplus}{{\mathbb{B}_{\mathrm{dR}}^{+}}}
\newcommand*{\wt}[1]{\widetilde{#1}}
\newcommand*{\wh}[1]{\widehat{#1}}
\newcommand{\B}{  {\mathbf{B}}  }
\newcommand{\wtb}{   {\widetilde{{\mathbf{B}}}}  }
\def \ok {{\mathcal{O}_K}}
\def \oc {{\mathcal{O}_C}}
\newcommand{\ocflat}{{\mathcal{O}_C^\flat}}
\newcommand{\rgamma}{\mathrm{R}\Gamma}
\newcommand{\rg}{\mathrm{R}\Gamma}
 \newcommand{\Shv}{\mathrm{Shv}}
\DeclareSymbolFontAlphabet{\mathbb}{AMSb}
\DeclareSymbolFontAlphabet{\mathbbl}{bbold}
\newcommand{\prism}{{\mathlarger{\mathbbl{\Delta}}}}
\newcommand{\xpris}{{X_\prism}}
\newcommand{\baropris}{{\overline{\O}_\prism}}
\newcommand{\barK}{{\overline{K}}}
\newcommand{\zp}{{\mathbb{Z}_p}}
\newcommand{\qp}{{\mathbb{Q}_p}}
\newcommand{\gs}{{\mathfrak{S}}}
\newcommand{\fkt}{{\mathfrak{t}}}
\newcommand{\cbf}{\mathbf{c}}
\newcommand{\kbf}{\mathbf{k}}
\author[]{Hui Gao}   \address{Department of Mathematics, Southern University of Science and Technology, Shenzhen 518055, China}   \email{gaoh@sustech.edu.cn}
\author[]{Yu Min}
\address{Department of Mathematics, Imperial College London, London SW7 2RH}
\email{y.min@imperial.ac.uk}
\author[]{Yupeng Wang}
\address{Beijing International Center of Mathematics research, Peking University, Yiheyuan 5, Beijing, 100190, China.}
\email{2306393435@pku.edu.cn}
\begin{document}
\title[]{Hodge--Tate prismatic crystals  and Sen theory} 
 \subjclass[2010]{Primary  14F30,11S25}
 
\begin{abstract}  
\normalsize{
We study Hodge--Tate crystals on the absolute  (log-) prismatic site of $\ok$, where $\ok$ is a mixed characteristic complete discrete valuation ring with perfect residue field. 
We  first classify  Hodge--Tate crystals   by  $\ok$-modules equipped with certain small  endomorphisms. 
We then construct   Sen theory over a non-Galois Kummer tower, and use it to classify rational Hodge--Tate crystals by (log-) nearly Hodge--Tate representations. 
Various cohomology comparison and vanishing results are proved along the way.
} 
\end{abstract}

\date{\today}
\maketitle
\setcounter{tocdepth}{1}
\tableofcontents

\section{Introduction }\label{Sec-Introduction}
  
\subsection{Overview}

In the  ground-breaking work \cite{BS22}, Bhatt and Scholze construct  the prismatic cohomology, which is of ``motivic nature" in the sense that it can specialize to most existing $p$-adic cohomology theories such as \'etale, de Rham and crystalline cohomologies.
 Similar to the theory of crystalline cohomology, one can also consider crystals on the relative/absolute prismatic site. It turns out that these coefficient objects contain important geometric and arithmetic information. 
 Let $\ok$ be a mixed characteristic complete discrete valuation ring with perfect residue field, and let $K=\ok[1/p]$.
In \cite{BS23}, Bhatt and Scholze prove an equivalence between the category   of   $F$-crystals over the absolute prismatic site of $\ok$ and the category of crystalline $\bZ_p$-representations of  $G_K=\gal(\barK/K)$.

Since the work of Bhatt--Scholze \cite{BS23}, the study of various types of prismatic crystals   has been very fast developing. 
In this paper, we focus on the study of \emph{(absolute) Hodge--Tate crystals}, which are   vector bundles over the Hodge--Tate  structure  sheaf on the absolute (log-) prismatic site. 
It turns out these   crystals are closely related with classical Sen theory \cite{Sen80} developed some 40 years ago. 
In particular, the study of these crystals leads to   definition of a new category of objects called \emph{(log-) nearly Hodge--Tate representations}. These representations can be regarded as the ``(log-) crystalline objects" in Sen theory, and give  interesting updates to Sen's original work.
Here, we should mention right away that there have been   independent works by Drinfeld \cite{Dri20}, Bhatt--Lurie \cite{BL-a, BL-b} and Ansch\"utz--Heuer--Le Bras \cite{AHLB1, AHLB2} on   Hodge--Tate crystals, using a stacky approach. We choose to give a more detailed account/comparison of the literature only near the end of the introduction, cf. \S \ref{subsecliterature}, particularly after we introduce the necessary notations. 
In addition, we also mention some of our computations are inspired by Tian \cite{Tia23} on \emph{relative} Hodge--Tate crystals.

  We now quickly set up some notations and terminologies for our discussions.
Let  $(\calO_K)_{\Prism}$  be the absolute prismatic site (cf. \cite{BS22}), and let $(\calO_K)_{\Prism,\log}$ be the absolute log-prismatic site constructed by Koshikawa \cite{Kos21}.
Let  $(\calO_K)_{\Prism}^{\perf}$ (resp. $(\calO_K)_{\Prism,\log}^{\perf}$) be the subsite consisting of perfect prisms (resp. perfect log prisms).  
We use $\opris$ resp. $\mathcal{I}_\prism$ to denote the structure sheaf resp. the structure ideal sheaf on these sites. For example, on  $\okprism$,  we have
\[ \mathcal{O}_\Prism((A, I))=A, \quad \text{resp. }\mathcal{I}_\prism((A, I)) =  I.\]
From these basic sheaves, one can form other variant sheaves. Let $\baroprism: =\opris/\mathcal{I}_\prism  $ be the Hodge--Tate structure sheaf, and let $\baroprism[1/p]$ be the rational  Hodge--Tate  structure sheaf with $p$ inverted.
We    set up  notations of various crystals.

\begin{dfn}\label{Dfn-A crystal}
   Let $\calS$ be one of the sites $\{(\calO_K)_{\Prism},(\calO_K)_{\Prism,\log},(\calO_K)_{\Prism}^{\perf},(\calO_K)_{\Prism,\log}^{\perf}\}$.
    Let $\bA$ be one of the sheaves $\{\calO_{\Prism},\overline\calO_{\Prism},\overline\calO_{\Prism}[1/p]\}$.  
   Define the category of $\bA$-crystals on $\calS$ by 
   \[ \Vect(\calS,\bA):= \projlim_{\calA\in \calS} \vect(\bA(\calA))\]
    In concrete terms, a such crystal $\bM$ (of rank $l$)   is a sheaf of $\bA$-modules such that 
   \begin{itemize}
   \item for any object $\calA\in\calS$, $\bM(\calA)$ is a finite projective $\bA(\calA)$-module of rank $l$, and that

   \item for any morphism $\calA\to\calA'$ in $\calS$, the natural map
   \[\bM(\calA) \otimes_{\bA(\calA)}\bA(\calA')\to \bM(\calA')\]
   is an isomorphism.
   \end{itemize}
 \end{dfn}

The following commutative diagram summarizes the   main theorem of Bhatt--Scholze \cite{BS23}, which serves as a proto-type for the results in this paper.  (Here and throughout the paper, we use $\into$ resp. $\simeq$ in commutative diagrams to signify a functor is fully faithful  resp. an equivalence.) 
\begin{equation} \label{eqBS23}
 \begin{tikzcd}
{\mathrm{Vect}^{\varphi}((\calO_K)_{\Prism}, \mathcal{O}_\Prism)} \arrow[d, "\simeq"] \arrow[rr, hook] &  & {\mathrm{Vect}^{\varphi}(\okprism, \mathcal{O}_\Prism[1/\mathcal{I}_\Prism]^{\wedge}_p)} \arrow[d, "\simeq"] \\
\rep^{\cris}_{\gk}(\zp) \arrow[rr, hook]                                                               &  & \rep_\gk(\zp)                                     
\end{tikzcd}  \end{equation}
Here,   an object in $\mathrm{Vect}^{\varphi}((\calO_K)_{\Prism}, \mathcal{O}_\Prism)$ is called an $F$-crystals: it is a crystal in $\vect(\okpris, \opris)$ (per Def. \ref{Dfn-A crystal})  equipped with a ``$\varphi$-isogeny". Objects in $\mathrm{Vect}^{\varphi}(\okprism, \mathcal{O}_\Prism[1/\mathcal{I}_\Prism]^{\wedge}_p)$ are similarly defined, and are called Laurent $F$-crystals. The category $\rep_\gk(\zp)$ consists of all   $\zp$-linear representations of  $\gk$, and $\rep^{\cris}_{\gk}(\zp) $ is the subcategory consisting of integral crystalline representations.
Let us mention that the  right   vertical equivalence in \eqref{eqBS23} is also independently obtained by \cite{Wu21}. In addition, the log-prismatic version of \eqref{eqBS23}  is proved by \cite{DL23}.

\subsection{Hodge--Tate crystals and (log-) nearly Hodge--Tate representations} \label{subsecHTintro}

Let $\ast \in \{\emptyset, \log  \}$.
We call an object in $$\mathrm{Vect}(\okprisast, \baropris) \text{ resp. } \mathrm{Vect}(\okprisast, \baropris[1/p])$$
an (integral) Hodge--Tate crystal resp.  a rational Hodge--Tate crystal. 
In this subsection, we state our main theorem  concerning the relation between rational Hodge--Tate crystals and $C$-representations. 


We first define our representation theoretic objects in our main theorem.
Let $\rep_\gk(C)$ be the usual category of $C$-representations; an object is a finite dimensional $C$-vector space equipped with a \emph{semi-linear} $\gk$-action. For $W \in \rep_\gk(C)$, Sen  \cite{Sen80} canonically associates a \emph{linear operator} $\phi: W \to W$ which nowadays is called the Sen operator; its eigenvalues are in $\barK$ and are called the (Hodge--Tate--)Sen weights of $W$. 
(In our set-up, the Hodge--Tate--Sen weight  of the the cyclotomic character is $1$.)

\begin{notation}\label{notaeu}
Let $k$ be the residue field of $\ok$.
Let $W(k)$ be the ring of Witt vectors, and let $K_0=W(k)[1/p]$.
Let $\pi \in K$ be a \emph{fixed} uniformizer, and let $E(u)=\mathrm{Irr}(\pi, K_0) \in W(k)[u]$ be the minimal polynomial over $K_0$. Let $E'(u) =\frac{d}{du}E(u)$, and let $E'(\pi)$ be the evaluation at $\pi$. Recall by \cite[Chap. III, \S 6, Cor. 2]{Serrelocal}, the ideal $E'(\pi)\cdot\ok$ is precisely the different $\mathfrak{D}_{\ok/W(k)}$ and hence is independent of choices of $\pi$.
\end{notation}

\begin{defn}\label{defnnht}
Say $W\in \rep_\gk(C)$ is \emph{nearly Hodge--Tate} (resp. \emph{log-nearly Hodge--Tate})  if all of its  Sen weights are in the subset
\[\mathbb{Z} + (E'(\pi))^{-1}\cdot \mathfrak{m}_{\O_{\overline{K}}}, \quad \text{ resp. } \mathbb{Z} +  (\pi\cdot E'(\pi))^{-1}\cdot \mathfrak{m}_{\O_{\overline{K}}}.  \]
 where $\O_{\overline{K}}$ is the ring of integers of $\overline{K}$ with $\mathfrak{m}_{\O_{\overline{K}}}$  its maximal ideal.
 That is: the Sen weights are near to being an integer up to a bounded distance. (By discussions in Notation \ref{notaeu}, the definition is   independent of choices of $\pi$).
Write
\[\rep_\gk^{\mathrm{nHT}}(C), \quad \text{resp. }  \rep_C^{\mathrm{lnHT}}(\gk)=\rep_C^{\mathrm{log-nHT}}(\gk) \]
 for the (tensor) subcategory of $\rep_\gk(C)$ consisting of these objects.
\end{defn}

We recall the Fontaine prism which will be   used in the statement of our main theorem.

\begin{notation}\label{exampleAinfprism}
Fix an algebraic closure $\overline{K}$ and denote $\gk=\gal(\overline{K}/K)$. Let $C$ be the $p$-adic completion of $\overline K$.
Let $C^\flat$ be the tilt of the perfectoid field $C$, and let $\O_{C^\flat}$ be its ring of integers. Let $\ainf=W(\O_{C^\flat})$ be the ring of Witt vectors, equipped with the absolute Frobenius. There is a usual Fontaine's map $\theta: \ainf \to \O_C$.
Let $\mu_1$ be a primitive $p$-root of unity, and inductively, for each $n \geq 2$, choose $\mu_n$ a $p$-th root of $\mu_{n-1}$.  The sequence $(1, \mu_1, \cdots, \mu_n, \cdots)$ defines an element $\epsilon \in \ocflat$. Let $[\epsilon] \in \ainf$ be its Teichm\"uller lift. Define $\xi := \frac{[\epsilon]-1}{[\epsilon]^{{1}/{p}}-1}$, then it is a generator of the kernel of $\theta$.
 Then $(\ainf, (\xi))$ is an object in $(\calO_K)_{\Prism}^{\perf}$ and hence in  $\okprism$, and is called the Fontaine prism. One can further equip $\Ainf$ with a log structure $M_{\ainf}$, giving rise to the Fontaine log prism  $(\ainf, (\xi), M_{\ainf})$ on the log-prismatic site, cf. Notation \ref{notaBKlogprism}.
\end{notation}
The following is the main theorem of this paper, which relates Hodge--Tate crystals with its ``(pro-)\'etale realizations".

\begin{theorem} \label{thmintroHT} 
\begin{enumerate}
\item \emph{(Theorems \ref{thmnht}  and \ref{rational crystal as representation})}. We have a commutative diagram of tensor functors:
\begin{footnotesize}
\begin{equation*}
\begin{tikzcd}
{\Vect( \okpris,\overline \calO_{\Prism}[\frac{1}{p}])} \arrow[d, "\simeq"] \arrow[r, hook] & {\Vect( \okprislog,\overline \calO_{\Prism}[\frac{1}{p}])} \arrow[d, "\simeq"] \arrow[r, hook] & {\Vect((\calO_K)^{\perf}_{\Prism, \log},\overline \calO_{\Prism}[\frac{1}{p}])} \arrow[d, "\simeq"] & {\Vect((\calO_K)^{\perf}_{\Prism},\overline \calO_{\Prism}[\frac{1}{p}])} \arrow[d, "\simeq"] \arrow[l, "\simeq"'] \\
\rep_\gk^{\mathrm{nHT}}(C) \arrow[r, hook]                                                  & \rep_\gk^{\mathrm{lnHT}}(C) \arrow[r, hook]                                                    & \rep_\gk(C)                                                                                         & \rep_\gk(C) \arrow[l, "="']
\end{tikzcd}
\end{equation*}
\end{footnotesize}
Here,   the first and fourth (resp. second and third) vertical equivalences are induced by evaluating the crystals on the Fontaine prism (resp. Fontaine log prism).  All the vertical equivalences are bi-exact; namely, the equivalences and  their quasi-inverses are exact.

\item  \emph{(Thm. \ref{thm-coho-pris-proetale})}. Let $\ast \in \{\emptyset, \log\}$. Let $\bm \in \Vect( \okprisast,\overline \calO_{\Prism}[1/p] )$ be a Hodge--Tate crystal, and let  $W \in \rep_\gk^{\ast-\nht}(C)$ be the associated representation. There exists a natural $K$-linear quasi-isomorphism 
\[   \rg( \okprisast, \bm) \simeq \rg(\gk, W) \]
where the right hand side is   Galois cohomology.
\end{enumerate} 
\end{theorem}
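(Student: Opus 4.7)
The plan is to reduce to the perfect (log-)prismatic site, where the Fontaine (log) prism covers the final object of the topos, compute the cohomology there as the totalisation of a \v{C}ech--Alexander complex along this cover, and finally identify the resulting cosimplicial complex with the continuous cochain complex of $\gk$ acting on $W$.

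First I would prove the comparison
\[
\rg(\okprisast, \bm) \simeq \rg(\okprisperfast, u^*\bm),
\]
where $u \colon \okprisperfast \to \okprisast$ is the natural morphism of ringed sites. Part (1) already guarantees that $\bm$ and $u^*\bm$ correspond to the same $W$; the question is purely cohomological. This would proceed via a Leray/hyperdescent argument: for each $(A,I) \in \okprisast$, iterated perfection of $(A,I)$ yields a hypercover by perfect (log) prisms, and on this hypercover $u^*\overline\calO_\Prism[\tfrac{1}{p}]$ has vanishing higher cohomology --- a Tate-type acyclicity valid because perfection is almost faithfully flat on Hodge--Tate quotients after inverting $p$.

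Next, the Fontaine (log) prism $\mathcal{A} = (\ainf,(\xi))$, equipped with $M_{\ainf}$ in the log case, covers the final object of the topos of $\okprisperfast$: every perfect (log) prism receives a map from $\mathcal{A}$ after the fpqc base change $\ok \to \oc$. Hence \v{C}ech-to-derived-functor gives
\[
\rg(\okprisperfast, u^*\bm) \simeq \mathrm{Tot}\bigl(\bm(\mathcal{A}^{\otimes(\bullet+1)})\bigr),
\]
with self-products computed in $\okprisperfast$. Via the equivalence between perfect (log) prisms over $\ok$ and integral perfectoid (log) $\ok$-algebras given by $(A,I) \mapsto A/I$, the Hodge--Tate quotient of $\mathcal{A}^{\otimes(n+1)}$ is the perfectoidisation $R_n$ of the $(n+1)$-fold completed tensor product of $\oc$ over $\ok$ (with the diagonal log structure in the log case).

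Finally, since $\oc/\ok$ is a pro-finite Galois extension with group $\gk$, Faltings' almost purity (equivalently, Scholze's decompletion for perfectoid Galois towers) supplies a canonical isomorphism $R_n[\tfrac{1}{p}] \isoto \mathrm{Maps}_{\cts}(\gk^n, C)$. Combined with the crystal identification $\bm(\mathcal{A}^{\otimes(n+1)})[\tfrac{1}{p}] \simeq W \otimes_C R_n[\tfrac{1}{p}] \simeq \mathrm{Maps}_{\cts}(\gk^n, W)$, the totalisation becomes precisely the continuous cochain complex $C^\bullet_{\cts}(\gk, W)$ computing $\rg(\gk, W)$. I expect the main obstacle to be the very first step: upgrading the crystal equivalence of part (1) to a cohomology comparison under $u$ requires a genuine sheaf-level Tate-type vanishing along the perfection tower for $\overline\calO_\Prism[\tfrac{1}{p}]$, and the log case adds a further verification that log structures on perfections behave uniformly. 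Step 3 is essentially standard once the Faltings-type input is set up, but one must still track $\gk^n$-continuity carefully across the cosimplicial identification.
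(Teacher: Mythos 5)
There is a genuine gap, and it sits exactly where you predict: the very first step. You propose to establish $\rg(\okprisast,\bm)\simeq\rg(\okprisperfast,u^*\bm)$ by a hyperdescent argument along iterated perfections, invoking a ``Tate-type acyclicity'' because ``perfection is almost faithfully flat on Hodge--Tate quotients after inverting $p$.'' No such flatness is available: perfection is a colimit along Frobenius, is far from flat over the original prism, and the restriction functor to the perfect site is not a priori a cohomological equivalence for sheaves of this type. The paper never proves this comparison directly. Instead it computes the two sides \emph{independently} and matches them: the full-site cohomology is computed by the \v{C}ech--Alexander complex of the Breuil--Kisin cover, which by the axiomatic stratification computations (Theorems \ref{Thm-AxiomHT} and \ref{HTcohom}) collapses to the two-term Sen complex $[M\xrightarrow{\phi_M}M]$; the Galois cohomology $\rg(\gk,W)$ is computed, after base change to $\kinfty$, by the complex $[D_{\sen,\kinfty}(W)\xrightarrow{\phi_{\sen,\kinfty}}D_{\sen,\kinfty}(W)]$ via the locally analytic/Lie-algebra cohomology formalism (Theorem \ref{thm-Sen coho Galois coho}); and Theorem \ref{thmMWSen} identifies $(M\otimes_K\kinfty,\phi_M/a)$ with $(D_{\sen,\kinfty}(W),\phi_{\sen,\kinfty})$. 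Your steps 2 and 3 (the Fontaine prism cover of the perfect site and the identification of the resulting cosimplicial ring with $\rC(G_K^\bullet,C)$) do agree with the paper's Lemmas \ref{Lem-BKCover-perf} and \ref{galois descent}, but they only give the second quasi-isomorphism in the chain, not the first.

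A second, larger omission: the statement you were asked to prove includes part (1), the classification of rational Hodge--Tate crystals by (log-) nearly Hodge--Tate representations, and your proposal takes it as given (``Part (1) already guarantees\dots''). That classification is the main content of the theorem: establishing that the Sen weights of $W$ lie in $\bZ+(E'(\pi))^{-1}\fkm_{\OKbar}$ (resp.\ the log variant), and that every such $W$ arises, requires relating the endomorphism $\phi_M/a$ coming from the Breuil--Kisin stratification to the classical Sen operator. This is precisely what the Sen theory over the Kummer tower (Theorems \ref{thmintrosen}, \ref{thmMWSen}, \ref{thmnht}) is built to do, and none of that machinery, nor any substitute for it, appears in your outline.
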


We shall explain the proof of this theorem in two steps. In \S \ref{subsecstrat}, we explain how to use the Breuil--Kisin prism and stratifications to classify Hodge--Tate crystals by certain ``coherent data".
In \S \ref{subsecKummerSen}, we develop a new Sen theory to read off Sen weights from these coherent data, which leads to proof of Thm. \ref{thmintroHT}.

 \begin{rem} \label{remintro115}
 We make some   general remarks about the main theorem.
 \begin{enumerate}
 \item 
Recall $W\in \rep_\gk(C)$ is called \emph{almost Hodge--Tate} (cf. \cite{Fon04}) if its Sen weights are integers, and is  called \emph{Hodge--Tate} if  additionally the Sen operator is    semi-simple. 
Thus, the (log-) nearly Hodge--Tate representations are those \emph{near} to an almost Hodge--Tate representation. 
The similarities between the diagrams in Thm. \ref{thmintroHT}  and in \eqref{eqBS23} seem to suggest that the  (log-) nearly Hodge--Tate representations could be regarded as the ``(log-) crystalline objects"  in the category of $C$-representations; this analogy deserves further studies.

\item  Recall the notion of \emph{almost  de Rham representations} is introduced in \cite{Fon04}. Given a $\bdrplus$-representation of $\gk$, it is almost de Rham if its mod $t$ reduction (which is a $C$-representation) is almost Hodge--Tate. Inspired by Fontaine's result, we could introduce the notion of \emph{(log-) nearly de Rham representations}, which are precisely those $\bdrplus$-representations whose mod $t$ reductions are (log-) nearly Hodge--Tate $C$-representations.
Indeed, if one changes the rational reduced structure sheaf $\overline \calO_{\Prism}[{1}/{p}]$ in Thm. \ref{thmintroHT}  to a ``prismatic de Rham period sheaf", i.e., 
\[\bbdrplus:= \projlim_{m \geq 1}  (\mathcal{O}_\Prism[1/p])/\mathcal{I}^m_\prism,\]
then one can define the notion of \emph{$\bbdrplus$-crystals} and prove they are   classified by the (log-) nearly de Rham representations. This will be discussed in our sequel \cite{GMWdR}.
 \end{enumerate}
 \end{rem}

\subsection{Breuil--Kisin prisms and  stratifications}
\label{subsecstrat}
In the statement of the main Theorem \ref{thmintroHT}, the vertical functors are all induced by Fontaine (log-) prisms. But to prove the theorem, one needs to carry out \emph{explicit} computations, where one has to make use of the ``smaller" Breuil--Kisin (log-) prisms.

\begin{notation}\label{notaBKprism}
\begin{enumerate}
\item 
Recall in Notation \ref{notaeu}, we defined $\pi$ and $E(u)$.
Let $\gs=W(k)[[u]]$, and equip it with a Frobenius $\varphi$ extending  the absolute Frobenius on $W(k)$ and such that $\varphi(u)=u^p$. Then $(\gs, (E(u)) \in \okprism$, and is called the Breuil-Kisin prism (associated to $\pi$).
One can equip it with a log structure and obtain a log prism $(\gs, (E), M_\gs) \in \okprislog$, cf. Notation \ref{notaBKlogprism}.

\item Let $\pi_0=\pi$, and for each $n \geq 1$, inductive fix some $\pi_n$ so that $\pi_n^p=\pi_{n-1}$.
This compatible sequence defines an element $\pi^\flat \in \ocflat$. We can define a morphism of prisms (resp. log prisms)
\begin{equation}\label{eqmorphbkainf}
(\gs, (E)) \to (\ainf, (\xi)), \quad \text{ resp. } (\gs, (E), M_\gs) \to (\ainf, (\xi), M_\ainf)
\end{equation} 
which is a $W(k)$-linear map and sends $u$   to the Teichm\"uller lift $[\pi^\flat]$.
\end{enumerate}
\end{notation}

As can be seen in the statement of our main Theorem \ref{thmintroHT}, the results in prismatic case resp. log-prismatic case are similar in shape. We introduce the following convention in order to discuss the two cases in parallel.

\begin{convention} \label{convnotalogprism}
 We   use
\[ (\gs, (E), \log), \text{ resp. } (\ainf, (\xi), \log) \]
to denote the log prism
\[ (\frakS,(E),M_{\frakS}), \text{ resp. } (\ainf, (\xi), M_\ainf). \]
Thus, if we let $\ast \in \{\emptyset, \log  \}$, then we  could use
\[ (\gs, (E), \ast), \text{ resp. } (\ainf, (\xi), \ast) \]
to denote objects in both prismatic and log-prismatic sites.
\end{convention}

Let   $\ast \in \{\emptyset, \log  \}$.
It turns out that the Breuil--Kisin prism $(\frakS,(E), \ast)$ is a cover of the final object of ${\rm Shv}((\calO_K)_{\Prism, \ast})$. Thus we can interpret Hodge--Tate crystals as stratifications with respect to the \v Cech nerve
$(\frakS^{\bullet},(E), \ast)$ associated with $(\frakS,(E), \ast)$. It turns out that the cosimplicial ring $\frakS_\ast^{\bullet}$ is extremely complicated; fortunately, the structure of $\frakS_\ast^{\bullet}/E$ turns out to be relatively easy. Indeed, in Prop. \ref{pd polynomial}, we will show
\begin{equation*}
\gs^n_\ast/E \simeq \ok\{X_1, \cdots, X_n \}^{\wedge}_{\pd}
\end{equation*}
 where the right hand side is the $p$-completed pd-polynomial ring with certain explicit variables   $X_i$.
 
 With the \emph{explict} cosimplicial rings $\frakS_\ast^{\bullet}/E$ in mind, we can \emph{explicitly} compute the \emph{stratifications} (cf. Def. \ref{stratification-dfn} for a recall of definition) associated to  Hodge--Tate crystals. Indeed, we can classify (integral and rational) Hodge--Tate crystals by the following simple linear algebra data.

\begin{defn} \label{defnnhtendo}
 Let $*\in\{\emptyset,\log\}$. 
Let \begin{equation*}
a=
\begin{cases}
  -E'(\pi), &  \text{if } \ast=\emptyset \\
 -\pi E'(\pi), &  \text{if } \ast=\log
\end{cases}
\end{equation*}  
Let  $\End_{\calO_K}^{\ast-\nht}$ (resp. $\End_{K}^{\ast-\nht}$)  be the category consisting of pairs $(M,\phi_M)$ which we call a \emph{module equipped with $a$-small endomorphism}, where
    \begin{enumerate}
        \item[(1)] $M$ is a finite free $\calO_K$-module (resp. $K$-vector space), and
        \item[(2)] $\phi_M$ is an $\calO_K$-linear (resp. $K$-linear) endomorphism of $M$ such that 
        \begin{equation}
            \lim_{n\to+\infty}\prod_{i=0}^{n-1}(\phi_M-ai) = 0.
        \end{equation} 
            \end{enumerate}
\end{defn}

 \begin{theorem} \label{thmintrostrat}
 \begin{enumerate}
 \item \emph{(Thm. \ref{Thm-HTCrystal}).}  Evaluation on the Breuil--Kisin (log-) prisms induce bi-exact equivalences of categories  
 \[ \Vect((\calO_K)_{\Prism, \ast},\overline \calO_{\Prism}) \simeq  \End_{\calO_K}^{\ast-\nht}                                                    \]
  \[ \Vect((\calO_K)_{\Prism, \ast},\overline \calO_{\Prism}[1/p]) \simeq  \End_{K}^{\ast-\nht}                                                    \]
 
  \item  \emph{(Thm. \ref{HTcohom})}. Let $\bM$ be an object in $\Vect((\calO_K)_{\Prism,*},\overline \calO_{\Prism})$ resp.  $\Vect((\calO_K)_{\Prism,*},\overline \calO_{\Prism}[1/p])$. Let $(M,\phi_M)$ be the corresponding object in $\End_{\calO_K}^{\ast-\nht}$ resp.  $\End_{K}^{\ast-\nht}$.
There is a natural $\ok$-linear (resp. $K$-linear) quasi-isomorphism
\[[M\xrightarrow{\phi_M}M]\simeq \rR\Gamma((\calO_K)_{\Prism,*},\bM).\]
 \end{enumerate}
 \end{theorem}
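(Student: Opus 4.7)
The plan is to translate a Hodge--Tate (log-)crystal into concrete linear algebra by evaluating on the Breuil--Kisin (log-)prism $(\gs,(E),\ast)$, which is a cover of the final object of $\Shv((\calO_K)_{\Prism,\ast})$. By standard descent, the category $\Vect((\calO_K)_{\Prism,\ast},\baropris)$ (resp.\ its $p$-inverted version) is then equivalent to the category of stratifications on the \v Cech nerve $\gs^{\bullet}_\ast/E$ in the sense of Def.~\ref{stratification-dfn}.

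Using the explicit description
\[
\gs^n_\ast/E\;\simeq\;\ok\{X_1,\dots,X_n\}^{\wedge}_{\pd}
\]
from Prop.~\ref{pd polynomial}, a stratification amounts to a $\gs^1_\ast/E$-linear isomorphism
\[
\epsilon : \gs^1_\ast/E\otimes_{d_0,\ok}M\;\isoto\;\gs^1_\ast/E\otimes_{d_1,\ok}M,
\]
normalised as the identity after pullback to $\gs^0_\ast/E=\ok$ and satisfying the usual cocycle condition on $\gs^2_\ast/E$. Writing $\epsilon=\sum_{n\ge 0}\phi_n X^{[n]}$ with $\phi_n\in\End_{\ok}(M)$ and $\phi_0=\id$, I would compute the cosimplicial face maps on the distinguished pd-variable $X$ and verify that the cocycle condition on $\gs^2_\ast/E$ forces the Leibniz-type recursion
\[
\phi_n\;=\;\prod_{i=0}^{n-1}(\phi_1-ai),
\]
where the universal scalar $a\in\{-E'(\pi),-\pi E'(\pi)\}$ arises from the ``deviation'' $d_0(X)-d_1(X)$ modulo higher pd-terms, the factor of $\pi$ in the log case being produced by the exactification used to build $M_\gs$. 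Setting $\phi_M:=\phi_1$ and observing that convergence of the pd-power series defining $\epsilon$ in $\gs^1_\ast/E$ is literally the smallness condition $\lim_n\prod_{i=0}^{n-1}(\phi_M-ai)=0$, one obtains a bijection between stratifications and pairs $(M,\phi_M)\in\End_{\ok}^{\ast-\nht}$. Bi-exactness is automatic since evaluation is exact and its quasi-inverse is an explicit base-change; the rational version follows after inverting $p$.

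For part~(2), I would compute $\rR\Gamma((\calO_K)_{\Prism,\ast},\bM)$ via the \v Cech--Alexander totalisation
\[
\rR\Gamma((\calO_K)_{\Prism,\ast},\bM)\;\simeq\;\Tot\bigl(M\otimes_{\ok}\gs^{\bullet}_\ast/E\bigr)
\]
associated with the cover $(\gs,(E),\ast)$. Feeding in the product formula for $\phi_n$ and using a pd-Poincar\'e-lemma-type contracting homotopy (``divide by $X^{[n]}$''), the totalisation collapses onto its linear part and yields the required quasi-isomorphism with the two-term complex $[M\xrightarrow{\phi_M}M]$.

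The main obstacle will be the cocycle analysis on $\gs^2_\ast/E$: one must pin down the correct universal constant $a$ (including the factor of $\pi$ distinguishing the log case) and then show that the cocycle equation really forces the factorial-like product formula $\phi_n=\prod_{i=0}^{n-1}(\phi_M-ai)$. This amounts to a careful explicit computation of the cosimplicial face maps on the pd-variables $X_i$ in $\gs^n_\ast/E$; once this is secured, both the classification and the cohomology comparison are formal consequences of manipulations with pd-power series.
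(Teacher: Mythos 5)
Your strategy for part (1) is the same as the paper's: descent along the Breuil--Kisin cover to stratifications on $\gs^{\bullet}_{\ast}/E$, the explicit identification $\gs^{n}_{\ast}/E\cong\ok\{X_1,\dots,X_n\}^{\wedge}_{\pd}$ with the constant $a$ read off from $E(u_0)/E(u_1)\equiv(1+aX_1)^{-1}$, and the verification that the cocycle condition forces $\phi_n=\prod_{i=0}^{n-1}(\phi_1-ia)$ while convergence of the pd-series encodes the smallness condition. This matches Prop.~\ref{Prop-dRStratification}, Prop.~\ref{pd polynomial} and Thm.~\ref{Thm-AxiomHT}(1)/Prop.~\ref{Prop-SolveCocycle}. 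Two small caveats: the equivalence of the two directions of the cocycle recursion is itself a nontrivial identity (the paper isolates it as Prop.~\ref{Prop-SolveCocycle}, using the formal series $(1+aX)^{Y/a}$), and bi-exactness of the descent equivalence is not quite ``automatic'' --- exactness of the quasi-inverse requires knowing $\rH^1((A,I),\bM)=0$ on an arbitrary prism, which the paper gets from fpqc descent.

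The genuine gap is in part (2). You assert that after feeding in the product formula, the \v Cech--Alexander totalisation ``collapses onto its linear part'' via a pd-Poincar\'e-lemma-type contracting homotopy (``divide by $X^{[n]}$''). No such simple homotopy is available: the degree-raising map $p_0$ twists the coefficients by $\varepsilon_M$ and by the nonlinear substitutions $(X_{j+1}-X_1)(1+aX_1)^{-1}$, so the complex $\rC(M,\varepsilon_M)$ is not a pd-de Rham complex with constant coefficients. In the paper, even the identifications $\Ker(\rd^0)=\Ker(\phi_M)$ and $\Ker(\rd^1)=\rho(\phi_M,X_1)M$ require introducing the explicit series $\rho(Y,X)=((1+aX)^{Y/a}-1)/Y$ and verifying the functional equation $(1+aX_1)^{Y/a}\rho(Y,(1+aX_1)^{-1}(X_2-X_1))=\rho(Y,X_2)-\rho(Y,X_1)$; and the vanishing $\rH^i=0$ for $i\geq 2$ is not formal at all --- it occupies the entire Section~\ref{higher vanishing}, a long combinatorial induction showing that a cocycle's coefficients are determined by those indexed by an explicit small subset $\Lambda^{s+1}\subset\bN^{s+1}$, together with an explicit construction of a primitive. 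So the cohomology comparison is the hardest computational part of the theorem, and your proposal leaves it essentially unproved.
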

The proof of the above theorem is parallel for both the prismatic and log-prismatic cases. Thus, we will develop in Section \ref{Strat} an axiomatic approach which covers both cases. 
Via a devissage argument, we also obtain the following cohomology vanishing theorem for crystals.

\begin{thm}[Theorem \ref{non-reduced coh}]\label{Intro-coho dim}
 Let $\bM\in \Vect(\okprisast,\calO_{\Prism})$ (a such object is called  a  crystal). Then   \[ \rH^i((\calO_K)_{\Prism},\bM)=0, \forall i \geq 2\]
\end{thm}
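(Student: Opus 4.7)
The plan is to proceed by devissage along the $\calI_\Prism$-adic filtration on $\bM$ and reduce the statement to the Hodge--Tate crystal cohomology computation already available in Theorem~\ref{HTcohom}. For each $n\geq 1$, one has a short exact sequence of $\calO_\Prism$-modules on $(\calO_K)_{\Prism,\ast}$
\[
0 \to \calI_\Prism^{n-1}\bM/\calI_\Prism^n\bM \to \bM/\calI_\Prism^n\bM \to \bM/\calI_\Prism^{n-1}\bM \to 0,
\]
and since $\calI_\Prism$ is locally principal on every prism, $\calI_\Prism/\calI_\Prism^2$ is an invertible $\baroprism$-module, so the leftmost term is canonically $(\calI_\Prism/\calI_\Prism^2)^{\otimes(n-1)}\otimes_{\baroprism}(\bM/\calI_\Prism\bM)$. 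In particular each graded piece is a Hodge--Tate crystal; under the equivalence of Theorem~\ref{Thm-HTCrystal}, this twist merely shifts the associated $a$-small endomorphism by a multiple of $a$, which preserves $a$-smallness by Definition~\ref{defnnhtendo}.

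By Theorem~\ref{HTcohom}, each such graded piece has cohomology computed by a two-term complex $[M\xrightarrow{\phi_M}M]$ with $M$ a finite free $\calO_K$-module, so its cohomology is concentrated in degrees $0$ and $1$ and is finitely generated over $\calO_K$. A straightforward induction on $n$ via the long exact sequences then gives $\rH^i((\calO_K)_{\Prism,\ast},\bM/\calI_\Prism^n\bM)=0$ for all $i\geq 2$, and keeps $\rH^0$ and $\rH^1$ finitely generated over $\calO_K$ at every step.

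Finally, each crystal $\bM$ is $(p,\calI_\Prism)$-complete on every prism, so $\bM\simeq\Rlim_n\bM/\calI_\Prism^n\bM$ as sheaves, and the Milnor exact sequence
\[
0 \to \mathrm{R}^1\projlim_n \rH^{i-1}(\bM/\calI_\Prism^n\bM) \to \rH^i((\calO_K)_{\Prism,\ast},\bM) \to \projlim_n \rH^i(\bM/\calI_\Prism^n\bM) \to 0
\]
reduces the statement to vanishing of $\mathrm{R}^1\projlim_n$ applied to $\{\rH^1(\bM/\calI_\Prism^n\bM)\}_n$. But these are finitely generated modules over the Noetherian ring $\calO_K$, so descending chains of images stabilise, Mittag--Leffler holds automatically, and $\mathrm{R}^1\projlim=0$. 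The main technical obstacle is the first paragraph: verifying that each graded piece really is a Hodge--Tate crystal in the sense of Theorem~\ref{Thm-HTCrystal} with a correctly shifted $a$-small endomorphism, so that the two-term complex of Theorem~\ref{HTcohom} applies uniformly at every step of the devissage.
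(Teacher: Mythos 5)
Your proposal is correct and follows essentially the same route as the paper: d\'evissage along the $\calI_{\Prism}$-adic filtration reducing to Theorem \ref{HTcohom}, the identification $\bM\simeq\Rlim_n\bM/\calI_{\Prism}^n$, and the Milnor sequence with a Mittag--Leffler argument killing $\mathrm{R}^1\projlim$. The only (immaterial) differences are that the paper works with the subobject $\calI_{\Prism}\bM/\calI_{\Prism}^{n+1}$, viewed as a crystal over $\calO_{\Prism}/\calI_{\Prism}^{n}$, instead of the Breuil--Kisin-twisted graded pieces, obtains Mittag--Leffler from surjectivity of the transition maps on $\rH^1$ (a consequence of $\rH^2=0$ for the graded piece) rather than from Noetherian finite generation, and justifies $\bM\simeq\Rlim_n\bM/\calI_{\Prism}^n$ via repleteness of the prismatic topos --- a point you should cite rather than deduce solely from prism-wise completeness.
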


 \subsection{Sen theory over  Kummer tower} \label{subsecKummerSen}
In this subsection, we explain a   Sen theory over a Kummer tower, and show how to use it to conclude the proof of Thm. \ref{thmintroHT}.

\begin{construction} \label{constfinal}
Let us explain the basic ideas to prove  Thm. \ref{thmintroHT}, and introduce some notations for later discussions.
Let $ \bm \in \vect(\okprisast, \baropris[1/p])$  be a rational Hodge--Tate crystal, and let $ W=\bm((\ainf, (\xi), \ast))$  be the associated $C$-representation. 
  The most difficult step for Thm. \ref{thmintroHT} is to show that $W$ is (log-) nearly Hodge--Tate; that is, we need to show the Sen weights of $W$ satisfy the condition in Def. \ref{defnnht}.
  
  We hint that this condition on Sen weights is already reflected in the second condition in Def. \ref{defnnhtendo}.
Indeed, let 
$ M=\bm((\gs, (E), \ast)) $ 
and let $(M, \phi_M)$ be the corresponding endomorphism associated to $\bm$. By linear algebra, the eigenvalues  of $\phi_M$ must be contained in the subset
\[ a\cdot \mathbb{Z} + \mathfrak{m}_{\barK}. \]
That is to say, the eigenvalues of $\frac{\phi_M}{a}$ precisely satisfy the (log-) nearly Hodge--Tate condition in Def. \ref{defnnht}! Thus for our purpose, it would suffice to \emph{relate $\frac{\phi_M}{a}$ with Sen theory}.
\end{construction}

 In this subsection, we explain how to achieve the final goal in Construction \ref{constfinal} using Sen theory over a \emph{Kummer tower}. 
Let us mention that this new  Sen theory is indeed a ``shadow" of the \emph{overconvergent $(\varphi, \tau)$-modules} constructed in \cite{GL20, GP21}; to save space here, we refer to \S \ref{subsecreviewOC} for more comments on the origin of this Sen theory.
  
  Let us firstly briefly recall classical Sen theory defined over the \emph{cyclotomic tower}, and see why it is insufficient for our purpose. For   convenience of ensuing discussions, we shall adopt a slightly modern approach using the notion of \emph{locally analytic vectors}; their deep relation with Sen theory (in a broader sense) is first discovered by Berger--Colmez \cite{BC16}.  Indeed, for $W\in \rep_\gk(C)$, the classical Sen module, mentioned above Def. \ref{defnnht},  can be obtained via
\begin{equation} \label{eqsenkpinftyintro}
D_{\Sen, \kpinfty}(W)=(W^{G_\kpinfty})^{\Gamma_K\dla}.
\end{equation}
Here,  $\kpinfty$ it the cyclotomic extension of $K$ by adjoining all $p$-power roots of unity, $G_\kpinfty=\gal(\barK/\kpinfty)$, and $\Gamma_K= \gal(\kpinfty/K)$; the notation ``$\Gamma_K\dla$" denotes the subset of locally analytic vectors under the action of  the $p$-adic Lie group $\gammak$.
 Using this framework, the Sen operator is precisely the \emph{Lie algebra operator}. See \S \ref{subseccycSen} for more details.
  Note the Eqn \eqref{eqsenkpinftyintro} informs us that Sen operators are related with (infinitesimal information of) Galois actions. 

\begin{notation} \label{notafields}
We introduce several other fields and Galois groups for further discussions.
 We already introduced $\mu_n$ resp. $\pi_n$ in Notations \ref{exampleAinfprism} resp. \ref{notaBKprism}, which are $p$-power roots of unity resp. of $\pi$.  
 Define the fields
$$K_{\infty}   = \cup _{n = 1} ^{\infty} K(\pi_n), \quad K_{p^\infty}=  \cup _{n=1}^\infty
K(\mu_n), \quad L =  \cup_{n = 1} ^{\infty} K(\pi_n, \mu_n).$$
Let $$G_{\kinfty}:= \gal (\overline K / K_{\infty}), \quad G_{\kpinfty}:= \gal (\overline K / K_{p^\infty}), \quad G_L: =\gal(\overline K/L).$$
Further define $\Gamma_K, \hat{G}$ as in the following diagram, where we let $\tau$ be a topological generator of $\gal(L/\kpinfty) \simeq \zp$, cf. Notation \ref{nota hatG} for more details.
\[
\begin{tikzcd}
                                       & L                                                                                              &                             \\
\kpinfty \arrow[ru, "<\tau>", no head] &                                                                                               & \kinfty \arrow[lu, no head] \\
                                       & K \arrow[lu, "\Gamma_K", no head] \arrow[ru, no head] \arrow[uu, "\hat{G}"', no head, dashed] &
\end{tikzcd}
\]
\end{notation}

\begin{construction}\label{contrcontinue}
In continuation using notations in Construction \ref{constfinal}, we hope to connect $\frac{\phi_M}{a}$ with Galois actions. Let $\underline{e}$ be a basis of $M$, which also serves as a basis of $W$ via the   identification 
\[ M \otimes_{K} C \simeq W \]
induced by the morphism of prisms
\[ (\gs, (E), \ast)\to (\ainf, (\xi), \ast)\]
Note via the embedding $\gs \into \ainf$, $\gs$ if fixed by $G_\kinfty$ but is \emph{not} fixed by $G_\kpinfty$. That is to say, elements of $M$ (in general) can not land inside the classical Sen module $D_{\sen, \kpinfty}(W)$. 
Nonetheless, it turns out we can compute \emph{explicitly} (using results from the proof of Thm. \ref{thmintrostrat}) the $\tau$-action by the following formula:
$$\tau^i (\underline e) = (\underline e) (1- i  \cdot c)^{\frac{A_1}{a}}, \quad \forall i \in \zp,$$
where $c$ is some constant that we do not specify here, and $A_1$ is the matrix of $\phi_M$ with respect to the basis $\underline{e}$; cf. \S \ref{subsec_fullcrystal_rep}.
Thus, one observes that if we ``take the log of $\tau$", we obtain (up to scaling) our target matrix $\frac{A_1}{a}$! In other words, to prove Thm. \ref{thmintroHT}, it is equivalent to show that: \emph{``taking the log of $\tau$" gives rise to Sen operator}; this is achieved in Thm. \ref{thmintrosen}.
\end{construction}

The ideas in Construction \ref{contrcontinue} prompts us to define a Sen module over the Kummer tower. The following definition is inspired by similar formula in study of overconvergent $(\varphi, \tau)$-modules, cf. \S \ref{subsecreviewOC}.

\begin{defn}\label{defnintrokummersen}
For $W\in \rep_\gk(C)$, define
 \begin{equation}
 D_{\Sen, \kinfty}(W):= (W^{G_L})^{\gamma=1,\tau\dla}.
 \end{equation}
 Here $\gamma=1$ denotes the invariant under $\gal(L/\kinfty)$-action, and  $\tau\dla$ denotes the locally analytic vectors under the  $\gal(L/\kpinfty)$-action, cf. Notation \ref{notataula}.
\end{defn}

\begin{theorem} \label{thmintrosen} (cf. \S\ref{subsecKS}.)
Let $W\in \rep_\gk(C)$, then $D_{\Sen, \kinfty}(W)$ is a finite dimensional $\kinfty$-vector space   such that the natural map
\[ D_{\Sen, \kinfty}(W)\otimes_\kinfty C \to W\]
is an isomorphism.
In addition,  we can construct a $\kinfty$-linear operator
\begin{equation*}
 \frac{\log \tau}{\mathfrak{c}} : D_{\Sen, \kinfty}(W) \to D_{\Sen, \kinfty}(W),
\end{equation*}
where $\log \tau$ denotes the Lie algebra operator associated to $\tau$, and  $\mathfrak{c} \in (\hat{L})^{\hat G \dla}$ is some ``normalizing constant" that we do not specify here. We call this operator the  \emph{Sen operator over the Kummer tower}.
This operator, after $C$-linearly extending to \[ \frac{\log \tau}{\mathfrak{c}}: W \to W,\]
 is the \emph{same} as the $C$-linear extension of classical Sen operator. In particular, eigenvalues of $\frac{\log \tau}{\mathfrak{c}}$ are precisely the Sen weights of $W$.
\end{theorem}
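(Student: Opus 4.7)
The plan is to mirror the Berger--Colmez reformulation of classical Sen theory, transferring the analysis from the cyclotomic tower to the Kummer tower via the intermediate Galois field $L$. First, I would apply Berger--Colmez to the Banach $C$-representation $W^{G_L}$, on which the $p$-adic Lie group $\hat{G}=\gal(L/K)$ acts continuously with two-dimensional Lie algebra spanned by $\log\gamma$ and $\log\tau$. This produces a finite projective $(\hatl)^{\hatgdla}$-module $(W^{G_L})^{\hatgdla}$ with $(W^{G_L})^{\hatgdla}\otimes_{(\hatl)^{\hatgdla}}\hatl\isoto W^{G_L}$, and, via Ax--Sen--Tate, a further isomorphism $(W^{G_L})^{\hatgdla}\otimes_{(\hatl)^{\hatgdla}}C\isoto W$.

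Next, I would show that taking $\gamma=1$ invariants followed by $\tau$-locally analytic vectors performs the descent from $(\hatl)^{\hatgdla}$ all the way down to $\kinfty$. The central input is the ``period ring'' identity
\[
((\hatl)^{\hatgdla})^{\gamma=1,\,\tau\dla}=\kinfty,
\]
an analog in the Kummer setting of Tate's cyclotomic period computation. Granting this, a faithfully flat descent along $\kinfty\hookrightarrow(\hatl)^{\hatgdla}$ would yield that $D_{\sen,\kinfty}(W)$ is finite-dimensional over $\kinfty$ of rank $\dim_C W$, and that the natural map $D_{\sen,\kinfty}(W)\otimes_\kinfty C\to W$ is an isomorphism.

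For the operator, I would take $\log\tau$ as the Lie-algebra derivation on $\hatgdla$-vectors. Because of the commutation $\gamma\tau\gamma^{-1}=\tau^{\chi_\cyc(\gamma)}$ in $\hat{G}$, the bracket $[\log\gamma,\log\tau]$ is a nonzero multiple of $\log\tau$, so $\log\tau$ itself does not preserve the subspace $\{\gamma=1\}$; the normalizing constant $\mathfrak{c}\in(\hatl)^{\hatgdla}$ is chosen precisely so that $\log\tau/\mathfrak{c}$ commutes with $\log\gamma$ on $\hatgdla$-vectors and therefore restricts to a $\kinfty$-linear endomorphism of $D_{\sen,\kinfty}(W)$. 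The identification with the classical Sen operator then reduces, after extending scalars to $C$, to an identity between two Lie-algebra operators on $W^{\hatgdla}$, which I would verify by a direct infinitesimal computation using the Lie bracket formula together with the classical normalization on the cyclotomic side.

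The hardest part will be the period ring identity above. Berger--Colmez provides the abstract descent from $\hatl$ to $(\hatl)^{\hatgdla}$, but to go further down to $\kinfty$ in the non-Galois Kummer setting requires an explicit understanding of $\hat{G}$-locally analytic vectors in $\hatl$ together with a careful analysis of how the non-commuting $\gamma$- and $\tau$-actions interact; this is precisely where the explicit period computations underlying the overconvergent $(\varphi,\tau)$-module theory alluded to in \S\ref{subsecreviewOC} become essential.
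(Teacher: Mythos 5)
Your overall architecture matches the paper's: decompletion through the intermediate field $L$, Berger--Colmez locally analytic vectors, a two-stage descent to $\kinfty$, and a $\gamma$-equivariant normalization of $\nabla_\tau$. But there is a genuine gap at the decisive step, the identification of $\frac{\log\tau}{\mathfrak{c}}$ with the classical Sen operator. The two operators live a priori on different decompletions ($D_{\Sen,\kinfty}(W)$ and $D_{\Sen,\kpinfty}(W)$) of the same module $(W^{G_L})^{\hatgdla}$, and the Lie bracket relation $[\nabla_\gamma,\nabla_\tau]=\nabla_\tau$ is a purely group-theoretic fact that cannot relate them: any ``direct infinitesimal computation'' comparing $\nabla_\gamma$ on one decompletion with $\frac{1}{\mathfrak{c}}\nabla_\tau$ on the other must, when you express an element of one decompletion in a basis of the other, evaluate $\nabla_\gamma$ and $\nabla_\tau$ on the coefficient ``periods'' in $(\hatl)^{\hatgdla}$ --- and that is exactly the missing input. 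The paper's mechanism is Berger--Colmez's theorem that there is a unique (up to scalar) nonzero element of $\hatl\otimes_\qp\Lie(\hat G)$ annihilating all of $(\hatl)^{\hatgdla}$, identified explicitly as $\nabla_\gamma+\frac{1}{\theta(u\lambda')}N_\nabla$ with $N_\nabla=\frac{1}{p\theta(\fkt)}\nabla_\tau$ by computing both derivations on the single period $\theta(\fkt)$. Because this combination kills the coefficient ring, it acts $(\hatl)^{\hatgdla}$-linearly on $(W^{G_L})^{\hatgdla}$; it restricts to $\nabla_\gamma$ (the classical Sen operator) on the cyclotomic decompletion and to $\frac{1}{\theta(u\lambda')}N_\nabla$ on the Kummer decompletion, and this single linear operator is the entire comparison. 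Relatedly, your characterization of $\mathfrak{c}$ (``chosen so that $\log\tau/\mathfrak{c}$ commutes with $\log\gamma$'') only determines $\mathfrak{c}$ up to a unit of $\kinfty$; its actual value is forced by the comparison itself, concretely by the identity $\theta\bigl(\tfrac{tE'}{p\lambda'\fkt E}\bigr)=1$.

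A smaller but real issue is the first half: the ring identity $(\hatl)^{\gamma=1,\tau\dla}=\kinfty$ plus ``faithfully flat descent'' does not by itself produce a full-rank $\kinfty$-form of $W$, because invariants and locally analytic vectors of a module are not computed by base change from the corresponding statement for the ring. The paper instead performs a monodromy descent: using the explicit presentation of $(\hatl)^{\hatgdla}$ as a convergent power series ring in the variable $\beta-\beta_n$ with $\beta=\theta(\fkt)$, it solves $\partial_\gamma(H)+D_\gamma H=0$ by an explicit convergent Taylor series to trivialize $\nabla_\gamma$ on $(W^{G_L})^{\hatgdla}$, obtaining an $L$-form of dimension $d$, and only then applies finite Galois descent for $\gal(L/\kinfty)$ to reach $\kinfty$. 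You correctly flag that explicit control of $(\hatl)^{\hatgdla}$ is needed here, but the descent mechanism you name is not the one that carries the argument.
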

 
 As argued in Construction \ref{contrcontinue}, Thm. \ref{thmintrosen} allows us to read off   Sen weights of $W$ from the data $(M, \phi_M)$, hence concluding the proof of Thm. \ref{thmintroHT}.

\subsection{Discussion of the literature}\label{subsecliterature}
We give several extensive remarks about the literature on Hodge--Tate crystals, and make some comparisons with our work.

\begin{remark}\label{remstacky}
In this remark, we discuss another independent approach, the \emph{stacky approach}, to study the Hodge--Tate crystals. 
\begin{enumerate}
\item In \cite{Dri20} and \cite{BL-a, BL-b}, Drinfeld and  Bhatt--Lurie define a stack  $X^\prism$ for a $p$-adic formal scheme $X$; vector bundles over this stack classify prismatic crystals on $X_\prism$.
A closed substack, the \emph{Hodge--Tate locus}  $X^{\HT}$ is also constructed; vector bundles over $X^\HT$ classify $\vect(\xpris, \baropris)$, whence the   name \emph{Hodge--Tate crystals} for the later. 

\item When $X=\spf \ok$, it turns out $\spf(\ok)^{\HT}$ admits a cover by $\spf \ok$, which is indeed a $G$-torsor for some explicit group $G$.  
Using this torsor structure, Bhatt--Lurie obtains our Theorem \ref{thmintrostrat} in the prismatic setting independently (cf. \cite[Theorem 3.5.8]{BL-a},\cite[Example 9.6]{BL-b}). Using the stacky approach, their results also  hold in a derived fashion; in particular, the cohomology comparison in Thm. \ref{thmintrostrat}(2) is automatic in their work. We also remark that this stacky approach (particularly in the ramified case) is studied in more detail in \cite{AHLB1}. 



\item The paper   \cite{AHLB1} further constructs a certain ``Sen theory" using a period ring $B_{\mathrm{en}}$, and deduces our Theorem \ref{thmintroHT} in the prismatic setting. We also mention when $K$ is unramified (hence there is a $q$-de Rham prism), Hodge--Tate crystals can also be easily related with classical Sen theory (over the cyclotomic tower); thus, our  Theorem \ref{thmintroHT} in the unramified case can essentially be deduced by  \cite{BL-a}.
\end{enumerate}
\end{remark}
 
\begin{remark}\label{remintrocompare}
We compare our \emph{site-theoretic} approach to the \emph{stacky} approach in  Rem. \ref{remstacky}.
\begin{enumerate}
\item Firstly, our paper can also treat the log-prismatic case in a parallel fashion. The log-prismatic cases are not yet covered in \cite{Dri20, BL-a, BL-b, AHLB1}. With the recent development of  log-prismatic cohomology, cf. \cite{Kos21, KY23}, the stacky approach in the log setting should also be within reach.

\item As mentioned in Rem. \ref{remstacky}, in the stacky approach, one can even obtain many results in a derived setting. 
It seems possible that we can also ``derive" most of our constructions in the current paper, cf. e.g. the axiomatic computations in \S \ref{subsecaxiomstrat} and particularly Thm. \ref{Thm-AxiomHT}, where we can also treat  general $p$-complete objects (that are not necessarily projective).
We have chosen not to carry out all the derived computations as our main focus is ``arithmetic", and we prefer to carry out  computations in a very \emph{explicit} fashion.

\item Indeed, our explicit computations make it possible for us to study $\bbdrplus$-crystals in the sequel \cite{GMWdR}; in fact, a key difficulty there is to construct certain \emph{explicit} ``variables". In comparison, as far as we are aware, it is not clear how to construct a ``$\bdrplus$-stack" (perhaps certain ``infinitesimal thickening" of the ``generic fiber" of the Hodge--Tate stack) to classify $\bbdrplus$-crystals. This stacky question seems very intriguing.

\item Finally, we point out both the stacky approach and the site-theoretic approach   work in the relative setting as well, ---i.e., for a formal scheme over $\spf \ok$---, cf. \cite{BL-a, BL-b, AHLB2} for the former, and cf. \cite{MW22, MW22log} for the later. We remark these results are related with non-abelian $p$-adic Hodge theory. Since the current paper only considers the point case, we refer the readers to these papers for more comparisons.
\end{enumerate}
\end{remark}

 \begin{rem}
In continuation of Rem. \ref{remintrocompare}, we comment on another distinctive feature in our  approach.
\begin{enumerate}
\item  As explained in Constructions \ref{constfinal} and \ref{contrcontinue}, the \emph{explicit} computations lead us to the construction of Sen theory over the Kummer tower Thm. \ref{thmintrosen}. This new Sen theory works for \emph{all} $C$-representations, and in particular has close relation to the general locally analytic Sen theory initiated by \cite{BC16}.  In contrast, the ``Sen theory" of \cite{AHLB1}, using the ring $B_{\mathrm{en}}$, can only be used to treat nearly Hodge--Tate representations. 

\item For  readers interested in our sequel paper \cite{GMWdR}, we point out that the Sen theory Thm. \ref{thmintrosen} plays a significant role in guiding our study of $\bbdrplus$-crystals. Indeed in \cite{GMWdR}, the cosimplicial rings and stratifications related with $\bbdrplus$-crystals are \emph{substantially} more involved, and initially it was not clear what our goal should be. However, inspired by \cite{Fon04}, Thm. \ref{thmintrosen} has a rather straightforward generalization to study all $\bdrplus$-representations, in this case leading to certain ``$K_\infty[[\lambda]]$-connections". The structure of this Sen-Fontaine theory (together with other explicit computations in the current paper) leads us to guess and conduct the \emph{correct} computations in \cite{GMWdR}.

\item As mentioned earlier, the Sen theory Thm. \ref{thmintrosen} (as well as the variant in \cite{GMWdR}) are ``shadows" of overconvergent $(\varphi, \tau)$-modules constructed in \cite{GL20,GP21}. However, there are also other  ``shadows", including the overconvergent modules themselves, as well as their application to Breuil--Kisin modules in \cite{Gao23}. We expect to see these shadows in other variants of prismatic crystals as well. We hope these variations on a  theme---namely, Sen theory--- help to clarify   the relation between prismatic crystals and (classical) $p$-adic Hodge theory.

\end{enumerate}
 \end{rem}

\subsection{Structure of the paper}
In \S \ref{secbkstrat}, we quickly review the notion of stratifications, which motivates us to study in detail the Breuil--Kisin cosimplicial ring and its reductions.
In \S \ref{Strat}, we compute, in an axiomatic way, stratifications (and their cohomology) over the Breuil--Kisin cosimplicial ring.
In \S \ref{sectionHT}, these axiomatic computations specialize  to classification theorem and cohomology comparison of  Hodge--Tate crystals.
In \S \ref{secHTperf}, which is the begining where we start to study Galois actions, we classify Hodge--Tate crystals on the perfect prismatic site by $C$-representations.
Some explicit formulae in \S \ref{secHTperf} prompts us to study Sen theory over the Kummer tower. We give a brief review of locally analytic vectors in \S \ref{seclav}, and then use the tool to define Sen theory over the Kummer tower in \S \ref{seckummersen}. In \S \ref{subsecHTsen}, we finally relate Hodge--Tate crystals with our new Sen theory, and  classify them by (log-) nearly Hodge--Tate representations.
The final \S \ref{higher vanishing} contains a technical proof of a cohomology vanishing theorem which is stated in \S \ref{Strat}.

 \subsection*{Acknowledgment}
  We thank Ruochuan Liu, Matthew Morrow and Takeshi Tsuji for useful correspondences during early stages of   this work.
We thank Bhargav Bhatt for informing us of his joint work with Jacob Lurie on absolute prismatic cohomology and suggesting us to use the name ``Hodge--Tate crystals" to replace our original one. 
We thank
Heng Du,
Juan Esteban Rodr\'{\i}guez Camargo, 
Tong Liu,
 and Zeyu Liu 
for useful discussions and correspondences. 
 Special thanks go to Shizhang Li for his careful reading of an early draft  and sharing many valuable comments. 
 Hui Gao  is partially supported by the National Natural Science Foundation of China under agreement No. NSFC-12071201; in addition, he gratefully acknowledges the support of the Infosys Member Fund at the Institute for Advanced Study for providing an excellent working condition where part of the work is carried out.
 Yu Min is partially supported by China Postdoctoral Science Foundation E1900503. 
 Yupeng Wang is partially supported by CAS Project for Young Scientists in Basic Research, Grant No. YSBR-032.
 All authors further thank Beijing International Center for Mathematical Research and Morningside Center of Mathematics where part of the work is carried out.

\section{Cosimplicial Breuil--Kisin prisms and stratifications} \label{secbkstrat}
 
In this section, we recall the notion of stratifications, and relate them to Hodge--Tate crystals. Then we give a detailed study of the cosimplicial Breuil-Kisin (log-) prism, particularly after reduction modulo the structural ideal. 
 
 \subsection{Breuil--Kisin prism  and Fontaine prism}
 \begin{notation}\label{notaBKlogprism}
 We introduce  two log prisms that will be repeatedly used, cf. \cite{Kos21} for foundations of log prismatic site and these examples.
 \begin{enumerate}
 \item Let $(\gs, (E))$ be the Breuil-Kisin prism in Notation \ref{notaBKprism} (with respect to the chosen uniformizer $\pi$).
  Let $M_{\frakS}\to \frakS$ be the log structure associated to the pre-log structure $\bN\xrightarrow{1\mapsto u}\frakS$. Then $(\frakS,(E),M_{\frakS})$  a log prism in  $(\calO_K)_{\Prism,\log}$, and is called the Breuil--Kisin log prism.
  
  \item Recall in Notation \ref{exampleAinfprism},    we defined the Fontaine prism $(\ainf, (\xi))$.
  We can equip $\Ainf$ with the log structure $M_{\ainf}$ induced by the pre-log structure $\bN\xrightarrow{1\mapsto[\pi^{\flat}]}\ainf$. Then $(\ainf,(\xi),M_{\ainf})$ is a log prism in $(\calO_K)_{\Prism,\log}$. We call it the Fontaine log prism. 
  
  \item 
 There is a morphism of prisms (resp. log prisms)
\begin{equation} 
(\gs, (E)) \to (\ainf, (\xi)), \quad \text{ resp. } (\gs, (E), M_\gs) \to (\ainf, (\xi), M_\ainf)
\end{equation} 
which is a $W(k)$-linear map and sends $u$   to the Teichm\"uller lift $[\pi^\flat]$.
 \end{enumerate}
 \end{notation}
 

 Recall in Convention \ref{convnotalogprism}, we introduced the notations $(\gs, (E), \ast) $ and $(\ainf, (\xi), \ast)$ for  $\ast \in \{ \emptyset, \log \}$.

 \begin{lemma}\label{Lem-BKCover}
 Let $\ast \in \{ \emptyset, \log \}$. Then the prisms   
\[ (\gs, (E), \ast) \text{ and } (\ainf, (\xi), \ast) \]
 are covers of the final object of the topos $\Shv(\okprisast)$.
 \end{lemma}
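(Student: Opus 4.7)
The plan is to show that for each object $(A,I,\ast) \in \okprisast$ one can produce a cover $(A,I,\ast) \to (A',I',\ast)$ in the Grothendieck topology (namely a $(p,I)$-completely faithfully flat morphism of (log-)prisms) that admits a morphism from $(\gs,(E),\ast)$, and similarly for $(\ainf,(\xi),\ast)$. By the definition of a cover of the final object in the topos $\Shv(\okprisast)$, this suffices.

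I would first handle the Fontaine (log-)prism, which is the main step. Given $(A,I,\ast) \in \okprisast$ with $R = A/I$, I would construct a perfectoid cover of $R$ that is also an $\ok_C$-algebra: concretely, form the $(p,I)$-completed tensor product $R\widehat{\otimes}_{\ok}\ok_C$ (faithfully flat over $R$ since $\ok \to \ok_C$ is $(p)$-completely faithfully flat), then apply perfectoidization. Via the tilting equivalence for perfect prisms, this corresponds to a perfect (log-)prism $(A',I',\ast) \in \okprisast$ equipped with a cover $(A,I,\ast) \to (A',I',\ast)$ and a morphism $(\ainf,(\xi),\ast) \to (A',I',\ast)$ (the latter since $A'$ is an $\ok_C$-algebra and the perfect prism $\ainf$ corresponds to $\ok_C$). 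In the log case, the log structures are transported compatibly using Koshikawa's framework \cite{Kos21}: the pre-log structure $\bN \xrightarrow{1\mapsto [\pi^\flat]} \ainf$ maps into the corresponding log structure on $A'$ by lifting $\pi \in R$ to an element of the log structure on $A'$.

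For the Breuil--Kisin (log-)prism $(\gs,(E),\ast)$, I would simply compose with the morphism \eqref{eqmorphbkainf}. Given any $(A,I,\ast) \in \okprisast$, let $(A',I',\ast)$ be the cover constructed above admitting a morphism from $(\ainf,(\xi),\ast)$. The composition
\[
(\gs,(E),\ast) \to (\ainf,(\xi),\ast) \to (A',I',\ast)
\]
exhibits $(A',I',\ast)$ as a cover receiving a map from the Breuil--Kisin (log-)prism. This reduces the Breuil--Kisin case entirely to the Fontaine case.

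The main technical obstacle is verifying the perfectoidization step in the Fontaine case: one must check that the resulting cover is indeed $(p,I)$-completely faithfully flat and carries the correct (log-)prism structure, including compatibility of the chosen log structures with the map from $M_\ainf$. These verifications rest on standard tools from \cite{BS22} in the non-log setting and \cite{Kos21} in the log setting; no essentially new input is required beyond tracking the log data through the perfectoidization.
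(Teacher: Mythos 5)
Your reduction of the Breuil--Kisin case to the Fontaine case (compose with the morphism $(\gs,(E),\ast)\to(\ainf,(\xi),\ast)$) is logically fine, but the direct argument you give for the Fontaine (log-)prism has a genuine gap, and it is exactly the step you defer as ``the main technical obstacle.'' On the \emph{full} site $\okprisast$ a cover of $(A,I)$ must be a $(p,I)$-completely faithfully flat map of prisms, and the prism you produce --- the \emph{perfect} prism attached to the perfectoidization of $(A/I)\widehat{\otimes}_{\ok}\calO_C$ --- does not receive a faithfully flat map from $(A,I)$ in general. Perfectoidization is very far from being a flat operation, and there is no reason an arbitrary bounded prism in $\okprisast$ should admit a faithfully flat map to \emph{any} perfect prism: already the self-products $(\gs^n,(E))$ have Hodge--Tate reduction $\ok\{X_1,\dots,X_n\}^{\wedge}_{\pd}$ (Proposition \ref{pd polynomial}), whose divided-power variables are nilpotent mod $p$, and passing to a perfectoid quotient destroys this structure in a way incompatible with ($(p,I)$-complete) faithful flatness. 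Your construction is the correct one on the \emph{perfect} prismatic site, where it is essentially the argument of \cite[Lemma 3.5]{Wu21} used in Lemma \ref{Lem-BKCover-perf}; it does not transport to the full site, and the burden of proof you leave to ``standard tools from \cite{BS22}'' is precisely where the argument breaks.

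The workable direction is the opposite one, and it is the one the paper takes: first, $(\gs,(E),\ast)$ is a cover of the final object by \cite[Example 2.6]{BS23} (resp.\ \cite[Proposition 5.0.16]{DL23} in the log case) --- there the cover of an arbitrary $(A,I)$ is built by $\delta$-adjoining a lift of $\pi$, which stays within (non-perfect) prisms and is faithfully flat by construction; second, the morphism $\iota\colon(\gs,(E),\ast)\to(\ainf,(\xi),\ast)$ is itself a faithfully flat cover, so the induced map of sheaves $h_{(\ainf,(\xi),\ast)}\to h_{(\gs,(E),\ast)}$ is an epimorphism and the Fontaine case follows by composing epimorphisms. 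If you want to salvage your outline, replace the perfectoidization step by base change along $\iota$: cover $(A,I)$ by a prism $(B,J)$ receiving $(\gs,(E),\ast)$ and then pass to $B\widehat{\otimes}_{\gs}\ainf$, which is flat over $B$ precisely because $\iota$ is flat --- but note the result is not a perfect prism.
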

 
   \begin{proof}
 Since the morphism $\iota:(\frakS,(E), \ast)\to(\ainf,(\xi), \ast)$   is a cover on $\okprisast$, it suffices to consider $(\frakS,(E), \ast)$. This follows from \cite[Example 2.6]{BS23} and \cite[Proposition 5.0.16]{DL23} for $\ast=\emptyset$ and $\ast=\log$ respectively.
\end{proof}

\subsection{Crystals and stratifications}
We recall the definition of a stratification.
\begin{convention}
    Let $A^{\bullet}$ be a cosimplicial ring.
     For any $0\leq i\leq n+1$, let $p_i:A^n\to A^{n+1}$
     be the $i$-th face map   induced by the order-preserving map $[n]\to[n+1]$ whose image does not contain $i$. For any $0\leq i\leq n$, let $\sigma_i:A^{n+1}\to A^n$ be the $i$-th degeneracy map induced by the order-preserving map $[n+1]\to[n]$ such that the preimage of $i$ is $\{i,i+1\}$. For any $0\leq i\leq n$, let $q_i:A^0\to A^n$ be the structure map induced by the map $[0]\to [n]$ sending $0$ to $i$. 
\end{convention}
\begin{defn}\label{stratification-dfn}
For a cosimplicial ring $A^{\bullet}$, a \emph{stratification with respect to $A^{\bullet}$} is a pair $(M,\varepsilon)$ consisting of a finite projective $A^0$-module $M$ and an $A^1$-linear isomorphism
 \[\varepsilon: M\otimes_{A^0,p_0}A^1\to M\otimes_{A^0,p_1}A^1,\]
 such that the \emph{cocycle condition} is satisfied:
 \begin{enumerate}
 \item  $p_2^*(\varepsilon)\circ p_0^*(\varepsilon) = p_1^*(\varepsilon): M\otimes_{A^0,q_2}A^2\to M\otimes_{A^0,q_0}A^2$;
\item  $\sigma_0^*(\varepsilon) = \id_M$.
 \end{enumerate}
Write $\mathrm{Strat}(A^\bullet)$ for the category of stratifications with respect to $A^\bullet$.
\end{defn}

\begin{notation}
 Let $(\frakS^{\bullet},(E))$ (resp. $(\frakS^{\bullet}_{\log},(E),M^{\bullet})$) be the cosimplicial prism corresponding to the \v Cech nerve of the Breuil--Kisin prism (resp. the Breuil--Kisin log prism) in the prismatic site (resp. log-prismatic site). For their existence, see \cite[Example 2.6(1)]{BS23} (resp. \cite[Lemma 5.0.12]{DL23}).
\end{notation}

 Using the language of stratification, we get the following. Recall we use the phrase \emph{bi-exact equivalence} to mean an equivalence and its quasi-inverse are both exact.
 
 \begin{prop}\label{Prop-dRStratification}    
   Evaluation at the  Breuil-Kisin (log-) prism induces bi-exact  equivalences of categories:
  \[\vect(\okprisast, \baropris) \simeq \Strat(\gs^{\bullet}_\ast/E)\]
  \[\vect(\okprisast, \baropris[1/p]) \simeq \Strat(\gs^{\bullet}_\ast/E[1/p]).\]
 \end{prop}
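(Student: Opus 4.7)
The plan is to invoke Lemma \ref{Lem-BKCover}: since the Breuil--Kisin (log-)prism $(\gs,(E),\ast)$ covers the final object of the topos $\Shv(\okprisast)$, any sheaf on this site is recovered from descent data along the \v{C}ech nerve $(\gs^\bullet_\ast,(E),\ast)$. Applied to $\baropris$ (resp. $\baropris[1/p]$), this translates the category of crystals into the category of stratifications on the cosimplicial ring $\gs^\bullet_\ast/E$ (resp. $\gs^\bullet_\ast/E[1/p]$), essentially as a formal consequence of effective descent along a cover of the final object.

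Concretely, the forward functor is evaluation: a crystal $\bm$ is sent to $M:=\bm((\gs,(E),\ast))$, which by definition is a finite projective $\ok$-module (resp. finite dimensional $K$-vector space), equipped with the isomorphism $\varepsilon$ obtained from the crystal identifications
\[ M\otimes_{\ok,p_i}\gs^1_\ast/E \xrightarrow{\sim} \bm((\gs^1_\ast,(E),\ast)), \qquad i=0,1. \]
The cocycle condition $p_2^*(\varepsilon)\circ p_0^*(\varepsilon)=p_1^*(\varepsilon)$ follows from the analogous identifications at $(\gs^2_\ast,(E),\ast)$, and $\sigma_0^*(\varepsilon)=\id_M$ from the degeneracy back to $\gs$.

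The quasi-inverse is where the substance of the proof lies. Given $(M,\varepsilon)$, I would define the sought crystal on a prism $(A,I,\ast)$ admitting a map $\iota$ from $(\gs,(E),\ast)$ by $M\otimes_{\ok,\iota}A/I$; for an arbitrary prism, first pass to a cover by the coproduct with $(\gs,(E),\ast)$, evaluate there by the preceding rule, and then glue via $\varepsilon$. The main obstacle is to verify that this construction is independent of auxiliary choices, defines a sheaf, and produces finite projective (resp. finite dimensional) values on every prism. This amounts to effective faithfully flat descent for finite projective modules; it relies on flatness of the face maps $\ok\to\gs^n_\ast/E$, which is ensured by the geometry of the prismatic topology and will be made entirely transparent in the next section, where $\gs^n_\ast/E$ is computed as a $p$-completed pd-polynomial ring over $\ok$.

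Finally, bi-exactness is formal. Evaluation at a fixed object of the site is exact on sheaves of $\baropris$-modules, giving exactness of the forward direction. In the reverse direction, a short exact sequence of stratifications $(M_i,\varepsilon_i)$ at the $\ok$-level remains exact after tensoring with the flat $\ok$-algebras $\gs^n_\ast/E$, and the resulting crystal on an arbitrary prism is an exact tensor operation followed by descent, so exactness is preserved. The rational statement is deduced by inverting $p$ throughout, an operation that is exact and commutes with all the constructions above.
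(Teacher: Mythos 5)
Your proposal is correct and follows essentially the same route as the paper: both deduce the equivalences from Lemma \ref{Lem-BKCover} together with ($p$-complete) faithfully flat descent for $\baropris$ and $\baropris[1/p]$ (the paper cites \cite{Mat22} for this), with the forward functor given by evaluation and the quasi-inverse by descent along a cover refining any given prism through $(\gs,(E),\ast)$. The only place where the paper is more explicit than you is the reverse half of bi-exactness, where instead of the loose "descent preserves exactness" it evaluates the candidate short exact sequence on the \v{C}ech nerve of a suitable cover of an arbitrary $(A,I)$ and closes the argument with the vanishing $\rH^1((A,I),\bM_1)=0$ from Lemma \ref{vansihing lemma}.
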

  \begin{proof} These equivalences are standard consequences of Lemma \ref{Lem-BKCover}, using the fact that the sheaves $\baropris$ and $\baropris[1/p]$ satisfy $p$-complete faithfully flat descent \cite{Mat22}. We point out there is  another way to define $\vect(\okprisast, \baropris)$ and $\vect(\okprisast, \baropris[1/p])$ following \cite[Notation 2.1]{BS23} using ringed topoi; these definitions coincide with our Def. \ref{Dfn-A crystal} following a similar argument as \cite[Prop. 2.7]{BS23}.

  It remains to prove the above equivalences are bi-exact. We only treat the case for $\vect(\okpris, \baropris)$ since other cases are similar. For $i\in\{1,2,3\}$, let $\bM_i$ be a Hodge--Tate crystals with   induced stratification $(M_i,\varepsilon_i)$.
If there is a short exact sequence of Hodge--Tate crystals
\begin{equation}\label{biexact:I}      0\to\bM_1\xrightarrow{}\bM_2\xrightarrow{}\bM_3\to 0.
  \end{equation}
  Then by evaluations on $(\frakS^{\bullet},(E))$, we obtain a short exact sequence of stratifications
\begin{equation}\label{biexact:II}       0\to(M_1,\varepsilon_1)\to(M_2,\varepsilon_2)\to(M_3,\varepsilon_3)\to0
  \end{equation}
  Conversely, suppose we are given a short exact sequence \eqref{biexact:II}; to show the induced sequence \eqref{biexact:I}  is short exact, we need to show its evaluation at \emph{any} prism $(A, I)$ is short exact. By Lemma \ref{Lem-BKCover}, one can find a cover    $(A,I) \to (B,J)$ which admits a morphism $(\frakS,(E))\to(B,J)$. Let $(B^{\bullet},JB^{\bullet})$ be the \v Cech nerve associated to the covering $(A,I)\to (B,J)$. Then we have a canonical morphism $(\frakS^{\bullet},(E))\to (B^{\bullet},JB^{\bullet})$. So the exactness of (\ref{biexact:II}) implies that the evaluation of (\ref{biexact:I}) at $(B^{m},JB^{m})$ is exact for $m=0,1,2$. Hence we have the following exact sequence
  \[0\to\bM_1((A,I))\to\bM_2((A,I))\to\bM_3((A,I))\to\rH^1(((A,I)),\bM_1).\]
  Finally, we can conclude by using $\rH^1((A,I),\bM_1) = 0$, by fpqc descent (cf. Lemma \ref{vansihing lemma}).
  \end{proof}

\begin{remark} 
Many   equivalences of categories constructed in this paper are induced by \emph{evaluations} (or variants thereof) of crystals on either the Breuil--Kisin prism or the Fontaine prism, and thus are all bi-exact following similar argument as in Prop. \ref{Prop-dRStratification}.   
 We shall omit proofs of these bi-exactness in the paper.
\end{remark}

\subsection{Breuil--Kisin cosimplicial rings}
 
In this subsection, motivated by Proposition \ref{Prop-dRStratification}, we study the structure of the cosimplicial ring $\gs_\ast^\bullet$  with $*\in\{\emptyset,\log\}$; in particular, we give a detailed description of  $\frakS_*^n/(E)$ in Prop. \ref{pd polynomial}.
(Some of our computations are partly inspired by  computations in the relative prismatic case in \cite{Tia23}).


\begin{notation} \label{notebksimp}
\begin{enumerate}
\item  Let $n\geq 0$. By \cite[Example 2.6]{BS23} (for the prismatic case) and \cite[Lemma 5.0.2]{DL23} (for the   log-prismatic case), we have
 \begin{equation}\label{Equ-CechBK}
     \frakS^n = \rW(k)[[u_0,\dots,u_n]]\{\frac{u_0-u_1}{E(u_0)},\dots,\frac{u_0-u_n}{E(u_0)}\}^{\wedge}_{\delta}
 \end{equation}
 and
 \begin{equation}\label{Equ-CechBKlog}
      \frakS^n_{\log} = \rW(k)[[u_0,1-\frac{u_1}{u_0},\dots,1-\frac{u_n}{u_0}]]\{\frac{1-\frac{u_1}{u_0}}{E(u_0)},\dots,\frac{1-\frac{u_1}{u_n}}{E(u_0)}\}_{\delta}^{\wedge}.
 \end{equation}
 There is a natural morphism of cosimplicial rings $\frakS^{\bullet}\to\frakS_{\log}^{\bullet}$  given by identifying $u_i$'s. 
 
 \item 
 For any $n\geq 0$, we define $\rW(k)$-algebras
 \[A^n = \rW(k)[[u_0,v_1, \dots,v_n]] \text{ where }  v_i:=u_0-u_i  \]
 and 
 \[A^n_{\log} = \rW(k)[[u_0,v_{1,\log},\dots,v_{n,\log} ]] \text{ where }  v_{i, \log}:=1-\frac{u_i}{u_0}   \]
 with the $\delta$-structures given by requiring $\delta(u_i) = 0$ for all $i$.
 Let $*\in\{\emptyset,\log\}$, then we can summarize the equations in Item (1) as
\[ \frakS_{*}^n = A^n_*\{\frac{v_{1,*}}{E(u_0)},\dots,\frac{v_{n,*}}{E(u_0)}\}^{\wedge}_{\delta}. \]
 \end{enumerate}
\end{notation}

To prove Proposition \ref{pd polynomial}, we first give some lemmas.

\begin{lem}[\emph{\cite[Lem. 2.1.7]{ALB}}]\label{ALB}
  Let $(A,I)$ be a prism and let $d\in I$ be distinguished. If $(p,d)$ is a regular sequence in $A$, then for all $r,s\geq 0, r\neq s$, the sequences $(p,\varphi^r(d))$ and $(\varphi^r(d),\varphi^s(d))$ are regular.
\end{lem}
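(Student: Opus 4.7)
The plan is to exploit two structural features of a distinguished element: (i) $d$ distinguished means $\delta(d) \in A^{\times}$, so $\varphi(d) = d^p + p\,\delta(d)$; and (ii) in any $\delta$-ring, $\delta\circ\varphi = \varphi\circ\delta$, hence $\delta(\varphi^r(d)) = \varphi^r(\delta(d))$ is again a unit (since $\varphi$ preserves units). Thus every $\varphi^r(d)$ remains ``distinguished'' in this sense, even if it need not generate $I$; this will be the toggle that lets me translate between $p$-adic and $\varphi^r(d)$-adic information.

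For the sequence $(p,\varphi^r(d))$: the element $p$ is a non-zero-divisor by assumption, and since $\varphi^r(d)\equiv d^{p^r}\pmod{p}$ and $d$ is a non-zero-divisor mod $p$, so is $\varphi^r(d)$. For $(\varphi^r(d),\varphi^s(d))$ with $r<s$, I would first promote ``non-zero-divisor mod $p$'' to ``non-zero-divisor in $A$'' by the standard argument using that $A$ is $p$-torsion-free and classically $p$-adically complete (the latter follows from derived $(p,I)$-completeness plus $p$ being a non-zero-divisor). Next, $p$ is a non-zero-divisor in $A/\varphi^r(d)$: from $pm = \varphi^r(d)n$, reducing mod $p$ forces $n\in pA$, and cancelling $p$ forces $m\in \varphi^r(d)A$.

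The heart of the argument is showing $\varphi^s(d)$ for $s>r$ is a unit multiple of $p$ in $A/\varphi^r(d)$. Writing $y = \varphi^r(d)$, the identity $\varphi(y) = y^p + p\,\delta(y)$ with $\delta(y)\in A^{\times}$ gives the ideal equality $(y,\varphi(y)) = (y,p)$, equivalently $\varphi^{r+1}(d)\equiv p\cdot u_1 \pmod{\varphi^r(d)}$ with $u_1 \in (A/\varphi^r(d))^{\times}$. I would then induct on $k\ge 1$ to establish $\varphi^{r+k}(d) \equiv p\cdot u_k \pmod{\varphi^r(d)}$ with $u_k$ a unit: in the inductive step, $\varphi^{r+k+1}(d) = \varphi^{r+k}(d)^p + p\,\delta(\varphi^{r+k}(d)) \equiv p\bigl(p^{p-1}u_k^p + \text{unit}\bigr) \pmod{\varphi^r(d)}$, and the parenthesis is itself a unit by the ``$1+p\cdot(\,\cdot\,)\subset(\,\cdot\,)^{\times}$'' principle in a classically $p$-complete ring. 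Since $p$ is a non-zero-divisor in $A/\varphi^r(d)$, so is $\varphi^s(d)$.

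The main obstacle I anticipate is verifying that $A/\varphi^r(d)$ is classically $p$-adically complete, so as to license the last step above. I would extract this from the short exact sequence $0\to A\xrightarrow{\varphi^r(d)} A \to A/\varphi^r(d)\to 0$: the quotient is derived $p$-complete (being a cokernel of a map of derived $p$-complete modules), and combined with $p$ being a non-zero-divisor on it (so that $p^{\infty}$-torsion vanishes, in particular is bounded), this upgrades to classical $p$-adic completeness. With this in hand the regularity of both sequences falls out.
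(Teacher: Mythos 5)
The paper offers no proof of this lemma --- it is quoted verbatim from Ansch\"utz--Le Bras with a citation --- so I can only judge your argument on its own merits. Your treatment of $(p,\varphi^r(d))$ is correct ($\varphi^r(d)\equiv d^{p^r}\bmod p$ and a power of a non-zero-divisor in $A/p$ is a non-zero-divisor), as is the promotion to ``$\varphi^r(d)$ is a non-zero-divisor in $A$'' and the swap showing $p$ is a non-zero-divisor in $A/\varphi^r(d)$; the completeness bookkeeping (derived $(p,I)$-complete plus $p$-torsion-free gives classical $p$-completeness of $A$, and derived $p$-completeness of the quotient plus $p$ being a non-zero-divisor on it gives classical $p$-completeness there) is also sound. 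The induction showing $\varphi^{r+k}(d)\equiv p\cdot(\text{unit})$ in $A/\varphi^r(d)$ is the right heart of the matter and is carried out correctly.

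The gap is that you only prove regularity of $(\varphi^r(d),\varphi^s(d))$ when $r<s$, whereas the statement quantifies over all ordered pairs $r\neq s$ and regularity of a sequence is order-sensitive: the lemma also asserts, e.g., that $(\varphi(d),d)$ is regular, i.e.\ that $d$ is a non-zero-divisor in $A/\varphi(d)$, which your argument does not address. This case is true and the fix is short. Either argue directly: for $s<r$ write $\varphi^r(d)=pu+\varphi^s(d)v$ with $u\in A^{\times}$ (the unit in $A/\varphi^s(d)$ lifts to a unit of $A$ because $\varphi^s(d)\in(p,I)\subset\operatorname{rad}(A)$); then from $\varphi^s(d)m=\varphi^r(d)n$ one gets $\varphi^s(d)(m-vn)=pun$, reduction mod $p$ and the fact that $\varphi^s(d)$ is a non-zero-divisor mod $p$ give $m-vn\in pA$, cancelling $p$ gives $n\in\varphi^s(d)A$, and cancelling $\varphi^s(d)$ gives $m\in\varphi^r(d)A$. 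Alternatively, invoke the general fact that a length-two regular sequence $(x,y)$ with both $x$ and $y$ non-zero-divisors in $A$ is Koszul-regular, the Koszul complex is symmetric in $x$ and $y$, and Koszul-regularity of $(y,x)$ together with $y$ being a non-zero-divisor recovers regularity of $(y,x)$; you have already established that every $\varphi^r(d)$ is a non-zero-divisor in $A$, so this applies. With either patch the proof is complete.
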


\begin{lem}\label{regular sequence}
  Let $*\in\{\emptyset,\log\}$.
  For any $n\geq j\geq 0$, the structure morphism $p_j:\frakS\rightarrow \frakS_*^n$ (cf. Notation \ref{notebksimp})
  is $(p,E(u_0))$-completely faithfully flat.  
  In particular, $(p,E(u_0))$ is a regular sequence in $\frakS_*^n$. 
\end{lem}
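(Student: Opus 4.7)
My plan is to factor the structure map $p_j$ through the intermediate power series ring $A_*^n$ of Notation~\ref{notebksimp}, reduce to $j = 0$ using the intrinsic $S_{n+1}$-symmetry of the $(n+1)$-fold self-product, and conclude via the known flatness of prismatic envelopes along Koszul-regular sequences. Concretely, since $\frakS_*^n$ is the $(n+1)$-fold self-product of the Breuil--Kisin (log-) prism in the (log-) prismatic site, permutation of its factors induces a natural $S_{n+1}$-action on $\frakS_*^n$ that cycles the face maps $p_j$; since automorphisms preserve faithful flatness, it suffices to treat $j = 0$. I will then factor $p_0$ as
\[ \frakS \xrightarrow{\iota} A_*^n \xrightarrow{\pi} \frakS_*^n, \]
where $\iota$ sends $u \mapsto u_0$ and $\pi$ is the canonical map into the prismatic envelope.

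The map $\iota$ realizes $A_*^n$ as the $(p, E(u_0))$-adic completion of a polynomial extension of $\frakS$ in $n$ variables, hence is trivially $(p, E(u_0))$-completely faithfully flat. For $\pi$, whose target is presented as $A_*^n\{v_{1,*}/E(u_0),\dots,v_{n,*}/E(u_0)\}^\wedge_\delta$, I will verify that $(p, E(u_0), v_{1,*}, \dots, v_{n,*})$ is a regular sequence in $A_*^n$: modulo $p$ it lives in the regular local ring $k[[u_0, v_{1,*}, \dots, v_{n,*}]]$, where $E(u_0) \equiv u_0^e$ is still a non-zero-divisor (by Eisenstein-ness) and the $v$'s form a regular sequence in the quotient. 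With this in hand, the $(p, E(u_0))$-complete faithful flatness of $\pi$ follows from the general flatness theorem for prismatic envelopes along Koszul-regular sequences, cf. \cite[Prop.~3.13]{BS22}.

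The log case runs in parallel with $v_{i,\log} = 1 - u_i/u_0$ playing the role of $v_i$; the regularity check is identical in $A_{\log}^n = W(k)[[u_0, v_{1,\log}, \dots, v_{n,\log}]]$. The ``in particular'' assertion is then immediate: $(p, E)$ is already a regular sequence in $\frakS$ (which is a two-dimensional regular local ring), and this property is preserved under $(p, E)$-completely faithfully flat base change. I anticipate the main obstacle to be precisely matching our setup to the hypotheses of the envelope flatness theorem, since $A_*^n$ is a $\delta$-$\frakS$-algebra rather than a prism in its own right, and the envelope is taken against $E(u_0)$ pulled back from $\frakS$; should a direct citation not apply verbatim, the fallback plan is an explicit Koszul-complex calculation identifying the derived envelope with its $0$-th cohomology and verifying $(p, E(u_0))$-complete flatness via Tor-vanishing.
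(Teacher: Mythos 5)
Your proposal is correct and follows essentially the same route as the paper: the paper's proof likewise observes that $E(u_0), v_{1,*},\dots,v_{n,*}$ is a regular sequence in $A_*^n$ and then invokes \cite[Prop.~3.13]{BS22}, with faithful flatness over $\frakS$ coming from the fact that $A_*^n$ is a power series extension of $\frakS$ via $u\mapsto u_0$. Your additional reduction to $j=0$ via the $S_{n+1}$-symmetry of the self-product and the explicit mod-$p$ regularity check are just more careful spellings-out of steps the paper leaves implicit.
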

\begin{proof}
  As the sequence
  $E(u_0),v_{1,*}, \dots, v_{n,*}$
  is regular in $A_*^n$, the lemma follows from \cite[Proposition 3.13]{BS22} (and its proof).
\end{proof}

\begin{lem}\label{divisiblity}
  Let $*\in\{\emptyset,\log\}$. For any $i\geq j\geq 1$, $n\geq 0$ and $m\geq 1$, $E(u_0)$ divides $\varphi^m(\delta_n(\frac{v_{j,*}}{E(u_0)}))$ in $\frakS_*^{i}$.
\end{lem}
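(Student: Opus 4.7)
Our strategy uses two structural facts about $\frakS_*^i$: (a) since $E(u_0)$ is distinguished in the $\delta$-ring, $\delta(E(u_0))$ is a unit modulo $E(u_0)$; (b) by Lemma \ref{regular sequence}, the quotient $\frakS_*^i/E(u_0)$ is $p$-torsion-free.

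The heart of the argument is the base case $m=1$, $n=0$. Writing $w := v_{j,*}/E(u_0)$ and applying Frobenius to the relation $v_{j,*} = E(u_0)\,w$ gives
\[
\varphi(E(u_0))\,\varphi(w) \;=\; \varphi(v_{j,*}) \;=\; v_{j,*}^p + p\,\delta(v_{j,*}).
\]
Both terms on the right lie in $(E(u_0))$: the first manifestly since $v_{j,*}^p = (E(u_0)\,w)^p$, and the second because $v_{j,*}$ divides $\delta(v_{j,*})$ (a consequence of the universal polynomial $C_p$ for the $\delta$-ring sum formula, whose relevant evaluation vanishes on the diagonal; this is valid in both the prismatic and log-prismatic settings). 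Thus $\varphi(E(u_0))\,\varphi(w) \in (E(u_0))$; reducing modulo $E(u_0)$ via the congruence $\varphi(E(u_0)) \equiv p\,\delta(E(u_0)) \pmod{E(u_0)}$ gives $p\,\delta(E(u_0))\,\varphi(w) \equiv 0 \pmod{E(u_0)}$, and (a) and (b) together yield $E(u_0) \mid \varphi(w)$.

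The general statement is obtained by induction, combining the base case with $\varphi \circ \delta = \delta \circ \varphi$ and the Leibniz formula $\delta(E(u_0)\,y) = \varphi(E(u_0))\,\delta(y) + y^p\,\delta(E(u_0))$. Given a known divisibility $\varphi^m(\delta_n(w)) = E(u_0)\,y$ from the induction hypothesis, both $\varphi^m(\delta_{n+1}(w)) = \delta(E(u_0)\,y)$ and $\varphi^{m+1}(\delta_n(w)) = \varphi(E(u_0))\,\varphi(y)$ can be computed from this expression; reducing either one modulo $E(u_0)$ collapses it (using $\varphi(E(u_0)) \equiv p\,\delta(E(u_0))$) to $\delta(E(u_0))\,\varphi(y)$ up to terms already in $(E(u_0))$. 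By (a) and (b), the desired divisibility thus reduces to $E(u_0) \mid \varphi(y)$, which is treated by repeating the base-case manipulation on the auxiliary element $y$ (whose relation $\varphi(E(u_0))\,\varphi(y) = E(u_0)\cdot Z'$ is available via $\varphi(E(u_0)\,y) = \varphi^{m+1}(\delta_n(w))$).

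The main obstacle will be arranging the induction so that each reduction genuinely decreases a well-defined complexity, since the auxiliary element $y$ changes at each step; care is needed to track, for instance, a lexicographic quantity in $(n,m)$ together with the ``depth'' of the auxiliary element, to ensure termination. The deeper geometric reason for the lemma is that under the PD-identification $\frakS_*^i/E(u_0) \cong \OK\{X_1,\ldots,X_i\}^{\wedge}_{\pd}$ from the forthcoming Proposition \ref{pd polynomial}, Frobenius composed with reduction modulo $E(u_0)$ vanishes on the generators $w_{j,*}$ and their $\delta$-iterates, forced by the PD-relation $X_j^p = p!\,X_j^{[p]}$ against the $\delta$-ring identity $\varphi(w) = w^p + p\,\delta(w)$.
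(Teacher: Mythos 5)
Your base case ($m=1$, $n=0$) is sound, and your ingredients (a) and (b) are exactly the mechanism behind the paper's appeal to Lemma \ref{ALB} and Lemma \ref{regular sequence}. The gap is in the induction, and it is the circularity you yourself flag at the end. From $\varphi^m(\delta_n(w))=E(u_0)\,y$, both of your target divisibilities reduce (correctly) to $E(u_0)\mid\varphi(y)$. But by the regularity of $(E(u_0),\varphi(E(u_0)))$, the statement $E(u_0)\mid\varphi(y)$ is \emph{equivalent} to $E(u_0)\mid\varphi(E(u_0))\varphi(y)=\varphi^{m+1}(\delta_n(w))$ — which is one of the two statements you are trying to prove. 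Your proposed escape, ``repeat the base-case manipulation on $y$,'' does not go through: the base case worked because $v_{j,*}$ divides its own Frobenius in the power-series ring $A_*^i$ (Taylor expansion), so $\varphi(v_{j,*})\in(E(u_0))$ was available \emph{independently} of the conclusion. The auxiliary element $y$ has no such self-divisibility, so the only handle on $\varphi(E(u_0))\varphi(y)$ is the very membership in $(E(u_0))$ you want to establish. No choice of lexicographic complexity fixes this; the loop $P(n,m+1)\Leftarrow E\mid\varphi(y)\Leftrightarrow P(n,m+1)$ is not a descent.

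The repair is to change what is being inducted on, and it is what the paper does. First dispose of $n=0$ for \emph{all} $m\geq 1$ at once: iterating the Taylor-expansion divisibility gives $v_{j,*}\mid\varphi^m(v_{j,*})$ in $A_*^i$, hence $E(u_0)\mid\varphi^m(v_{j,*})=\varphi^m(E(u_0))\varphi^m(w)$ in $\frakS_*^i$, and regularity of $(E(u_0),\varphi^m(E(u_0)))$ yields $E(u_0)\mid\varphi^m(w)$. Then induct on $n$ alone, carrying the full quantifier over $m$: the identity
\[
p\,\varphi^m(\delta_{n+1}(w))=\varphi^{m+1}(\delta_n(w))-\bigl(\varphi^m(\delta_n(w))\bigr)^p
\]
has a right-hand side involving only $\delta_n$ at Frobenius exponents $m$ and $m+1$, both covered by the hypothesis at level $n$; then $(p,E(u_0))$-regularity lets you cancel the $p$. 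This avoids ever needing to increment $m$ from a single instance of the hypothesis. (Your closing appeal to Proposition \ref{pd polynomial} must remain motivation only, since its proof relies on this lemma.)
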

\begin{proof}
  Without loss of generality, we may assume $i = j = 1$ and put $x_* = \frac{v_{1,*}}{E(u_0)}$. By Taylor's expansion, we see that $v_{1,*}$ divides $\varphi(v_{1,*})$ (even in $A_{*}^1$), and thus divides $\varphi^m(v_{1,*})$ for any $m\geq 1$. As a consequence $E(u_0)$ divides $\varphi^m(v_{1,*})$ in $\frakS_*^1$ for any $m\geq 0$. Now, we are going to prove the result by induction on $n$.

  For $n=0$ and $m\geq 1$, we have $\varphi^m(x_*) = \frac{\varphi^m(v_{1,*})}{\varphi^m(E(u_0))}$.
  As $E(u_0)$ divides $\varphi^m(v_{1,*})$, by Lemma \ref{ALB} and Lemma \ref{regular sequence}, it also divides $\varphi^m(x_*)$ as desired.

  For any $n\geq 0$ and $m\geq 1$, as $p\varphi^m(\delta_{n+1}(x_*)) = \varphi^{m+1}(\delta_n(x_*))-\varphi^m(\delta_n(x_*))^p$, it follows from induction hypothesis that $E(u_0)$ divides $p\varphi^{m}(\delta_{n+1}(x_*))$.
  As $(p,E(u_0))$ is a regular sequence in $\frakS_*^1$, we see that $E(u_0)$ also divides $\varphi^m(\delta_{n+1}(x_*))$ as desired, and then complete the proof.
\end{proof}

\begin{prop}\label{pd polynomial}
  Let $*\in\{\emptyset,\log\}$. 
Let \begin{equation*}
a=
\begin{cases}
  -E'(\pi), &  \text{if } \ast=\emptyset \\
 -\pi E'(\pi), &  \text{if } \ast=\log
\end{cases}
\end{equation*}  
  For any $1\leq i\leq n$, let
  \[ \text{ $X_i=$   image of $\frac{v_{i,*}}{E(u_0)} $ in $\frakS_*^{n}/E$. }\] 
  Then \[\frakS_*^n/(E) \cong \calO_K\{X_1,\dots, X_n\}^{\wedge}_{\rm pd}\]
  where the right hand side is the $p$-completed pd-polynomial ring in the variables $X_1,\dots, X_n$. 
  Via the above isomorphisms for all $n$, the face maps $p_i$'s and degeneracy maps $\sigma_i$'s on $\frakS_*^{\bullet}/(E)$ are given by
  \begin{equation}\label{Equ-pd polynomial-Face}
      p_i(X_j) = \left\{
      \begin{array}{rcl}
           (X_{j+1}-X_1)(1+aX_1)^{-1}, & i=0  \\
           X_j, & j<i \\
           X_{j+1}, & 0<i\leq j,
      \end{array}
      \right.
  \end{equation}
  and 
  \begin{equation*}
      \sigma_i(X_j) = \left\{
            \begin{array}{rcl}
                0, & i=0~ \&~ j=1 \\
                X_{j-1}, & i<j~ \&~ j\neq 1\\
                X_j, & j\leq i.
            \end{array}
            \right.
  \end{equation*}
  Moreover, the canonical map 
  \begin{equation}   \label{eqmoreover211}
  \calO_K\{X_1,\dots, X_n\}^{\wedge}_{\rm pd}\cong\frakS^{\bullet}\to\frakS^{\bullet}_{\log}\cong\calO_K\{X_1,\dots, X_n\}^{\wedge}_{\rm pd} 
  \end{equation}
   carries $X_i$ to $\pi X_i$ for all $i$.
\end{prop}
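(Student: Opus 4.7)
The plan is to reduce the proposition to three calculations. The central assertion is the identification $\frakS_*^n/E \cong \calO_K\{X_1,\dots,X_n\}^{\wedge}_{\mathrm{pd}}$; once this is in hand, the face/degeneracy formulas and the comparison \eqref{eqmoreover211} follow from explicit Taylor expansions. For the central identification, the crucial input is Lemma \ref{divisiblity}. Working in the $\delta$-ring $\frakS_*^n$ and using the defining relation $\varphi(y)=y^p+p\,\delta(y)$, the lemma yields, modulo $E$, the recursion $p\,\delta_{k+1}(X_i)\equiv -\delta_k(X_i)^p$. Iterating from $\delta_0(X_i)=X_i$ gives $\delta_k(X_i)\equiv \pm X_i^{p^k}/p^{(p^k-1)/(p-1)}\pmod E$, and since the exponent equals the $p$-adic valuation of $(p^k)!$, each $\delta_k(X_i)$ is a $\Z_{(p)}^\times$-multiple of the divided power $X_i^{[p^k]}$. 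Combining these Frobenius iterates with products produces all divided powers $X_i^{[m]}$ on the ideal of $\frakS_*^n/E$ generated by the $X_i$, and the universal properties of the $\delta$-envelope of $A_*^n$ along the $v_{i,*}/E$ and of the $p$-completed pd-envelope of $A_*^n/E$ along $(v_{1,*},\dots,v_{n,*})$ provide mutually inverse maps.

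For the face maps, the cases $i\geq 1$ are immediate since $p_i$ fixes $u_0$ and shifts $u_j\mapsto u_{j+1}$ for $j\geq i$, which directly shifts the $X_j$. The delicate case is $p_0$, which sends $u_0\mapsto u_1$. In both settings one has $u_1=u_0-v_1$, with $v_1=E(u_0)X_1$ in the non-log case and $v_1=u_0 v_{1,\log}=u_0 E(u_0)X_1^{\log}$ in the log case. A Taylor expansion gives
\[ E(u_1)=E(u_0)-E'(u_0)v_1+O(v_1^2)=E(u_0)\bigl(1-u_0^{\epsilon}E'(u_0)X_1\bigr)+O(E(u_0)^2), \]
with $\epsilon=0$ in the non-log case and $\epsilon=1$ in the log case. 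Reducing modulo $E$ and using $u_0\equiv\pi$ yields $E(u_1)/E(u_0)\equiv 1+aX_1$ in either case. Combined with $p_0(v_j)=v_{j+1}-v_1$ (non-log), respectively $p_0(v_{j,\log})=(v_{j+1,\log}-v_{1,\log})/(1-v_{1,\log})\equiv v_{j+1,\log}-v_{1,\log}\pmod E$ (log), one obtains $p_0(X_j)\equiv (X_{j+1}-X_1)(1+aX_1)^{-1}$. The inverse $(1+aX_1)^{-1}=\sum_{k\geq 0}k!\,(-a)^kX_1^{[k]}$ converges $p$-adically because $k!\to 0$ in $\Z_p$. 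The degeneracy formulas are immediate from the corresponding index manipulations, with $\sigma_0$ collapsing $u_0$ and $u_1$ (and hence killing $v_1$, i.e.\ $X_1$) and the remaining $\sigma_i$ acting as index shifts.

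Finally, the comparison \eqref{eqmoreover211} sends $u_i\mapsto u_i$ by construction, so it sends $v_i=u_0-u_i$ to $u_0 v_{i,\log}$, and modulo $E$ (where $u_0\equiv\pi$) this becomes $X_i\mapsto \pi X_i^{\log}$. The principal obstacle in the argument is the first step, namely rigorously extracting the pd-structure from the $\delta$-structure after reduction modulo $E$, which requires a clean description of both universal properties and a careful comparison of generators and relations; the face/degeneracy calculations and the log comparison are then essentially bookkeeping, the only nontrivial analytic ingredient being the Taylor expansion of $E(u_1)$ that produces the constant $a$.
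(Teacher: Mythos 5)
Your reduction of the proposition to (i) the pd-structure on $\frakS_*^n/E$, (ii) the Taylor expansion of $E(u_1)$, and (iii) the log comparison matches the paper's strategy, and parts (ii) and (iii) are carried out correctly: the recursion $p\,\overline\delta_{m+1,j}=-\overline\delta_{m,j}^{\,p}$ deduced from Lemma \ref{divisiblity} and the identification of $\delta_m(X_j)$ with a unit multiple of $X_j^{[p^m]}$ are exactly the paper's computation, as is the expansion $E(u_1)/E(u_0)\equiv 1+aX_1 \bmod E$. (You should also record up front that $(p,E)$ is a regular sequence in $\frakS_*^n$, so that $\frakS_*^n/E$ is $p$-torsion free and the divided powers $X_j^m/m!$ can legitimately be located inside $\frakS_*^n/E$ rather than merely in $(\frakS_*^n/E)[1/p]$.)

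The genuine gap is in step (i), precisely at the point you yourself flag as ``the principal obstacle'': the claim that ``the universal properties \dots provide mutually inverse maps.'' The universal property of the $p$-completed free pd-algebra does give a surjection $\alpha:\calO_K\{Y_1,\dots,Y_n\}^{\wedge}_{\pd}\to\frakS_*^n/E$, $Y_j\mapsto X_j$, once the divided powers are known to exist. But there is no universal property producing a map in the other direction: $\frakS_*^n/E$ is the reduction of a prismatic ($\delta$-)envelope, not visibly a pd-envelope of $A_*^n/E$, and the free pd-algebra is not a quotient of a $\delta$-ring that could receive $\frakS_*^n$ for formal reasons. What must actually be proved is the injectivity of $\alpha$, and since both sides are $p$-complete and the source is $p$-torsion free, this reduces to injectivity modulo $p$. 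The paper does this in Lemma \ref{injectivity} by an explicit generators-and-relations analysis of $\frakS_*^1/(E,p)$ (showing $\delta_n(EX-v_{1,*})\equiv\delta_n(EX)\bmod v_{1,*}$, hence presenting the quotient as $\frakS\{X\}^{\wedge}_{\delta}/(p,E,\delta_1(EX),\dots)$), then base-changing along the faithfully flat map $\calO_K/p\to\calO_K/p[T]$ to compare with the self-product $\frakS\langle T\rangle^1$ in the relative prismatic site, where the corresponding injectivity is \cite[Proposition 5.7]{Tia23}. Without this step, or an equivalent flatness/freeness statement for Hodge--Tate reductions of prismatic envelopes of regular sequences, the central isomorphism is asserted rather than proved.
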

\begin{proof}
  
  Since $(p,E=E(u_0))$ is a regular sequence in $\frakS_*^n$, we see that $\frakS_*^n/(E)$ is $p$-torsion free.
  For any $j$ and any $m\geq 0$, we denote by $\overline \delta_{m,j}$ the reduction of $\delta_m(\frac{v_{j,*}}{E})$ modulo $E$.
  Since for any $m\geq 1$,
  \[\varphi(\delta_{m-1}(\frac{v_{j,*}}{E})) = (\delta_{m-1}(\frac{v_{j,*}}{E}))^p+p\delta_m(\frac{v_{j,*}}{E}),\]
  By Lemma \ref{divisiblity}, we see that $p\overline \delta_{m+1,j} = -\overline{\delta_{m,j}}^p$ for any $m\geq 0$. By noting that $\overline \delta_{0,j} = X_j$, we conclude that for any $m\geq 0$,
  \[\overline \delta_{m,j} = (-1)^{1+p+\cdots+p^{m-1}}\frac{X_j^{p^m}}{p^{1+p+\cdots+p^{m-1}}}.\]
  Since for any $m\geq 0$, $X_j^{[p^m]}:=\frac{X_j^{p^m}}{p^m!}$\footnote{From now on, we denote by $X^{[n]}$ the $n$-th pd-power of $X$ (that is, $X^{[n]} = \frac{X^n}{n!}$). Similar remark applies to $X_1^{[n]}, X_2^{[n]}$, etc..} differs from $\overline \delta_{m,j}$ by a unit in $\Zp$, it is easy to check that $\frac{X_j^m}{m!}\in (\frakS_*^n/(E))[\frac{1}{p}]$ lies in $\frakS_*^n/(E)$.

  Let $\calO_K\{Y_1,\dots, Y_n\}^{\wedge}_{\pd}$ be the $p$-complete free pd-algebra in the variables $\{Y_j\}_{1\leq j\leq n}$ over $\calO_K$.
  Then there exists a well-defined pd-morphism
  \[\alpha: \calO_K\{Y_1,\dots, Y_n\}^{\wedge}_{\pd}\rightarrow \frakS^n/(E)\]
  sending $Y_j$ to $X_j$ for any $1\leq j\leq n$.
  The construction of $\frakS_*^n$ implies that $\alpha$ is surjective.
  Since both sides are $p$-complete, in order to see $\alpha$ is an isomorphism, we only need to see its reduction $\overline \alpha$ modulo $p$ is an injection. This follows from Lemma \ref{injectivity} below.

  It remains to check the cosimplicial structure on $\frakS^{\bullet}_*/(E)$ as claimed. We only deal with the $p_0$ case while the rests are obvious.
  Write $E(u) = \sum_{i=0}^ea_iu^i$ and then we have
  \begin{equation*}
      \begin{split}
          E(u_0)-E(u_1) & = \sum_{i=0}^ea_i(u_0^i-u_1^i) \\
          & = \sum_{i=1}^ea_i(\sum_{j=0}^i\binom{i}{j}u_1^{i-j}(u_0-u_1)^j-u_1^i) \\
          & = \sum_{i=1}^ea_i(\sum_{j=1}^i\binom{i}{j}u_1^{i-j}(u_0-u_1)^j) \\
          & \equiv \sum_{i=1}^eia_iu_1^{i-1}(u_0-u_1) \mod (u_0-u_1)^2.
      \end{split}
  \end{equation*}
  So we get
  \begin{equation*}
      \frac{E(u_1)}{E(u_0)} = 1-\frac{E(u_0)-E(u_1)}{E(u_0)} \equiv \left\{
      \begin{array}{rcl}
          1 - E'(u_0)X_1 \mod E, & * = \emptyset \\
          1-u_0E'(u_0)X_1 \mod E, & * = \log.
      \end{array}
      \right.
  \end{equation*}
  In other words, $\frac{E(u_0)}{E(u_1)}$ goes to $(1+aX_1)^{-1}$ in $\frakS^{\bullet}_*/(E)$. Now, one can conclude by noting that for any $1\leq j\leq n$,
  \begin{equation*}
  \begin{split}
      p_0(X_j) = p_0(\frac{v_{j,*}}{E(u_0)})= (\frac{v_{j+1,*}}{E(u_0)}-\frac{v_{1,*}}{E(u_0)})\frac{E(u_0)}{E(u_1)}\equiv (X_{j+1}-X_1)(1+aX_1)^{-1} \mod E.
  \end{split}
  \end{equation*}

  The ``moreover'' part follows immediately from the definitions of $\frakS^{\bullet}$ and $\frakS_{\log}^{\bullet}$. 
\end{proof}
\begin{lem}\label{injectivity}
   Keep the notations as above. The morphism $\bar \alpha: \calO_K\{Y_1,\cdots,Y_n\}^{\wedge}_{\pd}/p\to \frakS_*^n/(E,p)$ is injective.
\end{lem}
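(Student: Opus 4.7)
The plan is to reduce the injectivity of $\bar\alpha$ to a flatness/freeness count, rather than attempting to construct an explicit section. By Lemma \ref{regular sequence}, the face map $p_0:\frakS\to\frakS_*^n$ is $(p,E)$-completely faithfully flat, hence induces a $p$-completely flat $\calO_K$-algebra structure on $\frakS_*^n/(E)$. Since $\calO_K\{Y_1,\ldots,Y_n\}^\wedge_{\pd}$ is also $p$-completely flat over $\calO_K$ (in fact $p$-completely free, with basis the pd-monomials $\{\prod_i Y_i^{[m_i]}\}$), and since $\alpha$ is already known to be surjective, derived Nakayama reduces the problem to showing that the mod-$p$ map $\bar\alpha$ is an isomorphism. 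In particular, injectivity of $\bar\alpha$ is equivalent to the $\calO_K/p$-linear independence, in $\frakS_*^n/(E,p)$, of the images $\{\prod_i X_i^{[m_i]}\}$ of the pd-monomials.

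The next step is to use the structural description of the prismatic envelope. By \cite[Prop.~3.13]{BS22} (applied to the regular sequence $(p,E,v_{1,*},\ldots,v_{n,*})$ in $A_*^n$), the ring $\frakS_*^n=A_*^n\{v_{1,*}/E,\ldots,v_{n,*}/E\}^\wedge_\delta$ is $(p,E)$-completely flat over $A_*^n$, and is spanned topologically as an $A_*^n$-module by the monomials in the iterated $\delta$-images $\delta^k(v_{i,*}/E)$, $k\geq 0$. Reducing modulo $E$, the explicit formula
\[\overline{\delta_{m,i}}=(-1)^{1+p+\cdots+p^{m-1}}\frac{X_i^{p^m}}{p^{1+p+\cdots+p^{m-1}}}\]
established in the proof of Prop.~\ref{pd polynomial} shows that each such generator becomes, up to a unit in $\zp$, the pd-monomial $X_i^{[p^m]}$. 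Combined with the multiplicative structure, every $A_*^n/(E)$-algebra generator of $\frakS_*^n/(E)$ is expressible as a $\zp^\times$-scalar multiple of a pd-monomial in the $X_i$.

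To rule out extra relations and finish injectivity of $\bar\alpha$, the plan is to match ranks. The target $\frakS_*^n/(E,p)$, being $p$-completely flat over $\calO_K$ via $p_0$, is a free $\calO_K/p$-module whose $\calO_K/p$-rank (in each polynomial-degree layer in the $X_i$) can be read off from the $\delta$-envelope description above; this rank coincides with the rank of pd-monomials of the corresponding degree in $\calO_K/p\{Y_1,\ldots,Y_n\}_{\pd}$. Since $\bar\alpha$ is a surjection of free $\calO_K/p$-modules of equal rank on each graded piece of the degree filtration (the filtration by total degree in the $X_i$, respectively $Y_i$), it must be an isomorphism on each graded piece, hence on the whole, which gives injectivity.

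The main obstacle I anticipate is the bookkeeping in the last paragraph: one has to set up a degree filtration on $\frakS_*^n/(E,p)$ that is compatible with $\bar\alpha$, and verify that the induced map on associated graded pieces is genuinely a bijection of free $\calO_K/p$-modules of equal rank. The temptation is to say this is automatic from $(p,E)$-flatness, but the $\calO_K$-structure coming from $p_0$ and the $A_*^n$-structure used in \cite[Prop.~3.13]{BS22} are different (for instance, $v_{i,*}$ acts by $E\cdot X_i=0$ mod $E$), so some care is needed to translate the flatness of \cite{BS22} into a statement about the $\calO_K/p$-module rank relevant for $\bar\alpha$.
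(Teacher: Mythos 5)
There is a genuine gap at the rank-counting step, and it is not mere bookkeeping: it is the entire content of the lemma. Your argument correctly records that both sides are free $\calO_K/p$-modules (flatness over the Artinian local ring $\calO_K/p$ gives freeness) and that $\bar\alpha$ is surjective, but a surjection between free modules of infinite rank need not be injective, so everything hinges on computing the rank of the graded pieces of $\frakS_*^n/(E,p)$ and showing it equals that of the pd-monomials. The inputs you cite cannot deliver this: Lemma \ref{regular sequence} and \cite[Prop.~3.13]{BS22} give $(p,E)$-complete flatness and a set of topological \emph{generators} $\delta^k(v_{i,*}/E)$ of the envelope, i.e.\ an upper bound on the size of each piece, which combined with the already-known surjectivity of $\bar\alpha$ yields nothing new. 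What is needed is a \emph{lower} bound --- the linear independence of the images of the pd-monomials --- and that is precisely the statement being proven, so the proposed "read off the rank from the $\delta$-envelope description" is circular as written. (If one could extract from \cite{BS22} that the envelope is genuinely \emph{free} on those $\delta$-monomials, the lemma would follow at once; but Prop.~3.13 asserts only existence and flatness of the envelope, not a basis.)

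For comparison, the paper's proof gets the missing lower bound from an external source. It reduces to $n=1$, shows by a $\delta$-computation that modulo $(p,E,v_{1,*})$ the relations ideal $(EX-v_{1,*})_\delta$ is generated by the $\delta_m(EX)$, so that $\frakS_*^1/(E,p)\cong\frakS\{X\}^\wedge_\delta/(p,E,\delta_1(EX),\dots)$, and then observes that the self-product $\frakS\langle T\rangle^1$ in the \emph{relative} prismatic site over $\calO_K\langle T\rangle$ admits the identical presentation, giving $\frakS_*^1/(E,p)\otimes_{\calO_K/p}\calO_K/p[T]\cong\frakS\langle T\rangle^1/(E,p)$. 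Injectivity in the relative case is known by \cite[Prop.~5.7]{Tia23}, and one descends along the faithfully flat map $\calO_K/p\to\calO_K/p[T]$. If you want to keep a rank-counting strategy, you would need to supply an independent computation of a basis of $\frakS_*^n/(E,p)$ (for instance by carrying out the universal-case analysis underlying \cite[Prop.~3.13]{BS22} to the point of exhibiting freeness on the $\delta$-monomials), which is a substantially harder task than the reduction the paper performs.
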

\begin{proof}
  We prove the case $n=1$. The general case follows from the same argument.

  Note that $\frakS_*^1=A_*^1\{X\}^{\wedge}_{\delta}/(EX-v_{1,*})_{\delta}$, where $(EX-v_{1,*})_{\delta}$ is the closure of the ideal generated by $\{\delta_n(EX-v_{1,*})\}_{n\geq 0}$.

  We claim that for any $n\geq 0$, $\delta_n(EX-v_{1,*})\equiv \delta_n(EX) \mod v_{1,*}$. We prove this by induction. When $n=0$, this is trivial. Assume the claim is true when $n=m$, and thus one can write $\delta_m(EX-v_{1,*})=\delta_m(EX)+v_{1,*}Z_1$ for some $Z_1\in A_*^1\{X\}^{\wedge}_{\delta}$. Then we have
  \begin{equation*}
      \begin{split}
      \delta_{m+1}(EX-v_{1,*}) 
      = &\delta(\delta_m(EX-v_{1,*}))\\
      = &\delta(\delta_m(EX)+v_{1,*}Z_1)\\
      = &\delta_{m+1}(EX)+\delta(v_{1,*}Z_1)-\sum_{i=1}^{p-1}\frac{(p-1)!}{i!(p-i)!}\delta_m(EX)^{p-i}v_{1,*}^iZ_1^i\\
      = & \delta_{m+1}(EX)+\delta(v_{1,*})\varphi(Z_1)+v_{1,*}^p\delta(Z_1)-\sum_{i=1}^{p-1}\frac{(p-1)!}{i!(p-i)!}\delta_m(EX)^{p-i}v_{1,*}^iZ_1^i\\
      \equiv &\delta_{m+1}(EX)+\delta(v_{1,*})\varphi(Z_1) \mod v_{1,*}\\
      \equiv &\delta_{m+1}(EX)\mod v_{1,*},
      \end{split}
  \end{equation*}
  where the last congruence follows from $\delta(v_{1,*})\equiv 0\mod{v_{1,*}}$. So the claim is true.

  Thanks to the above claim, we can see that
  \begin{equation*}
      \begin{split}
           \frakS_*^1/(E,p)= & A_*^1\{X\}^{\wedge}_{\delta}/(p,E,EX-v_{1,*},\delta_1(EX-v_{1,*}),\cdots,\delta_n(EX-v_{1,*}),\cdots)\\
           = & A^1_*\{X\}^{\wedge}_{\delta}/(p,E,v_{1,*},\delta_1(EX),\cdots,\delta_n(EX),\cdots)\\
           = & \frakS\{X\}^{\wedge}_{\delta}/(p,E,\delta_1(EX),\cdots,\delta_n(EX),\cdots).
      \end{split}
  \end{equation*}
  
 Let $(\frakS\langle T\rangle^1, (E))$ be the self-product of $(\frakS\langle T\rangle, (E))$ with $\delta(T)=0$ in the category $(\calO_K\langle T\rangle/(\frakS,(E)))_{\Prism}$. Then we have $\frakS\za T\ya^1=\frakS\za T_0,T_1\ya\{X\}^{\wedge}_{\delta}/(EX-(T_0-T_1))_{\delta}$. Then by similar arguments as above, one can prove that for any $n\geq 0$,
 \[\delta_n(EX-(T_0-T_1))\equiv \delta_n(EX)\mod T_0-T_1,\]
 and furthermore that
 \begin{equation*}
 \begin{split}
         \frakS\langle T\rangle^1/(E,p)=&\frakS\langle T_0,T_1\rangle\{X\}^{\wedge}_{\delta}/(p,E,EX-(T_0-T_1),\delta_1(EX),\cdots,\delta_n(EX),\cdots)\\
         =& \frakS\langle T\rangle\{X\}^{\wedge}_{\delta}/(p,E,\delta_1(EX),\cdots,\delta_n(EX),\cdots).
 \end{split}
 \end{equation*}
 So in particular, we have $\frakS_*^1/(E,p)\otimes_{\calO_K/p}\calO_K/p[T]\cong \frakS\langle T\rangle^1/(E,p)$. 
 
 Now we consider the following commutative diagram
 \begin{equation*}
       \xymatrix@C=0.5cm{
    \calO_K/p\{Y_1\}^{\wedge}_{\pd}\ar[rrrr]^{}\ar[d]^{\otimes_{\calO_K/p}\calO_K/p[T]}&&&& \frakS_*^1/(p,E)\ar[d]^{\otimes_{\calO_K/p}\calO_K/p[T]}\\
    \calO_K/p[T]\{Y_1\}^{\wedge}_{\pd}\ar[rrrr]&&&& \frakS\langle T\rangle^1/(p,E),
  }
  \end{equation*}
 where the bottom map sends $Y_j$ to the reduction of $\frac{T_0-T_1}{E}\in\frakS\za T\ya^1$ is well-defined and injective due to \cite[Proposition 5.7]{Tia23}.
 The vertical maps are injective due to the faithful flatness of the map $\calO_K/p\to \calO_K/p[T]$.  So the top map is also injective as desired. We are done.
\end{proof}

 \section{Stratifications:   axiomatic computations}\label{Strat}

The purpose of this section is to carry out an axiomatic study of stratifications, preparing for the discussions on Hodge--Tate crystals in Section \ref{sectionHT}.
Indeed, in Notation \ref{cosimplicial ring structure}, we define a slightly more general cosimplicial ring which generalizes both the prismatic and log-prismatic Breuil--Kisin cosimplicial ring. In \S \ref{subsecaxiomstrat}, we classify stratifications over this cosimplicial ring; in \S  \ref{axiomcoho}, we study \v{C}ech-Alexander complexes associated to these stratifications.

\begin{notation} \label{cosimplicial ring structure}
Let $R$ be a $p$-complete ring, and for any $n\geq 0$, define 
 \[A^{n}:=R[X_1,\dots,X_n]^{\wedge}_{\pd}\] to be the $p$-completed free pd-algebra over $R$ generated by free variables $X_1,\dots,X_n$ (thus, $X_i$'s are all topologically nilpotent in $A^n$). This mimicks the $p$-complete pd-polynomial ring appearing in Proposition \ref{pd polynomial}.
 
  Fix an element $a\in R$. For any $n\geq 0$, we define $R$-linear morphisms
    \begin{enumerate}
        \item[(a)] $p_i:A^n\to A^{n+1}$ for any $0\leq i\leq n+1$ such that
        \begin{equation*}
            p_i(X_j) = \left\{
            \begin{array}{rcl}
                (X_{j+1}-X_1)(1+aX_1)^{-1}, & i=0 \\
                X_j, & j<i \\
                X_{j+1}, & 0<i\leq j
            \end{array}
            \right.
        \end{equation*}
         for any $1\leq j\leq n$, and
        \item[(b)] $\sigma_i: A^{n+1}\to A^n$ for any $0\leq i\leq n$ such that
        \begin{equation*}
            \sigma_i(X_j) = \left\{
            \begin{array}{rcl}
                0, & i=0  \text{ and } j=1 \\
                X_{j-1}, & i<j \text{ and } j\neq 1\\
                X_j, & j\leq i
            \end{array}
            \right.
        \end{equation*}
        for any $1\leq j\leq n+1$.
    \end{enumerate}
\end{notation}

\begin{lem}\label{Lem-CosimplicialRing}
   
    We have 
    \begin{enumerate}
        \item[(1)] $A^{\bullet}$ is a cosimplicial $R$-algebra with the $i$-th face map $p_i$ and $i$-th degeneracy map $\sigma_i$;
        \item[(2)] the kernel $J$ of $\sigma_0:A^1\to A = R$ is the ($p$-complete) $pd$-ideal generated by $X_1$ and $J/J^{[2]}\cong R$, where $J^{[2]}$ is the $p$-complete ideal generated by $\{X_1^{[i]}\mid i\geq 2\}$.
    \end{enumerate}
\end{lem}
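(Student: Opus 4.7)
\textbf{Proof proposal for Lemma \ref{Lem-CosimplicialRing}.}

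The plan is to verify both parts by direct computation on the pd-generators $X_1,\dots,X_n$. For (1), the first point to settle is that each $p_i$ and $\sigma_i$ is a well-defined $R$-algebra map: everything is determined by the images of the $X_j$'s, and the only subtle case is $p_0$, which involves $(1+aX_1)^{-1}$. This makes sense in $A^{n+1}$ because $X_1^k = k!\,X_1^{[k]}$ is $p$-adically small as $k\to\infty$, so the series $\sum_{k\ge 0}(-aX_1)^k$ converges to a genuine inverse of $1+aX_1$. Granted this, one then checks the cosimplicial identities
\[
p_j p_i = p_i p_{j-1}\ (i<j),\quad \sigma_j\sigma_i = \sigma_i\sigma_{j+1}\ (i\le j),\quad \sigma_j p_i\ \text{vs}\ p_{i}\sigma_{j-1},\ \id,\ p_{i-1}\sigma_j
\]
on the generators. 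All identities not involving $p_0$ or the special value $\sigma_0(X_1)=0$ are trivial because the maps are just index shifts. The identities involving $p_0$ reduce, after unravelling, to the single algebraic manipulation
\[
p_0 p_0 (X_j) \;=\; p_0\!\left(\tfrac{X_{j+1}-X_1}{1+aX_1}\right) \;=\; \tfrac{X_{j+2}-X_2}{1+aX_2} \;=\; p_1 p_0(X_j),
\]
together with its variants where $p_j$ or $\sigma_j$ acts after $p_0$ and merely shifts or freezes indices.

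A conceptual way to organize and motivate all these identities: the law $X\oplus Y := X+Y+aXY$ is the multiplicative-type formal group law on $\mathrm{Spf}\,R[X]^\wedge_{\mathrm{pd}}$, with inverse $\ominus X = -X/(1+aX)$, and one verifies directly that
\[
X_{j+1} \oplus (\ominus X_1) \;=\; (X_{j+1}-X_1)(1+aX_1)^{-1}.
\]
Hence $A^{\bullet}$ can be understood as the cobar/\v Cech nerve of this formal group (with $X_i$ thought of as $x_0\ominus x_i$), and the cosimplicial identities follow formally from associativity of $\oplus$ and the unit/inverse axioms. This both clarifies \emph{why} the particular formula $(X_{j+1}-X_1)(1+aX_1)^{-1}$ is the right one and organizes the case analysis into a single fact.

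For (2), $A^1 = R[X_1]^\wedge_{\mathrm{pd}}$ decomposes, as a $p$-complete $R$-module, as the $p$-completed direct sum $R\oplus \bigoplus_{i\ge 1} R\cdot X_1^{[i]}$. The augmentation $\sigma_0$ is the $R$-linear map sending $X_1\mapsto 0$, hence $X_1^{[i]}\mapsto 0$ for every $i\ge 1$, so $J=\ker(\sigma_0)$ is precisely the (closed) pd-ideal generated by $X_1$. By definition $J^{[2]}$ is the closed ideal generated by the $X_1^{[i]}$ with $i\ge 2$, and the quotient $J/J^{[2]}$ is the free rank-one $R$-module on the class of $X_1$, giving $J/J^{[2]}\cong R$.

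The main obstacle is part (1): in isolation the identity $p_0p_0 = p_1p_0$ looks like an algebraic coincidence, and a brute-force check risks being opaque. The formal-group interpretation above is the step I expect to spend the most thought on, since it reduces an otherwise combinatorial verification to a single conceptual fact and makes the extension to the mixed $\sigma_jp_0$ identities essentially automatic.
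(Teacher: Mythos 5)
Your proof is correct, and it is more explicit than the paper's, which disposes of the lemma in two sentences: item (1) is declared ``a formal generalisation'' of Proposition \ref{pd polynomial} (where the same face and degeneracy formulas were shown to arise from the \v Cech nerve of the Breuil--Kisin prism, so the cosimplicial identities hold there for free and, being polynomial identities in $a$ and the $X_i$ over $\Zp$, hold universally), and item (2) is ``easy to check.'' You instead verify the identities directly; your key computation $p_0p_0(X_j)=(X_{j+2}-X_2)(1+aX_2)^{-1}=p_1p_0(X_j)$ is right, as is the convergence of $(1+aX_1)^{-1}=\sum_k(-a)^kk!\,X_1^{[k]}$ in the $p$-complete pd-ring, and your treatment of (2) via the topological $R$-module decomposition $A^1=\widehat{\bigoplus}_{i\ge 0}R\cdot X_1^{[i]}$ is exactly what ``easy to check'' unpacks to. The formal-group-law organization $X\oplus Y=X+Y+aXY$, $\ominus X=-X(1+aX)^{-1}$, with $X_i$ playing the role of $x_0\ominus x_i$, is a genuinely different and worthwhile way to see why the identities must hold; it is not in the paper's proof, but it matches the stacky picture alluded to in Remark \ref{remstacky}, where the Hodge--Tate locus is a torsor under precisely such a group. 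The trade-off is that the paper's appeal to Proposition \ref{pd polynomial} gets the cosimplicial identities for free from the existence of the \v Cech nerve, whereas your argument is self-contained and explains the formula $(X_{j+1}-X_1)(1+aX_1)^{-1}$ rather than merely inheriting it.
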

\begin{proof}
Item (1) is just a formal generalisation of the cosimplicial algebra associated with the Breuil--Kisin prism (see Proposition \ref{pd polynomial}). Item (2) is easy to check. \end{proof}

\subsection{Computation of stratifications}\label{subsecaxiomstrat}
  Note that $A^{\bullet}$ in Notation \ref{cosimplicial ring structure} is $p$-complete. So one can define the category of $p$-complete stratifications as follows:

\begin{defn}\label{p-complete stratification-dfn}
Let $A^{\bullet}$ be in Notation \ref{cosimplicial ring structure}, a \emph{$p$-complete stratification with respect to $A^{\bullet}$} is a pair $(M,\varepsilon)$ consisting of a $p$-adically complete $A^0$-module $M$ and an $A^1$-linear isomorphism
 \[\varepsilon: M\widehat \otimes_{A^0,p_0}A^1\to M\widehat \otimes_{A^0,p_1}A^1,\]
 such that the \emph{cocycle condition} is satisfied:
 \begin{enumerate}
 \item  $p_2^*(\varepsilon)\circ p_0^*(\varepsilon) = p_1^*(\varepsilon): M\widehat \otimes_{A^0,q_2}A^2\to M\widehat \otimes_{A^0,q_0}A^2$;
\item  $\sigma_0^*(\varepsilon) = \id_M$.
 \end{enumerate}
Write $\mathrm{Strat}(A^\bullet)^{\wedge}_p$ for the category of $p$-complete stratifications with respect to $A^\bullet$.
\end{defn}
 Note that finite projective $A^0$-modules are automatically $p$-complete, so we have a natural embedding of categories $\mathrm{Strat}(A^\bullet)\subset \mathrm{Strat}(A^\bullet)^{\wedge}_p$. The purpose of this part is to give an explicit description of the stratifications $(M,\varepsilon_M)$ with respect to $A^{\bullet}$ and compute the homotopy groups of $\rC(M,\varepsilon_M)$, the cochain complex attached to $M\widehat \otimes_{A^0,q_0}A^{\bullet}$.
  
\begin{rem}
    In fact, the main scope of this paper only concerns $\mathrm{Strat}(A^\bullet)$, i.e, the finite projective objects, as in our main theorems stated in the introduction. Nonetheless,  all the computations in the following work \emph{verbatim} for general $p$-completed objects in $\mathrm{Strat}(A^\bullet)^{\wedge}_p$, and thus we include them for future references. Note this also points to the possibility to ``derive" all our results in this paper, cf. the various remarks in \S\ref{subsecliterature}.
\end{rem}

\begin{construction} \label{cons34}
    Let $(M,\varepsilon_M)$ be a stratification in $\Strat(A^{\bullet})^{\wedge}_p$. Then the difference 
    \[\varepsilon_M-\id_M: M\to M\widehat \otimes_{A^0,p_1}A^1\] is an $R$-linear map taking values in $M\widehat \otimes_{A^0,p_1}J$. Denote by $\phi_M$ the $R$-linear map induced by 
    \[M\xrightarrow{\varepsilon_M-\id_M} M\widehat \otimes_{A^0,p_1}J\to M\widehat \otimes_{A^0,p_1}J/J^{[2]} \xrightarrow{\cong} M. \]
    Equivalently, if we write 
    \begin{equation}\label{Equ-Varepsilon}
      \varepsilon_M(m) = \sum_{i\geq 0}\phi_i(m)X_1^{[i]}
    \end{equation}
    with $\phi_i\in\End_R(M)$ satisfying $\lim_{i\to+\infty}\phi_i = 0$ (and $\phi_0 = \id_M$ as $\sigma_0^*(\varepsilon_M) = \id_M$), then \[\phi_M=\phi_1.\]
\end{construction}


\begin{thm}\label{Thm-AxiomHT}
\begin{enumerate}
    \item The rule $(M,\varepsilon_M)\mapsto (M,\phi_M)$ in Construction \ref{cons34} induces an equivalence from the category $\Strat(A^{\bullet})^{\wedge}_p$ to the category of pairs $(M,\phi_M)$ consisting of a $p$-complete $R$-module $M$ and a $\phi_M\in\End_R(M)$ satisfying the condition
    \begin{equation}\label{Equ-Nilpotent}
        \lim_{n\to+\infty}\prod_{i=0}^{n-1}(\phi_M-ia) = 0.
    \end{equation}
    The quasi-inverse of the above functor is defined by sending $(M,\phi_M)$ to $(M,\varepsilon_M)$
    with
    \begin{equation}     \label{eqinverse33}
    \varepsilon_M=(1+aX_1)^{\frac{\phi_M}{a}}:=\sum_{n\geq 0}\left(\prod_{i=0}^{n-1}(\phi_M-ia)\right)X_1^{[n]},
    \end{equation} 
    where  the first summand with $n = 0$ is $\id_M$.
    Moreover,   all the   equivalences  are bi-exact, preserves ranks, tensor products and duals. 
    \item Let $(M,\phi_M)$ correspond to $(M,\varepsilon_M)$. Then there exists an $R$-linear quasi-isomorphism
    \[\rho_M:[M\xrightarrow{\phi_M}M]\xrightarrow{\simeq}\rC(M,\varepsilon_M)\]
    which is functorial in $(M,\varepsilon_M)$; here the right hand side is the cochain complex attached to $M\widehat \otimes_{A^0,q_0}A^{\bullet}$.
    \end{enumerate}
\end{thm}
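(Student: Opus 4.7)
The plan is to expand $\varepsilon_M(m) = \sum_{n\geq 0}\phi_n(m) X_1^{[n]}$ (the $p$-completeness of $M \widehat\otimes_{A^0, p_1} A^1$ forces $\phi_n \to 0$ $p$-adically), extract $\phi_M := \phi_1$, and note that $\sigma_0^*(\varepsilon_M) = \id$ gives $\phi_0 = \id$. To show that the rule $\varepsilon_M := (1 + aX_1)^{\phi_M/a}$ is an honest quasi-inverse, I would verify the cocycle using the algebraic identity
\[
1 + aX_2 \;=\; (1 + aX_1)\,(1 + a\beta) \quad \text{in } A^2, \qquad \beta := p_0(X_1) = (X_2 - X_1)(1+aX_1)^{-1};
\]
since every operator in sight is a polynomial in the single endomorphism $\phi_M$ and hence everything commutes, raising both sides to the formal ``$\phi_M/a$''-power immediately yields the cocycle identity $p_1^*(\varepsilon_M) = p_2^*(\varepsilon_M)\circ p_0^*(\varepsilon_M)$.

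\textbf{Uniqueness of $\phi_n$ and the small-endomorphism condition.} Conversely, I would show by induction on $n$ that the cocycle forces $\phi_n = (\phi_M - (n-1)a)\phi_{n-1}$, so that $\phi_n = \prod_{i=0}^{n-1}(\phi_M - ia)$. The calculation reduces to expanding
\[
\beta^{[n]} \;=\; (1+aX_1)^{-n}\sum_{i+j=n}(-1)^j X_2^{[i]} X_1^{[j]}
\]
via the pd-binomial formula, then comparing the ``linear-in-$X_2$'' part of both sides of the cocycle to extract the recursion for $\phi_n$. Hypothesis (3.2) is then equivalent to $\phi_n \to 0$ $p$-adically, i.e.\ to the well-definedness of $\varepsilon_M(m) \in M \widehat\otimes A^1$. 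Bi-exactness, rank preservation, and compatibility with $\otimes$ and duals follow formally from the functoriality of the construction (for $\otimes$, the Leibniz-type rule $\phi_{M_1\otimes M_2} = \phi_{M_1}\otimes \id + \id\otimes \phi_{M_2}$ is precisely the multiplicativity of $(1+aX_1)^{\bullet/a}$ under the operator sum).

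\textbf{Plan for part (2).} For the quasi-isomorphism $\rho_M:[M\xrightarrow{\phi_M}M]\to \rC(M,\varepsilon_M)$, I would take $\rho_M$ to be the identity in degree $0$ and the map $m \mapsto m \otimes X_1$ in degree $1$. Chain compatibility is immediate from $d^0(m) = \varepsilon_M(m) - m \otimes 1 = \phi_M(m) X_1 + (\text{terms in } X_1^{[\geq 2]})$. To prove $\rho_M$ is a quasi-isomorphism, my approach is to filter $\rC(M,\varepsilon_M)$ by the pd-filtration on $A^\bullet$ induced by the augmentation ideals $(X_1,\dots,X_n)^{[\geq k]}$: modulo this filtration, the stratification becomes infinitesimal and is encoded by the logarithmic connection $\nabla = \phi_M\otimes \tfrac{dX_1}{1+aX_1}$, whose de Rham complex on the pd-polynomial ring is, by the pd-Poincar\'e lemma, quasi-isomorphic to $[M\xrightarrow{\phi_M}M]$.

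\textbf{Main obstacle.} I expect the most delicate step to be the uniqueness argument in part (1): the pd-binomial expansion of $\beta^{[n]}$ interacts non-trivially with the inverse series $(1+aX_1)^{-n}$, so that extracting the recursion for $\phi_n$ from the cocycle requires careful bookkeeping across all pd-degrees. A conceptually cleaner route would be to translate the stratification $\varepsilon_M$ into its infinitesimal shadow, a flat connection $\nabla$ on $M$ over the pd-formal neighborhood, for which uniqueness from the leading term is transparent; the price to pay is verifying the equivalence between stratifications and integrable connections in the $p$-complete pd-setting, which should itself reduce to the pd-Poincar\'e lemma used in part (2).
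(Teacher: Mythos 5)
Your plan for part (1) is sound and is essentially the paper's argument: the identity $(1+aX_1)\bigl(1+ap_0(X_1)\bigr)=1+aX_2$ together with multiplicativity of the formal power $(1+aX)^{\phi_M/a}=\sum_{n\geq 0}\prod_{i=0}^{n-1}(\phi_M-ia)X^{[n]}$ is exactly how the paper verifies the cocycle condition for the candidate quasi-inverse (Proposition \ref{Prop-SolveCocycle}, $(2)\Rightarrow(1)$), and the recursion $\phi_{n+1}=(\phi_1-na)\circ\phi_n$ is extracted there by the same coefficient comparison you describe.

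Part (2), however, has two genuine gaps. First, $(\id,\ m\mapsto mX_1)$ is not a chain map: you need $(\text{degree-1 map})\circ\phi_M=\rd^0$, but
\[\rd^0(m)=\varepsilon_M(m)-m=\phi_M(m)X_1+(\phi_M-a)\phi_M(m)X_1^{[2]}+\cdots,\]
and the terms in $X_1^{[\geq 2]}$ do not go away. The correct degree-1 component is the full series $\rho(\phi_M,X_1)=\sum_{n\geq 1}\prod_{i=1}^{n-1}(\phi_M-ia)X_1^{[n]}$, which satisfies $\rho(\phi_M,X_1)\circ\phi_M=\varepsilon_M-\id_M$ on the nose (Construction \ref{Construction-Rho}).

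Second, and more seriously, the pd-Poincar\'e-lemma strategy proves the wrong statement. Test it on the trivial stratification $M=R$, $\phi_M=0$: the de Rham complex of $R\{X\}^{\wedge}_{\pd}$ has $\rH^0=R$ and $\rH^1=0$ by the pd-Poincar\'e lemma, whereas $[R\xrightarrow{0}R]$ has $\rH^1=R$, and indeed $\rH^1(\rC(R,\id))\cong R$ by Proposition \ref{Prop-Cohomology}(2) (this nonvanishing $\rH^1$ is precisely why Hodge--Tate cohomology of $\calO_K$ is a two-term complex). The underlying issue is that $A^{\bullet}$ is \emph{not} the pd-envelope of the diagonal of an affine pd-space (the setting where \v{C}ech--Alexander equals de Rham and the Poincar\'e lemma applies); because of the twisted face map $p_0$, it is the \v{C}ech nerve of a torsor under a formal group, i.e.\ a bar construction, whose cohomology already with trivial coefficients is $R\oplus R[-1]$. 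Any filtration argument that reduces to the Poincar\'e lemma on pd-affine space will therefore kill $\rH^1$. A conceptual substitute exists (identify $\rC(M,\varepsilon_M)$ with group cohomology of the relevant formal group, as in the stacky approach of Bhatt--Lurie), but it is not the Poincar\'e lemma; the paper instead identifies $\Ker(\rd^1)=\rho(\phi_M,X_1)M$ by direct coefficient comparison and proves $\rH^{\geq 2}=0$ by the combinatorial argument of Section \ref{higher vanishing}.
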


\begin{proof}
  We first prove Item (1) here; Item (2) will be proved in \S \ref{axiomcoho}.
   Let $(M,\varepsilon_M=\sum_{n\geq 0}\phi_nX_1^{[n]})$ be a stratification in $\Strat(A^{\bullet})^{\wedge}_p$. As $\sigma_0^*(\varepsilon_M) = \id_M$, we have $\phi_0=\id_M$. On the other hand, we have
  \begin{equation*}
      \begin{split}
          p_2^*(\varepsilon_M)\circ p_0^*(\varepsilon_M)&=p_2^*(\varepsilon_M)(\sum_{n\geq 0}\phi_np_0(X_1^{[n]}))\\
          &=\sum_{m\geq 0,n\geq 0}\phi_m\circ\phi_np_2(X_1^{[m]})(1+aX_1)^{-n}(X_2-X_1)^{[n]}\\
          &=\sum_{l,m,n\geq 0}\phi_m\circ\phi_{l+n}(1+aX_1)^{-l-n}(-1)^lX_1^{[l]}X_1^{[m]}X_2^{[n]},
      \end{split}
  \end{equation*}
  and 
  \begin{equation*}
      p_1^*(\varepsilon_M) = \sum_{n\geq 0}\phi_np_1(X_1^{[n]}) = \sum_{n\geq 0}\phi_nX_2^{[n]}.
  \end{equation*}
  As a consequence, we conclude that $(M,\varepsilon_M)$ satisfying the cocycle condition if (and only if) $\phi_0 = \id_M$ and for any $n\geq 0$,
  \begin{equation}\label{eqnewrecu}
  \phi_n = \sum_{l,m\geq 0}\phi_m\circ\phi_{l+n}(1+aX_1)^{-l-n}(-1)^lX_1^{[l]}X_1^{[m]}.
  \end{equation} 
  We shall prove in the following Proposition \ref{Prop-SolveCocycle}, that the recurrence condition \eqref{eqnewrecu} is equivalent to the condition that for any $n\geq 1$, 
  \[ \phi_n = \prod_{i=0}^{n-1}(\phi_1-ia).\]
 Thus in particular the condition \eqref{Equ-Nilpotent} has to hold, and thus the functor as described in Item (1) is defined; the fact that Eqn. \eqref{eqinverse33} defines a quasi-inverse also follow from above discussions.  A standard argument shows that the above equivalence preserves tensor products and duals.  Bi-exactness follow obviously from constructions.
  \end{proof}

  The following key result is used in proof of  Theorem \ref{Thm-AxiomHT}(1); it gives another characterisation of   condition \eqref{eqnewrecu} in the proof.
  
  \begin{prop}\label{Prop-SolveCocycle}
  Let $M$ be a $p$-complete $R$-module.
      Let $\{\phi_n\}_{n\geq 0}\subset\End_R(M)$ with $\phi_0 = \id_M$. Then the following conditions are equivalent:
      \begin{enumerate}
          \item[(1)] For any $n\geq 0$, $\phi_n = \sum_{l,m\geq 0}\phi_m\circ\phi_{l+n}(1+aX)^{-l-n}(-1)^lX^{[l]}X^{[m]}$.

          \item[(2)] For any $n\geq 1$, $\phi_n = \prod_{i=0}^{n-1}(\phi_1-ia)$.
      \end{enumerate}
  \end{prop}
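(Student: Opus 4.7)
The plan is to prove $(1)\Leftrightarrow(2)$ by extracting the linear coefficient in $X$ from identity $(1)$ for the forward direction, and by recognizing a generating-function identity for the reverse.

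For $(1)\Rightarrow(2)$, I would view identity $(1)$ as an equality in a formal pd-power series ring in the single variable $X$ with coefficients in $\End_R(M)$. Since the left-hand side is constant in $X$, every positive-degree coefficient on the right must vanish. The constant term only recovers $\phi_0\phi_n = \phi_n$, consistent with $\phi_0 = \id_M$. The coefficient of $X$ collects exactly three contributions: $(l,m)=(0,0)$ via the linear term of the expansion $(1+aX)^{-n}$, giving $-na\phi_n$; $(l,m)=(0,1)$, giving $\phi_1\phi_n$; and $(l,m)=(1,0)$, giving $-\phi_{n+1}$. Setting their sum to zero yields the recurrence
\[ \phi_{n+1} = (\phi_1 - na)\phi_n, \]
from which $(2)$ follows by induction on $n$, starting from $\phi_0 = \id_M$.

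For $(2)\Rightarrow(1)$, I would first observe that under $(2)$ the $\phi_n$'s are polynomial expressions in $\phi_1$, hence commute pairwise. Introducing the generating series $\varepsilon(Y):=\sum_{n\geq 0}\phi_n Y^{[n]}$ and noting
\[ \phi_n Y^{[n]} = \prod_{i=0}^{n-1}(\phi_1 - ia)\cdot \frac{Y^n}{n!} = \binom{\phi_1/a}{n}(aY)^n, \]
one recognizes $\varepsilon(Y)$ as the formal binomial series $(1+aY)^{\phi_1/a}$. Multiplying $(1)$ by $X_2^{[n]}$, summing over $n$, and then applying the pd-binomial identity $(X_2-X_1)^{[k]} = \sum_{l+j=k}(-1)^l X_1^{[l]}X_2^{[j]}$ together with the axiom $\gamma_k(fy)=f^k\gamma_k(y)$ (used with $f=(1+aX_1)^{-1}$ a unit and $y=X_2-X_1$ in the pd-ideal), the identity $(1)$ becomes the two-variable identity
\[ \varepsilon(X_1)\cdot \varepsilon\bigl((X_2-X_1)(1+aX_1)^{-1}\bigr) = \varepsilon(X_2). \]
This in turn reduces to the elementary ring identity $(1+aX_1)\bigl(1+a(X_2-X_1)(1+aX_1)^{-1}\bigr) = 1+aX_2$ combined with the multiplicativity $(uv)^s = u^s v^s$ of the formal binomial expansion for commuting units $u,v$ (applied with $s=\phi_1/a$).

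The main obstacle I anticipate is organizing the pd-algebra manipulations so that the generating-function argument is rigorous rather than merely formal. In particular, one must verify convergence of the infinite sums in the $p$-complete topology, which uses the implicit requirement $\phi_n\to 0$ $p$-adically that is made explicit as condition~\eqref{Equ-Nilpotent} of Theorem~\ref{Thm-AxiomHT}, and one must carefully keep track of pd-powers of composite expressions. Once these bookkeeping points are in place, the content of the proof reduces to the binomial identities sketched above.
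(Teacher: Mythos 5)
Your proof of $(1)\Rightarrow(2)$ is exactly the paper's: extract the coefficient of $X$ (the three contributions you list are the right ones) to get the recurrence $\phi_{n+1}=(\phi_1-na)\circ\phi_n$ and iterate. For $(2)\Rightarrow(1)$ you take a genuinely different, and tidier, route. The paper first verifies the $n=0$ case by specializing the universal one-variable identity $(1+aX)^{Y/a}(1+aX)^{-Y/a}=1$ in $\bQ[[a^{\pm1},X,Y]]$ at $Y=\phi_1$, and then handles general $n$ by induction via a fairly long chain of manipulations using the recurrence. You instead treat all $n$ at once: summing $(1)$ against $X_2^{[n]}$ converts the whole family into the single two-variable identity $\varepsilon(X_1)\cdot\varepsilon\bigl((X_2-X_1)(1+aX_1)^{-1}\bigr)=\varepsilon(X_2)$ (which is nothing but the cocycle condition before it is expanded in coefficients), and this follows from $(1+aX_1)\bigl(1+a(X_2-X_1)(1+aX_1)^{-1}\bigr)=1+aX_2$ together with multiplicativity of the universal binomial series; your use of $\gamma_k(fy)=f^k\gamma_k(y)$ is the right tool for the pd-power of $(X_2-X_1)(1+aX_1)^{-1}$. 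The rigor issue you flag is handled the same way as in the paper: the identity holds with $Y$ and $a$ as formal variables and with the coefficients of the pd-monomials being the integral polynomials $\prod_i(Y-ia)$, so it specializes at $Y=\phi_1$ (and the paper performs exactly this kind of two-variable manipulation anyway in the proof of Proposition \ref{Prop-Cohomology}(2)). What your route buys is the elimination of the inductive computation. One small correction: the worry about $p$-adic convergence is not needed for this proposition — for each fixed pd-degree $k$ only the finitely many pairs with $l+m\le k$ contribute to the coefficient of $X^{[k]}$, so both sides of $(1)$ make sense coefficientwise without assuming $\phi_n\to 0$; that condition only enters in Theorem \ref{Thm-AxiomHT} itself.
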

  \begin{proof}
      $(1)\Rightarrow(2):$ Assume we have for any $n\geq 0$,
      \begin{equation}\label{Equ-Key-I}
      \phi_n = \sum_{l,m\geq 0}\phi_m\circ\phi_{l+n}(1+aX)^{-l-n}(-1)^lX^{[l]}X^{[m]}.
  \end{equation}
  Comparing the coefficients of $X$ on both sides of (\ref{Equ-Key-I}), we have
  \[\phi_1\circ\phi_n-\phi_{1+n}-an\phi_n = 0.\]
  In other words, for any $n\geq 0$, we have 
  \begin{equation}\label{Equ-Key-II}
      \phi_{n+1} = (\phi_1-an)\circ\phi_n.
  \end{equation}
  Then we obtain Item (2) by iterations.

  $(2)\Rightarrow(1):$ Note that under the assumption, all $\phi_n$'s commute with each other such that (\ref{Equ-Key-II}) holds true. We first show (\ref{Equ-Key-I}) is true for $n=0$. That is, we want to show that
  \begin{equation}\label{Equ-Key-III}
      \id_M = \sum_{l,m\geq 0}\phi_m\circ\phi_{l}(1+aX)^{-l}(-1)^lX^{[l]}X^{[m]}.
  \end{equation}
  Recall in $\bQ[[a^{\pm 1},X,Y]]$ (where we regard $a,X,Y$ as free variables), we always have 
  \[\begin{split}
      (1+aX)^{\frac{Y}{a}} & = \sum_{n\geq 0}\frac{\frac{Y}{a}(\frac{Y}{a}-1)\cdots(\frac{Y}{a}-n+1)}{n!}(aX)^n\\
      & = \sum_{n\geq 0}\prod_{i=0}^{n-1}(Y-ia)X^{[n]}.
  \end{split}\]
  Replacing $X$ by $-(1+aX)^{-1}X$ above, we see that
  \[(1+aX)^{-\frac{Y}{a}} = (1-a(1+aX)^{-1}X)^{\frac{Y}{a}} = \sum_{n\geq 0}\prod_{i=0}^{n-1}(Y-ia)(-1)^n(1+aX)^{-n}X^{[n]}.\]
  As a consequence, we always have 
  \[1 = \sum_{l,m\geq 0}\left(\prod_{i=0}^{m-1}(Y-ia)\right)\left(\prod_{j=0}^{l-1}(Y-ja)\right)(-1)^l(1+aX)^{-l}X^{[l]}X^{[m]}.\]
  Now, by letting $Y = \phi_1$ above and comparing the coefficients of $X^{[n]}$'s, we see that (\ref{Equ-Key-III}) holds true.

  Now, assume we have proven (\ref{Equ-Key-I}) for some $n\geq 0$. Then we have
    \begin{equation*}
      \begin{split}
          \phi_{n+1} 
          = & (\phi_1-an)\circ\phi_n\\
          = & \sum_{l,m\geq 0}\phi_m\circ(\phi_1\circ\phi_{l+n}-an\phi_{l+n})(1+aX)^{-l-n}(-1)^lX^{[l]}X^{[m]}\\
          = & \sum_{l,m\geq 0}\phi_m\circ(\phi_{1+l+n}+al\phi_{l+n})(1+aX)^{-l-n}(-1)^lX^{[l]}X^{[m]}\\
          =& \sum_{l,m\geq 0}\phi_m\circ\phi_{1+l+n}(1+aX)^{-l-n}(-1)^lX^{[l]}X^{[m]}+ \sum_{m\geq 0,l\geq 1}\phi_m\circ\phi_{l+n}aX(1+aX)^{-l-n}(-1)^lX^{[l-1]}X^{[m]}\\
          =& \sum_{l,m\geq 0}\phi_m\circ\phi_{1+l+n}(1+aX)^{-l-n}(-1)^lX^{[l]}X^{[m]}+ \sum_{l,m\geq 0}\phi_m\circ\phi_{l+1+n}aX(1+aX)^{-1-l-n}(-1)^{l+1}X^{[l]}X^{[m]}\\
          =&(1+aX-aX)\sum_{l,m\geq 0}\phi_m\circ\phi_{1+l+n}(1+aX)^{-l-n-1}(-1)^lX^{[l]}X^{[m]}\\
          =&\sum_{l,m\geq 0}\phi_m\circ\phi_{1+l+n}(1+aX)^{-l-n-1}(-1)^lX^{[l]}X^{[m]}.
      \end{split}
  \end{equation*}
  Now, we can conclude Item (1) by induction on $n$.
  \end{proof}

  \subsection{Computation of cohomology}\label{axiomcoho}
This subsection is devoted to the proof of Theorem \ref{Thm-AxiomHT}(2). We first construct the morphism $\rho_M:[M\xrightarrow{\phi_M}M]\to\rC(M,\varepsilon_M)$ as mentioned in Theorem \ref{Thm-AxiomHT}(2). For simplicity, we put 
  \[\rC^n:=M\widehat \otimes_{R,q_0}A^n = M\widehat \otimes_{R,q_0}R\{X_1,\dots,X_n\}^{\wedge} = M\{X_1,\dots,X_n\}^{\wedge},\]
  and denote the $i$-th face map $p_{i,M}$ by $p_i$.
  By Notation \ref{cosimplicial ring structure}(a), for any $mX_1^{[i_1]}\cdots X_r^{[i_n]}\in \rC^n$, we have 
  \begin{equation}\label{Equ-FaceMod}
      p_j(mX_1^{[i_1]}\cdots X_r^{[i_n]}) = \left\{
      \begin{array}{cc}
          \varepsilon_M(m)(1+aX_1)^{-i_1-\cdots-i_n}(X_2-X_1)^{[i_1]}\cdots(X_{n+1}-X_1)^{[i_n]}, & j=0\\
          mX_1^{[i_1]}\dots X_{j-1}^{[i_{j-1}]}X_{j+1}^{[i_{j+1}]}\cdots X_{n+1}^{i_{n+1}}, & j\geq 1.
      \end{array}
      \right.
  \end{equation}
  The differential $\rd^n:\rC^n\to\rC^{n+1}$ is then induced by $\rd^n = \sum_{i=0}^{n+1}(-1)^ip_i$. In particular, we see that for any $m\in M$,
  \begin{equation}\label{Equ-d0}
      \rd^0(m) = \varepsilon_M(m)-m = \sum_{n\geq 1}\prod_{i=0}^{n-1}(\phi_M-ia)(m)X_1^{[n]} = \sum_{n\geq 1}\prod_{i=1}^{n-1}(\phi_M-ia)(\phi_M(m))X_1^{[n]}.
  \end{equation}
  
  Now, we are able to construct the morphism $\rho_M$ in Theorem \ref{Thm-AxiomHT}.
  
  \begin{construction}\label{Construction-Rho}
      Define a formal series \[\rho(Y,X):=\frac{(1+aX)^{\frac{Y}{a}}-1}{Y} = \sum_{n\geq 1} \left(\prod_{i=1}^{n-1}(Y-ia)\right) X^{[n]}.\]
      Then we have
      \[\rho(\phi_M,X_1)\circ\phi_M = \varepsilon_M-\id_M=\rd^0.\]
      Equivalently, the following diagram
      \[\begin{tikzcd}
M \arrow[d, "="] \arrow[rr, "\phi_M"] &  & M \arrow[d, "{\rho(\phi_M,X_1)}"] \\
 \rC^0 \arrow[rr, "\rd^0"]            &  & \rC^1                            
\end{tikzcd}\]
      commutes and induces a morphism of complexes of $R$-modules 
      \[\rho_M:[M\xrightarrow{\phi_M}M]\to \rC(M,\varepsilon_M).\]
  \end{construction}
  
  \begin{proof}[Proof of Theorem \ref{Thm-AxiomHT}(2)]
  Using the construction of $\rho_M$, Theorem \ref{Thm-AxiomHT}(2) then follows from Proposition \ref{Prop-Cohomology} below. 
  \end{proof}
  \begin{prop}\label{Prop-Cohomology}
      \begin{enumerate}
          \item[(1)] $\Ker(\phi_M)=\Ker(\rd^0)$.

          \item[(2)] We have $\Ker(\rd^1) = \rho(\phi_M,X_1)M$. As a result, $\rho_M$ induces an isomorphism 
          \[M/\phi_M(M)\cong\rH^1(\rC(M,\varepsilon_M)) = \rho(\phi_M,X_1)(M)/\rho(\phi_M,X_1)(\phi_M(M)).\]

          \item[(3)] For any $i\geq 2$, $\rH^i(\rC(M,\varepsilon_M)) = 0$.
      \end{enumerate}
  \end{prop}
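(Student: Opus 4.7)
For part (1), I would simply extract the $X_1$-coefficient of equation (3.5): since the empty product $\prod_{i=1}^{0}(\phi_M - ia)$ equals $\mathrm{id}$, the coefficient of $X_1^{[1]}$ in $d^0(m)$ is exactly $\phi_M(m)$. Hence $d^0(m)=0$ forces $\phi_M(m)=0$, and the converse is obvious from the formula.

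For part (2), the inclusion $\rho(\phi_M, X_1)(M) \subseteq \Ker(d^1)$ follows from a formal computation using the cocycle identity proved in Theorem \ref{Thm-AxiomHT}. Writing $\rho(\phi_M, X_1)(m)$ formally as $\phi_M^{-1}(\varepsilon_M(m) - m)$ and using the identity $p_0^*(\varepsilon_M) = ((1+aX_2)/(1+aX_1))^{\phi_M/a}$, one obtains $p_0(\rho(\phi_M, X_1)(m)) = \phi_M^{-1}((1+aX_2)^{\phi_M/a}(m) - (1+aX_1)^{\phi_M/a}(m))$, so that $d^1(\rho(\phi_M, X_1)(m)) = p_0 - p_1 + p_2$ telescopes to $0$ and the apparent $\phi_M^{-1}$ cancels term-by-term. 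For the reverse inclusion, given $\eta = \sum_{i \geq 0} m_i X_1^{[i]} \in \Ker(d^1)$, I would first extract the constant coefficient of $d^1(\eta)$, which equals $m_0$, forcing $m_0 = 0$. I would then set $\eta' := \eta - \rho(\phi_M, X_1)(m_1) \in \Ker(d^1)$---whose $X_1^{[0]}$ and $X_1^{[1]}$ coefficients both vanish---and induct on $k$: once $m'_1 = \cdots = m'_{k-1} = 0$ for some $k \geq 2$, only the $i = k$ summand of $p_0(\eta')$ contributes to the $X_1^{[k-1]} X_2^{[1]}$-coefficient of $d^1(\eta')$, yielding $(-1)^{k-1} m'_k = 0$, hence $m'_k = 0$. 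This forces $\eta = \rho(\phi_M, X_1)(m_1)$. The isomorphism $M/\phi_M(M) \cong \rH^1(\rC(M, \varepsilon_M))$ is then immediate: $\rho(\phi_M, X_1)$ is injective since its $X_1$-coefficient is the identity on $M$, and $\Ima(d^0) = \rho(\phi_M, X_1)(\phi_M(M))$ by construction of $\rho_M$.

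Part (3) is the main obstacle: I would prove it by constructing an explicit contracting homotopy $h^n: C^n \to C^{n-1}$ for $n \geq 2$ satisfying $d^{n-1} h^n + h^{n+1} d^n = \mathrm{id}_{C^n}$. The guiding heuristic is the formal change of variables $Y_i := a^{-1}\log(1+aX_i)$, under which $p_0$ becomes purely additive, $p_0(Y_j) = Y_{j+1} - Y_1$, and the stratification simplifies to $\varepsilon_M = e^{\phi_M Y_1}$. In these new coordinates, $\rC(M, \varepsilon_M)$ becomes a twisted version of the cobar complex of a one-dimensional formal additive group, which admits an explicit contracting homotopy in degrees $\geq 2$ via the standard ``evaluation at $Y_n = 0$'' combined with an alternating-sign construction. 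To stay rigorous within the $p$-complete pd-setting and avoid the formal logarithm, I would translate this homotopy back to the $X_i$-coordinates: writing $\eta \in C^n$ in the pd-basis $\{X_1^{[j_1]}\cdots X_n^{[j_n]}\}$, define $h^n(\eta)$ by an explicit combinatorial formula that strips off the $X_n$-dependence, with pd-binomial and $(1+aX_i)$-weighting corrections accounting for the non-additivity of $p_0$ in the original coordinates. The verification of the homotopy identity then reduces to pd-binomial identities together with the cocycle property of $\varepsilon_M$, and works verbatim for all $p$-complete objects of $\Strat(A^\bullet)^{\wedge}_p$.
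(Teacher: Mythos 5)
Your parts (1) and (2) are correct and essentially follow the paper's own route. For (1) the paper likewise just reads off the $X_1^{[1]}$-coefficient of $\rd^0(m)=\varepsilon_M(m)-m$. For (2), your induction on the coefficients of $\eta'=\eta-\rho(\phi_M,X_1)(m_1)$ is a harmless repackaging of the paper's recurrence $m_{1+j}=(\phi_M-aj)(m_j)$, obtained there by comparing coefficients of $X_1X_2^{[j]}$; and your telescoping computation for $\rho(\phi_M,X_1)M\subset\Ker(\rd^1)$ is the paper's identity $(1+aX_1)^{Y/a}\rho(Y,(1+aX_1)^{-1}(X_2-X_1))=\rho(Y,X_2)-\rho(Y,X_1)$ specialized at $Y=\phi_M$. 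Just do not literally write $\phi_M^{-1}$: $\phi_M$ is typically non-injective, and the division by $Y$ must stay inside the formal series $\rho(Y,X)=\sum_{n\geq 1}\prod_{i=1}^{n-1}(Y-ia)\,X^{[n]}$, which contains no negative powers of $Y$.

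Part (3) is where the genuine gap lies. The entire content of the statement is delegated to ``an explicit combinatorial formula that strips off the $X_n$-dependence, with pd-binomial and $(1+aX_i)$-weighting corrections,'' which is never written down, and to a verification asserted to ``reduce to pd-binomial identities'' that is not carried out. Two concrete obstructions make this more than a routine omission. First, the guiding substitution $Y_i=a^{-1}\log(1+aX_i)$ is unavailable in the integral $p$-complete pd-setting: $a$ is not a unit of $R$ in the cases of interest (e.g.\ $a=-E'(\pi)$ or $-\pi E'(\pi)$ in $\calO_K$), and the logarithm introduces denominators; so the heuristic produces no formula until the ``translation back'' is done, and that translation \emph{is} the difficulty. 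Second, a homotopy with $\rd^{n-1}h^n+h^{n+1}\rd^n=\id$ cannot exist on the whole complex (since $\rH^0$ and $\rH^1$ are nonzero in general), so you need one valid only in degrees $\geq 2$ and compatible with the twisted face map $p_0$; the obvious ``evaluate at $X_n=0$'' operators are the degeneracies $\sigma_i$, and the standard extra-degeneracy contraction does not apply. The paper's actual proof (Section \ref{higher vanishing}) takes a different, fully explicit route: it shows that the coefficients of any cocycle $g=\sum_K n_K\underline{X}^{[K]}\in\Ker(\rd^{s+1})$ are determined by those indexed by an explicit small subset $\Lambda^{s+1}\subset\bN^{s+1}$, and then builds a preimage $f$ with $\rd^s(f)=g$ coefficient by coefficient. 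Your homotopy plan is a reasonable alternative in spirit (it is essentially the Drinfeld/Bhatt--Lurie computation of coherent cohomology of the classifying stack of a one-dimensional formal group), but as written it proves nothing in degrees $\geq 2$.
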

  \begin{proof} 
  For Item (1):  The commutative diagram implies $\Ker(\phi_M)\subset\Ker(\rd^0)$.
      On the other hand, suppose $\rd^0(m) = 0$, then the coefficient of $X_1$ in $\epsilon_M(m)$, namely, $\phi_M(m)$ vanishes. So we have $\Ker(\rd^0)\subset\Ker(\phi_M)$ and we can conclude.

      For Item (2): Assume $\sum_{i\geq 0}m_iX_1^{[i]}\in\Ker(\rd^1)$. By (\ref{Equ-FaceMod}), we have
      \[
      \begin{split}\rd^1(\sum_{i\geq 0}m_iX_1^{[i]}) =& \sum_{i\geq 0}(\varepsilon_M(m_i)(1+aX_1)^{-i}(X_2-X_1)^{[i]}-m_iX_2^{[i]}+m_iX_1^{[i]})\\
      &=\sum_{i,j\geq 0}\varepsilon_M(m_{i+j})(1+aX_1)^{-i-j}(-1)^iX_1^{[i]}X_2^{[j]}-\sum_{j\geq 0}m_jX_2^{[j]}+\sum_{i\geq 0}m_iX_1^{[i]}\\
      &=\sum_{i,j,k\geq 0}\prod_{l=0}^{k-1}(\phi_M-la)(m_{i+j})X_1^{[k]}(1+aX_1)^{-i-j}(-1)^iX_1^{[i]}X_2^{[j]}-\sum_{j\geq 0}m_jX_2^{[j]}+\sum_{i\geq 0}m_iX_1^{[i]}\\
      &=0.
      \end{split}\]
      By letting $X_1=X_2=0$, we see that \[m_0 = 0.\]
      On the other hand, for any $j\geq 1$, comparing the coefficients of $X_2^{[j]}$, we have
      \[m_j = \sum_{i\geq 0}\varepsilon_M(m_{i+j})(1+aX_1)^{-i-j}(-1)^iX_1^{[i]}=\sum_{i,k\geq 0}\prod_{l=0}^{k-1}(\phi_M-la)(m_{i+j})X_1^{[k]}(1+aX_1)^{-i-j}(-1)^iX_1^{[i]}.\]
      Comparing the coefficients of $X_1$, we see that for any $j\geq 1$,
      \[\phi_M(m_j)-m_{1+j}-jam_j = 0\]
      and hence \[
         m_{1+j} = (\phi_M-aj)(m_j).      \]
        By iteration, we conclude that 
         \begin{equation} \label{eq1stdetall}
         m_j = \prod_{i=1}^{j-1}(\phi_M-ia)(m_1).
         \end{equation} Therefore, we have
      \[\sum_{i\geq 0}m_iX_1^{[i]} = m_0+\rho(\phi_M,X_1)(m_1) =0+\rho(\phi_M,X_1)(m_1)= \rho(\phi_M,X_1)(m_1).\]
      Therefore, we have $\Ker(\rd^1)\subset\rho(\phi_M,X_1)M$. 

      It remains to prove $\rho(\phi_M,X_1)M\subset\Ker(\rd^1)$. 
      By the definition of $\rho$, we see that in $\bQ[[a^{\pm 1},X_1,X_2,Y]]$,
      \begin{equation*}
          \begin{split}
              (1+aX_1)^{\frac{Y}{a}}\rho(Y,(1+aX_1)^{-1}(X_2-X_1))
              =&(1+aX_1)^{\frac{Y}{a}}\frac{(1+a(1+aX_1)^{-1}(X_2-X_1))^{\frac{Y}{a}}-1}{Y}\\
              =&(1+aX_1)^{\frac{Y}{a}}\frac{((1+aX_2)(1+aX_1)^{-1})^{\frac{Y}{a}}-1}{Y}\\
              =&\frac{(1+aX_2)^{\frac{Y}{a}}-(1+aX_1)^{\frac{Y}{a}}}{Y}\\
              =&\rho(Y,X_2)-\rho(Y,X_1).
          \end{split}
      \end{equation*}
      By letting $Y = \phi_M$, we then have
      \[\rd^1\circ\rho(\phi_M,X_1)=\varepsilon_M\circ\rho(\phi_M,(1+aX_1)^{-1}(X_2-X_1))-\rho(\phi_M,X_2)+\rho(\phi_M,X_1)=0,\]
      which implies that $\rho(\phi_M,X_1)M\subset\Ker(\rd^1)$ as desired.

      Writing $\rho(\phi_M,X_1)$ as ${\rm id}\cdot X_1+\sum_{n\geq 2} \left(\prod_{i=1}^{n-1}(\phi_M-ia)\right) X_1^{[n]}$, we see that $\rho(\phi_M,X_1)$ is injective, from which we conclude
      \[M/\phi_M(M)\cong\rH^1(\rC(M,\varepsilon_M)) = \rho(\phi_M,X_1)(M)/\rho(\phi_M,X_1)(\phi_M(M)).\]

For Item (3), we need to prove $\Ker(\rd^{s+1})\subset\Ima(\rd^s)$. 
The proof is combinatorial,  and is a bit long; we leave it to Section \ref{higher vanishing}. The main idea is to show that for an element in the kernel \[ g=\sum_{I\in\bN^{s+1}}m_I\underline X^{[I]}\in\Ker(\rd^{s+1}),\] 
the coefficients  $m_I$ (for \emph{all} the indices $I$) are actually determined by those $m_J$ with $J$ in an explicit (and much smaller) subset of indices $\Lambda^{s+1} \subset \bN^{s+1}$. See also Construction \ref{constr3stepssec9} for a slightly expanded explanation of strategy.
  \end{proof}




Finally, we point out that Theorem \ref{Thm-AxiomHT} also holds true after inverting $p$. 
  \begin{cor}\label{rationalstrat}
    \begin{enumerate}
    \item The rule $(M,\varepsilon_M)\mapsto (M,\phi_M)$ induces an equivalence from the category $\Strat(A^{\bullet}[\frac{1}{p}])$ to the category of pairs $(M,\phi_M)$ consisting of a finite projective $R[\frac{1}{p}]$-module $M$ and a $\phi_M\in\End_R(M)$ satisfying the condition
    \[
        \lim_{n\to+\infty}\prod_{i=0}^{n-1}(\phi_M-ia) = 0.
    \]
    The quasi-inverse of the above functor is induced by sending $(M,\phi_M)$ to $(M,\varepsilon_M)$
    with
    \[\varepsilon_M=(1+aX_1)^{\frac{\phi_M}{a}}:=\sum_{n\geq 0}(\prod_{i=0}^{n-1}(\phi_M-ia))X_1^{[n]}.\]
    Moreover, the equivalence is bi-exact, and preserves ranks, tensor products and duals, 
    \item Let $(M,\phi_M)$ correspond to $(M,\varepsilon_M)$. Then there exists an $R[1/p]$-linear quasi-isomorphism
    \[\rho_M:[M\xrightarrow{\phi_M}M]\xrightarrow{\simeq}\rC(M,\varepsilon_M)\]
    which is functorial in $(M,\varepsilon_M)$.
    \end{enumerate}
  \end{cor}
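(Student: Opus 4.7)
The strategy is to adapt the proof of Theorem \ref{Thm-AxiomHT} essentially verbatim, with appropriate handling of the rational topology. The key observation is that the algebraic identities used in Proposition \ref{Prop-SolveCocycle}, Construction \ref{Construction-Rho}, and Proposition \ref{Prop-Cohomology} are formal power series manipulations in $\bQ[[a^{\pm 1}, X, Y]]$, so they remain valid over $R[\frac{1}{p}]$. What requires care is only the interpretation of the convergence conditions.

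To set this up, given a finite projective $R[\frac{1}{p}]$-module $M$, I would fix any finitely generated $R$-submodule $M_0 \subset M$ with $M_0[\frac{1}{p}] = M$, and let $\widehat{M_0}$ denote its $p$-adic completion, so that $\widehat{M_0}[\frac{1}{p}] = M$. Then $M \otimes_{R[\frac{1}{p}]} A^n[\frac{1}{p}] \cong (\widehat{M_0} \widehat\otimes_R A^n)[\frac{1}{p}]$; elements admit unique expansions $\sum_I m_I \underline X^{[I]}$ with $m_I \in M$ such that, for some uniform $N$, all $p^N m_I$ lie in $\widehat{M_0}$ and $p^N m_I \to 0$ $p$-adically. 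The $p$-adic topology on $\End_{R[\frac{1}{p}]}(M) = \End_R(\widehat{M_0})[\frac{1}{p}]$ is defined analogously and is independent, up to commensurability, of the choice of lattice. This interpretation is what gives precise meaning to the convergence condition $\lim_{n\to+\infty}\prod_{i=0}^{n-1}(\phi_M - ia) = 0$ in the rational setting.

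With this in place, given $(M, \varepsilon_M) \in \Strat(A^\bullet[\frac{1}{p}])$, I expand $\varepsilon_M(m) = \sum_i \phi_i(m) X_1^{[i]}$ with $\phi_i \in \End_{R[\frac{1}{p}]}(M)$; convergence in $M \otimes A^1[\frac{1}{p}]$ translates to uniform boundedness of the $\phi_i$ on a lattice together with $\phi_i \to 0$. The normalization $\sigma_0^*(\varepsilon_M) = \id_M$ and the cocycle condition yield exactly the same recurrence as in the integral case, so Proposition \ref{Prop-SolveCocycle} forces $\phi_n = \prod_{i=0}^{n-1}(\phi_M - ia)$ with $\phi_M := \phi_1$, and convergence of the $\phi_n$ is precisely the smallness condition. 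Conversely, given $(M, \phi_M)$ satisfying smallness, the series $(1+aX_1)^{\phi_M/a}$ converges in $M \otimes A^1[\frac{1}{p}]$ and satisfies the cocycle condition by the same formal identity. For the cohomology comparison, the construction of $\rho_M$ and the arguments in Proposition \ref{Prop-Cohomology} (including the combinatorial vanishing of Section \ref{higher vanishing}) transfer without change, since all the relevant identities are algebraic in character. The main obstacle, such as it is, lies in the bookkeeping of the rational topology; once a lattice is fixed, everything reduces to the same manipulations as in the integral case, and bi-exactness together with compatibility with ranks, tensor products, and duals are immediate from the explicit form of the correspondence.
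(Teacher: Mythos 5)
Your proposal is correct and follows essentially the same route as the paper: the paper's own proof of this corollary is the one-line observation that all constructions and calculations in the proof of Theorem \ref{Thm-AxiomHT} remain valid after inverting $p$. Your additional bookkeeping with a choice of lattice $M_0\subset M$ to interpret the convergence conditions is a reasonable way to make that one-liner precise, but it is the same argument in substance.
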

  \begin{proof}
      All constructions and calculations for the proof of Theorem \ref{Thm-AxiomHT} still hold true after inverting $p$.  
  \end{proof}

\section{Hodge--Tate crystals and Sen modules}\label{sectionHT}
In this section, we will give a linear-algebraic classification of Hodge--Tate crystals and discuss their cohomology using results in Section \ref{Strat}.

\subsection{Classification of Hodge--Tate crystals}

\begin{defn}
 Let $*\in\{\emptyset,\log\}$. 
Let \begin{equation*}
a=
\begin{cases}
  -E'(\pi), &  \text{if } \ast=\emptyset \\
 -\pi E'(\pi), &  \text{if } \ast=\log
\end{cases}
\end{equation*}  
Let  $\End_{\calO_K}^{\ast-\nht}$ (resp. $\End_{K}^{\ast-\nht}$)  be the category consisting of pairs $(M,\phi_M)$ which we call a \emph{module equipped with $a$-small endomorphism}, where
    \begin{enumerate}
        \item[(1)] $M$ is a finite free $\calO_K$-module (resp. $K$-vector space), and
        \item[(2)] $\phi_M$ is an $\calO_K$-linear (resp. $K$-linear) endomorphism of $M$ such that 
        \begin{equation}
            \lim_{n\to+\infty}\prod_{i=0}^{n-1}(\phi_M-ai) = 0.
        \end{equation} 
            \end{enumerate}
\end{defn}


We have the following theorem.
\begin{thm}\label{Thm-HTCrystal}
There are two commutative diagrams of functors
\begin{equation}\label{Diag-HTCrystals}
   \begin{tikzcd}
{\Vect((\calO_K)_{\Prism},\overline \calO_{\Prism})} \arrow[d, "\simeq"] \arrow[rr, hook] &  & {\Vect((\calO_K)_{\Prism,\log},\overline \calO_{\Prism})} \arrow[d, "\simeq"] \\
\End_{\ok}^{\nht}  \arrow[rr, "{}", hook]       &  & \End_{\calO_K}^{\log-\nht}                                                   
\end{tikzcd}
   \end{equation}
   and
   \[
   \begin{tikzcd}
{\Vect((\calO_K)_{\Prism},\overline \calO_{\Prism}[1/p])} \arrow[d, "\simeq"] \arrow[rr, hook] &  & {\Vect((\ok)_{\Prism,\log},\overline \calO_{\Prism}[1/p])} \arrow[d, "\simeq"] \\
\End_{K}^{\nht}  \arrow[rr, "{}", hook]       &  & \End_{K}^{\log-\nht}                                                   
\end{tikzcd}
\]
Here both the top horizontal rows are induced by the  forgetful functor $(\calO_K)_{\Prism,\log}\to(\calO_K)_{\Prism}$; both the bottom horizonal rows are induced by the assignment
   \[(M,\phi_M)\mapsto(M,\pi\phi_M).\]
   The vertical equivalences are induced by evaluation at $(\frakS,(E),*)$.
     Moreover, all the  vertical equivalences  are bi-exact, preserve  ranks, tensor products and duals. 
\end{thm}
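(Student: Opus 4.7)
The plan is to assemble three ingredients that are already in place: Proposition~\ref{Prop-dRStratification}, which converts crystals into stratifications; Proposition~\ref{pd polynomial}, which identifies the reduced Breuil--Kisin cosimplicial ring $\gs_*^\bullet/E$ with the axiomatic cosimplicial ring of Notation~\ref{cosimplicial ring structure}; and Theorem~\ref{Thm-AxiomHT}, which classifies stratifications over such a cosimplicial ring by pairs $(M,\phi_M)$ satisfying condition~\eqref{Equ-Nilpotent}. The same three ingredients, with Theorem~\ref{Thm-AxiomHT} replaced by Corollary~\ref{rationalstrat}, will handle the rational diagram.

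First I would fix $* \in \{\emptyset, \log\}$ and specialize Notation~\ref{cosimplicial ring structure} to $R=\calO_K$ with the parameter
\[
a = \begin{cases} -E'(\pi), & *=\emptyset,\\ -\pi E'(\pi), & *=\log,\end{cases}
\]
so that the axiomatic $A^\bullet$ agrees with $\gs_*^\bullet/E$ by Proposition~\ref{pd polynomial}. Composing Propositions~\ref{Prop-dRStratification} and~\ref{pd polynomial} with Theorem~\ref{Thm-AxiomHT}(1) then yields the vertical equivalences in both the integral and rational cases; bi-exactness, preservation of rank, tensor products and duals are immediate, coming from the corresponding assertions in Proposition~\ref{Prop-dRStratification} and Theorem~\ref{Thm-AxiomHT}(1) (respectively Corollary~\ref{rationalstrat}).

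The real bookkeeping lies in the commutativity of the two squares. The top horizontal arrow is induced by the forgetful morphism of sites $(\calO_K)_{\Prism,\log} \to (\calO_K)_{\Prism}$, which on crystals produces the pullback via the cosimplicial map $\gs^\bullet/E \to \gs^\bullet_{\log}/E$; by the ``moreover'' clause of Proposition~\ref{pd polynomial} this map carries $X_i \mapsto \pi X_i$. Using the explicit inversion formula~\eqref{eqinverse33}, a prismatic stratification $\varepsilon_M = (1 + aX_1)^{\phi_M/a}$ pulls back to the log side as $(1 + a\pi X_1)^{\phi_M/a} = (1 + a' X_1)^{(\pi\phi_M)/a'}$, where $a' = \pi a$. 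By Theorem~\ref{Thm-AxiomHT}(1) applied on the log side, this corresponds to the pair $(M, \pi\phi_M)$, which is precisely the image under the bottom horizontal arrow. One should separately check that the $a$-smallness condition for $(M, \phi_M)$ implies the $a'$-smallness condition for $(M, \pi\phi_M)$: this is automatic because $\prod_{i=0}^{n-1}(\pi\phi_M - a' i) = \pi^n \prod_{i=0}^{n-1}(\phi_M - a i)$, and the extra $\pi^n$ factor can only help convergence.

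I do not expect a substantive obstacle here. All genuinely novel content---the axiomatic classification of stratifications, the explicit form of $\gs_*^\bullet/E$ with its face maps, and the stratification--crystal equivalence---has already been done in Sections~\ref{secbkstrat} and~\ref{Strat}. The proof is therefore primarily a matter of gluing these pieces together and tracking the factor of $\pi$ through the relevant functors, which is why the rescaling $\phi_M \mapsto \pi\phi_M$ is exactly the right bottom horizontal arrow to make the diagrams commute.
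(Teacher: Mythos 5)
Your proposal is correct and follows essentially the same route as the paper: vertical equivalences from Proposition~\ref{Prop-dRStratification} plus Theorem~\ref{Thm-AxiomHT}(1) (and Corollary~\ref{rationalstrat} rationally), and commutativity by tracking the stratification $(1+aX_1)^{\phi_M/a}$ through the base change $X_i\mapsto \pi X_i$ of Proposition~\ref{pd polynomial} and rewriting it as $(1+a'X_1)^{\pi\phi_M/a'}$ with $a'=\pi a$. Your explicit check that the extra factor $\pi^n$ preserves the smallness condition is a harmless addition the paper leaves implicit.
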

\begin{proof}
    The vertical equivalences follow from Proposition \ref{Prop-dRStratification} and Theorem \ref{Thm-AxiomHT}(1). It suffices to check   commutativity of the diagrams.
 For any $\bM\in\Vect((\calO_K)_{\Prism,*},\overline \calO_{\Prism})$ with induced pair $(M,\phi_M)\in\End_{\calO_K}^{\ast-\nht}$, the stratification $(M,\varepsilon)$ with respect to $\frakS^{\bullet}_*/(E)\cong\calO_K\{X_1,\dots,X_{\bullet}\}^{\wedge}_{\pd}$ (cf. Proposition \ref{pd polynomial}) is given by the formula in Theorem \ref{Thm-AxiomHT}(1):
    \[\varepsilon_M=(1+aX_1)^{\frac{\phi_M}{a}} = \ \sum_{n\geq 0}\left(\prod_{i=0}^{n-1}(\phi_M-ia)\right) X_1^{[n]}\] 
Let $\bM\in\Vect((\calO_K)_{\Prism},\overline \calO_{\Prism})$ (on the prismatic site) with induced pair $(M,\phi_M)\in\End_{\calO_K}^{\nht}$. 
Now, consider $\bM$ as an object  in $\Vect((\calO_K)_{\Prism,\log},\overline \calO_{\Prism})$; its induced stratification with respect to $\frakS^{\bullet}_{\log}/(E)$ is given by base change along the map  \eqref{eqmoreover211} in Proposition \ref{pd polynomial},  which is
\[\varepsilon_M= \sum_{n\geq 0}\prod_{i=0}^{n-1}(\phi_M+iE'(\pi))(\pi X_1)^{[n]} = \sum_{n\geq 0}\prod_{i=0}^{n-1}(\pi\phi_M+i\pi E'(\pi))X_1^{[n]}\]
Thus its induced pair in $\End_{\calO_K}^{\log-\nht}$ is $(M,\pi\phi_M)$. This implies the commutativity of (\ref{Diag-HTCrystals}).  
\end{proof}

\begin{remark}
It is obvious that $\End_{K}^{\ast-\nht}$ is the isogeny category of $\End_{\calO_K}^{\ast-\nht}$. However, \emph{a priori}, it is not clear if $\Vect((\ok)_{\Prism,\ast},\overline \calO_{\Prism}[1/p])$ should be the isogeny category of $\Vect((\ok)_{\Prism,\ast},\overline \calO_{\Prism})$.
\end{remark}

\subsection{Cohomology of Hodge--Tate crystals: comparison with Sen complex}
Let $\bM$ be a Hodge--Tate crystal. In this subsection, we  investigate its cohomology via the corresponding pair $(M,\phi_M)$ given by Theorem \ref{Thm-HTCrystal}.

\begin{defn}
Let $\bm \in \vect(\okprisast, \baropris)$ resp. $\vect(\okprisast, \baropris[1/p])$. Let $(M,\phi_M)$ be the associated object in  $\End_{\calO_K}^{\ast-\nht}$ resp. $\End_{K}^{\ast-\nht}$.
\begin{enumerate}
\item Call $M$ (or, abusively, the pair $(M,\phi_M)$) the \emph{Sen module} associated to $\bm$ (with respect to $(\gs, (E), \ast)$, or equivalently, with respect to $\pi$).
\item Call the complex  
\[ [M\xrightarrow{\phi_M}M] \]
as the \emph{Sen complex}.
\end{enumerate}
The choice of terminology here comes from their  close relationship with Sen theory, cf. \S \ref{subsecHTsen}.
\end{defn}

 The following is the main theorem in this subsection.

 \begin{theorem}\label{HTcohom}
 Let $\bM$ be an object in $\Vect((\calO_K)_{\Prism,*},\overline \calO_{\Prism})$ resp.  $\Vect((\calO_K)_{\Prism,*},\overline \calO_{\Prism}[1/p])$. Let $(M,\phi_M)$ be the corresponding object in $\End_{\calO_K}^{\ast-\nht}$ resp.  $\End_{K}^{\ast-\nht}$.
 There is a natural quasi-isomorphism
\[[M\xrightarrow{\phi_M}M]\simeq \rR\Gamma((\calO_K)_{\Prism,*},\bM).\]
 \end{theorem}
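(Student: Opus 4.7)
My plan is to combine a Čech--Alexander-type computation of $\rR\Gamma((\calO_K)_{\Prism,\ast},\bM)$ with the axiomatic quasi-isomorphism already established in Theorem \ref{Thm-AxiomHT}(2) (and Corollary \ref{rationalstrat}(2) after inverting $p$). The bridge between the two sides is that, by Proposition \ref{pd polynomial}, the cosimplicial ring $\frakS^\bullet_\ast/(E)$ coincides with the universal cosimplicial ring $A^\bullet$ of Notation \ref{cosimplicial ring structure}, while the proof of Theorem \ref{Thm-HTCrystal} identifies the stratification attached to $\bM$ under this isomorphism as $\varepsilon_M = (1+aX_1)^{\phi_M/a}$.

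First I would invoke Lemma \ref{Lem-BKCover} to see that the Breuil--Kisin (log-) prism covers the final object of $\Shv((\calO_K)_{\Prism,\ast})$, and then use the $p$-completely faithfully flat descent of $\overline{\calO}_\Prism$ and $\overline{\calO}_\Prism[1/p]$ already invoked in the proof of Proposition \ref{Prop-dRStratification} to identify $\rR\Gamma((\calO_K)_{\Prism,\ast},\bM)$ with the totalization of the cosimplicial object $\bM(\frakS^\bullet_\ast/(E))$. By the bridge above, this totalization is precisely the cochain complex $\rC(M,\varepsilon_M)$ of Section \ref{axiomcoho}.

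Theorem \ref{Thm-AxiomHT}(2) (resp. Corollary \ref{rationalstrat}(2)) then supplies a natural quasi-isomorphism $\rho_M : [M \xrightarrow{\phi_M} M] \xrightarrow{\simeq} \rC(M,\varepsilon_M)$. Chaining this with the above identification delivers the theorem; naturality in $\bM$ is inherited from the functoriality of $\rho_M$ built into Construction \ref{Construction-Rho}.

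The main obstacle I anticipate is upgrading the Čech computation to a genuine $\rR\Gamma$ statement, since \emph{a priori} the totalization of $\bM(\frakS^\bullet_\ast/(E))$ only computes $\check{H}^\bullet$. I expect to handle this either by checking that the Čech nerve of the Breuil--Kisin cover is a hypercover for the relevant topology, or by iteratively applying $p$-complete faithfully flat descent to refinements of the cover in order to kill higher cohomology on each $\frakS^n_\ast/(E)$; this is in the same spirit as the cohomology vanishing Theorem \ref{Intro-coho dim} stated earlier in the paper, and once it is in place, the remaining steps are the bookkeeping described above.
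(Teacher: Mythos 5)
Your proposal follows essentially the same route as the paper: the paper first proves (via fpqc descent) that $\rH^i(\frakB,\bM)=0$ for every object $\frakB$ of the site and $i\geq 1$, so that the \v Cech-to-derived spectral sequence degenerates and $\rR\Gamma$ is computed by the \v Cech--Alexander complex $\bM(\frakS^\bullet_\ast,(E))$, which is then identified with $\rC(M,\varepsilon_M)$ and compared with $[M\xrightarrow{\phi_M}M]$ via Theorem \ref{Thm-AxiomHT}(2). The "obstacle" you flag is resolved exactly by the second mechanism you describe (local vanishing of higher cohomology), so the argument is correct as outlined.
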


Before we prove Theorem \ref{HTcohom}, we record a direct corollary.
\begin{cor} \label{corprislogpris}
Let $\bM\in \Vect((\calO_K)_{\Prism},\overline \calO_{\Prism}[\frac{1}{p}])$. We have a quasi-isomorphism
    \[\RGamma((\calO_K)_{\Prism},\bM)\simeq \RGamma((\calO_K)_{\Prism,\log},\bM),\]
    where on the right hand side, we regard $\bm$ as a sheaf on $\okprislog$ via pull-back.
\end{cor}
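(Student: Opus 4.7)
The plan is to reduce both sides to Sen complexes via Theorem \ref{HTcohom} and then observe that passing from the prismatic to the log-prismatic setting only scales the endomorphism by $\pi$, which is a unit after inverting $p$.

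First I would apply Theorem \ref{HTcohom} (the rational version) to $\bM$ viewed as an object of $\Vect((\calO_K)_{\Prism},\overline\calO_\Prism[1/p])$: this gives a natural $K$-linear quasi-isomorphism
\[ \RGamma((\calO_K)_{\Prism},\bM) \;\simeq\; [M\xrightarrow{\phi_M} M], \]
where $(M,\phi_M)\in \End_K^{\nht}$ is the pair associated to $\bM$ by Theorem \ref{Thm-HTCrystal}, i.e.\ the evaluation of $\bM$ at $(\frakS,(E))$ together with its induced endomorphism. Next, I would apply the same theorem to the pullback of $\bM$ along the forgetful functor $(\calO_K)_{\Prism,\log}\to(\calO_K)_{\Prism}$, yielding
\[ \RGamma((\calO_K)_{\Prism,\log},\bM) \;\simeq\; [M'\xrightarrow{\phi_{M'}} M'] \]
for the pair $(M',\phi_{M'})\in\End_K^{\log-\nht}$ attached to the pullback.

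The key input is now the commutativity of the second diagram in Theorem \ref{Thm-HTCrystal}: it tells us that under the pullback functor, the pair $(M,\phi_M)$ is sent precisely to $(M,\pi\phi_M)$, so $M'=M$ and $\phi_{M'}=\pi\phi_M$. Since $K$ is a field and $\pi\in K^\times$, multiplication by $\pi$ is an isomorphism on $M$, and the square
\[
\begin{tikzcd}
M \arrow[r, "\phi_M"] \arrow[d, "\id"'] & M \arrow[d, "\pi"] \\
M \arrow[r, "\pi\phi_M"'] & M
\end{tikzcd}
\]
is a commutative isomorphism of two-term complexes of $K$-vector spaces. Composing the three quasi-isomorphisms gives the desired comparison.

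The only point that requires care is the compatibility of all this with the natural pullback map $\RGamma((\calO_K)_{\Prism},\bM)\to \RGamma((\calO_K)_{\Prism,\log},\bM)$; but this is not needed to prove the statement as phrased, which merely asserts the existence of a quasi-isomorphism. I do not anticipate a serious obstacle: the only substantive ingredients are already packaged in Theorem \ref{Thm-HTCrystal} and Theorem \ref{HTcohom}, and the corollary is essentially the observation that $\pi$ becomes invertible after inverting $p$.
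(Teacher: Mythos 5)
Your proposal is correct and follows exactly the paper's own argument: apply Theorem \ref{HTcohom} to both sides, use the diagram in Theorem \ref{Thm-HTCrystal} to see that the log pair is $(M,\pi\phi_M)$, and conclude since $\pi\in K^\times$ makes $[M\xrightarrow{\phi_M}M]\simeq[M\xrightarrow{\pi\phi_M}M]$. The explicit commutative square you write down is the (implicit) content of the paper's one-line proof.
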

\begin{proof}
By Theorem \ref{HTcohom}, it reduces to note there is a natural quasi-isomorphism
    \[[M\xrightarrow{\phi_M}M]\simeq[M\xrightarrow{\pi \phi_M}M]\]
    since $\pi$ is invertible in $K$.
\end{proof}

To prove Theorem \ref{HTcohom}, we need to interpret $\rR\Gamma((\calO_K)_{\Prism,*},\bM)$ as some explicit complex.

 \begin{lem}\label{vansihing lemma}
   Let $\bM \in \Vect((\calO_K)_{\Prism,*},\overline \calO_{\Prism})$. Then for any $\frakB\in(\calO_K)_{\Prism,*}$ and any $i\geq 1$, we have
   \[\rH^i(\frakB,\bM) = 0.\]
 \end{lem}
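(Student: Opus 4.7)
The plan is to reduce the statement to $p$-complete faithfully flat descent for the Hodge--Tate sheaf $\baropris$, applied to the finite projective module $\bM(\frakB)$.

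First, I would localize: working in the slice site $(\okprisast)_{/\frakB}$, the object $\frakB$ becomes the final object, so it suffices to show that for a sheaf $\bM$ of $\baropris$-modules on $(\okprisast)_{/\frakB}$ which is a vector bundle (i.e.\ locally finite projective on the Hodge--Tate sheaf), the global sections functor has vanishing higher derived functors. By the \v{C}ech-to-derived spectral sequence, it is enough to exhibit, for each cohomology class, a cover $\frakB \to \frakB'$ in $(\okprisast)_{/\frakB}$ on which the class is killed; equivalently, to show that for a cofinal family of covers $\frakB\to\frakB'$ with \v{C}ech nerve $\frakB'^{\bullet+1}$, the \v{C}ech complex $\bM(\frakB'^{\bullet+1})$ is a resolution of $\bM(\frakB)$.

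Second, I would use the crystal property of $\bM$: writing $\frakB = (B,J,\ldots)$ and $\bar B = B/J$, the module $M_B := \bM(\frakB)$ is finite projective over $\bar B$; moreover, for any cover $\frakB\to\frakB'$ in the (log-)prismatic site with $\bar{B'} = B'/JB'$, the map $\bar B\to\bar{B'}$ is $p$-completely faithfully flat and
\[
\bM(\frakB'^{n+1}) \;=\; M_B\,\widehat{\otimes}_{\bar B}\,(\bar{B'})^{\widehat{\otimes}_{\bar B}(n+1)}.
\]
By $p$-complete faithfully flat descent (as used already in the proof of Proposition \ref{Prop-dRStratification}, cf.\ \cite{BS22,Mat22}), the Amitsur complex of the finite projective module $M_B$ along $\bar B\to\bar{B'}$ is exact in positive degrees. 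This gives the desired \v{C}ech vanishing for that cover, and hence for a cofinal system, yielding $\rH^i(\frakB,\bM) = 0$ for $i\geq 1$.

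The only mild point to check is that the \v{C}ech cohomology computation really is compatible with the $p$-complete tensor products appearing (so that the relevant derived functors agree with the naive ones on finite projective modules); this is essentially automatic since $M_B$ is a direct summand of a finite free $\bar B$-module and $p$-complete tensor products commute with finite direct sums. Apart from this bookkeeping, the proof is a direct application of faithfully flat descent, with no geometric or representation-theoretic input. I expect this to be the main (and essentially only) obstacle.
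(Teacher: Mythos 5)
Your argument is correct and is exactly what the paper does: its entire proof reads ``This follows from the fpqc descent or the same argument in the proof of \cite[Lemma 3.12]{Tia23}'', and your write-up is just the standard unwinding of that — crystal property identifies the \v{C}ech complex on a cover with the ($p$-completed) Amitsur complex of a finite projective module along a $p$-completely faithfully flat map, which is exact by descent. No further comparison is needed.
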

 \begin{proof}
   This follows from the fpqc descent or the same argument in the proof of \cite[Lemma 3.12]{Tia23}.
 \end{proof}
 \begin{cor}[\v Cech-Alexander complex]
   Let $\bM\in \Vect((\calO_K)_{\Prism,*},\overline \calO_{\Prism})$. The prismatic cohomology $\RGamma((\calO_K)_{\Prism,*},\bM)$ can be computed by the \v Cech-Alexander complex
   \begin{equation}\label{Equ-Cech-Alexander complex}
       \bM(\frakS,(E))\xrightarrow{d^0}\bM(\frakS_*^1,(E))\xrightarrow{d^1} \bM(\frakS_*^2,(E))\to\cdots,
   \end{equation}
   where for any $n\geq 0$, $d^n=\sum_{i=0}^{n+1}(-1)^ip_i$ is induced by $p_i:\frakS^n\to\frakS^{n+1}$ defined in Lemma \ref{pd polynomial}.
 \end{cor}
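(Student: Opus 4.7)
The plan is to realize $\RGamma((\calO_K)_{\Prism,*},\bM)$ via the standard Čech-to-derived-functor spectral sequence associated with the cover of the final object of $\Shv((\calO_K)_{\Prism,*})$ given by the Breuil--Kisin (log-)prism.

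First, by Lemma \ref{Lem-BKCover}, the Breuil--Kisin (log-)prism $(\frakS,(E),*)$ is a cover of the final object of the topos $\Shv((\calO_K)_{\Prism,*})$, and by construction of the \v Cech nerve in \cite{BS23} and \cite{DL23} the associated self-products are precisely the cosimplicial object $(\frakS^{\bullet}_*,(E),*)$ introduced in Notation \ref{notebksimp}. Consequently the \v Cech-to-derived functor spectral sequence reads
\[
E_1^{p,q} = \rH^q\bigl((\frakS^p_*,(E)),\,\bM\bigr) \;\Longrightarrow\; \rH^{p+q}\bigl((\calO_K)_{\Prism,*},\bM\bigr),
\]
with the horizontal differential on the $E_1$-page being the alternating sum of the face maps $p_i$, i.e. exactly $d^p$.

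Next, I would use Lemma \ref{vansihing lemma} to see that $\rH^q((\frakS^p_*,(E)),\bM)=0$ for every $q\geq 1$ and every $p\geq 0$, since each $(\frakS^p_*,(E),*)$ is an object of $(\calO_K)_{\Prism,*}$. Therefore the $E_1$-page is concentrated in the single row $q=0$, so the spectral sequence degenerates and yields a canonical quasi-isomorphism between $\RGamma((\calO_K)_{\Prism,*},\bM)$ and the complex
\[
\bM(\frakS,(E))\xrightarrow{d^0}\bM(\frakS_*^1,(E))\xrightarrow{d^1}\bM(\frakS_*^2,(E))\to\cdots,
\]
which is exactly the desired \v Cech-Alexander complex.

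There is essentially no serious obstacle here: the statement is a formal consequence of having a cover of the final object together with the vanishing of higher cohomology on every prism, both of which are already established. The only mild bookkeeping point is to identify the differentials on the $E_1$-page with the alternating sums $\sum_{i=0}^{n+1}(-1)^i p_i$ induced by the cosimplicial structure, which follows directly from the definition of the \v Cech nerve together with the description of the face maps given in Proposition \ref{pd polynomial}.
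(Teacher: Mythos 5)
Your proposal is correct and follows exactly the paper's own argument: the paper's proof likewise combines the fact that $(\frakS,(E),*)$ covers the final object of $\Shv((\calO_K)_{\Prism,*})$ with Lemma \ref{vansihing lemma} and the \v Cech-to-derived spectral sequence. Your write-up simply spells out the degeneration of the spectral sequence at the $E_1$-page in more detail.
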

 \begin{proof}
   Since $(\frakS,(E),*)$ is a cover of the final object of the topos ${\rm Shv}((\calO_K)_{\Prism,*})$, the result follows from Lemma \ref{vansihing lemma} combined with the \v Cech-to-derived spectral sequence.
 \end{proof}
\begin{proof}[Proof of Theorem \ref{HTcohom}]
By the identification $\bM(\frakS_*^{\bullet})\cong M\otimes_{\frakS/(E),q_0}\frakS^{\bullet}_*/(E)$ and Proposition \ref{Prop-dRStratification}, the \v Cech-Alexander complex $\bM(\frakS^{\bullet}_*,(E))$ is exactly the complex $\rC(M,\varepsilon_M)$ in Theorem \ref{Thm-AxiomHT}(2). So one can conclude by applying Theorem \ref{Thm-AxiomHT}(2). 
 \end{proof}

  \begin{rmk} 
     In the log-prismatic case for any $\calO_K$ and in the prismatic case for ramified $\calO_K$ (or equivalently, $a\equiv 0\mod \pi$), the comparison in Theorem \ref{HTcohom} can be deduced from the work of \cite{Tia23} on \emph{relative} Hodge--Tate crystals (that is, without using the computations in our axiomatic Theorem \ref{Thm-AxiomHT}(2)).
  This presents a quite interesting phenomenon in this setting: ``unramified" is more difficult than ``ramified". We only give a sketch here, and leave the details to interested readers.
  
Recall we constructed $\rho_M:[M\xrightarrow{\phi_M}M]\to \rC(M,\varepsilon_M)$ in Construction \ref{Construction-Rho}. To prove it is a quasi-isomorphism, it suffices to prove that the induced map \[ \left[M\otimes_{\calO_K}\calO_K\za T\ya\xrightarrow{\phi_M}M\otimes_{\calO_K}\calO_K\za T\ya \right]\to \rC(M\otimes_{\calO_K}\calO_K\za T\ya,\varepsilon_M)\] 
is a quasi-isomorphism.
By derived Nakayama's Lemma, it suffice to prove   
     \begin{equation}
         \label{eqdagger}  \left[ M/\pi[T]\xrightarrow{\phi_M}M/\pi[T] \right] \to \rC(M/\pi[T],\varepsilon_M).
     \end{equation}
     is a quasi-isomorphism.
     As $\phi_M$ is nilpotent modulo $\pi$, we know that $(M/\pi,\phi_M)$ defines a nilpotent Higgs module over $k[T]$.
  Apply  \cite[Theorem 5.12]{Tia23} to the prismatic lifting $(\rW(k)\za T\ya,(p))$ of $k[T]$, this nilpotent Higgs module  induces a Hodge--Tate crystal $\calM$ on the \emph{relative} prismatic site \[(k[T]/(\rW(k),(p)))_{\Prism}.\]
   As $\varepsilon_M \equiv\exp(\phi_MX_1)\mod \pi$ (as $a\equiv 0\mod \pi$), we can check that $\rC(M/\pi,\epsilon_M)$ is the \v Cech-Alexander complex in the sense of \cite[Theorem 5.14]{Tia23}, and hence obtain the quasi-isomorphism \eqref{eqdagger} as desired. 
     \end{rmk}

 \subsection{Cohomology of (log-) prismatic crystals}
 As a direct corollary of Theorem \ref{HTcohom}, we can get the vanishing of higher cohomology of (log-) prismatic crystals by d\'evissage.

 
\begin{thm}\label{non-reduced coh}
 Let $\bm \in \Vect(\okprisast, \opris)$. Then the cohomology
  \[ \RGamma(\okprisast,\bM) \]
   is concentrated in degree $[0,1]$. 
\end{thm}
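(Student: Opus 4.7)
The plan is to d\'evissage the crystal $\bm$ along the $\mathcal{I}_\prism$-adic filtration and reduce the statement to the Hodge--Tate vanishing already established in Theorem \ref{HTcohom}.

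First, I would introduce $\bm_n := \bm/\mathcal{I}_\prism^n\bm$ for $n\geq 1$ and the graded pieces $\gr^n\bm := \mathcal{I}_\prism^n\bm/\mathcal{I}_\prism^{n+1}\bm$ for $n\geq 0$. Since $\bm$ is a vector bundle on $\opris$ and $\mathcal{I}_\prism$ is an invertible ideal sheaf, the multiplication map identifies
\[ \gr^n\bm \ \simeq \ (\bm/\mathcal{I}_\prism\bm) \otimes_{\baropris} (\mathcal{I}_\prism/\mathcal{I}_\prism^2)^{\otimes n}. \]
Because $\mathcal{I}_\prism/\mathcal{I}_\prism^2$ is an invertible $\baropris$-module (the Breuil--Kisin twist, in particular a rank one Hodge--Tate crystal), each $\gr^n\bm$ lies in $\Vect(\okprisast, \baropris)$.

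Next, Theorem \ref{HTcohom} applied to each $\gr^n\bm$ gives that $\RGamma(\okprisast, \gr^n\bm)$ is concentrated in degrees $[0,1]$. Using the short exact sequences $0 \to \gr^{n-1}\bm \to \bm_n \to \bm_{n-1} \to 0$ together with induction on $n$ (with base case $\bm_1 = \gr^0\bm \in \Vect(\okprisast, \baropris)$), the cohomology $\RGamma(\okprisast, \bm_n)$ is likewise concentrated in degrees $[0,1]$ for every $n$. Moreover, the associated long exact sequences show that the boundary maps $\rH^1(\okprisast, \bm_{n-1}) \to \rH^2(\okprisast, \gr^{n-1}\bm) = 0$ vanish, so the transition maps $\rH^1(\okprisast, \bm_n) \twoheadrightarrow \rH^1(\okprisast, \bm_{n-1})$ are surjective, and the tower $\{\rH^1(\okprisast, \bm_n)\}_n$ is Mittag--Leffler.

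Finally, I would pass to the derived inverse limit. Because the value of $\bm$ on any prism is a finite projective, hence $\mathcal{I}_\prism$-adically complete, module, one has $\bm \simeq R\lim_n \bm_n$ as a sheaf on $\okprisast$, and consequently $\RGamma(\okprisast, \bm) \simeq R\lim_n \RGamma(\okprisast, \bm_n)$. The Milnor sequence
\[ 0 \to \varprojlim\nolimits^{1}_n \rH^{i-1}(\okprisast, \bm_n) \to \rH^i(\okprisast, \bm) \to \varprojlim\nolimits_n \rH^i(\okprisast, \bm_n) \to 0 \]
then combines with the vanishing $\rH^i(\okprisast, \bm_n) = 0$ for $i\geq 2$ and the Mittag--Leffler property of the $\rH^1$-tower to force $\rH^i(\okprisast, \bm) = 0$ for all $i\geq 2$, yielding the claimed concentration in $[0,1]$. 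The main obstacle will be the identification of $\gr^n\bm$ as a Hodge--Tate crystal (resting on the invertibility of $\mathcal{I}_\prism$ on $\okprisast$) and the sheafwise derived $\mathcal{I}_\prism$-completeness used to realise $\bm$ as $R\lim_n \bm_n$; once these are in hand, the degree $[0,1]$ bound of Theorem \ref{HTcohom} propagates cleanly through the Mittag--Leffler limit.
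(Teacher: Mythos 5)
Your proposal is correct and follows essentially the same route as the paper: d\'evissage along the $\mathcal{I}_\Prism$-adic filtration, identification of the graded pieces as Hodge--Tate crystals so that Theorem \ref{HTcohom} gives concentration in degrees $[0,1]$, surjectivity of the transition maps on $\rH^1$, and passage to the derived inverse limit via $\bM\simeq\Rlim\bM/\calI_\Prism^n$ (where the paper justifies the $\Rlim$ identification by repleteness of the topos together with $\calI_\Prism$-adic completeness). No gaps.
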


We begin with two lemmas.
 \begin{lem}
 For any $j>1$ and $n\geq 1$, the cohomology group \[\rH^j((\calO_K)_{\Prism,*},\bM/\calI_{\Prism}^n)=0.\]
 \end{lem}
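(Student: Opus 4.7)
The plan is to proceed by induction on $n$. The base case $n=1$ is the sheaf $\bM/\calI_\Prism\bM$, which is by construction an object of $\Vect(\okprisast,\baropris)$, i.e., a Hodge--Tate crystal. By Theorem \ref{HTcohom}, its cohomology is computed by the two-term Sen complex $[M\xrightarrow{\phi_M}M]$, and hence is concentrated in degrees $[0,1]$. In particular $\rH^j(\okprisast,\bM/\calI_\Prism\bM)=0$ for $j>1$.

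For the inductive step, assume the statement holds for some $n\geq 1$. I will consider the short exact sequence of sheaves
\[
0\to \calI_\Prism^n\bM/\calI_\Prism^{n+1}\bM \to \bM/\calI_\Prism^{n+1}\bM \to \bM/\calI_\Prism^n\bM \to 0
\]
on $\okprisast$. Since $\calI_\Prism$ is an invertible ideal sheaf (each prism $(A,I)$ satisfies $I$ being invertible over $A$) and $\bM$ is a vector bundle over $\opris$, the left term can be identified with $(\calI_\Prism^n/\calI_\Prism^{n+1})\otimes_{\baropris}(\bM/\calI_\Prism\bM)$. Here $\calI_\Prism^n/\calI_\Prism^{n+1}$ is an invertible $\baropris$-module (a power of the Breuil--Kisin twist), so the tensor product is again a Hodge--Tate crystal of the same rank as $\bM$. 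By the base case (applied to this twisted crystal), its higher cohomology vanishes in degrees $>1$.

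Taking the long exact sequence in cohomology, for each $j>1$ I obtain
\[
\rH^j\bigl(\okprisast,(\calI_\Prism^n/\calI_\Prism^{n+1})\otimes(\bM/\calI_\Prism\bM)\bigr)\to \rH^j(\okprisast,\bM/\calI_\Prism^{n+1}\bM)\to \rH^j(\okprisast,\bM/\calI_\Prism^n\bM).
\]
The outer terms vanish: the left by the base case applied to the twisted Hodge--Tate crystal, the right by the inductive hypothesis. Hence the middle term vanishes, completing the induction.

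The only non-formal point, which is mostly a routine verification, is the identification $\calI_\Prism^n\bM/\calI_\Prism^{n+1}\bM\simeq(\calI_\Prism^n/\calI_\Prism^{n+1})\otimes_{\baropris}(\bM/\calI_\Prism\bM)$; this follows from the local freeness of $\bM$ together with the invertibility of $\calI_\Prism$, applied section-wise on a basis of prisms (e.g., perfect ones, or the Breuil--Kisin cover), and then sheafified. I do not foresee a substantial obstacle here, as both sides are well-behaved with respect to the base-change that computes a crystal.
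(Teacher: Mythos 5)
Your proof is correct and follows essentially the same d\'evissage as the paper: induct on $n$ along the $\calI_\Prism$-adic filtration, whose graded pieces are (twisted) Hodge--Tate crystals with cohomology concentrated in degrees $[0,1]$ by Theorem \ref{HTcohom}. The only cosmetic difference is that you peel off the bottom graded piece $\calI_\Prism^n\bM/\calI_\Prism^{n+1}\bM$ (base case applied to this twist, inductive hypothesis to the quotient $\bM/\calI_\Prism^n\bM$), whereas the paper uses $0\to\calI_\Prism\bM/\calI_\Prism^{n+1}\bM\to\bM/\calI_\Prism^{n+1}\bM\to\bM/\calI_\Prism\bM\to0$, applying the inductive hypothesis to the sub and the base case to the quotient.
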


 \begin{proof}
We consider the short exact sequences of abelian sheaves on $(\calO_K)_{\Prism,*}$
  \[
  0\to \calI_{\Prism}\bM/\calI_{\Prism}^{n+1}\to \bM/\calI_{\Prism}^{n+1}\to \bM/\calI_{\Prism}\to 0
  \]
  for any $n\geq 1$. Then we get the exact triangles
  \[
  \rR\Gamma((\calO_K)_{\Prism,*},\calI\bM/\calI_{\Prism}^{n+1})\to \rR\Gamma((\calO_K)_{\Prism,*},\bM/\calI_{\Prism}^{n+1})\to \rR\Gamma((\calO_K)_{\Prism,*},\bM/\calI_{\Prism}).
  \]
Note that $\calI_{\Prism}\bM/\calI_{\Prism}^{n+1}$ is a crystal of vector bundle over $\calO_{\Prism}/\calI_{\Prism}^n$. Then by Theorem \ref{Thm-HTCrystal} and induction on $n$, we see that $\tau^{\leq 1}\rR\Gamma((\calO_K)_{\Prism,*},\bM/\calI_{\Prism}^n)\simeq \rR\Gamma((\calO_K)_{\Prism,*},\bM/\calI_{\Prism}^n)$ for any $n\geq 1$.

 \end{proof}

 \begin{lem}
 For any crystal $\bM\in \Vect((\calO_K)_{\Prism,*},\calO_{\Prism})$, we have $\bM\simeq\Rlim \bM/\calI_{\Prism}^n$.
 \end{lem}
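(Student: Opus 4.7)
The plan is to verify the claim pointwise on the site $(\calO_K)_{\Prism,*}$. The compatible projections $\bM \twoheadrightarrow \bM/\calI_{\Prism}^n$ yield a canonical comparison map $\bM \to \Rlim_n \bM/\calI_{\Prism}^n$ in the derived category of sheaves of $\calO_{\Prism}$-modules, and to show it is a quasi-isomorphism it suffices to check that for every prism $(A,I) \in (\calO_K)_{\Prism,*}$, applying $\rR\Gamma((A,I),-)$ produces a quasi-isomorphism.

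Next I would compute both sides. The left side is simply $\rR\Gamma((A,I), \bM) = M$, where $M := \bM((A,I))$ is a finite projective $A$-module by the crystal condition. For the right side, the preceding vanishing lemma says $\rH^j((A,I), \bM/\calI_{\Prism}^n) = 0$ for all $j \geq 1$ and all $n$. This allows derived global sections to be interchanged with the derived inverse limit, giving
\[
\rR\Gamma\big((A,I),\, \Rlim_n \bM/\calI_{\Prism}^n\big) \;\simeq\; \Rlim_n (\bM/\calI_{\Prism}^n)((A,I)).
\]
By the crystal condition in Definition \ref{Dfn-A crystal}, applied to the sheaf $\bM \otimes_{\calO_{\Prism}} (\calO_{\Prism}/\calI_{\Prism}^n)$, we have $(\bM/\calI_{\Prism}^n)((A,I)) = M/I^n M$.

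Finally, I would conclude using completeness. Since $(A,I)$ is a prism, $A$ is $(p,I)$-adically complete; in particular, any finitely generated $A$-module, such as the finite projective $M$, is $I$-adically complete. The transition maps $M/I^{n+1}M \twoheadrightarrow M/I^n M$ are surjective, so Mittag-Leffler yields $R^1\lim_n M/I^n M = 0$, and the derived limit coincides with the ordinary limit, which equals $M$. Thus both sides evaluate to $M$ on $(A,I)$, and the canonical comparison is an equivalence of sheaves.

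There is no essential obstacle; the only slightly delicate point is the interchange of derived global sections with the derived inverse limit, and this is handled cleanly by the preceding vanishing lemma.
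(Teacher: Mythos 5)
Your proof is correct in outline but takes a genuinely different route from the paper. The paper argues globally on the topos: it notes that $\Shv(\okprisast)$ is replete (filtered colimits of faithfully flat maps of prisms are faithfully flat), so that $\Rlim_n \bM/\calI_{\Prism}^n \simeq \lim_n \bM/\calI_{\Prism}^n$, and then concludes from the $\calI_{\Prism}$-adic completeness of $\bM$. You instead verify the comparison map after applying $\rR\Gamma((A,I),-)$ to each object of the site, trading the repleteness input for local cohomological vanishing plus Mittag--Leffler. Both are valid; yours is more explicit and avoids the repleteness formalism, at the cost of needing section-wise vanishing statements.

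Two steps should be tightened. First, the vanishing $\rH^j((A,I),\bM/\calI_{\Prism}^n)=0$ for $j\geq 1$ is not literally ``the preceding vanishing lemma'': the lemma immediately before the statement concerns cohomology over the whole site in degrees $j>1$, and Lemma \ref{vansihing lemma} is the case $n=1$ (crystals in $\overline\calO_{\Prism}$-modules). You need the analogue for crystals in $\calO_{\Prism}/\calI_{\Prism}^n$-modules, and also for $\bM$ itself in order to write $\rR\Gamma((A,I),\bM)=M$; both follow from the same fpqc/\v Cech descent argument, but this should be said. Note also that $\rR\Gamma((A,I),-)$ commutes with $\Rlim$ unconditionally (it is itself a derived limit); the vanishing is needed only to identify each term $\rR\Gamma((A,I),\bM/\calI_{\Prism}^n)$ with $M/I^nM$ placed in degree $0$. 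Second, the assertion that $(p,I)$-adic completeness of $A$ makes ``any finitely generated $A$-module'' $I$-adically complete is too quick: over non-Noetherian rings finitely generated modules over a complete ring need not be complete, and $(p,I)$-adic completeness does not formally imply $I$-adic completeness. The correct argument is that $A$ is derived $(p,I)$-complete, hence derived $I$-complete, hence classically $I$-adically complete because $I$ is generated by a nonzerodivisor; and $M$, being a direct summand of a finite free module, inherits this. With these repairs the argument is complete.
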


 \begin{proof}
 Since an inductive limit of faithfully flat maps of prisms is a faithfully flat map of prisms (cf. \cite[Remark 2.4]{BS23}), the topos ${\rm Shv}((\calO_K)_{\Prism,*})$ is replete. Then we have $\lim_n\bM/\calI_{\Prism}^n\simeq\Rlim_n\bM/\calI_{\Prism}^n$ by \cite[Proposition 3.1.10]{BS22}. Then this lemma follows from the fact that $\bM$ is $\calI_{\Prism}$-adically complete.
 \end{proof}

 \begin{proof}[\bf Proof of Theorem \ref{non-reduced coh}]
 Since $\rR\Gamma$ commutes with derived inverse limit, we have
 \[
 \rR\Gamma((\calO_K)_{\Prism,*},\bM)\simeq \Rlim_n \rR\Gamma((\calO_K)_{\Prism,*},\bM/\calI_{\Prism}^n).
 \]
 Now we consider the exact triangle of abelian sheaves on $(\calO_K)_{\Prism,*}$
 \[
 \calI^n_{\Prism}\bM/\calI_{\Prism}^{n+1}\to \bM/\calI_{\Prism}^{n+1}\to \bM/\calI_{\Prism}^n.
 \]
 By taking the cohomology, we see that the natural map \[\rH^1((\calO_K)_{\Prism,*},\bM/\calI_{\Prism}^{n+1})\to \rH^1((\calO_K)_{\Prism,*},\bM/\calI_{\Prism}^n)\]
 is surjective. Then by  \cite[\href{https://stacks.math.columbia.edu/tag/07KY}{Tag 07KY}]{Sta}, we have
 \[\rH^k((\calO_K)_{\Prism,*},\bM)=0\]
 for any $k\geq 2$. This finishes the proof of Theorem \ref{non-reduced coh}.
 \end{proof}

\section{Hodge--Tate crystals on the perfect prismatic site} \label{secHTperf}

In \S \ref{subsecperfHT}, we  classify (rational) Hodge--Tate crystals on the perfect (log-) prismatic site by semi-linear $C$-representations of $G_K$.
In \S \ref{subsec_fullcrystal_rep}, we start with a Hodge--Tate crystals on the full (log-) prismatic site, and compute explicitly the $\gk$-action on the associated $C$-representation.

 \subsection{Hodge--Tate crystals on the perfect site}\label{subsecperfHT}

\begin{prop} \label{same perfect site}
   The forgetful morphism of sites
\[ (\calO_K)_{\Prism,\log}^{\perf}\to (\calO_K)_{\Prism}^{\perf}\]
is an equivalence. 
\end{prop}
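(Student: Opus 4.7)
The strategy is to exhibit the forgetful functor as an equivalence by constructing a quasi-inverse that equips every perfect prism with a \emph{canonical} log structure, the point being that in the perfect setting there is essentially no choice to make.

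First, I will construct the quasi-inverse on objects. Let $(A,I)\in(\calO_K)_{\Prism}^{\perf}$. Since $A$ is a perfect $\delta$-ring, $A\cong W(R)$ for the perfect $\bF_p$-algebra $R=A/p$, and $\overline A:=A/I$ is a perfectoid $\calO_K$-algebra whose tilt is canonically identified with $R$. Because $\pi\in\overline A$ lies in the topologically nilpotent ideal, one can extract a compatible system $\pi^\flat\in R$ of $p$-power roots of the image of $\pi$ (via the tilting correspondence), and the Teichm\"uller lift $[\pi^\flat]\in A$ provides a distinguished element of $A$ reducing to $\pi$ modulo $I$. Define the pre-log structure $\alpha:\bN\to A$ by $1\mapsto[\pi^\flat]$ and let $M_A$ be the associated log structure. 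The resulting triple $(A,I,M_A)$ is naturally a perfect log prism over $(\calO_K,M_{\calO_K})$, and this assignment is my candidate quasi-inverse.

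Next, I will verify that the construction is independent of choices and functorial. Any two compatible systems of $p$-power roots of $\pi$ in $R$ differ by a unit lying in $1+\mathfrak{m}_R$, and such units induce a canonical isomorphism of the associated log structures; this uses that Frobenius is an isomorphism on $R$, which rigidifies $p$-th roots. For a morphism of perfect prisms $f:(A,I)\to(B,J)$, the map $f$ is determined by a morphism of tilts $R_A\to R_B$ carrying $\pi^\flat_A$ to (any) $\pi^\flat_B$ up to the canonical isomorphism above, so $f$ lifts uniquely to a morphism of log prisms. This shows the assignment is a functor and that, conversely, any perfect log prism $(A,I,M_A)$ over $(\calO_K,M_{\calO_K})$ has its log structure canonically isomorphic to the one I have just built: fully-faithfulness and essential surjectivity of the forgetful functor follow.

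Finally, I will check that the equivalence of underlying categories upgrades to an equivalence of sites: the covers in both sites are the $(p,I)$-completely faithfully flat ones, and these are preserved in either direction by construction. The main obstacle I anticipate is the rigidification step, i.e., proving that the log structure produced above is genuinely canonical, independently of the chosen $\pi^\flat$, and that all morphisms of underlying prisms are compatible with it; this is the heart of the statement and must be argued carefully using perfectness (uniqueness of $p$-power roots in $R$) together with the fact that, for a log structure associated to a pre-log chart $\bN\to A$, rescaling the image of $1$ by a unit induces a canonical isomorphism of the log structure. Once this is in place, everything else is formal.
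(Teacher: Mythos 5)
Your overall strategy is the same as the paper's (which itself defers to \cite[Prop.~2.18]{MW22log}): equip each perfect prism with an essentially unique log structure generated by a Teichm\"uller element lifting $\pi$, and check this rigidifies everything. However, your existence step contains a genuine error. You claim that, since $\overline{A}=A/I$ is perfectoid, one can extract a compatible system of $p$-power roots of $\pi$ itself in $\overline{A}$, so that $[\pi^\flat]$ reduces to $\pi$ modulo $I$. This is false in general: perfectoidness only gives surjectivity of Frobenius on $\overline{A}/p$, so the sharp map $R\to\overline{A}$ need not hit $\pi$. Concretely, for $\calO_K=\zp$, $\pi=p$, and the perfect prism $(\ainf(S),\ker\theta)$ with $S=\widehat{\zp[\mu_{p^\infty}]}$, the element $p$ has no $p$-th root in $S$ (since $p^{1/p}\notin\qp(\mu_{p^\infty})$ and algebraic elements of the completion already lie in $\qp(\mu_{p^\infty})$). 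The best one can do is find $c\in A/p$ with $[c]\equiv u\pi\bmod I$ for some unit $u\in(A/I)^\times$ --- and even producing this unit requires an argument --- which is exactly how the paper phrases the existence statement. The log structures generated by $\pi$ and $u\pi$ are of course isomorphic, so the construction survives, but not in the form you wrote it.

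A second, independent gap is in your fully-faithfulness/uniqueness step: you assert that \emph{any} perfect log prism $(A,I,M_A)$ in $(\calO_K)_{\Prism,\log}^{\perf}$ has $M_A$ canonically isomorphic to the log structure you built, but this is the actual content of the cited proposition and does not follow from your rescaling remark. One must show that the axioms of a (perfect) log prism over $(\calO_K, \bN\xrightarrow{1\mapsto\pi}\calO_K)$ --- in particular the $\delta_{\log}$-structure --- force $M_A$ to be associated to a pre-log structure $\bN\to A$ sending $1$ to a Teichm\"uller element $[a]$ whose reduction mod $I$ lies in $\pi\cdot(A/I)^\times$, and then that any two such generators differ by a unit of $A$ (not merely of $A/I$). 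Neither point is addressed in your proposal; without them the argument does not close.
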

\begin{proof}
This is proved in \cite[Proposition 2.18]{MW22log} (cf. Rem. \ref{rem52}). Indeed, this proposition says that given any perfect prism $(A,I) \in (\calO_K)_{\Prism}^{\perf}$, there exists a \emph{unique} way to equip a log structure. We sketch the idea of the proof here.
For uniqueness, suppose we have two objects $(A,I,M_i)\in (\calO_K)_{\Prism,\log}^{\perf}$ for $i\in\{1,2\}$, then one can check that $M_i$ is induced by a pre-log structure $\bN\xrightarrow{} A$ where $1\mapsto [a_i]$  for some $a_i\in A/p$ such that the reduction of $[a_i]$ modulo $I$ belongs to $\pi\cdot (A/I)^{\times}$. Thus, $[a_1]\in [a_2]\cdot A^{\times}$, and the two log structures are isomorphic. 
For existence, one can check there exists a unit $u\in (A/I)^{\times}$ such that $u\pi$ is the image of $[c]$ modulo $I$ for some $c\in A/p$. The log structure $N$ on $A$ induced by $\bN\xrightarrow{}B$ where $1\mapsto [c]$ gives rise to a log prism $(A, I, N)$.
\end{proof}

\begin{remark} \label{rem52}
 Prop. \ref{same perfect site} is the only place in this paper where we cite  \cite{MW22log}. 
 The paper \cite{MW22log} 
is indeed a sequel of the current paper, which treats log-prismatic Hodge--Tate crystals in the relative case. 
Since Prop. \ref{same perfect site} is a standard result (which also works in the general relative case) in log-prismatic site and its proof is rather long, we choose to leave the development in \cite{MW22log}. 
\end{remark}
 
 By Prop. \ref{same perfect site}, in this section, it suffices to treat the site $(\calO_K)^{\perf}_{\Prism}$.
Similar to Lemma \ref{Lem-BKCover}, we have
\begin{lem}\label{Lem-BKCover-perf}
    The prism $(\ainf,(\xi))$ is a cover of the final object in $\Shv((\calO_K)^{\perf}_{\Prism})$.
\end{lem}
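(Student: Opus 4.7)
My plan is to deduce the statement from the Bhatt--Scholze equivalence between perfect prisms and perfectoid rings, restricted over $\calO_K$. Under this equivalence a perfect prism $(A,I)\in(\calO_K)^{\perf}_\Prism$ corresponds to the perfectoid $\calO_K$-algebra $R:=A/I$ with $A=W(R^{\flat})$; the Fontaine prism $(\ainf,(\xi))$ corresponds to $\calO_C$; and coverings of perfect prisms correspond to $p$-completely faithfully flat maps of perfectoid $\calO_K$-algebras. It is therefore enough to produce, for every perfectoid $\calO_K$-algebra $R$, a $p$-completely faithfully flat map $R\to R'$ of perfectoid $\calO_K$-algebras such that $R'$ carries an $\calO_K$-algebra map from $\calO_C$.

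The natural candidate is the $p$-completed tensor product $R':=R\,\widehat{\otimes}_{\calO_K}\calO_C$. First I will observe that $\calO_K\to\calO_C$ is $p$-completely faithfully flat: $\calO_{\overline{K}}$ is a filtered colimit of finite free $\calO_K$-modules (the rings of integers of finite subextensions of $\overline{K}/K$), hence faithfully flat over $\calO_K$, and $\calO_C$ is its $p$-adic completion. By base change, $R\to R'$ is $p$-completely faithfully flat. Next I will verify that $R'$ is integral perfectoid: it is $p$-adically complete by construction and $p$-torsion-free, since both $R$ and $\calO_C$ are flat over the DVR $\calO_K$; the Frobenius on $R'/p=R/p\otimes_{\calO_K/p}\calO_C/p$ is surjective because $(x\otimes y)^p=x^p\otimes y^p$ on pure tensors, together with surjectivity of Frobenius on each factor modulo $p$; and the element $p^{1/p}\in\calO_C\subset R'$ has $p$-th power equal to $p$, providing the required pseudo-uniformizer. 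Granted this, the associated perfect prism $(W(R'^{\flat}),(\xi_{R'}))$ is a cover of $(A,I)$ in $(\calO_K)^{\perf}_\Prism$ equipped with a canonical map from $(\ainf,(\xi))$ induced by $\calO_C\to R'$, which yields the desired conclusion.

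The hard part, and main obstacle, is the last perfectoid-ness check: in addition to the points listed, the BMS characterization also requires principality of $\ker(\theta\colon W(R'^{\flat})\to R')$. If that is not immediate from the tilt $R'^{\flat}$, I will fall back on the Bhatt--Scholze perfectoidization functor and replace $R'$ by $(R\,\widehat{\otimes}_{\calO_K}\calO_C)_{\mathrm{perfd}}$, which is perfectoid and still receives both $R$ and $\calO_C$; one then has to check that $R\to (R')_{\mathrm{perfd}}$ remains $p$-completely faithfully flat, which is standard in the integral perfectoid setting since the original map is already $p$-completely faithfully flat and the target has an obvious map from $\calO_C$ with $p^{1/p^\infty}$ available.
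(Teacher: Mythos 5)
Your reduction to perfectoid rings and the choice of candidate are reasonable, but the argument has a genuine gap at exactly the point you flag, and the fallback does not repair it. First, the primary route fails: $R\,\widehat{\otimes}_{\calO_K}\calO_C$ is in general \emph{not} perfectoid --- already for $R=\calO_C$ the generic fibre $C\widehat{\otimes}_K C$ is not uniform (on $L\otimes_K\overline K\cong\prod_\sigma\overline K$ the tensor-product and sup norms differ by the different of $L/K$, which is unbounded as $L$ grows), so $\ker\theta$ cannot be principal; this is precisely why Lemma \ref{galois descent} must pass to the perfectoidization of $\calO_C\widehat{\otimes}_{\calO_K}\cdots\widehat{\otimes}_{\calO_K}\calO_C$. (A side remark: your $p$-torsion-freeness argument assumes $R$ is $\calO_K$-flat, which can fail --- any perfect $k$-algebra is a perfectoid $\calO_K$-algebra killed by $p$ --- but $p$-torsion-freeness is not part of the definition of perfectoid, so this is harmless.) Second, and more seriously, in the fallback the assertion that $R\to (R\,\widehat{\otimes}_{\calO_K}\calO_C)_{\perfd}$ remains $p$-completely faithfully flat is not ``standard'': perfectoidization does not preserve flatness, and since $R\,\widehat{\otimes}_{\calO_K}\calO_C$ is the $p$-completed colimit of the finite free extensions $R[x]/(E_L(x))$ obtained by adjoining roots of the minimal polynomials of generators of the $\calO_L$, establishing this flatness is exactly an instance of Andr\'e's flatness lemma --- i.e., the deep input you would have to cite anyway.

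The paper's proof avoids the tensor product altogether and runs the logic in the opposite direction: following the proof of \cite[Lemma 3.5]{Wu21}, one applies Andr\'e's lemma in the form of \cite[Proposition 7.11(2)]{BS22} to $R=A/I$ to obtain a $p$-completely faithfully flat map $R\to S$ of perfectoid rings with $S$ absolutely integrally closed; then $\calO_K\to S$ extends along $\calO_{\overline K}=\mathrm{colim}_L\,\calO_L$ by successively choosing roots of monic polynomials, hence along $\calO_C$ by $p$-completeness of $S$, and the resulting map $\calO_C\to S$ corresponds to a map of perfect prisms $(\ainf,(\xi))\to (W(S^\flat),\ker\theta)$ covering $(A,I)$. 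Here the faithful flatness is built into the statement of Andr\'e's lemma rather than deferred to a property of perfectoidization. If you wish to keep your construction, you should replace the appeal to ``standard'' by an explicit invocation of Andr\'e's lemma in this form.
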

\begin{proof} 
    This follows from the proof of \cite[Lemma 3.5]{Wu21} by using \cite[Proposition 7.11(2)]{BS22} instead of \cite[Proposition 7.11(1)]{BS22}.
\end{proof}
Let $(A_{\perf}^{\bullet},(\xi))$ be the \v Cech nerve associated to $(\ainf,(\xi))\in (\calO_K)_{\Prism}^{\perf}$. Then we have the following analogue of Proposition \ref{Prop-dRStratification}:
\begin{lem}\label{crystal is stratification-II}
Evaluations on $(A_{\perf}^{\bullet},(\xi))$ induce a bi-exact equivalence  of categories \[ \Vect((\calO_K)^{\perf}_{\Prism},\baropris[1/p]) \simeq \Strat(\overline \calO_{\Prism}[ {1}/{p}](A_{\perf}^{\bullet},(\xi)))\]
\end{lem}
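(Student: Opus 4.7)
The plan is to mimic the proof of Proposition \ref{Prop-dRStratification}, replacing the Breuil--Kisin cover by the Fontaine cover on the perfect prismatic site. The basic input is Lemma \ref{Lem-BKCover-perf}, which tells us that $(\ainf,(\xi))$ is a cover of the final object of the topos $\Shv((\calO_K)^{\perf}_{\Prism})$. Together with the fact that $\overline{\calO}_{\Prism}[1/p]$ satisfies $p$-complete faithfully flat descent (by \cite{Mat22}), this gives the standard descent interpretation of vector bundles as stratifications with respect to the \v{C}ech nerve $(A_{\perf}^{\bullet},(\xi))$. More precisely, I would first check that our Definition \ref{Dfn-A crystal} agrees with the ringed-topos definition of $\Vect$ (following \cite[Prop.~2.7]{BS23}, as was done implicitly in the proof of Prop. \ref{Prop-dRStratification}); then the descent statement yields the desired equivalence of categories.

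For bi-exactness, I would run the argument used in the proof of Prop. \ref{Prop-dRStratification} essentially verbatim. Given a short exact sequence $0\to \bM_1\to \bM_2\to \bM_3\to 0$ of Hodge--Tate crystals on $(\calO_K)^{\perf}_{\Prism}$, evaluation at the terms of $(A_{\perf}^{\bullet},(\xi))$ immediately produces a short exact sequence of stratifications. Conversely, given a short exact sequence of stratifications, one must check exactness of the evaluation at an arbitrary perfect prism $(A,I)$. By Lemma \ref{Lem-BKCover-perf} one chooses a cover $(A,I)\to (B,J)$ admitting a map from $(\ainf,(\xi))$; forming the \v{C}ech nerve $(B^\bullet, JB^\bullet)$ of this cover one obtains a morphism $(A_{\perf}^\bullet,(\xi))\to (B^\bullet, JB^\bullet)$ in the sense of cosimplicial prisms, and the exactness of the original sequence on $(B^m,JB^m)$ for $m=0,1,2$ follows. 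The vanishing of $\rH^1((A,I),\bM_1)$ (by fpqc descent, analogous to Lemma \ref{vansihing lemma}) then gives the required exactness at $(A,I)$.

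The only step requiring mild verification is that Lemma \ref{vansihing lemma} carries over to the perfect site with the rational Hodge--Tate structure sheaf; since its proof is via fpqc descent this is automatic. I do not foresee any real obstacle: the whole argument is a routine descent argument, and the main substance is packaged in Lemma \ref{Lem-BKCover-perf} and the fpqc descent result of \cite{Mat22}. The only genuinely new ingredient over the proof of Prop. \ref{Prop-dRStratification} is the use of the Fontaine prism in place of the Breuil--Kisin prism, which has already been arranged by Lemma \ref{Lem-BKCover-perf}.
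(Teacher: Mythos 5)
Your proposal is correct and follows exactly the route the paper takes: the paper's own proof is a one-line reduction to Lemma \ref{Lem-BKCover-perf}, with the descent argument and bi-exactness left implicit (the latter is explicitly omitted by the remark following Proposition \ref{Prop-dRStratification}). Your elaboration of the descent and bi-exactness steps simply spells out what the paper treats as routine.
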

\begin{proof}
  This follows from   Lemma  \ref{Lem-BKCover-perf}.
\end{proof}

  By Lemma \ref{crystal is stratification-II}, in order to give an explicit description of rational Hodge--Tate crystals on $(\calO_K)_{\Prism}^{\perf}$, we need to describe the cosimplicial ring $\overline \calO_{\Prism}[\frac{1}{p}](A_{\perf}^{\bullet},(\xi))$.
  
\begin{lemma}\label{galois descent}
  There is a canonical isomorphism of cosimplicial rings
  \[\overline \calO_{\Prism}[\frac{1}{p}](A_{\perf}^{\bullet})\cong \rC(G_K^{\bullet},C).\] 
  where for a topological ring $A$, $\rC(G_K^{\bullet},A)$ denotes the cosimplicial ring of continuous functions from $G_K^{\bullet}$ to $A$.
\end{lemma}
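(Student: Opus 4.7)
The plan is to identify $A_{\perf}^\bullet$ via the Bhatt--Scholze equivalence between the category of perfect prisms and the category of integral perfectoid rings (under $(A,I)\mapsto A/I$), and then use Galois descent. First, I would observe that under this equivalence, the Fontaine prism $(\ainf,(\xi))$ corresponds to $\calO_C$. Since the equivalence preserves coproducts and $A_{\perf}^n$ is the $(n+1)$-fold self-product of $\ainf$ in the perfect prismatic topos over the final object, $\baropris(A_{\perf}^n)=A_{\perf}^n/\xi$ is the $(n+1)$-fold coproduct of $\calO_C$ in the category of integral perfectoid $\calO_K$-algebras. Inverting $p$, this coproduct is the $(n+1)$-fold completed tensor product $C\hattimes_K \cdots \hattimes_K C$.

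Second (the main step), I would identify $C^{\hattimes_K (n+1)}$ with $\rC(G_K^n,C)$. The model case is $n=1$: for a finite Galois extension $K'/K$ with $G=\gal(K'/K)$, the classical identification $K'\otimes_K K'\cong \rC(G,K')$ via $a\otimes b\mapsto(g\mapsto a\cdot g(b))$ holds. Passing to the colimit over all finite Galois $K'/K$ inside $\overline K$ gives $\overline K \otimes_K \overline K$ identified with locally constant functions $G_K\to\overline K$, and $p$-adic completion then yields $C\hattimes_K C\cong \rC(G_K,C)$ since $G_K$ is profinite and $C$ is complete. The general case follows by iterating this identification, or equivalently by applying the same colimit-and-complete argument to the $(n+1)$-fold analogue
\[ (K')^{\otimes_K(n+1)}\cong \rC(G^n,K'),\quad a_0\otimes\cdots\otimes a_n \mapsto \bigl((g_1,\dots,g_n)\mapsto a_0\cdot g_1(a_1)\cdot g_1g_2(a_2)\cdots g_1\cdots g_n(a_n)\bigr). \]

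Third, I would check compatibility with the cosimplicial structure. Under the identifications above, the face maps $p_i$ of $A_{\perf}^\bullet$ (insertion into the $i$-th slot) translate into the standard face maps on $\rC(G_K^\bullet,C)$: the outer faces $p_0$ and $p_{n+1}$ correspond to projecting away the first or last factor of $G_K^{n+1}$ (with $C$-twisting by the projected group element in the case of $p_0$), and interior faces correspond to multiplying consecutive coordinates; the degeneracies $\sigma_i$ correspond to inserting the identity element $e\in G_K$ in the $i$-th position. This is a routine verification from the explicit formula above.

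The main obstacle is the second step, specifically ensuring that the finite-level identification $K'\otimes_K K'\cong \rC(G,K')$ extends \emph{continuously} and correctly to the perfectoid $p$-adic completion, yielding \emph{continuous} functions on the profinite group $G_K^n$ rather than just a dense subring. The technical input is that $G_K$ is profinite, $C$ is a Banach $K$-algebra, and continuous functions $G_K^n\to C$ are exactly the completion of locally constant functions into finite extensions $K'\subset C$; this matches the $p$-adic completion of $\varinjlim_{K'}(K')^{\otimes(n+1)}$ because each such function factors through a finite quotient of $G_K^n$ up to arbitrary $p$-adic precision.
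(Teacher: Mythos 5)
Your overall route is the same as the paper's: identify $\baropris(A_{\perf}^n)$ with the $(n{+}1)$-fold self-coproduct of $\calO_C$ over $\calO_K$ in perfectoid world, and then recognize the generic fibre as continuous functions on $G_K^n$ via the Galois-torsor structure of $C/K$. The paper packages the second step differently — it passes to adic spaces, uses that $\Spa(S^i[1/p],R^i)$ is the initial perfectoid space receiving $(i{+}1)$ arrows from $\Spa(C,\calO_C)$, and quotes the computation of the fibre product $X\times_K\cdots\times_K X\cong \Spa(\rC(G_K^i,C),\rC(G_K^i,\calO_C))$ (following Wu) — but the content is the same.

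However, your justification of the key identification $C\,\widehat{\otimes}_K\cdots\widehat{\otimes}_K\,C\cong \rC(G_K^n,C)$ has a genuine gap. The finite-level isomorphism $(K')^{\otimes_K(n+1)}\cong \rC(\Gal(K'/K)^n,K')$ and the density of locally constant functions in continuous ones only show that $\varinjlim_{K'}(K')^{\otimes(n+1)}$ is dense in $\rC(G_K^n,C)$ \emph{for the sup norm}. The completed tensor product, by contrast, is the completion with respect to the tensor-product seminorm, whose unit ball is the image of $\calO_{\overline K}^{\otimes_{\calO_K}(n+1)}$. At each finite level these two lattices differ by the discriminant of $K'/K$, which is unbounded as $K'$ grows through wildly ramified extensions, so "each function factors through a finite quotient up to arbitrary $p$-adic precision" does not by itself show the two completions agree. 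The statement is true, but the missing input is precisely Tate--Faltings almost purity (the different of $\overline K/K$ is almost zero, so the image of $\calO_{\overline K}\otimes_{\calO_K}\cdots\otimes_{\calO_K}\calO_{\overline K}$ contains $p\cdot\rC(G_K^n,\calO_{\overline K})$ up to completion), or equivalently the perfectoid-space torsor statement the paper cites. A related, smaller imprecision occurs in your first step: the coproduct of copies of $\calO_C$ in integral perfectoid rings is the \emph{perfectoidization} of the $p$-completed tensor product, not the completed tensor product itself; that these agree after inverting $p$ is again a consequence of the same almost-purity input, so it should not be asserted before that input is in place.
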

\begin{proof}
  This follows from similar arguments used in the proof of \cite[Lemma 5.3]{Wu21}, so we just give an outline here. For any $i\geq 0$, let $S^i$ be the perfectoidization of the $p$-complete tensor product of $(i+1)$-copies of $\calO_{C}$ over $\calO_K$, and then by \cite[Theorem 3.10]{BS22}, $A_{\perf}^{\bullet}/\xi = S^{\bullet}$. It is enough to show $S^{\bullet}[{1}/{p}] = C(G_K^{\bullet},C)$. Let $R^{i}$ be the $p$-complete normalization of $S^i$ in $S^i[{1}/{p}]$. Then $X^i: = \Spa(S^i[1/p],R^i)$ is the initial object in the category of perfectoid spaces over $\Spa(K,\calO_K)$ to which there are $(i+1)$-arrows from $X = X^0 = \Spa(C,\calO_{C})$. So we see that
  \[X^i \cong X\times_K\times\dots\times_KX\]
  is the fibre product of $(i+1)$-copies of $X$ over $K$, which turns out to be
  \[\Spa(C(G_K^i,C),C( G_K^i,\calO_{C})).\]
  This implies $S^{\bullet}[{1}/{p}] = C(G_K^{\bullet},C)$ as desired.
\end{proof}

\begin{rmk}
    In the proof of Proposition \ref{galois descent}, we have shown $ R^{\bullet} = \rC(G_K^{\bullet},\calO_C)$. The natural map $S^{\bullet}\to R^{\bullet}$ then induces a natural morphism of comsimplicial prisms $(A_{\perf}^{\bullet},(\xi))\to(\rC(G_K^{\bullet},\ainf),(\xi))$, by \cite[Theorem 3.10]{BS22}.
\end{rmk}

\begin{thm}\label{rational crystal as representation}
Let $\ast \in \{ \emptyset, \log\}$.
  Evaluation at $(A_{\inf},(\xi), \ast)$ induces a  bi-exact equivalence of categories
\[ \Vect((\calO_K)^{\perf}_{\Prism, \ast},\baropris[1/p])  \simeq \Rep_{G_K}(C)\]
\end{thm}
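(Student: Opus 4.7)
The plan is to chain together the three structural results already in hand, namely Proposition \ref{same perfect site}, Lemma \ref{crystal is stratification-II}, and Lemma \ref{galois descent}, so that the proof reduces to a formal descent statement for cosimplicial rings of continuous functions on $G_K$.

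First I would invoke Proposition \ref{same perfect site} to observe that the forgetful morphism $(\calO_K)^{\perf}_{\Prism,\log} \to (\calO_K)^{\perf}_{\Prism}$ is an equivalence of sites; consequently, the category of $\baropris[1/p]$-crystals does not depend on $\ast \in \{\emptyset,\log\}$, and it is enough to treat the unlogged site. Next, by Lemma \ref{crystal is stratification-II}, evaluation on the \v Cech nerve of the Fontaine prism gives a bi-exact equivalence
\[ \Vect((\calO_K)^{\perf}_{\Prism},\baropris[1/p]) \simeq \Strat(\baropris[1/p](A_{\perf}^{\bullet})), \]
and Lemma \ref{galois descent} identifies the target cosimplicial $C$-algebra with $\rC(G_K^{\bullet},C)$. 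Composing, I obtain a bi-exact equivalence
\[ \Vect((\calO_K)^{\perf}_{\Prism},\baropris[1/p]) \simeq \Strat(\rC(G_K^{\bullet},C)). \]

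It then remains to construct a bi-exact equivalence $\Strat(\rC(G_K^{\bullet},C)) \simeq \Rep_{\gk}(C)$. In one direction, given $W \in \Rep_{\gk}(C)$ — a finite-dimensional $C$-vector space with continuous semi-linear $\gk$-action — I define the stratification $(W,\varepsilon_W)$ by specifying $\varepsilon_W \colon W \otimes_{C,p_0} \rC(G_K,C) \to W \otimes_{C,p_1} \rC(G_K,C)$ pointwise by the formula $\varepsilon_W(g)(w) = g(w)$; the cocycle condition on $\rC(G_K^2,C)$ is the associativity identity $g_1(g_2 w) = (g_1 g_2)(w)$, and the unit condition is $e(w) = w$. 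Conversely, from a stratification $(W,\varepsilon)$ I recover a continuous $\gk$-action by $g \cdot w := \varepsilon(g)(w)$, which is well-defined and $C$-semi-linear because of how the face maps $p_0, p_1$ twist the $C$-module structures on $\rC(G_K,C)$. Continuity is automatic: after choosing a basis of the finite-dimensional $C$-vector space $W$, the datum of $\varepsilon$ becomes a continuous $\GL$-valued function on $G_K$. Finally, bi-exactness on both sides is trivial, since the categories involved are just categories of finite-dimensional $C$-vector spaces with additional structure and all constructions respect kernels and cokernels.

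The step requiring the most care — though not deep — is verifying that the explicit face and degeneracy maps on $\rC(G_K^{\bullet},C)$ arising from the \v Cech nerve of $(\ainf,(\xi))$ in Lemma \ref{galois descent} correspond, under my correspondence, precisely to the usual group-law face/degeneracy maps on the nerve of $G_K$. Once that identification is in place (which, as remarked after Lemma \ref{galois descent}, follows from the fact that $\Spa(C(G_K^i,C),C(G_K^i,\calO_C))$ represents $i{+}1$-fold fiber products of $\Spa(C,\calO_C)$ over $\Spa(K,\calO_K)$), the cocycle/associativity translation is purely formal, and the theorem follows.
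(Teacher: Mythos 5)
Your proposal is correct and follows essentially the same route as the paper: reduce to $\ast=\emptyset$ via Proposition \ref{same perfect site}, pass to stratifications over the \v Cech nerve of the Fontaine prism via Lemma \ref{crystal is stratification-II}, identify the cosimplicial ring with $\rC(G_K^{\bullet},C)$ via Lemma \ref{galois descent}, and conclude by Galois descent. The paper compresses the final descent step into a single phrase, whereas you unwind it explicitly (stratification $\leftrightarrow$ semi-linear action, cocycle condition $\leftrightarrow$ associativity, continuity from the matrix of $\varepsilon$), but the argument is the same.
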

\begin{proof}
By Prop. \ref{same perfect site}, it suffices to treat the case $\ast=\emptyset$. 
Apply Lemmas \ref{crystal is stratification-II} 
 and \ref{galois descent}, and apply Galois descent.
\end{proof}

 \subsection{Representations from crystals on the full (log-) prismatic site} \label{subsec_fullcrystal_rep}

Let $*\in\{\emptyset,\log\}$. Recall as in \S \ref{sectionHT}, let $a=-E'(\pi)$ for $*=\emptyset$ and $a = -\pi E'(\pi)$ for $* = \log$.

\begin{construction} \label{const_function_gk}
Let $\bm \in \Vect((\calO_K)_{\Prism,*},\overline \calO_{\Prism}[1/p])$. It restricts to a crystal $\bm^\perf$ on the perfect site, and hence corresponds to a representation $W \in \rep_\gk(C)$. We would like to describe 
the Galois action on $W$ using the stratification data corresponding to $\bm$.
Recall using the Breuil-Kisin cosimplicial (log-) prism, one obtains a stratification $(M, \varepsilon)$ and then a pair $(M,\phi_M)$ by Theorem \ref{Thm-HTCrystal} .
In particular, the stratification
 \[\varepsilon: M\otimes_{\ok,p_0} \frakS_{\ast}^{1}[1/p]/E\to M\otimes_{\ok,p_1}\frakS_{\ast}^{1}[1/p]/E\]
is given by sending any $x\in M$ to
\begin{equation}\label{eqsec7vare}
\varepsilon(x) = \sum_{n\geq 0} \left( \prod_{i=0}^{n-1} (\phi_M-ia)(x)\right) \cdot X_1^{[n]}.
\end{equation}
The stratification of  $\bm^{\mathrm{perf}}$ is precisely the base change of $(M, \varepsilon)$ along  \[ \frakS_{\ast}^{\bullet}[1/p]/E   \to    A_{\perf}^{\bullet}[1/p]/E.\] 
To understand this new stratification and hence the $G_K$-action on $W$, it suffices to determine the image of $X_1=X \in\calO_K\{X\}^{\wedge}_{\pd}\cong \frakS_*^1/(E)$ (cf. Proposition \ref{pd polynomial})  under the composite
\begin{equation} \label{eqcom59}
    \frakS_{\ast}^{1}[1/p]/E   \to   A_{\perf}^{1}[1/p]/E \simeq   \rC(G_K, C).
\end{equation}
 \end{construction}
  
 \begin{lemma}\label{image of X}
  Consider $X\in\calO_K\{X\}^{\wedge}_{\pd}\cong\frakS_*^1/(E)$ as an element in $\rC(G_K, C)$ via \eqref  {eqcom59}. For any $g\in G_K$, we have
\[X(g) = c(g)\frac{\pi E'(\pi)}{a}\theta(\frac{\xi}{E([\pi^{\flat}])})(\mu_1-1),\]
where $c(g)\in\bZ_p$ is determined by $g(\pi^{\flat}) = \epsilon^{c(g)}\pi^{\flat}$, and $\mu_1$ is the primitive $p$-th root of unity as in Notation \ref{exampleAinfprism}.
\end{lemma}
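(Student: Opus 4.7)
The plan is to directly evaluate $X$ under the composite map \eqref{eqcom59} by working on the Fontaine side. The map $\frakS_*^1/(E) \to A_{\perf}^1/\xi$ is induced by the universal property of self-coproducts applied to the Breuil--Kisin-to-Fontaine prism morphism $(\frakS,(E))\to(\ainf,(\xi))$, $u\mapsto[\pi^{\flat}]$: explicitly, $u_i\mapsto p_i([\pi^{\flat}])$ for $i=0,1$, where $p_0,p_1\colon\ainf\rightrightarrows A_{\perf}^1$ are the two structural maps. The subsequent identification $A_{\perf}^1/\xi[1/p]\simeq\rC(G_K,C)$ from the proof of Proposition \ref{galois descent} identifies one of these structural maps with the constant embedding $\oc\hookrightarrow\rC(G_K,\oc)$ and the other with the Galois-twisted one $x\mapsto(g\mapsto g(x))$; with the correct convention, functoriality of the Teichmüller lift combined with the relation $g(\pi^{\flat})=\epsilon^{c(g)}\pi^{\flat}$ gives $p_0([\pi^{\flat}])(g)=[\pi^{\flat}]$ and $p_1([\pi^{\flat}])(g)=g([\pi^{\flat}])=[\epsilon]^{c(g)}[\pi^{\flat}]$.

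The computational core is to extract the image of $X=(u_0-u_1)/E(u_0)\bmod E$ (resp.\ $X=(1-u_1/u_0)/E(u_0)\bmod E$ for $\ast=\log$), which amounts to computing the $\xi$-linear coefficient of $u_0-u_1$ (resp.\ $1-u_1/u_0$) in $A_{\perf}^1$ at the point $g$. For this I work modulo $\xi^2$ using the identity
\[ [\epsilon]-1=\xi\cdot([\epsilon]^{1/p}-1)\]
together with the Taylor-type expansion $[\epsilon]^{c(g)}\equiv 1+c(g)([\epsilon]-1)\pmod{([\epsilon]-1)^2}$, which holds for integer exponents by the binomial theorem and extends to $c(g)\in\bZ_p$ by continuity. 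This yields
\[ u_0-u_1\equiv -c(g)\,[\pi^{\flat}]\,\xi\,([\epsilon]^{1/p}-1)\pmod{\xi^2}\qquad(\ast=\emptyset),\]
and analogously $1-u_1/u_0\equiv-c(g)\,\xi\,([\epsilon]^{1/p}-1)\pmod{\xi^2}$ for $\ast=\log$.

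Writing $E([\pi^{\flat}])=\alpha\cdot\xi$ with $\alpha\in\ainf^{\times}$ (since $E([\pi^{\flat}])$ is distinguished), dividing by $\alpha\xi$ and applying $\theta$ (using $\theta([\pi^{\flat}])=\pi$ and $\theta([\epsilon]^{1/p})=\mu_1$) gives
\[ X(g)=-c(g)\,\pi\,(\mu_1-1)\,\theta\!\left(\tfrac{\xi}{E([\pi^{\flat}])}\right)\text{ for }\ast=\emptyset,\quad X(g)=-c(g)\,(\mu_1-1)\,\theta\!\left(\tfrac{\xi}{E([\pi^{\flat}])}\right)\text{ for }\ast=\log.\]
Substituting $a=-E'(\pi)$ (resp.\ $a=-\pi E'(\pi)$) identifies the leading scalar $-\pi$ (resp.\ $-1$) with $\pi E'(\pi)/a$ in each case, confirming the claimed unified formula.

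The main obstacle I anticipate is bookkeeping rather than substance: fixing consistently which of $p_0,p_1$ corresponds to the Galois-twisted embedding under the identification of Proposition \ref{galois descent} (this determines all overall signs and must be matched with the convention used to read a $G_K$-action off a stratification), and justifying the Taylor expansion of $[\epsilon]^{c(g)}$ for a $p$-adic exponent. Neither issue is deep; once the conventions are aligned, the verification reduces to the direct expansion modulo $\xi^2$ outlined above.
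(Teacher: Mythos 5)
Your proposal is correct and follows essentially the same route as the paper: identify $u_0(g)=[\pi^\flat]$ and $u_1(g)=g([\pi^\flat])=[\epsilon]^{c(g)}[\pi^\flat]$ via the identification from Lemma \ref{galois descent}, use the relation $[\epsilon]-1=\xi\,\varphi^{-1}([\epsilon]-1)$ to extract the $\xi$-linear part, and apply $\theta$. The only cosmetic difference is that you expand $[\epsilon]^{c(g)}$ by a binomial/Taylor series modulo $([\epsilon]-1)^2$, whereas the paper factors exactly through $\frac{1-[\epsilon]^{c(g)}}{[\epsilon]-1}$ and evaluates that quotient under $\theta$ to get $-c(g)$; both yield the same computation and the same constants in both the $\emptyset$ and $\log$ cases.
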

\begin{proof}
  We only treat the $* = \emptyset$ case since the $*=\log$ case is similar. 
  In this case, $X$ is the image of $\frac{u_0-u_1}{E(u_0)}$ modulo $E$. By the proof of Lemma \ref{galois descent},  as functions in $\rC(G_K,\ainf)$, $u_0(g) = [\pi^{\flat}]$ and $u_1(g) = g([\pi^{\flat}])$ for any $g\in G_K$. Therefore, we have
  \[X(g) = \frac{[\pi^{\flat}]-[\epsilon]^{c(g)}[\pi^{\flat}]}{E([\pi^{\flat}])} = \frac{1-[\epsilon]^{c(g)}}{[\epsilon]-1}[\pi^{\flat}] \frac{\xi}{E([\pi^{\flat}])}\varphi^{-1}([\epsilon]-1).\]
  Modulo $\xi$, we see $X(g) = -c(g)\pi\theta(\frac{\xi}{E([\pi^{\flat}])})(\mu_1-1)$ as desired.
  Note here $\theta(\frac{\xi}{E([\pi^{\flat}])}) \in \o_C^\times$ is a unit.
\end{proof}

  \begin{prop}\label{Prop-MatrixCocycle}
    Use notations in Construction \ref{const_function_gk}.
The morphism \[(\gs, (E), \ast) \to (\ainf, (\xi), \ast)\] induces an identification
\[ M\otimes_{K}C \xrightarrow{\simeq} W.\]
    For $g\in G_K$, its semi-linear action on $W$ is   determined such that for any $x\in M$,    \begin{equation}\label{Equ-MatrixCocycle}
        g(x) = \sum_{n\geq 0} \left( \left(\prod_{i=0}^{n-1}(\phi_M- ia)(x)\right) \cdot \left( c(g) \frac{\pi E'(\pi)}{a}\theta(\frac{\xi}{E([\pi^{\flat}])})(\mu_1-1)  \right)^{[n]}\right).
    \end{equation}
  \end{prop}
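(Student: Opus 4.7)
The plan is to chase the stratification data through the maps between the Breuil--Kisin (log-) prism and the Fontaine (log-) prism, and then translate the descent data on the perfect site into a cocycle for $G_K$ via Lemma \ref{galois descent}.

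\textbf{Step 1: The $C$-module identification.} Since $\bm$ is a crystal of $\overline{\calO}_{\Prism}[1/p]$-modules, the morphism of (log-) prisms $(\gs,(E),\ast)\to(\ainf,(\xi),\ast)$ in Notation \ref{notaBKlogprism} induces a canonical isomorphism
\[
W=\bm((\ainf,(\xi),\ast)) \;\cong\; M\otimes_{\gs/E[1/p]} \ainf/\xi[1/p] \;=\; M\otimes_K C,
\]
where the last equality uses $\gs/E = \calO_K$ and $\ainf/\xi = \calO_C$. This gives the first assertion and, in particular, a distinguished $K$-form of $W$.

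\textbf{Step 2: Translating the $G_K$-action into a stratification.} Let $\bm^{\perf}$ be the restriction of $\bm$ to the perfect (log-) prismatic site. By Theorem \ref{rational crystal as representation} combined with Lemma \ref{crystal is stratification-II} and Lemma \ref{galois descent}, the stratification on $\bm^{\perf}$ can be viewed as an $A_{\perf}^{1}[1/p]/E \simeq \rC(G_K,C)$-linear isomorphism
\[
\varepsilon_{\perf}\colon W\otimes_{C,p_0}\rC(G_K,C)\isoto W\otimes_{C,p_1}\rC(G_K,C),
\]
where, under the identification $A_{\perf}^{1}[1/p]/E \simeq \rC(G_K,C)$, the face map $p_0$ is the constant embedding and $p_1$ sends $c\mapsto (g\mapsto g(c))$. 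Unpacking the Galois-descent equivalence of Theorem \ref{rational crystal as representation}, the action $g\cdot x$ of $g\in G_K$ on $x\in W$ is precisely the value at $g$ of the function $\varepsilon_{\perf}(x)\in W\otimes_C\rC(G_K,C)$.

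\textbf{Step 3: Base change from the full site.} Since $\bm^{\perf}$ is the restriction of $\bm$, its stratification $\varepsilon_{\perf}$ is obtained from the Breuil--Kisin stratification $\varepsilon$ of \eqref{eqsec7vare} by base change along the cosimplicial map $\gs_{\ast}^{1}[1/p]/E\to A_{\perf}^{1}[1/p]/E \simeq \rC(G_K,C)$ appearing in \eqref{eqcom59}. For $x\in M\subset W$, this yields, as a function of $g\in G_K$,
\[
g(x)\;=\;\varepsilon_{\perf}(x)(g)\;=\;\sum_{n\geq 0}\Bigl(\prod_{i=0}^{n-1}(\phi_M-ia)(x)\Bigr)\cdot X(g)^{[n]},
\]
where $X(g)$ is the image of the generator $X_1\in\calO_K\{X_1\}^{\wedge}_{\pd}\simeq \gs_{\ast}^{1}/E$ in $\rC(G_K,C)$ evaluated at $g$.

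\textbf{Step 4: Substitute Lemma \ref{image of X}.} Plugging in
\[
X(g)=c(g)\,\frac{\pi E'(\pi)}{a}\,\theta\!\left(\tfrac{\xi}{E([\pi^{\flat}])}\right)(\mu_1-1)
\]
from Lemma \ref{image of X} yields exactly formula \eqref{Equ-MatrixCocycle}. Semi-linearity on all of $W=M\otimes_K C$ follows because $\varepsilon_{\perf}$ is $\rC(G_K,C)$-linear and $g$ acts on $C$ by its natural Galois action.

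\textbf{Main obstacle.} The only non-formal input is the identification in Step 2 between the descent data $\varepsilon_{\perf}$ and the $G_K$-action on $W$ under the correct normalization of $p_0$ versus $p_1$; once this is pinned down, the remaining steps are a direct substitution using the axiomatic formula \eqref{eqsec7vare} of Theorem \ref{Thm-AxiomHT} and the explicit computation of $X(g)$ in Lemma \ref{image of X}.
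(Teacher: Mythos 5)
Your proposal is correct and follows essentially the same route as the paper: the paper's Construction \ref{const_function_gk} sets up exactly your Steps 1--3 (identification via the morphism of prisms, base change of the Breuil--Kisin stratification to $A_{\perf}^{\bullet}[1/p]/E\simeq \rC(G_K,C)$, and reading off the $G_K$-action from the resulting descent datum), and the paper's proof is then the one-line substitution of Lemma \ref{image of X} into \eqref{eqsec7vare}, which is your Step 4.
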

  \begin{proof} One simply plugs Lemma \ref{image of X} into \eqref{eqsec7vare}. 
  \end{proof}

\section{Locally analytic vectors: a review}\label{seclav}
 
 In this section, we review some results from \cite{GP21, Gao23}. We recall the notion of locally analytic vectors, and define some differential operators; these operators will be  \emph{specialized} to the Sen theory setting in \S \ref{seckummersen} and will be crucially used there.
 In \S \ref{subsecreviewOC}, we include a brief review of \emph{overconvergent $(\varphi, \tau)$-modules} which serve as motivation for the discussions in this section and \S \ref{seckummersen}.

\subsection{Locally analytic vectors}

Let us very quickly recall the theory of locally analytic vectors, see \cite[\S 2.1]{BC16} and \cite[\S 2]{Ber16} for more details.  
Recall the multi-index notations: if $\cbf = (c_1, \hdots,c_d)$ and $\kbf = (k_1,\hdots,k_d) \in \mathbb{N}^d$ (here $\mathbb{N}=\mathbb{Z}^{\geq 0}$), then we let $\cbf^\kbf = c_1^{k_1} \cdot \ldots \cdot c_d^{k_d}$. Recall that a $\Qp$-Banach space $W$ is a $\Qp$-vector space with a complete non-Archimedean  norm $\|\cdot\|$ such that $\|aw\|=\|a\|_p\|w\|, \forall a \in \Qp, w \in W$, where $\|a\|_p$ is the   $p$-adic norm on $\Qp$.

\begin{defn}\label{defLAV}
\begin{enumerate}
\item 
Let $G$ be a $p$-adic Lie group, and let $(W, \|\cdot \|)$ be a $\Qp$-Banach representation of $G$.
Let $H$ be an open subgroup of $G$ such that there exist coordinates $c_1,\hdots,c_d : H \to \Zp$ giving rise to an analytic bijection $\cbf : H \to \Zp^d$.
 We say that an element $w \in W$ is an $H$-analytic vector if there exists a sequence $\{w_\kbf\}_{\kbf \in \mathbb{N}^d}$ with $w_\kbf \to 0$ in $W$, such that $$g(w) = \sum_{\kbf \in \mathbb{N}^d} \cbf(g)^\kbf w_\kbf, \quad \forall g \in H.$$
Let $W^{H\dan}$ denote the space of $H$-analytic vectors.

\item $W^{H\dan}$ injects into $\mathcal{C}^{\an}(H, W)$ (the space of analytic functions on $H$ valued in $W$), and we endow it with the induced norm, which we denote as $\|\cdot\|_H$. We have $\|w\|_H=\sup_{\kbf \in \mathbb{N}^d}\|w_{\kbf}\|$, and $W^{H\dan}$ is a Banach space.

\item 
We say that a vector $w \in W$ is \emph{locally analytic} if there exists an open subgroup $H$ as above such that $w \in W^{H\dan}$. Let $W^{G\dla}$ denote the space of such vectors. We have $W^{G\dla} = \cup_{H} W^{H\dan}$ where $H$ runs through  open subgroups of $G$. We can endow $W^{\la}$ with the inductive limit topology, so that $W^{\la}$ is an LB space.

\item We can naturally extend these definitions to the case when $W$ is a Fr\'echet- or LF- representation of $G$, and use $W^{G\dpa}$ to denote the \emph{pro-analytic} vectors, cf. \cite[\S 2]{Ber16}.
\end{enumerate}
\end{defn}

\begin{Notation} \label{nota hatG}
We defined $\hat{G}=\gal(L/K)$ in Notation \ref{notafields}.
Note $\hat{G}=\gal(L/K)$ is a $p$-adic Lie group of dimension 2. We recall the structure of this group in the following.
\begin{enumerate}
\item Recall that:
\begin{itemize}
\item if $K_{\infty} \cap K_{p^\infty}=K$ (always valid when $p>2$, cf. \cite[Lem. 5.1.2]{Liu08}), then $\gal(L/K_{p^\infty})$ and $\gal(L/K_{\infty})$ generate $\hat{G}$;
\item if $K_{\infty} \cap K_{p^\infty} \neq K$, then necessarily $p=2$, and $K_{\infty} \cap K_{p^\infty}=K(\pi_1)$ (cf. \cite[Prop. 4.1.5]{Liu10}) and $\pm i \notin K(\pi_1)$, and hence $\gal(L/K_{p^\infty})$ and $\gal(L/K_{\infty})$   generate an open subgroup  of $\hat{G}$ of index $2$.
\end{itemize}

\item Note that:
\begin{itemize}
\item $\gal(L/K_{p^\infty}) \simeq \Zp$, and let
$\tau \in \gal(L/K_{p^\infty})$ be \emph{the} topological generator such that
\begin{equation} \label{eq1tau}
\begin{cases}
\tau(\pi_i)=\pi_i\mu_i, \forall i \geq 1, &  \text{if }  \Kinfty \cap \Kpinfty=K; \\
\tau(\pi_i)=\pi_i\mu_{i-1}=\pi_i\mu_i^2, \forall i \geq 2, & \text{if }  \Kinfty \cap \Kpinfty=K(\pi_1).
\end{cases}
\end{equation}

\item $\gal(L/K_{\infty})$ ($\subset \gal(K_{p^\infty}/K) \subset \Zp^\times$) is not necessarily pro-cyclic when $p=2$; however, this issue will \emph{never} bother us.
\end{itemize}
\item If we let $\Delta \subset \gal(L/K_{\infty})$ be the torsion subgroup, then $\gal(L/K_{\infty})/\Delta$  is pro-cyclic; choose $\gamma' \in \gal(L/K_{\infty})$ such that its image in $\gal(L/K_{\infty})/\Delta$ is a topological generator.
Let $\hat{G}_n \subset \hat{G}$ be the subgroup topologically generated by $\tau^{p^n}$ and $(\gamma')^{p^n}$. 
\end{enumerate}
\end{Notation}

\begin{Notation} \label{notataula}
We set up some notations with respect to representations of $\hat{G}$.
\begin{enumerate}
\item Given a $\hat{G}$-representation $W$, we use
$$W^{\tau=1}, \quad W^{\gamma=1}$$
to mean $$ W^{\gal(L/K_{p^\infty})=1}, \quad
W^{\gal(L/K_{\infty})=1}.$$
And we use
$$
W^{\tau\dla},   \quad  W^{\gamma\dla} $$
to mean
$$
W^{\gal(L/K_{p^\infty})\dla}, \quad  
W^{\gal(L/K_{\infty})\dla}.  $$

\item  Let
$W^{\tau\dla, \gamma=1}:= W^{\tau\dla} \cap W^{\gamma=1},$
then by \cite[Lem. 3.2.4]{GP21}
$$ W^{\tau\dla, \gamma=1} \subset  W^{\hat{G}\dla}. $$
\end{enumerate}
\end{Notation}

\begin{rem}
Note that we never define $\gamma$ to be an element of $\gal(L/K_\infty)$; although when $p>2$ (or in general, when $\gal(L/K_\infty)$ is pro-cyclic), we could have defined it as a topological generator of $\gal(L/K_\infty)$. In particular, although ``$\gamma=1$" might be slightly ambiguous (but only when $p=2$), we use the notation for brevity.
\end{rem}

\begin{notation} \label{notalieg}
For $g\in \hat{G}$, let $\log g$ denote  the (formally written) series $(-1)\cdot \sum_{k \geq 1} (1-g)^k/k$. Given a $\hat{G}$-locally analytic representation $W$, the following two Lie-algebra operators (acting on $W$) are well defined:
\begin{itemize}
\item  for $g\in \gal(L/\kinfty)$ enough close to 1, one can define $\nabla_\gamma := \frac{\log g}{\log(\chi_p(g))}$;
\item for $n \gg 0$ hence $\tau^{p^n}$ enough close to 1, one can define $\nabla_\tau :=\frac{\log(\tau^{p^n})}{p^n}$.
\end{itemize}
Clearly, these two Lie-algebra operators form a $\qp$-basis of $\Lie(\hat{G}$).
\end{notation}

\subsection{Review of overconvergent $(\varphi, \tau)$-modules}\label{subsecreviewOC}
In this subsection, we  quickly recall some ideas and results in \cite{GL20, GP21} on \emph{overconvergent $(\varphi, \tau)$-modules}, which are certain ``decompleted" modules defined using the Kummer tower $\kinfty$. 
We emphasize that these constructions   \emph{motivate} much of the definitions and computations of Sen theory over the Kummer tower in \S \ref{seckummersen}.
Indeed, one can compare the formulae in Construction \ref{consreviewphitau} with those in Thm. \ref{thm331kummersenmod}.

Recall in (algebraic) $p$-adic Hodge theory, we study representations of the Galois group $G_K$. A key idea   is to first restrict the representations to some subgroups of $G_K$. A most convenient subgroup is $G_\kpinfty$ since it is normal and the quotient group $\Gamma_K$ is a $1$-dimensional \emph{$p$-adic Lie group}, to which many techniques in $p$-adic analysis --- e.g.  locally analytic vectors --- can be applied. We briefly review the following:

\begin{construction}
Let $V\in \rep_\gk(\qp)$, the famous theorem of Cherbonnier-Colmez \cite{CC98}  says that one can attach an \emph{overconvergent $(\varphi, \Gamma)$-module} to it. To save space, we recall the following (loose) formula which constructs the \emph{rigid-overconvergent} $(\varphi, \Gamma)$-module (cf. \S \ref{subsecwtbi} for Berger's well-known ring $\wtb_{\rig}^\dagger$):
\begin{equation}\label{eqkpinftyd}
D_{\rig, \kpinfty}^\dagger(V)  \approx \left((V\otimes_\qp \wtb_{\rig}^\dagger)^{G_\kpinfty}\right)^{\Gamma_K\dla}.
\end{equation}
 (For the interested reader, the correct formula  replaces left hand side by the union  $\cup_n \varphi^{-n}(D_{\rig, \kpinfty}^\dagger(V) )$, cf. \cite[Thm. 8.1]{Ber16};  applying Kedlaya's slope filtration theorem \cite{Ked05} to these objects then recovers Cherbonnier-Colmez's theorem).
 \end{construction}


\begin{construction}\label{consreviewphitau}
Consider the subgroup $G_\kinfty$ of $G_K$ constructed using the Kummer tower.  
The field $\kinfty$ (similar to $\kpinfty$) is still an APF extension (cf. \cite{Win83}), hence Caruso \cite{Car13} shows one can   construct the \'etale $(\varphi, \tau)$-modules (similar to  the \'etale $(\varphi, \Gamma)$-modules) to classify Galois representations.
In two papers joint with Liu and Poyeton respectively, \cite{GL20, GP21}, we prove a conjecture of Caruso, which says that similar to Cherbonnier-Colmez's theorem \cite{CC98}, one can also attach \emph{overconvergent $(\varphi, \tau)$-modules} to $V\in \rep_\gk(\qp)$. The proof in \cite{GP21} uses crucially the idea of locally analytic vectors, indeed, the main formula (again loosely written) is the following.
\begin{equation}\label{eqkinftyd}
D_{\rig, \kinfty}^\dagger(V) \approx \left((V\otimes_\qp \wtb_{\rig}^\dagger)^{G_L}\right)^{\gamma=1,\tau\dla}.
\end{equation}
(Similar comments below \eqref{eqkpinftyd} still apply, cf. \cite{GP21} for full details.)  
A key difference between \eqref{eqkinftyd} and \eqref{eqkpinftyd} is that we \emph{can not} take $G_\kinfty$-invariants first, because $G_\kinfty \subset G_K$ is not normal. This fact complicates the computation for \eqref{eqkinftyd}. Nonetheless, we still manage to show it is a free module of full rank, and furthermore, there is a ``comparison" isomorphism with  overconvergent $(\varphi, \Gamma)$-modules:
\begin{equation}
D_{\rig, \kinfty}^\dagger(V) \otimes \left((  \wtb_{\rig}^\dagger)^{G_L}\right)^{\hat{G}\dla}
\simeq D_{\rig, \kpinfty}^\dagger(V) \otimes \left((  \wtb_{\rig}^\dagger)^{G_L}\right)^{\hat{G}\dla}.
\end{equation} 
 \end{construction}


We shall see many formulae in \S \ref{seckummersen} resemble those discussed above.

\subsection{Locally analytic vectors in $\wtb^I$}\label{subsecwtbi}

We recall some rings and operators from the  study of $(\varphi,\tau)$-modules in \cite{GP21, Gao23}; they will be used in  \S \ref{seckummersen}.

\begin{notation}
We briefly recall the rings $\wt{\mathbf{A}}^{I}$ and  $\wt{\mathbf{B}}^{I}$, see \cite[\S 2]{GP21} for detailed discussions, also see \cite[\S 2]{Gao23} for a faster summary. Recall   we have an embedding $\gs \into \ainf$;  henceforth we simply identify $u$ with its image $[\pi^\flat]$.
\begin{enumerate}
   \item For $n \geq 0$, let $r_n: =(p-1)p^{n-1}$.
Let $\wt{\mathbf{A}}^{[r_\ell, r_k]}$ be the $p$-adic completion of $ \wt{\mathbf{A}}^+ [\frac{p}{u^{ep^\ell}} , \frac{u^{ep^k}}{p}]$, and let 
$$\wt{\mathbf{B}}^{[r_\ell, r_k]}: =\wt{\mathbf{A}}^{[r_\ell, r_k]}[1/p].$$
These spaces are equipped with $p$-adic topology.
  \item  When $I  \subset J$ are two closed intervals as above, then by \cite[Lem. 2.5]{Ber02}, there exists a natural (continuous) embedding
$\wt{\mathbf{B}}^{J}   \hookrightarrow  \wt{\mathbf{B}}^{I}$. Hence we can define the nested intersection
$$\wt{\mathbf{B}}^{[r_\ell, +\infty)}: = \bigcap_{k \geq \ell} \wt{\mathbf{B}}^{[r_\ell, r_k]},$$
and equip it with the natural Fr\'echet  topology. 
  \item  Finally, let 
$$\wt{\mathbf{B}}_{  \rig}^{\dagger}: = \bigcup_{n \geq 0} \wt{\mathbf{B}}^{[r_n, +\infty)},$$
 which is a LF space.

\end{enumerate} 
\end{notation}

\begin{convention}
When $Y$ is a ring with a $G_K$-action, $X \subset \overline{K}$ is a subfield, we use $Y_X$ to denote the $\gal(\overline{K}/X)$-invariants of  $Y$.   Examples include when $Y=\wt{\mathbf{A}}^{I}, \wt{\mathbf{B}}^{I}$ and $X=L, K_\infty$. This ``style of notation" imitates that of \cite{Ber02}, which uses the subscript $\ast_{K}$ to denote $G_{\kpinfty}$-invariants.
\end{convention}

\begin{defn} \label{defnfkt}
(cf. \cite[\S 5.1]{GP21} for full details).
Recall we have an element $[\epsilon] \in \ainf$  in Notation \ref{exampleAinfprism}.
Let $t=\log([\epsilon])  \in \bcrisplus$ be the usual element, where $\bcrisplus$ is the usual crystalline period ring.
Define the element
\[
\lambda :=\prod_{n \geq 0} (\varphi^n(\frac{E(u)}{E(0)}))  \in \bcrisplus.\]
Define
$$ \mathfrak{t} = \frac{t}{p\lambda},$$
then it turns out $\mathfrak{t} \in \ainf$.
  \end{defn}

\begin{lemma} \label{lem b}
\cite[Lem. 5.1.1]{GP21}
There exists some $n=n(\fkt) \geq 0$, such that $\mathfrak{t}, 1/\mathfrak{t} \in 
  \wt{\mathbf{B}}^{[r_n, +\infty)}$. In addition, $\mathfrak{t}, 1/\mathfrak{t} \in
 (\wt{\mathbf{B}}^{[r_n, +\infty)}_{ L})^{\hat{G}\dpa}$.
\end{lemma}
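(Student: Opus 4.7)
The plan is to split the claim into three parts: containment of $\mathfrak{t}$ in the overconvergent ring, invertibility of $\mathfrak{t}$ there, and pro-analyticity. First I would observe that the containment $\mathfrak{t} \in \wt{\mathbf{B}}^{[r_n, +\infty)}$ is essentially automatic: the definition already provides $\mathfrak{t} \in \ainf = \wt{\mathbf{A}}^+$, and $\wt{\mathbf{A}}^+$ embeds into every $\wt{\mathbf{A}}^{[r_\ell,r_k]}$, so $\mathfrak{t} \in \wt{\mathbf{B}}^{[r_n,+\infty)}$ for all $n\geq 0$. No choice of $n$ is needed here; the constraint comes from the inverse.

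The heart of the argument, and what I expect to be the main obstacle, is to locate an $n$ for which $1/\mathfrak{t} \in \wt{\mathbf{B}}^{[r_n,+\infty)}$. The issue is that although $\mathfrak{t}$ has no ``poles'' (being in $\ainf$), it could \emph{a priori} have zeros cutting into the given annulus. My approach would be to identify the zero divisors of numerator and denominator on the Fargues--Fontaine side: the element $t=\log[\epsilon]$ vanishes along the Frobenius orbit of the cyclotomic point, while $p\lambda$, via the recursion $\lambda = (E(u)/E(0))\cdot\varphi(\lambda)$ combined with $E(0) = p\cdot (\text{unit of }W(k))$, vanishes along the Frobenius orbit of the Kummer point. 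Since the ratio $\mathfrak{t}$ already lies in $\ainf$, these two divisors must \emph{cancel on the nose}, and this geometric fact is exactly what forces $\mathfrak{t}$ to be a unit section on every sufficiently outer annulus $[r_n,+\infty)$. Turning this picture into a proof requires either a slope/Newton-polygon computation inside each $\wt{\mathbf{B}}^{[r_\ell,r_k]}$, or a direct series manipulation: write $t = \mu\cdot (\sum_{k\geq 0}(-\mu)^k/(k+1))$ with $\mu=[\epsilon]-1$, show the bracketed series is a unit in $\wt{\mathbf{B}}^{[r_n,+\infty)}$ for $n$ large, and massage $p\lambda$ similarly so that the ``$\mu$-factor'' coming from $\xi$ cancels. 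I would pursue the latter route since it also outputs the explicit $n=n(\mathfrak{t})$ used downstream.

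For the pro-analyticity statement the starting point is that $G_L$ fixes both $\pi^\flat$ and $\epsilon$, hence fixes $u$, $E(u)$, $\lambda$, and $t=\log[\epsilon]$, so automatically $\mathfrak{t},1/\mathfrak{t}\in \wt{\mathbf{B}}^{[r_n,+\infty)}_L$. Since the set of $\hat G$-pro-analytic vectors is a ring, it suffices to check pro-analyticity for $t$ and for $\lambda$ individually. For $t$ this is immediate from $g(t)=\chi_p(g)\cdot t$ together with local analyticity of $\chi_p$. For $\lambda$, one uses that $\gal(L/\kinfty)$ acts trivially on $u$ hence on $\lambda$, while $\tau\in\gal(L/\kpinfty)$ acts on $u$ by multiplication by an element of the ``cyclotomic'' part whose pro-analyticity is standard in the $(\varphi,\tau)$-theory; the product formula for $\lambda$ then yields pro-analyticity by termwise estimates in the Fr\'echet structure on $\wt{\mathbf{B}}^{[r_n,+\infty)}_L$. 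Once both $\mathfrak{t}$ and the ambient ring element $\mathfrak{t}$ are pro-analytic units, $1/\mathfrak{t}$ is pro-analytic for free, completing the plan.
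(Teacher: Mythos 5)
First, a point of reference: the paper does not prove this lemma at all --- it is imported verbatim from \cite[Lem.~5.1.1]{GP21} --- so you are supplying an argument where the authors supply only a citation. Your overall architecture (containment of $\mathfrak t$ is trivial since $\mathfrak t\in\ainf\subset\wt{\mathbf A}^{[r_\ell,r_k]}$; invertibility is the crux; pro-analyticity reduces to $t$ and $\lambda$ separately, with $t$ handled by the cyclotomic character and $\lambda$ by the product formula and the $\tau$-action on $u$, and $1/\mathfrak t$ handled by closure of pro-analytic vectors under inversion of units) is sound and is essentially how the cited proof is organized.

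The gap is in your justification of the central step. The inference ``since the ratio $\mathfrak t$ already lies in $\ainf$, these two divisors must cancel on the nose'' is false: elements of $\ainf$ routinely have nontrivial zero loci on these annuli ($\xi$ and $\mu=[\epsilon]-1$ being the basic examples), and in fact the divisors do \emph{not} cancel. The zero locus of $t$ is the full Frobenius orbit of evaluation points $\{\theta\circ\varphi^{m}\}_{m\in\mathbb Z}$, whereas $p\lambda=\prod_{n\ge0}\varphi^n(pE(u)/E(0))$ accounts only for the half $m\le 0$; consequently $\theta(\varphi^{k}(\mathfrak t))=0$ for every $k\ge 1$ (immediate from $\varphi(\mathfrak t)=\tfrac{pE(u)}{E(0)}\mathfrak t$ and $\theta(E(u))=0$), so $\mathfrak t$ genuinely vanishes along half the orbit. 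The correct reason $1/\mathfrak t\in\wt{\mathbf B}^{[r_n,+\infty)}$ is that these residual zeros sit at the points $\iota_{-k}$, $k\ge1$, i.e.\ at radii $r_{-k}<r_0$, hence \emph{outside} the annulus; one then still needs the B\'ezout/Newton-polygon structure of each $\wt{\mathbf B}^{[r_n,r_k]}$ (nonvanishing implies unit) plus passage to the Fr\'echet limit. Worse, the fallback you say you would actually pursue --- factor $t=\mu\cdot\bigl(\sum_{k\ge0}(-\mu)^k/(k+1)\bigr)$ and show the bracketed series is a unit in $\wt{\mathbf B}^{[r_n,+\infty)}$ for $n$ large --- fails outright: $\theta\bigl(\varphi^{-m}(t/\mu)\bigr)=\theta\bigl(p^{-m}t\bigr)/\theta\bigl([\epsilon^{1/p^m}]-1\bigr)=0/(\mu_m-1)=0$ for every $m\ge1$, so $t/\mu$ vanishes at infinitely many points of \emph{every} annulus $[r_n,+\infty)$ and is never a unit there. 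So one of your two routes rests on a wrong divisor computation and the other is a dead end; the divisor/Newton-polygon route can be repaired, but as written the key step is not established.
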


Let us caution that $\mathfrak t$ is an element of (the Banach space) $(\ainf)^{G_L}$, but it is \emph{not} a  {locally analytic vector} inside it; roughly speaking, we need the bigger spaces  $\wt{\mathbf{B}}^{I}_L$  (which contains $t$, the $p$-adic $2\pi i$) to take ``derivatives".

\begin{defn} \label{defndiffwtb}
We define two differential operators on the ring $(\wt{\mathbf{B}}_{  \rig, L}^{\dagger})^{\hat{G}\dpa}$, which are ``normalized" operators of those in Notation \ref{notalieg}. 
\begin{enumerate}
\item (cf. \cite[\S 4]{Gao23}). Define
$$N_\nabla: (\wt{\mathbf{B}}_{  \rig, L}^{\dagger})^{\hat{G}\dpa} \to (\wt{\mathbf{B}}_{  \rig, L}^{\dagger})^{\hat{G}\dpa}$$ 
by setting
\begin{equation}\label{eqnnring}
{N_\nabla:=}
\begin{cases} 
\frac{1}{p\mathfrak{t}}\cdot \nabla_\tau, &  \text{if }  \Kinfty \cap \Kpinfty=K; \\
& \\
\frac{1}{p^2\mathfrak{t}}\cdot \nabla_\tau=\frac{1}{4\mathfrak{t}}\cdot \nabla_\tau, & \text{if }  \Kinfty \cap \Kpinfty=K(\pi_1), \text{ cf. Notation \ref{nota hatG}. }
\end{cases}
\end{equation}
 Note that $1/\mathfrak t$ is in $ (\wt{\mathbf{B}}^\dagger_{\rig, L})^{\hat{G}\dpa}$ by Lem \ref{lem b}, hence  division by $\fkt$ is allowed.
 A convenient and useful fact is that $N_\nabla$ commutes with $\gal(L/\kinfty)$, i.e., $gN_\nabla=N_\nabla g, \forall g\in \gal(L/\kinfty)$, cf. \cite[Eqn. (4.2.5)]{Gao23}.
 
\item (cf.   \cite[5.3.4]{GP21}.)
Define 
$$\partial_{\gamma}: (\wt{\mathbf{B}}_{  \rig, L}^{\dagger})^{\hat{G}\dpa} \to (\wt{\mathbf{B}}_{  \rig, L}^{\dagger})^{\hat{G}\dpa}$$ 
via
$$\partial_{\gamma}:=\frac{1}{\mathfrak t}\nabla_{\gamma}.$$
Since $\gamma(\mathfrak t) = \chi(\gamma) \cdot \mathfrak t$, we have $\nabla_{\gamma}(\mathfrak t) =\mathfrak t $ and hence
\[
\partial_{\gamma}(\mathfrak t)  = 1.
\]
\end{enumerate}
\end{defn}

\begin{rem}\label{remcompaKis}
We mention some remarks about  $N_\nabla$ that are not used in the sequel.
\begin{enumerate}
\item The   $p$ (resp. $p^2$) in the denominator of \eqref{eqnnring} makes  our monodromy operator compatible with earlier theory of Kisin in \cite{Kis06}, but \emph{up to a minus sign}. See also \cite[1.4.6]{Gao23} for general convention of minus signs in that paper.

\item The operator $N_\nabla$ in fact restricts to an operator
$$
N_\nabla: \B_{\rig, \kinfty}^\dagger \to \B_{\rig, \kinfty}^\dagger, 
$$
where $\B_{\rig, \kinfty}^\dagger$ is the Robba ring in the $(\varphi, \tau)$-module setting, cf. \cite[\S 4]{Gao23}.
 
\end{enumerate}
\end{rem}

\section{Sen theory over Kummer tower}
\label{seckummersen}

In this section, we  construct Sen theory over the non-Galois Kummer tower $\kinfty$, via the theory of locally analytic vectors reviewed in \S \ref{seclav}.
We first review classical (cyclotomic) Sen theory. We then compute the set of locally analytic vectors $(\hat{L})^{\hat{G}\dla}$; this is used as a \emph{bridge} transporting the $\kpinfty$-Sen theory to the $\kinfty$-Sen theory.
To be more precise, in \S \ref{subsecKS}, we define the $\kinfty$-Sen module $D_{\Sen, \kinfty}(W)$, and define the $\kinfty$-Sen operator $\frac{1}{\theta(u\lambda')}\cdot N_\nabla$. This operator, when linearly extended over $C$ (in fact, $(\hat L)^{\hat G \dla}$ is enough), becomes the \emph{same} as the classical Sen operator.

 \subsection{Cyclotomic tower and Sen theory}\label{subseccycSen}
 
 We recall the usual category of semi-linear representations.
 \begin{defn}\label{defsemilinrep}
 Suppose $\mathcal G$ is a topological group that acts continuously on a topological ring $R$. We use $\rep_{\mathcal G}(R)$ to denote the category where an object is a finite free $R$-module $M$ (topologized via the topology on $R$) equipped with a continuous and \emph{semi-linear} $\mathcal G$-action in the usual sense that
$$g(rx)=g(r)g(x), \forall g\in \mathcal G, r \in R, x\in M.$$
\end{defn}
 
 Recall $\kpinfty$ is defined in Notation \ref{notafields}.
 Let $\hatkpinfty$ be its $p$-adic completion.


 \begin{theorem}
 Base change functors induce  bi-exact  equivalences of categories (cf. Def. \ref{defsemilinrep})
 \begin{equation*}
 \rep_\gammak(\kpinfty)\simeq \rep_\gammak(\hatkpinfty) \simeq \rep_\gk(C).
 \end{equation*}
 Here, given $W\in \rep_\gk(C)$, the corresponding object in $\rep_\gammak(\hatkpinfty)$ is $W^{G_\kpinfty}$, and the corresponding object in $\rep_\gammak(\kpinfty)$ is 
\begin{equation}\label{senlav}
D_{\Sen, \kpinfty}(W): =(W^{G_\kpinfty})^{\gammak\dla};
\end{equation} 
in addition, the natural map
\[ D_{\Sen, \kpinfty}(W)\otimes_\kpinfty C \to W\]
is an isomorphism.
 \end{theorem}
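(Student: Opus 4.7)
The plan is to establish the two equivalences separately and then identify the associated Sen module with the locally analytic vectors.

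For the second equivalence $\rep_\gammak(\hatkpinfty) \simeq \rep_\gk(C)$, I would proceed by Galois descent along the faithfully flat extension $\hatkpinfty \to C$. The key inputs are the Ax--Sen--Tate theorem, which gives $C^{G_\kpinfty} = \hatkpinfty$, and the vanishing (or rather, continuous triviality) of $H^1_{\cts}(G_\kpinfty, GL_n(C))$, so that every semi-linear $G_\kpinfty$-action on a finite-dimensional $C$-vector space descends to a finite free $\hatkpinfty$-module with residual $\gammak$-action. The quasi-inverse is $W \mapsto W^{G_\kpinfty}$, and $W^{G_\kpinfty} \otimes_{\hatkpinfty} C \xrightarrow{\sim} W$. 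Bi-exactness is immediate since both functors are essentially base change/invariants along a faithfully flat map.

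For the first equivalence $\rep_\gammak(\kpinfty) \simeq \rep_\gammak(\hatkpinfty)$, which is the heart of Sen theory, I would adopt the locally analytic viewpoint of Berger--Colmez. Given $W' \in \rep_\gammak(\hatkpinfty)$, the action of the $p$-adic Lie group $\gammak$ on the Banach space $W'$ makes $(W')^{\gammak\dla}$ a $\kpinfty$-subspace. The main claim is that this is finite-dimensional of rank equal to $\rk_{\hatkpinfty}(W')$, and that the natural map
\[
(W')^{\gammak\dla} \otimes_\kpinfty \hatkpinfty \to W'
\]
is an isomorphism. The strategy is to pick an open subgroup $\gammak^{(n)}$ acting analytically on a basis up to arbitrarily small error: for any basis of $W'$, sufficiently deep subgroups of $\gammak$ act via matrices $\exp(M)$ with $M$ small, and after applying a suitable change of basis (convergent geometric-series construction) one produces a basis fixed up to analytic dependence on the group parameter. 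The key estimate---the main technical step---is the standard ``Tate--Sen axioms'' or ``almost \'etale descent'' style inequality allowing one to improve the basis iteratively; this yields $\gammak^{(n)}$-analytic vectors spanning $W'$ over $\hatkpinfty$. The quasi-inverse is base change $D \mapsto D \otimes_\kpinfty \hatkpinfty$, whose continuous semi-linear $\gammak$-action is automatically locally analytic on the dense subspace $D$.

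The hardest step is the existence and finiteness of $(W')^{\gammak\dla}$: this is precisely the content of the Berger--Colmez decompletion theorem in this setting, and either I would cite it directly (for $\gammak$ acting on a Banach representation over $\hatkpinfty$, the locally analytic vectors form a model over $\kpinfty$) or I would reprove it via the Tate--Sen formalism, constructing the decompletion as the limit of a Cauchy sequence of change-of-basis matrices in $GL_n(\hatkpinfty)$. Composing the two equivalences then identifies $D_{\Sen, \kpinfty}(W) = (W^{G_\kpinfty})^{\gammak\dla}$ with the classical Sen module, and the isomorphism $D_{\Sen,\kpinfty}(W) \otimes_{\kpinfty} C \xrightarrow{\sim} W$ follows by composing the two base-change isomorphisms.
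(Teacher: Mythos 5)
Your proposal is correct, but it takes a more self-contained route than the paper, whose entire proof is a citation: the equivalences themselves are attributed to Sen's original work \cite{Sen80}, where the decompleted module is constructed as the space of \emph{$K$-finite vectors} (vectors whose $\Gamma_K$-orbit spans a finite-dimensional $\kpinfty$-space), and the only additional input is \cite[Thm.~3.2]{BC16}, which identifies the $K$-finite vectors with the locally analytic vectors and thereby justifies the formula \eqref{senlav}. You instead sketch a direct proof in the Berger--Colmez style: almost \'etale descent (Ax--Sen--Tate plus the triviality of $H^1_{\cts}(G_{\kpinfty},\GL_n(C))$) for the second equivalence, and the Tate--Sen/locally-analytic decompletion for the first, taking $(W')^{\gammak\dla}$ as the model from the outset. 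Both routes are standard and land in the same place; yours has the advantage of making the locally analytic description the definition rather than an a posteriori identification (which is the viewpoint the paper itself adopts in the rest of \S\ref{seckummersen}), at the cost of having to invoke (or reprove) the Berger--Colmez decompletion theorem rather than Sen's more elementary $K$-finiteness argument. The only point you leave implicit is bi-exactness of the first equivalence, but this is immediate once the model exists, since both directions are base change along field extensions.
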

 \begin{proof}
 This is proved in \cite{Sen80}, except the last formula \eqref{senlav}. In \cite{Sen80}, $D_{\Sen, \kpinfty}(W)$ is recovered as the \emph{``$K$-finite vectors"}; it turns out they coincide with the \emph{locally analytic vectors}, by \cite[Thm. 3.2]{BC16}.
 \end{proof}
 
 \begin{notation}\label{notaSenop}
 Let $W\in \rep_\gk(C)$.
 \begin{enumerate}
 \item Since \eqref{senlav} implies $\Gamma_K$-action on $D_{\Sen, \kpinfty}(W)$ is locally analytic, thus the operator $\nabla_\gamma$ in Notation \ref{notalieg}   induces a operator
\begin{equation}\label{eqsenclassical}
\nabla_\gamma: D_{\Sen, \kpinfty}(W) \to D_{\Sen, \kpinfty}(W).
\end{equation}
This is called the \emph{Sen operator}: it is $\kpinfty$-linear because $\nabla_\gamma$ kills $\kpinfty$.

\item  We can \emph{$C$-linearly extend} $\nabla_\gamma$ to $D_{\Sen, \kpinfty}(W) \otimes_\kpinfty C=W$. That is, we obtain a $C$-linear operator
\begin{equation}\label{eqsenextendtoc}
\nabla_\gamma: W \to W;
\end{equation}
we still call it the \emph{Sen operator}.
 \end{enumerate}
 \end{notation}

\subsection{Locally analytic vectors in $\hat{L}$} \label{seclavL}

Let $\bdrplus$  denote the usual de Rham period ring. Let $\theta: \bdrplus \to C$ be the usual map which extends $\theta: \ainf \to \O_C$.
Recall that as in \cite[\S 2.2]{Ber02}, when $r_n \in I$, there exists a continuous embedding $\iota_n: \wt{\mathbf{B}}^{I} \hookrightarrow \bdrplus$.

\begin{lemma}
Consider the image of $\mathfrak{t}$  via the map $\theta: \ainf\to \oc$, then $0 \neq \theta(\mathfrak t) \in (\hat{L})^{\hat{G}\dla}$.  In addition, $1/\theta(\mathfrak t) \in (\hat{L})^{\hat{G}\dla}$.
\end{lemma}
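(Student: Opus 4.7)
The plan is to leverage Lemma \ref{lem b}, which provides that both $\fkt$ and $1/\fkt$ lie in $(\wt{\mathbf{B}}^{[r_n, +\infty)}_L)^{\hat{G}\dpa}$ for some $n \geq 0$. It therefore suffices to establish two things: (a) the images $\theta(\fkt)$ and $\theta(1/\fkt)$ are nonzero elements of $\hat{L}$, and (b) the image of a pro-analytic $G_L$-invariant vector in $\wt{\mathbf{B}}^{[r_n, +\infty)}_L$ under $\theta$ lies in $(\hat{L})^{\hat{G}\dla}$.

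First I would dispose of the non-vanishing and invertibility statements. Since $\fkt \cdot (1/\fkt) = 1$ in $\wt{\mathbf{B}}^{[r_n, +\infty)}$, applying the ring homomorphism $\theta$ immediately gives $\theta(\fkt) \cdot \theta(1/\fkt) = 1$ in $C$, so $\theta(\fkt) \neq 0$ and $\theta(1/\fkt) = 1/\theta(\fkt)$. Because $\fkt$ and $1/\fkt$ are $G_L$-invariant and $\theta$ is $G_K$-equivariant, both images are $G_L$-invariant and hence land in $\hat{L} = C^{G_L}$.

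Next I would establish the local analyticity via the standard compatibility of $\theta$ with the embeddings $\iota_k : \wt{\mathbf{B}}^{[r_n, r_k]} \hookrightarrow \bdrplus$ for $k \geq n$. Composing with $\theta : \bdrplus \to C$ gives, for each such $k$, a continuous $\hat{G}$-equivariant ring homomorphism $\wt{\mathbf{B}}^{[r_n, r_k]}_L \to \hat{L}$, and hence a continuous $\hat{G}$-equivariant map from the Fréchet space $\wt{\mathbf{B}}^{[r_n, +\infty)}_L$ to the Banach space $\hat{L}$. The pro-analyticity of $\fkt$ provides, for some open subgroup $H \subset \hat{G}$ with analytic coordinates $\cbf : H \to \Zp^2$, an expansion $g(\fkt) = \sum_{\kbf \in \bbn^2} \cbf(g)^{\kbf} w_{\kbf}$ converging in some Banach piece $\wt{\mathbf{B}}^{[r_n, r_k]}_L$ with $w_{\kbf} \to 0$ there. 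Applying the continuous map $\theta \circ \iota_k$ term by term yields $g(\theta(\fkt)) = \sum_{\kbf} \cbf(g)^{\kbf} \theta(w_{\kbf})$ with $\theta(w_{\kbf}) \to 0$ in $\hat{L}$, which is precisely $H$-analyticity of $\theta(\fkt)$ per Definition \ref{defLAV}. The identical argument applies to $1/\fkt$.

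The only subtlety is ensuring the $H$-analytic expansion can be realized inside a single Banach piece $\wt{\mathbf{B}}^{[r_n, r_k]}_L$ rather than merely in the Fréchet limit; but this is exactly what pro-analyticity means in this Fréchet setting (the vector is $H$-analytic in each of the defining Banach quotients), so no genuine obstacle remains beyond invoking the definitions.
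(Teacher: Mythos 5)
Your overall strategy (push the pro-analytic expansion of $\fkt$ through a continuous equivariant map to $C$) is the same as the paper's, but there is a genuine gap at the point you dismiss as routine: the identification of "the image of $\fkt$ under $\theta$" with $\theta(\fkt)$. The map $\theta:\ainf\to\oc$ does \emph{not} extend to $\wt{\mathbf{B}}^{[r_n,r_k]}$ for $n\geq 1$ (that ring contains $p/u^{ep^n}$, and $\theta(u^{ep^n})=\pi^{ep^n}$ does not divide $p$ in $\oc$). The only available continuous map $\wt{\mathbf{B}}^{[r_n,r_k]}\to\bdrplus$ is $\iota_n$, which by convention factors as $\iota_0\circ\varphi^{-n}$. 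Consequently your "term by term" argument proves that $\theta(\varphi^{-n}(\fkt))$ lies in $(\hat{L})^{\hat{G}\dla}$, not that $\theta(\fkt)$ does; likewise your identity $\theta(\fkt)\cdot\theta(1/\fkt)=1$ is really a statement about $\theta(\varphi^{-n}(\fkt))$. Since Lemma \ref{lem b} only supplies some possibly large $n=n(\fkt)$, you cannot assume $n=0$, and the subtlety you do flag (realizing the expansion in a single Banach piece) is not where the difficulty lies.

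The paper closes exactly this gap using the functional equation $\varphi(\fkt)=\frac{pE(u)}{E(0)}\fkt$, which gives $\fkt=\varphi^{-n}(\fkt)\cdot\prod_{i=1}^{n}\varphi^{-i}\bigl(\frac{pE(u)}{E(0)}\bigr)$ in $\ainf$; each correction factor has $\theta$-image in $\kinfty$, hence is locally analytic (indeed locally trivial), so $\theta(\fkt)\in(\hat{L})^{\hat{G}\dla}$ follows from the statement about $\theta(\varphi^{-n}(\fkt))$. You would need to add this step. Two further small points: the non-vanishing $\theta(\fkt)\neq 0$ is cleanest to check directly in $\ainf$ (as the paper does, via $\theta(t/\mu)=1$ and the fact that $E(u)$ and $\mu/\varphi^{-1}(\mu)$ both generate $\ker\theta$), since $1/\fkt$ does not live in $\ainf$; and for the inverse it is simpler to invoke that $(\hat{L})^{\hat{G}\dla}$ is a field (\cite[Lem. 2.5]{BC16}) than to track $\theta(1/\fkt)$ through the same twisted maps.
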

\begin{proof}
We first check $\theta(\fkt)\neq 0$. Recall $\fkt=\frac{t}{p\lambda}$. Note $\theta(\frac{t}{[\epsilon]-1})=1$ using the expansion $t=\log([\varepsilon])$. Hence it suffices to show $\theta(\frac{[\epsilon]-1}{E(u)})\neq 0$: this holds because both $E$ and $\frac{[\epsilon]-1}{\varphi^{-1}([\epsilon]-1)}$ generate the principal ideal $\ker\theta$.

The proof of analyticity for $\theta(\mathfrak t)$ and $1/\theta(\mathfrak t)$ are the same; alternatively, we can use the fact that  $(\hat{L})^{\hat{G}\dla}$ is a field \cite[Lem. 2.5]{BC16}. We treat $\theta(\mathfrak t)$  in the following.
Choose $n \geq n(\fkt)$ as in Lem. \ref{lem b}  so that $\fkt \in (\wtb^{[r_n, r_n]}_L)^{\hat{G}\dla}$. 
Consider the image of $\fkt$ under the following composite map
\begin{equation}
\label{eqcompftk}
\wtb^{[r_n, r_n]} \xrightarrow{\iota_n} \bdrplus \xrightarrow{\theta} C;
\end{equation}
since both maps are continuous, hence the image is an element in $(\hat{L})^{\hat{G}\dla}$. Unfortunately, the map $\iota_n$   factors as
$$\wtb^{[r_n, r_n]} \xrightarrow{\varphi^{-n}} \wtb^{[r_0, r_0]}\xrightarrow{\iota_0} \bdrplus,$$
hence the   image of $\fkt$ under \eqref{eqcompftk} is only $\theta(\varphi^{-n}(\fkt))$. That is, we obtained
$$\theta(\varphi^{-n}(\fkt)) \in (\hat{L})^{\hat{G}\dla}.$$
Nonetheless, we have $\varphi(\fkt) =\frac{pE(u)}{E(0)} \fkt$.  One can deduce that
$$\fkt = \varphi^{-n}(\fkt) \cdot \prod_{i=1}^n \varphi^{-i}(\frac{pE(u)}{E(0)}),$$
which holds as an equality inside $\ainf$. To see that $\theta(\fkt) \in (\hat{L})^{\hat{G}\dla}$, it then suffices to see that each $\theta(\varphi^{-i}(\frac{pE(u)}{E(0)}) )$ is   locally analytic: but each of these is an element of $\kinfty$ hence is locally analytic (indeed, locally trivial). 
\end{proof}

 \begin{defn}\label{defnablaforL}
 Over the field $(\hat{L})^{\hat{G}\dla}$, we can define two differential operators, which are precisely \emph{``$\theta$-specializations"} of those in Def. \ref{defndiffwtb}.
 \begin{enumerate}
 \item Define $$N_\nabla: (\hat{L})^{\hat{G}\dla} \to (\hat{L})^{\hat{G}\dla}$$ 
by setting
\begin{equation}\label{eqnnring:reduced level}
{N_\nabla:=}
\begin{cases} 
\frac{1}{p\theta(\mathfrak{t})}\cdot \nabla_\tau, &  \text{if }  \Kinfty \cap \Kpinfty=K; \\
& \\
\frac{1}{p^2\theta(\mathfrak{t})}\cdot \nabla_\tau=\frac{1}{4\theta(\mathfrak{t})}\cdot \nabla_\tau, & \text{if }  \Kinfty \cap \Kpinfty=K(\pi_1), \text{ cf. Notation \ref{nota hatG}. }
\end{cases}
\end{equation}
Similar as in Def. \ref{defndiffwtb}, we have $gN_\nabla=N_\nabla g, \forall g\in \gal(L/\kinfty)$.

\item Define 
$$\partial_{\gamma}: (\hat{L})^{\hat{G}\dla} \to (\hat{L})^{\hat{G}\dla}$$ 
via
$$\partial_{\gamma}:=\frac{1}{\theta(\mathfrak t)}\nabla_{\gamma}.$$

\item Both these (normalized) differential operators are well-defined for a $(\hat{L})^{\hat{G}\dla}$-vector space  equipped with semi-linear and locally analytic action  by $\hat{G}$.
 \end{enumerate}
 \end{defn}

The following theorem of Berger-Colmez is crucial for the discussion in the following.

\begin{theorem}\label{thmBC61}
Let $\wt{K}/K$ be a Galois extension contained in $\overline{K}$ whose Galois group is a $p$-adic Lie group with    Lie algebra $\mathfrak g$, and let $\wh{\wt K}$ be the $p$-adic completion.
There exists some $m \in \mathbb N$, a non-zero element $\mathfrak a \in \O_{\wh{\wt K}(\mu_m)} \otimes_\zp \mathfrak g$ such that $\mathfrak a=0$ over $(\wh{\wt K})^{\gal(\wt{K}/K)\dla}$.
\end{theorem}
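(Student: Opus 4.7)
The plan follows the strategy of Berger--Colmez, using classical cyclotomic Sen theory together with decompletion.

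First, I would reduce to the situation $K(\mu_{p^\infty}) \subset \wt K$. After replacing $\wt K$ by $\wt K \cdot K(\mu_{p^\infty})$, which modifies $\gal(\wt K/K)$ only by a finite-index subgroup and hence leaves $\mathfrak g$ unchanged, we have a short exact sequence of $p$-adic Lie groups
\[
1 \to H \to G \to \gammak \to 1,
\]
where $G = \gal(\wt K/K)$ and $H = \gal(\wt K/K(\mu_{p^\infty}))$, inducing a Lie algebra surjection $\mathfrak g \twoheadrightarrow \Lie(\gammak) = \qp \cdot \nabla_\gamma$. Adjoining $\mu_m$ for an appropriate $m$ is precisely the device that arranges this reduction when $\wt K$ meets $K(\mu_{p^\infty})$ only in a proper sub-extension.

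Second, I would propose $\mathfrak a := 1 \otimes \widetilde{\nabla}_\gamma$, where $\widetilde{\nabla}_\gamma \in \mathfrak g$ is any lift of the classical Sen generator $\nabla_\gamma \in \Lie(\gammak)$. Non-triviality of $\mathfrak a$ is immediate from $\nabla_\gamma \neq 0$. The substantive claim is that $\widetilde{\nabla}_\gamma(v) = 0$ for every $v \in (\wh{\wt K})^{G\dla}$. The key input is the Berger--Colmez decompletion theorem, which produces a $G$-stable, ``Sen-decompleted'' subring of $\wh{\wt K}(\mu_m)$ containing all $G$-locally analytic vectors and on which the Lie algebra $\mathfrak g$ acts through a well-defined Sen operator; a direct computation identifies this operator along the cyclotomic quotient with the classical $\nabla_\gamma$ acting on $(\wh{K(\mu_{p^\infty})})^{\gammak\dla} = K(\mu_{p^\infty})$, and the latter is zero since each root of unity $\mu_{p^n}$ is fixed by an open subgroup of $\gammak$, making the formula $\nabla_\gamma = \log(g)/\log(\chi(g))$ vanish on it.

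The main obstacle is matching the lift $\widetilde{\nabla}_\gamma$ against the classical cyclotomic Sen operator on actual locally analytic vectors. Since the extension $1 \to H \to G \to \gammak \to 1$ is generally non-split, $\widetilde{\nabla}_\gamma$ a priori has contributions from $\mathfrak h := \Lie(H)$, which acts non-trivially on elements of $\wt K$ outside $K(\mu_{p^\infty})$. Controlling these contributions requires the full Tate--Sen formalism for $p$-adic Lie extensions: one verifies Tate--Sen axioms for $\wt K/K(\mu_{p^\infty})$ via normalized traces (using Faltings' almost purity to handle ramification), constructs the Sen decompletion of $\wh{\wt K}$ adapted to $H$, and checks that on this decompletion the $\mathfrak h$-component of $\widetilde{\nabla}_\gamma$ acts through a Sen operator that is itself zero on $G$-locally analytic vectors --- reflecting the fact that local analyticity forces all infinitesimal Lie algebra actions above the decompletion layer to vanish.
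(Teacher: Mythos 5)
The paper does not actually prove this statement: its ``proof'' is a two-line citation of Berger--Colmez \cite[Thm.~6.1, Prop.~6.3]{BC16}, where $\mathfrak a$ is constructed as a certain Sen operator. So the real question is whether your sketch is a correct reconstruction, and it is not: the central step fails.

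Your proposed annihilator $\mathfrak a=1\otimes\wt{\nabla}_\gamma$ is a lift of the cyclotomic generator with \emph{constant} coefficients, i.e.\ (in the case relevant to this paper, $\wt K=L$) an element $\nabla_\gamma+c\nabla_\tau$ with $c\in\qp$. Such an element does not kill $(\hat{L})^{\hat{G}\dla}$, and your closing heuristic that ``local analyticity forces all infinitesimal Lie algebra actions above the decompletion layer to vanish'' is false. Both claims are contradicted by the paper's own Corollary~\ref{corkill}: the annihilating direction is \emph{unique up to scalar} and equals $\theta(u\lambda')\nabla_\gamma+N_\nabla$, i.e.\ $\nabla_\gamma+\frac{1}{p\,\theta(u\lambda')\theta(\fkt)}\nabla_\tau$ after rescaling, and the coefficient $\frac{1}{p\,\theta(u\lambda')\theta(\fkt)}$ is a genuinely non-constant element of $\hat L$ (it involves $\mu_1-1$ and $\theta(\xi/E)$; for $p>2$ its valuation is not even an integer). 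Concretely, $\nabla_\tau$ does \emph{not} vanish on $(\hat{L})^{\hat{G}\dla}$: the proof of Corollary~\ref{corkill} computes $N_\nabla(\fkt)=-\fkt\, u\lambda'\neq 0$, while $\nabla_\gamma(\theta(\fkt))=\theta(\fkt)\neq 0$, so no $\qp$-linear combination $\nabla_\gamma+c\nabla_\tau$ kills $\theta(\fkt)\in(\hat{L})^{\hat{G}\dla}$. Your heuristic would force $(\wh{\wt K})^{\la}=\wt K$, contradicting Proposition~\ref{loc ana in L}, which exhibits $(\hat{L})^{\hat{G}\dla}$ as strictly larger than $L$. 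The entire content of the theorem is precisely that the annihilator only exists after allowing coefficients in $\O_{\wh{\wt K}(\mu_m)}$, and its non-vanishing is itself a nontrivial point in \cite{BC16}. Two further issues: your preliminary reduction is wrong, since adjoining $\mu_{p^\infty}$ generically increases $\dim\mathfrak g$ by one and an annihilator for the enlarged Lie algebra does not descend to one in $\O\otimes_{\zp}\mathfrak g$; and the identification of the ``cyclotomic component'' of the Sen operator with classical $\nabla_\gamma$ on $K(\mu_{p^\infty})$ proves nothing, because the obstruction lives entirely in the $\mathfrak h$-direction, which your argument dismisses rather than controls.
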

\begin{proof}
This follows from \cite[Thm. 6.1, Prop. 6.3]{BC16}. Note in \emph{loc. cit.}, one can make $\mathfrak a$ ``primitive" (defined above \cite[Thm. 6.1]{BC16}. In addition, this $\mathfrak a$ can be chosen as a certain ``Sen operator" (see \cite[Prop. 6.3]{BC16}).
\end{proof}

\begin{cor}\label{corkill}
Up to   nonzero scalars, the combination $
\theta(u\lambda') \nabla_\gamma +N_\nabla$ (from Def. \ref{defnablaforL}) is the \emph{unique} non-zero operator in $\hat{L} \otimes_\qp \Lie(\hat G)$ that kills all of $(\hat{L})^{\hat{G}\dla}$. Here  $\lambda$ is defined in Def. \ref{defnfkt}, and $\lambda'$ is the $u$-derivative, that is $\lambda'=\frac{d}{du}(\lambda)$.
\end{cor}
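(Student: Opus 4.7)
The plan has three steps: $(1)$ a dimension count, $(2)$ explicit verification that $\theta(u\lambda')\nabla_\gamma + N_\nabla$ annihilates the element $\theta(\mathfrak t)\in(\hat L)^{\hat G\dla}$, and $(3)$ a Berger--Colmez-based argument that the annihilator of $(\hat L)^{\hat G\dla}$ in $\hat L\otimes_\qp\Lie(\hat G)$ has $\hat L$-dimension at most one. For $(1)$, $\hat G$ is a $2$-dimensional $p$-adic Lie group (Notation \ref{nota hatG}), so $\hat L\otimes_\qp\Lie(\hat G)$ has $\hat L$-basis $\{\nabla_\gamma,N_\nabla\}$, noting that $N_\nabla$ is just $\nabla_\tau$ rescaled by a unit of $\hat L^{\times}$; then $\theta(u\lambda')\nabla_\gamma + N_\nabla$ is automatically non-zero since its $N_\nabla$-coefficient equals $1$. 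Steps $(2)$ and $(3)$ together force this non-zero operator to span the $1$-dimensional annihilator.

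For step $(2)$, I would first establish, in the ambient Fr\'echet ring $(\wtbrigL)^{\hat G\dpa}$, the identity
\[
  \nabla_\tau(\mathfrak t) \;=\; -\,p\,\mathfrak t^{\,2}\cdot u\lambda'.
\]
The inputs are $\nabla_\tau(t)=0$ (since $\tau$ fixes $[\epsilon]$, hence $t$), $\nabla_\tau(u)=tu$ (from $\tau(u)=[\epsilon]u$ and $\log[\epsilon]=t$; the exceptional $p=2$, $\kinfty\cap\kpinfty=K(\pi_1)$ case has $\tau(u)=[\epsilon]^{2}u$, producing a factor of $2$ absorbed by the $p^{2}$-denominator of $N_\nabla$ in Def.\ \ref{defnablaforL}), together with $\varphi(t)=pt$ and the commutation of $\varphi$ with $\nabla_\tau$. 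Applying the product rule to $\lambda=\prod_{n\geq 0}\varphi^{n}(E(u)/E(0))$ yields $\nabla_\tau(\lambda)=t\cdot u\lambda'$; substituting into $\mathfrak t=t/(p\lambda)$ and simplifying via $t/\lambda=p\mathfrak t$ gives the displayed identity. Applying $\theta$ (which is Galois-equivariant, hence commutes with $\nabla_\tau$) produces $\nabla_\tau(\theta\mathfrak t)=-p\theta(\mathfrak t)^{2}\theta(u\lambda')$, so $N_\nabla(\theta\mathfrak t)=-\theta(\mathfrak t)\theta(u\lambda')$. Combined with $\nabla_\gamma(\theta\mathfrak t)=\theta(\mathfrak t)$ (from $\gamma(\mathfrak t)=\chi(\gamma)\mathfrak t$), the two terms cancel: $(\theta(u\lambda')\nabla_\gamma + N_\nabla)(\theta\mathfrak t)=0$.

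For step $(3)$, Theorem \ref{thmBC61} (Berger--Colmez) applied to the $p$-adic Lie extension $L/K$ with Lie algebra $\Lie(\hat G)$ produces a non-zero element of $\hat L\otimes_\qp\Lie(\hat G)$ annihilating $(\hat L)^{\hat G\dla}$. If this annihilator had $\hat L$-dimension $\geq 2$, it would equal the full $2$-dimensional space, forcing $\nabla_\gamma$ itself to annihilate $(\hat L)^{\hat G\dla}$; but $\theta(\mathfrak t)\in(\hat L)^{\hat G\dla}$ (since $\mathfrak t$ is pro-analytic by Lemma \ref{lem b}, $\theta$ is continuous, and $\theta(\mathfrak t)\in\hat L$ by Ax--Sen--Tate applied to the $G_L$-invariant element $\mathfrak t$) and $\nabla_\gamma(\theta\mathfrak t)=\theta(\mathfrak t)\neq 0$, a contradiction. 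Hence the annihilator has $\hat L$-dimension exactly one, and by step $(2)$ it is spanned by $\theta(u\lambda')\nabla_\gamma + N_\nabla$.

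The main obstacle is the $\theta$-reduction in step $(2)$: both $t$ and $\lambda$ vanish individually under $\theta$, so one cannot directly $\theta$-reduce the naive intermediate form $\nabla_\tau(\mathfrak t)=-t^{2}u\lambda'/(p\lambda^{2})$ termwise (it formally becomes the indeterminate $0/0$). The trick is first to eliminate $t$ and $\lambda$ in favor of the $\theta$-regular unit $\mathfrak t$ via $t/\lambda=p\mathfrak t$, and only then to apply $\theta$.
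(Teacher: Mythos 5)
Your proposal is correct and follows essentially the same route as the paper: Berger--Colmez (Theorem \ref{thmBC61}) for existence of an annihilating operator, the observation that $\nabla_\gamma(\theta(\mathfrak t))=\theta(\mathfrak t)\neq 0$ to force the annihilator to be one-dimensional, and the computation $N_\nabla(\mathfrak t)=-\mathfrak t\, u\lambda'$ (equivalently your $\nabla_\tau(\mathfrak t)=-p\mathfrak t^2 u\lambda'$) together with $\nabla_\gamma(\mathfrak t)=\mathfrak t$ to identify the coefficients. The only cosmetic difference is that you derive $\nabla_\tau(\lambda)=tu\lambda'$ from the product rule and $\nabla_\tau(u)=tu$ rather than quoting the formula from \cite[Lem.~4.1.2]{Gao23}, and your remarks on the $p=2$ normalization and on rewriting $t/\lambda=p\mathfrak t$ before applying $\theta$ make explicit points the paper leaves implicit.
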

\begin{proof}
The existence of a linear combination  $a\nabla_\gamma +b\nabla_\tau$ that kills $(\hat{L})^{\hat{G}\dla}$ is guaranteed by Thm. \ref{thmBC61}. In addition, neither $\nabla_\gamma$ nor $\nabla_\tau$ alone can kill all of $(\hat{L})^{\hat{G}\dla}$; hence the combination has to be unique up to  non-zero scalars. 
It hence suffices to compute this operator against the element $\theta(\mathfrak t)\in  (\hat{L})^{\hat{G}\dla}$.
Indeed, we can even make the computation inside $(\wt{\mathbf{B}}_{  \rig, L}^{\dagger})^{\hat{G}\dpa}$. 
It is easy to see  $\nabla_\gamma(\mathfrak t)=\mathfrak t$. 
Using the formula in \cite[Lem. 4.1.2]{Gao23} (which holds uniformly even when $\Kinfty \cap \Kpinfty=K(\pi_1)$), one computes
$$ N_\nabla(\mathfrak t) = N_\nabla(\frac{t}{p\lambda}) = \frac{t}{p}\cdot (-\frac{1}{\lambda^2}) N_\nabla(\lambda) = \frac{-t}{p\lambda^2}\cdot   \lambda u\cdot \frac{d}{du}(\lambda) = -\mathfrak t u\lambda'.$$
Hence we can conclude. 
We  remark that it is more convenient to use $N_\nabla$ instead of $\nabla_\tau$ in the formula as it already subsumes the   possible normalization issues  when $p=2$.
\end{proof}


 We now determine the structure of $(\hat{L})^{\hat{G}\dla}$. We first review a description Prop. \ref{loc ana in L} by Berger-Colmez.
  We then obtain an alternative description Prop. \ref{loc ana in L new} which is more convenient for us.

\begin{construction}\label{consalphan}
\begin{enumerate}
\item As in \cite[\S 4.4]{BC16},
consider the 2-dimensional $\Qp$-representation of $G_K$ (associated to our choice of $\{\pi_n\}_{n \geq 0}$) such that $g \mapsto \smat{\chi(g) & c(g) \\ 0 & 1}$ where $\chi$ is the $p$-adic cyclotomic character. Since the co-cycle $c(g)$ becomes trivial over $C_p$, there exists $\alpha \in C_p$ (indeed, $\alpha \in \hat{L}$) such that $c(g) = g(\alpha)\chi(g)-\alpha$.
This implies  $g(\alpha) = \alpha/\chi(g) + c(g)/\chi(g)$ and so $\alpha \in \hat{L}^{\hat{G}\dla}$.

\item 
Now similarly as in the beginning of \cite[\S 4.2]{BC16}, let $\alpha_n \in L$ such that $\|\alpha-\alpha_n\|_p \leq p^{-n}$. Then there exists $r(n) \gg0$ such that if $m \geq r(n)$, then $\|\alpha-\alpha_n\|_{\hat{G}_m}= \|\alpha-\alpha_n\|_p$ and $\alpha-\alpha_n \in \hat{L}^{\hat{G}_m\dan}$ (see Notation \ref{nota hatG} for $\hat{G}_m$, and see Def. \ref{defLAV} for  $\|\cdot \|_{\hat{G}_m}$ ). We can furthermore suppose that $\{r(n)\}_n$ is an increasing sequence.
\end{enumerate}
\end{construction}

\begin{defn}
Let $(H, \|\cdot \|)$ be a $\Qp$-Banach algebra such that $\|\cdot \|$ is sub-multiplicative, and let $W \subset H$ be a $\Qp$-subalgebra. Let $T$ be a variable, and let  $W \dacc{T}_n$ be the vector space consisting of $\sum_{k \geq 0} a_k T^k$ with $a_k \in W$, and $p^{nk} a_k \to 0$ when $k \to +\infty$. For $h \in H$ such that $\|h \|\leq p^{-n}$, denote $W \dacc{h}_n$ the image of the evaluation map $W \dacc{T}_n \to H$ where $T \mapsto h$.
\end{defn}

\begin{prop} 
\label{loc ana in L}
\begin{enumerate}
\item $\hat{L}^{\hat{G}\dla} =\cup_{n \geq 1} K({\mu_{r(n)}, \pi_{r(n)}})\dacc{ \alpha-\alpha_n }_n. $
\item $\hat{L}^{\hat{G}\dla, \nabla_\gamma=0} = L.$
\item $\hat{L}^{\tau\dla, \gamma=1} = {K_{\infty}}.$
\end{enumerate}
\end{prop}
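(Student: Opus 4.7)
My plan is to follow the Tate--Sen descent strategy developed in \cite[\S 4]{BC16}, specialised to the two-dimensional $p$-adic Lie extension $L/K$.

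For Part (1), the inclusion $\supseteq$ is formal: by construction $\alpha - \alpha_n \in \hat{L}^{\hat{G}_{r(n)}\dan}$ with $\|\alpha-\alpha_n\|_{\hat{G}_{r(n)}} \leq p^{-n}$, and $K(\mu_{r(n)}, \pi_{r(n)})$ is contained in the fixed field $\hat{L}^{\hat{G}_{r(n)}=1}$. Hence the evaluation map $T \mapsto \alpha-\alpha_n$ carries $K(\mu_{r(n)}, \pi_{r(n)})\dacc{T}_n$ into $\hat{L}^{\hat{G}_{r(n)}\dan}$. For the converse $\subseteq$, given $x \in \hat{L}^{\hat{G}_m\dan}$, the cocycle relation $g(\alpha) = \alpha/\chi(g) + c(g)/\chi(g)$ exhibits $\alpha$ as a ``geometric coordinate'' witnessing the Kummer direction of $\hat{G}$; combined with the cyclotomic coordinate already present through $\chi$, this allows one to Taylor-expand $x$ around $\alpha_n$ to obtain, after enlarging $n$ enough to absorb the denominators of the coordinate chart on $\hat{G}_m$, a convergent series in $\alpha - \alpha_n$ whose coefficients lie in the fixed field $K(\mu_{r(n)}, \pi_{r(n)})$.

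For Part (2), the inclusion $L \subseteq \hat{L}^{\hat{G}\dla, \nabla_\gamma = 0}$ is immediate because every element of $L$ is fixed by an open subgroup of $\hat{G}$, hence in particular is locally constant in the $\gamma$-direction. For the converse, I combine Part (1) with Corollary \ref{corkill}: the condition $\nabla_\gamma x = 0$ for $x \in \hat{L}^{\hat{G}\dla}$ is equivalent to $N_\nabla x = 0$ via the unique-up-to-scalar annihilating relation $\theta(u\lambda')\nabla_\gamma + N_\nabla = 0$ on $(\hat{L})^{\hat{G}\dla}$, so $x$ must be fixed by an open subgroup of $\gal(L/K_{p^\infty})$. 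Expanding $x = \sum_{k\geq 0} a_k(\alpha - \alpha_n)^k$ using (1) and matching leading behaviour against this vanishing condition forces all higher-order coefficients $a_k$ ($k\geq 1$) to vanish, leaving $x = a_0 \in K(\mu_{r(n)}, \pi_{r(n)}) \subseteq L$.

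Part (3) then follows quickly from Part (2): every $x \in \hat{L}^{\tau\dla, \gamma = 1}$ is automatically in $\hat{L}^{\hat{G}\dla}$ by Notation \ref{notataula}, and the identity $\gamma(x) = x$ (for all $\gamma \in \gal(L/K_\infty)$) gives $\nabla_\gamma x = 0$, so Part (2) places $x$ in $L$; intersecting $L$ with the exact fixed set of $\gal(L/K_\infty)$ then yields $x \in K_\infty$. Conversely, $K_\infty \subseteq \hat{L}^{\tau\dla, \gamma=1}$ is clear because every element of $K_\infty$ lies in some $K(\pi_n)$ and is fixed by both $\gal(L/K_\infty)$ and $\tau^{p^n}$.

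The main obstacle will be the inclusion $\subseteq$ of Part (1): converting an abstract $\hat{G}_m$-analytic vector, which is initially a convergent series in the two Lie-algebra coordinates on $\hat{G}_m$, into a single-variable convergent series in the geometric coordinate $\alpha - \alpha_n$. This requires a careful comparison of the $\hat{G}_m$-analytic norm with the sup-norm on power series coefficients, controlled via the bound $\|\alpha - \alpha_n\|_{\hat{G}_{r(n)}} \leq p^{-n}$ and the explicit cocycle defining $\alpha$. Once this is in place, Parts (2) and (3) reduce to the direct algebraic manipulations on power series sketched above.
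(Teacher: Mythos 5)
Your proposal is correct and follows essentially the same route as the paper, which itself defers Part (1) to \cite[Prop.~4.12]{BC16} and Parts (2)--(3) to \cite[Prop.~3.3.2]{GP21}; the one concrete ingredient the paper's sketch supplies that you leave implicit (and flag as ``the main obstacle'') is the explicit inversion formula $y_i=\sum_{k\ge 0}(-1)^k(\alpha-\alpha_n)^k\nabla_\tau^{k+i}(x)\binom{k+i}{k}$, which rests on $\nabla_\tau(\alpha)=1$ and on $\nabla_\tau(y_i)=0$ forcing $y_i\in K(\mu_m,\pi_m)$. Your use of Corollary \ref{corkill} in Part (2) to upgrade $\nabla_\gamma x=0$ to vanishing of the whole Lie algebra action (hence $x$ fixed by an open subgroup) is a legitimate, non-circular shortcut consistent with the cited references.
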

\begin{proof}
Item (1) is \cite[Prop. 4.12]{BC16}, the rest follow easily, cf. \cite[Prop. 3.3.2]{GP21}.
We quickly recall the proof of Item (1) here. Suppose $x\in \hat{L}^{\hat{G}_n\dan}$. For $i \geq 0$, let
$$y_i = \sum_{k \geq 0} (-1)^k (\alpha - \alpha_n)^k \nabla_\tau^{k+i}(x) \binom{k+i}{k},$$
then there exists $m\geq n$ such that $y_i \in \hat{L}^{\hat{G}_m\dan}$, and
\begin{equation} \label{eqxalpha}
x = \sum_{i \geq 0} y_i (\alpha - \alpha_n)^i \in \hat{L}^{\hat{G}_m\dan}
\end{equation}
Note  roughly speaking, \eqref{eqxalpha} is the ``Taylor expansion" of $x$ with respect to the ``variable" $\alpha - \alpha_n$. The equality  \eqref{eqxalpha} holds precisely because 
\begin{equation} \label{eqnablaalpha}
\nabla_\tau (\alpha - \alpha_n) =\nabla_\tau(\alpha)=1.
\end{equation}
Finally, the fact $\nabla_\tau(y_i)=0$ will imply that $y_i\in K(\mu_m, \pi_m)$, concluding (1).
\end{proof}

\begin{prop} \label{loc ana in L new}
 Denote $\beta = \theta(\mathfrak t)$. Apply the same procedure as in Item (2) of Construction \ref{consalphan}, choose the analogous elements $\beta_n \in L$.
Then
$$\hat{L}^{\hat{G}\dla} =\cup_{n \geq 1} K({\mu_{r(n)}, \pi_{r(n)}})\dacc{ \beta-\beta_n }_n. $$
\end{prop}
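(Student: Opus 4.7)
The plan is to closely mimic the proof of Proposition \ref{loc ana in L} due to Berger--Colmez, with $\beta = \theta(\mathfrak t)$ playing the role of $\alpha$. The inclusion ``$\supset$'' is immediate from the construction of $\beta_n$: one has $\|\beta - \beta_n\|_{\hat G_{r(n)}} \leq p^{-n}$, the coefficient field $K(\mu_{r(n)}, \pi_{r(n)})$ lies in $\hat L^{\hat G\dla}$, and any convergent Tate series in $\beta - \beta_n$ with coefficients there defines an $\hat G_{r(n)}$-analytic vector.

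For the reverse inclusion, I would carry out a change of variables between $\alpha - \alpha_n$ and $\beta - \beta_n$. The essential algebraic input is that $\nabla_\tau(\beta)$ is a nonzero element of $\hat L^{\hat G\dla}$, so $\beta$ is a genuine local coordinate for the $\nabla_\tau$-direction just as $\alpha$ is. Since $\theta$ is $\gk$-equivariant it commutes with $\nabla_\tau$, and combining this with $N_\nabla(\mathfrak t) = -\mathfrak t\, u\lambda'$ (from the proof of Corollary \ref{corkill}) together with the definition \eqref{eqnnring} of $N_\nabla$ yields
\[
\nabla_\tau(\beta) \;=\; -\,p\,\beta^{2}\,\theta(u\lambda')
\]
(with $p$ replaced by $p^2=4$ in the exceptional case $\Kinfty \cap \Kpinfty = K(\pi_1)$). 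An Eisenstein analysis gives $\theta(u\lambda') = \pi E'(\pi)/E(0) \cdot \prod_{j\geq 1} E(\pi^{p^j})/E(0)$, in which each factor with $j\geq 1$ is a unit in $\oc^{\times}$ congruent to $1$ modulo $\fkm_{\oc}$ (since $\pi^{p^j}$ has large valuation and $E$ is Eisenstein); hence $\theta(u\lambda')\neq 0$ and the right-hand side is nonzero.

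Next, given $x \in \hat L^{\hat G_n\dan}$, I would first invoke Proposition \ref{loc ana in L} to expand $x = \sum_{i\geq 0} y_i(\alpha - \alpha_n)^i$ with $y_i \in K(\mu_{r(n)}, \pi_{r(n)})$, and then apply the same proposition to $\beta$ (enlarging $n$ if necessary) to obtain $\beta = b_0 + b_1(\alpha - \alpha_n) + b_2(\alpha - \alpha_n)^2 + \cdots$ with $b_k \in K(\mu_{r(n)}, \pi_{r(n)})$. A short norm comparison, using that both $b_0$ and $\beta_n$ approximate $\beta$ to precision $p^{-n}$, lets one absorb $b_0 - \beta_n$ into the higher-order part, while $b_1$ is computed (up to a factorial) from the constant term of $\nabla_\tau(\beta)$, hence by the previous paragraph is a nonzero element of the field $K(\mu_{r(n)}, \pi_{r(n)})$. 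The non-archimedean implicit function theorem then inverts this power series to give $\alpha - \alpha_n = c_1(\beta - \beta_n) + c_2(\beta - \beta_n)^2 + \cdots$ with $c_i \in K(\mu_{r(n)}, \pi_{r(n)})$, converging in the Tate algebra $K(\mu_{r(n)}, \pi_{r(n)})\dacc{\beta - \beta_n}_n$. Substituting back into the expansion of $x$ then exhibits $x$ as an element of this Tate algebra.

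The hard part will be the non-archimedean norm bookkeeping for the change of variables: one needs to verify that inverting $\beta - \beta_n$ against $\alpha - \alpha_n$ lands in a Tate algebra with essentially the same level $n$ (up to harmless enlargements), and that substituting back into $x$ preserves convergence. This is routine $p$-adic analysis once one knows that the linear coefficient $b_1$ is a unit of controlled norm, but it is the technical heart of the argument and requires careful tracking of how the renormalization factor $\nabla_\tau(\beta)$ interacts with the filtration of Banach norms $\|\cdot\|_{\hat G_m}$.
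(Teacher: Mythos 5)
Your route is genuinely different from the paper's, and the divergence is instructive. You treat $\beta=\theta(\fkt)$ as a coordinate for the $\tau$-direction: the formula $\nabla_\tau(\beta)=-p\beta^2\theta(u\lambda')$ is correct (it follows from $\nabla_\tau=p\beta N_\nabla$ on $\hat L^{\hat G\dla}$ together with $N_\nabla(\fkt)=-\fkt u\lambda'$ from the proof of Corollary \ref{corkill}), and your Eisenstein argument for $\theta(u\lambda')\neq 0$ is fine. But precisely because $\nabla_\tau(\beta)$ is not $1$ --- it is not even constant, being quadratic in $\beta$ --- you are forced into a change of variables: expand $\beta$ in powers of $\alpha-\alpha_n$, invert the series, and substitute into the expansion of $x$. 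The paper sidesteps all of this by working in the \emph{$\gamma$-direction} instead: since $\gamma(\fkt)=\chi(\gamma)\fkt$ one has $\nabla_\gamma(\beta)=\beta$ exactly, so the normalized operator $\partial_\gamma:=\nabla_\gamma/\beta$ of Definition \ref{defnablaforL} satisfies $\partial_\gamma(\beta-\beta_n)=1$, the precise analogue of $\nabla_\tau(\alpha-\alpha_n)=1$ in \eqref{eqnablaalpha}. One then reruns the Berger--Colmez Taylor expansion verbatim with $(\partial_\gamma,\beta-\beta_n)$ in place of $(\nabla_\tau,\alpha-\alpha_n)$, setting $z_i=\sum_{k\geq 0}(-1)^k(\beta-\beta_n)^k\partial_\gamma^{k+i}(x)\binom{k+i}{k}$, and the coefficients land in $K(\mu_m,\pi_m)$ because $\partial_\gamma(z_i)=0$ forces $\nabla_\gamma(z_i)=0$. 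No inversion, no substitution, and the norm estimates are exactly those of Proposition \ref{loc ana in L}.

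What your version still owes is exactly the part you defer. Inverting $\beta-b_0=b_1(\alpha-\alpha_n)+\cdots$ is not purely cosmetic bookkeeping: $b_1=-pb_0^2\theta(u\lambda')$ is nonzero but is divisible by $p\beta^2$ (note $v_p(\beta)=\tfrac{1}{p-1}>0$), so $b_1^{-1}$ strictly inflates norms; the inverse series therefore converges only on a smaller disc, and you must pass to a deeper level $m>n$ both to dominate the higher coefficients $b_k$ by the linear term and to absorb $b_0-\beta_m$, before checking that the composed series still lies in $K(\mu_{r(m)},\pi_{r(m)})\dacc{\beta-\beta_m}_m$. All of this can be carried out, but until it is written down the argument is incomplete, and the $\partial_\gamma$-normalization reduces the whole proposition to a one-line modification of the proof you are imitating.
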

\begin{proof}
Recall   in Def. \ref{defnablaforL}, we defined $\partial_\gamma: =\frac{\nabla_\gamma}{\beta}$. Since $\nabla_\gamma(\beta)=\beta$, we have 
\begin{equation} \label{eqnablabeta}
\partial_\gamma (\beta - \beta_n) =\partial_\gamma (\beta)=1.
\end{equation}
This is the key analogue of Eqn. \eqref{eqnablaalpha}.
Now similar to the proof in Prop. \ref{loc ana in L}, suppose $x\in \hat{L}^{\hat{G}_n\dan}$, we can define
$$ z_i = \sum_{k \geq 0} (-1)^k (\beta - \beta_n)^k \partial_\gamma ^{k+i}(x) \binom{k+i}{k},$$
then there exists $m\geq n$ such that $z_i \in \hat{L}^{\hat{G}_m\dan}$, and
\begin{equation} \label{eqxbeta}
x = \sum_{i \geq 0} z_i (\beta - \beta_n)^i \in \hat{L}^{\hat{G}_m\dan}
\end{equation}
Finally, $\partial_\gamma(z_i)=0$ implies that $\nabla_\gamma(z_i)=0$ and hence $z_i\in K(\mu_m, \pi_m)$.
\end{proof}


\subsection{Kummer tower and Sen theory} \label{subsecKS}

 \begin{theorem}\label{thm331kummersenmod}
 Given  $W\in \rep_\gk(C)$ of dimension $d$, define
 \begin{equation*}
 D_{\Sen, \kinfty}(W):= (W^{G_L})^{\tau\dla, \gamma=1}.
 \end{equation*}
 Then this is a $\kinfty$-vector space of dimension $d$, and the natural maps induce isomorphisms
 \begin{equation}\label{eqkpk}
 D_{\Sen, \kinfty}(W) \otimes_\kinfty (\hat{L})^{\hat{G}\dla} \simeq (W^{G_L})^{\hat{G}\dla} \simeq  D_{\Sen, \kpinfty}(W) \otimes_\kpinfty (\hat{L})^{\hat{G}\dla}.
 \end{equation}
 \end{theorem}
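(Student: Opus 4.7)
The plan is to combine classical Sen descent over the cyclotomic tower with a Berger--Colmez locally analytic descent, and then to transport to the Kummer side via a Taylor expansion in the variable $\beta-\beta_n$ from Prop.~\ref{loc ana in L new}. First, by Tate--Sen--Ax, $W^{G_L}$ is a finite free $\hatl$-module of rank $d$ with continuous semi-linear $\hat G$-action, and $W^{G_L}\otimes_{\hatl}C\simeq W$. A Berger--Colmez locally analytic descent applied to $W^{G_L}$ then yields that $(W^{G_L})^{\hat G\dla}$ is free of rank $d$ over $(\hatl)^{\hat G\dla}$, with base change to $\hatl$ recovering $W^{G_L}$.

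For the right isomorphism in \eqref{eqkpk}, observe that $(W^{G_L})^{\tau=1}=W^{G_\kpinfty}$ and that on $\tau$-fixed vectors the $\hat G$-local analyticity reduces to $\gammak$-local analyticity, so $(W^{G_L})^{\hat G\dla,\tau=1}=D_{\Sen,\kpinfty}(W)$. Since $(\hatl)^{\hat G\dla,\tau=1}=(\hatkpinfty)^{\gammak\dla}=\kpinfty$ by classical Sen applied to $\hatl$, taking $\tau$-invariants in the first step shows $D_{\Sen,\kpinfty}(W)$ is free of rank $d$ over $\kpinfty$, and base change to $(\hatl)^{\hat G\dla}$ recovers $(W^{G_L})^{\hat G\dla}$.

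For the left isomorphism, use Notation \ref{notataula}(2) to identify $(W^{G_L})^{\tau\dla,\gamma=1}=(W^{G_L})^{\hat G\dla,\gamma=1}$. Then lift the Berger--Colmez Taylor expansion of Prop.~\ref{loc ana in L new} to the module level: for $x\in(W^{G_L})^{\hat G_n\dan}$ with $n$ large enough, define
\[ z_i=\sum_{k\geq 0}(-1)^k(\beta-\beta_n)^k\, \partial_\gamma^{k+i}(x)\binom{k+i}{k}, \]
so that $x=\sum_{i\geq 0} z_i(\beta-\beta_n)^i$ with $\nabla_\gamma(z_i)=0$; this uses $\partial_\gamma(\beta-\beta_n)=1$ together with convergence estimates modeled on the proof of Prop.~\ref{loc ana in L new} and on \cite{GP21}. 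Since $(\hatl)^{\hat G\dla,\nabla_\gamma=0}=L$ by Prop.~\ref{loc ana in L}(2), this exhibits $\tilde D:=(W^{G_L})^{\hat G\dla,\nabla_\gamma=0}$ as a free $L$-module of rank $d$ satisfying $\tilde D\otimes_L(\hatl)^{\hat G\dla}\simeq(W^{G_L})^{\hat G\dla}$. The group $\gal(L/\kinfty)$ acts pro-analytically on the finite-dimensional $\tilde D$ with vanishing Lie-algebra action, so on a fixed $L$-basis it factors through a finite quotient; finite Galois descent then yields $D_{\Sen,\kinfty}(W)=\tilde D^{\gamma=1}$ free of rank $d$ over $L^{\gamma=1}=\kinfty$, with $\tilde D\simeq D_{\Sen,\kinfty}(W)\otimes_\kinfty L$. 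Composing gives the left isomorphism.

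The main obstacle will be upgrading the Taylor-expansion argument of Prop.~\ref{loc ana in L new}, originally proved for the coefficient field $\hatl$, to a functorial statement on the module $(W^{G_L})^{\hat G\dla}$ with uniform convergence on each Banach piece $(W^{G_L})^{\hat G_n\dan}$. This requires careful tracking of Banach norms and of the commutation relations among $\tau$, $\gamma$, $\nabla_\gamma$, and $\partial_\gamma$ acting on $W^{G_L}$, following the template of the analogous descent for overconvergent $(\varphi,\tau)$-modules in \cite{GP21}.
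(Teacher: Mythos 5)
Your proposal is correct and follows essentially the same route as the paper: the right isomorphism via Berger--Colmez locally analytic descent (\cite[Prop.~3.1.6]{GP21}) combined with classical Sen theory, a $\nabla_\gamma$-monodromy descent using the Taylor expansion in the variable $\beta-\beta_n$ to produce the $L$-structure $(W^{G_L})^{\hat{G}\dla,\nabla_\gamma=0}$, and then finite Galois descent for $\gal(L/\kinfty)$ down to $\kinfty$. The only cosmetic difference is that the paper packages the monodromy step as solving the matrix equation $\partial_\gamma(H)+D_\gamma H=0$ with $H=\sum_{k\ge 0}(-1)^kD_k\frac{(\beta-\beta_n)^k}{k!}$ rather than your element-wise expansion with binomial coefficients; the two computations are equivalent.
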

 \begin{proof}
We study $D_{\Sen, \kinfty}(W)= (W^{G_L})^{\tau\dla, \gamma=1}$ in two steps. In Step 1, we show $(W^{G_L})^{\hat{G}\dla, \nabla_\gamma=0}$ is of dimension $d$ over $L$, this is achieved via a monodromy descent. In Step 2, via an (easy) \'etale descent, we further show the ($\gamma=1$)-invariant $D_{\Sen, \kinfty}(W)$ has dimension $d$. 
 
Step 1 (monodromy descent). We claim the following:
\begin{itemize}
\item Let $M$ be a $\hat{L}^{\hat{G}\dla}$-vector space of dimension $d$ with a semi-linear and locally analytic $\hat{G}$-action. Then the subspace $M^{\nabla_\gamma=0}$ is a   $L$-vector space of dimension $d$ such that
\begin{equation}
M^{\nabla_\gamma=0} \otimes_L \hat{L}^{\hat{G}\dla} =M
\end{equation}
\end{itemize}
 This is a ``$\theta$-specialization" of the argument \cite[Rem. 6.1.7]{GP21}, hence the proof is practically verbatim. 
 To proceed, as in Prop. \ref{loc ana in L new}, we denote $\beta=\theta(\mathfrak{t})$. There, we also made use of the operator
  $\partial_\gamma =\frac{1}{\beta}  \nabla_\gamma$, which is precisely $\theta$-specialization  of the operator (with same notation) in \cite[5.3.4]{GP21}.
 Choose a basis of $M$, and let  $D_\gamma=\Mat(\partial_\gamma)$, then it suffices to show that there exists $H\in \GL_d(\hat{L}^{\hat{G}\dla})$ such that
 \begin{equation}\label{eqdgamma}
 \partial_{\gamma}(H)+D_{\gamma}H = 0
 \end{equation} 
For $k \in \mathbb N$, let $D_k = \Mat(\partial_{\gamma}^k)$. For $n$ large enough, the series given by
$$H = \sum_{k \geq 0}(-1)^kD_k\frac{(\beta-\beta_n)^k}{k!}$$
converges   to the desired  solution of \eqref{eqdgamma}. Here, $\beta-\beta_n$ is used as a ``variable" just as in the proof of Prop. \ref{loc ana in L new}.
  
  Step 2 (etale descent). 
  By \cite[Prop. 3.1.6]{GP21}, we know  
  \begin{equation}\label{eq316first}
  (W^{G_L})^{\hat{G}\dla} =D_{\Sen, \kpinfty}(W) \otimes_\kpinfty (\hat{L})^{\hat{G}\dla}.
  \end{equation}
 Apply Step 1 to the above vector space, and  so 
 $$X:=(W^{G_L})^{\hat{G}\dla, \nabla_\gamma=0}$$
  is a vector space over $L$ of  dimension $d$. In addition, $X$ is stable under $\gal(L/\kinfty)$-action as this action commutes with $\nabla_\gamma$. (Note however $\tau$-action does not commute with $\nabla_\gamma$, not even on the ring level: for example $\tau \nabla_\gamma(u)=0 \neq \nabla_\gamma \tau(u)$.) 
  In summary, $X$ is a $L$-vector space with a $\gal(L/\kinfty)$-action. Note $L=\cup_n K(\pi_n, \mu_n)$ and $\gal(L/\kinfty)$ is topologically finitely generated. Thus, for $n\gg 0$, $X$ descends to some $\gal(L/\kinfty)$-stable  vector space  $X_n$ over $K(\pi_n, \mu_n)$. 
 By Galois descent,   $X_n^{\gamma=1}$ is a $K(\pi_n)$-vector space of dimension $d$, and hence $X_n^{\gamma=1}\otimes_{K(\pi_n)} \kinfty$ is precisely the  desired $D_{\Sen, \kinfty}(W)$. Finally, apply \cite[Prop. 3.1.6]{GP21} again, then we have
 \begin{equation}
 (W^{G_L})^{\hat{G}\dla} =D_{\Sen, \kinfty}(W) \otimes_\kinfty (\hat{L})^{\hat{G}\dla},
 \end{equation}
 which together with \eqref{eq316first} proves \eqref{eqkpk}. 
 \end{proof}

Let $W \in \rep_\gk(C)$. Since $D_{\Sen, \kinfty}(W) $ are locally analytic vectors, we can define (cf. Def. \ref{defnablaforL})
\begin{equation}\label{eq322tau}
N_\nabla: D_{\Sen, \kinfty}(W)  \to (W^{G_L})^{\hat{G}\dla}
\end{equation}

\begin{theorem}\label{thmkummersenop}
Let $W \in \rep_\gk(C)$, then Eqn. \eqref{eq322tau}, after linear scaling,  induces a $\kinfty$-linear operator, which we call the \emph{Sen operator over the Kummer tower}
\begin{equation}\label{eqnnablanorm}
\frac{1}{\theta(u\lambda')}\cdot N_\nabla: D_{\Sen, \kinfty}(W) \to D_{\Sen, \kinfty}(W).
\end{equation}
(We also sometimes use the simplified terminology ``$\kinfty$-Sen operator").
Extend it $C$-linearly to a $C$-linear operator on $ D_{\Sen, \kinfty}(W) \otimes_\kinfty C=W$, and denote it by the same notation:
\begin{equation}
\frac{1}{\theta(u\lambda')}\cdot N_\nabla:  W \to W
\end{equation}
Then this is precisely the (uniquely defined) \emph{Sen operator} in Eqn. \eqref{eqsenextendtoc}.
\end{theorem}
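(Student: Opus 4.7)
The plan is to prove Theorem \ref{thmkummersenop} in two stages: first show that $\frac{1}{\theta(u\lambda')}N_\nabla$ genuinely defines a $\kinfty$-linear endomorphism of $D_{\Sen,\kinfty}(W)$, and then show that its $C$-linear extension to $W$ agrees with the classical Sen operator $\nabla_\gamma$ of Notation \ref{notaSenop}. The single decisive input for both stages is the scalar relation
\[ \theta(u\lambda')\,\nabla_\gamma + N_\nabla = 0 \quad \text{on } (\hat{L})^{\hat{G}\dla} \]
furnished by Corollary \ref{corkill}.

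For the first stage, I would first verify that $N_\nabla$ maps $D_{\Sen,\kinfty}(W)=(W^{G_L})^{\tau\dla,\gamma=1}$ into itself: it preserves $W^{G_L}$ trivially; it preserves $\hat{G}$-locally analytic vectors because, up to multiplication by an element of $(\hat{L})^{\hat{G}\dla}$, it is the Lie algebra operator $\nabla_\tau$; and it commutes with the $\gal(L/\kinfty)$-action, per Definition \ref{defndiffwtb}, hence preserves the condition $\gamma=1$. The scalar $\theta(u\lambda')$ lies in $K^\times$: expanding $\lambda=(E(u)/E(0))\prod_{n\geq 1}\varphi^n(E(u)/E(0))$ and applying $\theta$ (noting $\theta(E(u))=0$) produces, up to a power of $E(0)^{-1}$, the product $\pi E'(\pi)\cdot\prod_{n\geq 1}E(\pi^{p^n})$, which is nonzero since $\pi^{p^n}$ is not a root of $E$ for $n\geq 1$. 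Finally, $\kinfty$-linearity follows from the Leibniz identity $N_\nabla(ax)=N_\nabla(a)x+aN_\nabla(x)$ combined with the vanishing $N_\nabla(a)=-\theta(u\lambda')\nabla_\gamma(a)=0$ for $a\in\kinfty$, the latter because $\gal(L/\kinfty)$ acts trivially on $\kinfty$.

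For the second stage, the cleanest approach is to introduce the combined operator $D := \theta(u\lambda')\,\nabla_\gamma + N_\nabla$ on the ambient space $(W^{G_L})^{\hat{G}\dla}$. Since $\nabla_\gamma$ and $N_\nabla$ are both derivations with respect to multiplication by $(\hat{L})^{\hat{G}\dla}$-scalars, and since $D$ annihilates such scalars by Corollary \ref{corkill}, the Leibniz computation
\[ D(aw)=\bigl(\theta(u\lambda')\nabla_\gamma(a)+N_\nabla(a)\bigr)w + a\,D(w) = a\,D(w) \]
shows that $D$ is itself $(\hat{L})^{\hat{G}\dla}$-linear. By the two identifications in \eqref{eqkpk}, $D$ is determined by its restriction to either Sen module. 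On $D_{\Sen,\kinfty}(W)$ the condition $\gamma=1$ annihilates $\nabla_\gamma$, so $D$ reduces to $N_\nabla$; on $D_{\Sen,\kpinfty}(W)=(W^{G_\kpinfty})^{\gammak\dla}$ any lift of $\tau$ belongs to $G_\kpinfty$ and therefore acts trivially, so $\nabla_\tau$ and hence $N_\nabla$ vanish, leaving $D=\theta(u\lambda')\,\nabla_\gamma$, which is $\theta(u\lambda')$ times the classical Sen operator. Dividing the resulting two-sided description of $D$ by $\theta(u\lambda')\in K^\times$ yields the stated identification of $C$-linear operators on $W=D_{\Sen,\kinfty}(W)\otimes_\kinfty C = D_{\Sen,\kpinfty}(W)\otimes_\kpinfty C$.

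The only step that requires genuine care is the $(\hat{L})^{\hat{G}\dla}$-linearity of $D$, which is exactly the Leibniz computation displayed above; everything else is essentially formal from the definitions of the two Sen modules and the pointwise vanishing of $\nabla_\gamma$ on $D_{\Sen,\kinfty}(W)$ and of $\nabla_\tau$ on $D_{\Sen,\kpinfty}(W)$.
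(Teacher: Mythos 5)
Your proposal is correct and follows essentially the same route as the paper: stability of $D_{\Sen,\kinfty}(W)$ under $N_\nabla$ via commutation with $\gal(L/\kinfty)$, $\kinfty$-linearity via the Leibniz rule, and then the key step of observing that the combination $\theta(u\lambda')\nabla_\gamma+N_\nabla$ from Corollary \ref{corkill} is $(\hat{L})^{\hat{G}\dla}$-linear and reduces to $N_\nabla$ on $D_{\Sen,\kinfty}(W)$ and to $\theta(u\lambda')\nabla_\gamma$ on $D_{\Sen,\kpinfty}(W)$, which are then matched through \eqref{eqkpk}. The only (welcome) additions are your explicit verification that $\theta(u\lambda')$ is a nonzero element of $K$ and the spelled-out Leibniz computation, both of which the paper leaves implicit.
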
 

 \begin{proof}
 Note we have the relation  $gN_\nabla=N_\nabla g, \forall g\in \gal(L/\kinfty)$, hence $N_\nabla$ stabilizes $D_{\Sen, \kinfty}(W)$. 
 Furthermore, $\theta(u\lambda') \in \kinfty$, hence $\frac{1}{\theta(u\lambda')}\cdot N_\nabla$ still stabilizes $D_{\Sen, \kinfty}(W)$. Thus \eqref{eqnnablanorm} is well-defined.
 This is a $\kinfty$-linear operator  because $\nabla_\tau$ hence $\frac{1}{\theta(u\lambda')}\cdot N_\nabla$ kills $\kinfty$.
 
By Cor. \ref{corkill}, 
$\mathfrak{a}= \nabla_\gamma +\frac{1}{\theta(u\lambda')} N_\nabla$ is an $(\hat{L})^{\hat{G}\dla}$-\emph{linear} operator  on both sides of the following:
  \begin{equation*}
 D_{\Sen, \kinfty}(W) \otimes_\kinfty (\hat{L})^{\hat{G}\dla} =D_{\Sen, \kpinfty}(W) \otimes_\kpinfty (\hat{L})^{\hat{G}\dla}.
 \end{equation*}
 On the right hand side,  $N_\nabla$  kills $ D_{\Sen, \kpinfty}(W) $ and hence $\mathfrak{a} =\nabla_\gamma$ is precisely the $(\hat{L})^{\hat{G}\dla}$-linear extension of the Sen operator in Eqn. \eqref{eqsenclassical}. Similarly, on the left hand side, $\nabla_\gamma$ kills $ D_{\Sen, \kinfty}(W)$, and hence $\mathfrak{a}$ is the same as the $(\hat{L})^{\hat{G}\dla}$-linear extension of \eqref{eqnnablanorm}. We can  conclude by further extending $C$-linearly.
 \end{proof}

\begin{cor}
The two operators $\frac{1}{\theta(u\lambda')}\cdot N_\nabla$ in Eqn. \eqref{eqnnablanorm} and $\nabla_\gamma$ in Eqn. \eqref{eqsenclassical} have the same eigenvalues, and have the same semi-simplicity property.
\end{cor}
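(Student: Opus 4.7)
The plan is to deduce this immediately from Theorem \ref{thmkummersenop} via a purely linear-algebraic fact about base change of endomorphisms over a field extension.

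First I would recall the following elementary fact. Let $F \subset F'$ be a field extension in characteristic zero, let $V$ be a finite-dimensional $F$-vector space, and let $\phi \in \End_F(V)$. Then the characteristic polynomial and the minimal polynomial of $\phi \otimes_F F'$ on $V \otimes_F F'$ coincide with those of $\phi$. Indeed, preservation of the characteristic polynomial is clear from the determinantal formula; for the minimal polynomial, choose any $F$-basis $\{b_j\}$ of $F'$, write a hypothetical annihilator $p \in F'[X]$ of $\phi \otimes F'$ as $p = \sum_j b_j p_j$ with $p_j \in F[X]$, and use the $F$-linear independence of the $b_j$ to conclude that each $p_j$ annihilates $\phi$, hence is divisible by $m_\phi$. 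In particular, eigenvalues in any algebraically closed overfield are preserved, and $\phi$ is semi-simple (i.e. has separable minimal polynomial) if and only if $\phi \otimes F'$ is semi-simple.

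Next I would apply this with the following two inputs. On the one hand, with $F = K_\infty$, $F' = C$, $V = D_{\Sen, K_\infty}(W)$, and $\phi = \frac{1}{\theta(u\lambda')} N_\nabla$, the operator $\phi \otimes_{K_\infty} C$ on $V \otimes_{K_\infty} C = W$ is, by Theorem \ref{thmkummersenop}, the classical Sen operator $\nabla_\gamma$ on $W$. On the other hand, with $F = K_{p^\infty}$, $F' = C$, $V' = D_{\Sen, K_{p^\infty}}(W)$, and $\phi' = \nabla_\gamma$, the operator $\phi' \otimes_{K_{p^\infty}} C$ on $V' \otimes_{K_{p^\infty}} C = W$ is, by definition (cf.\ Notation \ref{notaSenop}), the same classical Sen operator on $W$.

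Combining these, the minimal and characteristic polynomials of $\frac{1}{\theta(u\lambda')} N_\nabla$ on $D_{\Sen, K_\infty}(W)$ and of $\nabla_\gamma$ on $D_{\Sen, K_{p^\infty}}(W)$ both agree with those of the classical Sen operator on $W$. Hence the two operators have the same eigenvalues (in $\overline{K}$, say) and simultaneously satisfy or fail to satisfy semi-simplicity. There is no real obstacle here; the only subtlety is to make sure one invokes the base-change invariance of the minimal polynomial (which requires the $F$-linear independence argument sketched above), not just that of the characteristic polynomial, so as to transfer the semi-simplicity statement and not merely the eigenvalues.
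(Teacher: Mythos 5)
Your proposal is correct and is exactly the intended deduction: the paper states this as an immediate corollary of Theorem \ref{thmkummersenop} (both operators become the \emph{same} $C$-linear Sen operator after base change, and eigenvalues and semi-simplicity are invariant under field extension since both the characteristic and minimal polynomials are preserved). Your careful justification of the invariance of the minimal polynomial is a correct spelling-out of what the paper leaves implicit.
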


\begin{defn}
Let $F$ be a field. Write $\mathrm{End}_F$ for the category consisting of $(M, f)$ where $M$ is a
   finite dimensional $F$-vector space and $f: M\to M$ is an $F$-linear endomorphism.
\end{defn}  
  
  \begin{prop} \label{propisomobj}
Let $W_1, W_2 \in \rep_\gk(C)$. 
By abuse of notation, we use $\phi_\mathrm{Sen}$ to denote the Sen operators on $D_{\Sen, \kpinfty}(W_i)$ as well as on $W_i$, cf. Notation \ref{notaSenop}.
Similarly, we use   $\phi_{\kinfty-\mathrm{Sen}}$ to denote the $\kinfty$-Sen operators on $D_{\Sen, \kinfty}(W_i)$, cf. Thm. \ref{thmkummersenop}.
The following are equivalent.
\begin{enumerate}
\item $W_1$ and $W_2$ are isomorphic as objects in $\rep_\gk(C)$.
\item   $(D_{\Sen, \kpinfty}(W_1), \phi_\mathrm{Sen})$ and $(D_{\Sen, \kpinfty}(W_2), \phi_\mathrm{Sen})$ are isomorphic as objects in $\mathrm{End}_\kpinfty$.
\item   $(W_1,  \phi_\mathrm{Sen})$ and $(W_2,  \phi_\mathrm{Sen})$ are isomorphic as objects in $\mathrm{End}_C$.
\item    $(D_{\Sen, \kinfty}(W_1), \phi_{\kinfty-\mathrm{Sen}})$ and $(D_{\Sen, \kinfty}(W_2), \phi_{\kinfty-\mathrm{Sen}})$  are isomorphic as objects in $\mathrm{End}_\kinfty$.
\end{enumerate}
\end{prop}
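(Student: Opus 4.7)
The plan is to treat condition (3) as a hub and establish that each of (1), (2), (4) is equivalent to it. The two equivalences $(3) \Leftrightarrow (2)$ and $(3) \Leftrightarrow (4)$ will be essentially linear-algebraic, while $(1) \Leftrightarrow (3)$ is where classical Sen theory enters.

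For $(3) \Leftrightarrow (2)$ and $(3) \Leftrightarrow (4)$, I would invoke a standard base-change principle: given a field extension $F \subset F'$, two pairs $(M_i, f_i) \in \mathrm{End}_F$ are isomorphic iff their $F'$-extensions are isomorphic in $\mathrm{End}_{F'}$, since the rational canonical form, equivalently the invariant factors of $xI - A$, is computed over $F[x]$ and is preserved by scalar extension. Applying this with $F = \kpinfty$ and $F' = C$, and using the canonical identification $D_{\Sen, \kpinfty}(W) \otimes_\kpinfty C \simeq W$ with compatible Sen operators (from classical Sen theory, recalled in Notation \ref{notaSenop}), yields $(3) \Leftrightarrow (2)$. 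The same principle with $F = \kinfty$, combined with Theorem \ref{thmkummersenop} (which identifies the $C$-linear extension of the $\kinfty$-Sen operator with the classical Sen operator on $W$) together with Theorem \ref{thm331kummersenmod} (which supplies $D_{\Sen, \kinfty}(W) \otimes_\kinfty C \simeq W$), yields $(3) \Leftrightarrow (4)$.

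For $(1) \Leftrightarrow (3)$, the direction $(1) \Rightarrow (3)$ is immediate by functoriality of the Sen operator. For the converse, the essential input is the classical theorem of Sen, which upgrades the construction $W \mapsto (D_{\Sen, \kpinfty}(W), \phi_{\mathrm{Sen}})$ to an \emph{equivalence} of categories between $\rep_\gk(C)$ and the category of pairs $(D, \phi)$ consisting of a finite-dimensional $\kpinfty$-vector space with a $\kpinfty$-linear endomorphism. In particular, the $\Gamma_K$-action on $D_{\Sen, \kpinfty}(W)$, and hence the full $\gk$-action after tensoring with $C$, can be canonically reconstructed from the Sen operator alone (the action of $\gamma \in \Gamma_K$ sufficiently close to the identity is recovered as $\exp(\log \chi(\gamma) \cdot \phi_{\mathrm{Sen}})$). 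Combining this with the already-established $(2) \Leftrightarrow (3)$ then gives $(3) \Rightarrow (1)$.

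The only non-formal input is Sen's classical equivalence used in $(3) \Rightarrow (1)$; all other implications are routine base-change or naturality arguments, and I do not anticipate any serious obstacle beyond correctly invoking the existing theorems.
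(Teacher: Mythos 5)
Your reduction of everything to condition (3), the base-change argument via invariant factors for $(2)\Leftrightarrow(3)$ and $(4)\Leftrightarrow(3)$ (using Theorems \ref{thm331kummersenmod} and \ref{thmkummersenop}), and the direction $(1)\Rightarrow(3)$ are all fine, and this matches the paper's structure, which also funnels everything through $(3)\Rightarrow(1)$.

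The problem is your justification of $(3)\Rightarrow(1)$. There is no equivalence of categories between $\rep_\gk(C)$ and the category of pairs $(D,\phi)$ over $\kpinfty$: the functor $W\mapsto (D_{\Sen,\kpinfty}(W),\phi_{\mathrm{Sen}})$ forgets genuine information, and the paper devotes \S \ref{subsec44} to explaining that even when $k$ is algebraically closed one only gets a \emph{non-canonical}, non-tensor equivalence (Sen's Theorem 10), built the other way around by making choices. Your reconstruction formula $\gamma\mapsto\exp(\log\chi(\gamma)\cdot\phi_{\mathrm{Sen}})$ is also not enough: since the $\Gamma_K$-action on $D_{\Sen,\kpinfty}(W)$ is only \emph{semilinear}, that formula is valid only for $\gamma$ in an open subgroup $\Gamma_m$ and only on a chosen basis, and a $\kpinfty$-linear isomorphism intertwining the Sen operators therefore yields at best a $G_{K(\mu_{p^m})}$-equivariant isomorphism $W_1\simeq W_2$ for some $m\gg 0$. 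Descending such an isomorphism to $G_K$ is precisely the non-formal content of Sen's theorem; see also Remark \ref{remconfusion}, which warns against exactly this confusion. The fix is simply to cite \cite[p.~101, Thm.~7]{Sen80} directly for $(3)\Rightarrow(1)$ — that theorem states that two $C$-representations with conjugate ($C$-linear) Sen operators are isomorphic — rather than attempting to rederive it from a categorical equivalence that does not exist.
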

\begin{proof}
Clearly, (1) implies (2)-(4). (2) obviously implies (3); (4) implies (3) by  Thm. \ref{thmkummersenop}.
The implication from (3) to (1) is proved in \cite[p. 101, Thm. 7]{Sen80}.
\end{proof}

 We record a cohomology comparison theorem in Sen theory.
Recall for $W \in \rep_\gk(C)$, we have the classical Sen module $D_{\Sen, \kpinfty}(W)$ together with Sen operator $\nabla_\gamma$ in Notation \ref{notaSenop}.
We also have $\kinfty$-Sen module $D_{\Sen, \kinfty}(W)$ together with $\kinfty$-Sen operator $\phi_{\sen, \kinfty}=\frac{1}{\theta(u\lambda')}\cdot N_\nabla$.

\begin{theorem}\label{thm-Sen coho Galois coho}
Let $W \in \rep_\gk(C)$. We have  a diagram of  quasi-isomorphisms between (continuous) group cohomologies and Lie algebra cohomologies:
\begin{equation}\label{diagqisom}
\begin{tikzcd}
{\rgamma(G_K, W)} \arrow[rr, "\simeq"] &  & {\rgamma(\hat{G}, W^{G_L})} \arrow[rr, "\simeq"] \arrow[d, "\simeq"]                &  & {\rgamma(\gammak, W^{G_\kpinfty})} \arrow[d, "\simeq"]           \\
                                       &  & {\rgamma(\hat{G}, (W^{G_L})^{\hat{G}\dla})} \arrow[rr, "\simeq"] \arrow[d, "\simeq"] &  & {\rgamma(\gammak,  D_{\sen, \kpinfty}(W))} \arrow[d, "\simeq"] \\
                                       &  & {(\rgamma(\Lie \hat{G}, (W^{G_L})^{\hat{G}\dla}))^{\hat G}} \arrow[rr, "\simeq"]     &  & {(\rgamma(\Lie \gammak, D_{\sen, \kpinfty}(W)))^{\gammak}}
\end{tikzcd}
\end{equation}
In addition, we have quasi-isomorphism
\begin{align}
\label{qisopinfty}\rgamma(G_K, W)\otimes_K \kpinfty & \simeq
[D_{\sen, \kpinfty}(W) \xrightarrow{\nabla_\gamma} D_{\sen, \kpinfty}(W)] \\
\label{qisoL} \rgamma(G_K, W)\otimes_K L &\simeq \rgamma(\Lie \hat{G}, (W^{G_L})^{\hat{G}\dla})\\
\label{qisoinfty}\rgamma(G_K, W)\otimes_K \kinfty &\simeq
[D_{\sen, \kinfty}(W) \xrightarrow{\phi_{\sen, \kinfty}} D_{\sen, \kinfty}(W)]
\end{align}
\end{theorem}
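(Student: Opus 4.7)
The plan is to establish the nine quasi-isomorphisms in diagram \eqref{diagqisom} in sequence, then extract the base-change statements \eqref{qisopinfty}--\eqref{qisoinfty} by a Hilbert 90-style descent.

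First, I would tackle the top row via Hochschild-Serre. For $\rgamma(G_K, W) \simeq \rgamma(\hat G, W^{G_L})$, it suffices to show $\rgamma(G_L, W)$ is concentrated in degree $0$: the field $\hat L$ is perfectoid (since $L$ contains the perfectoid subfield $\widehat{\kpinfty \cdot \kinfty}$), so for the $C$-representation $W$ standard Tate-Sen vanishing yields $H^i(G_L, W) = 0$ for $i \geq 1$. The second top-row isomorphism then comes from Hochschild-Serre for $1 \to \gal(L/\kpinfty) \to \hat G \to \gammak \to 1$ together with the vanishing $H^i(\gal(L/\kpinfty), W^{G_L}) = 0$ for $i \geq 1$, which follows from continuity of the $\gal(L/\kpinfty) \simeq \Zp$-action on the finite-dimensional $\hat L$-Banach space $W^{G_L}$.

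Next, for the middle-row vertical maps I would invoke the locally analytic Tate-Sen formalism: the inclusions $(W^{G_L})^{\hat G\dla} \hookrightarrow W^{G_L}$ and $D_{\sen, \kpinfty}(W) \hookrightarrow W^{G_\kpinfty}$ induce quasi-isomorphisms on continuous group cohomology by \cite[Prop.~3.3.1]{GP21} (for $\hat G$) and by classical Sen theory (for $\gammak$). For the bottom row I would apply Lazard's theorem: for a compact $p$-adic Lie group $H$ acting locally analytically on a $\Qp$-Banach space $M$,
\[\rgamma(H, M) \simeq (\rgamma(\Lie H, M))^H,\]
where the right side denotes $H$-invariants of the bounded Chevalley-Eilenberg complex $\wedge^\bullet (\Lie H)^\vee \otimes M$. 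For $H = \gammak$ this complex reduces to $[M \xrightarrow{\nabla_\gamma} M]$; for $H = \hat G$ it has three terms involving $\nabla_\gamma$ and $\nabla_\tau$. The bottom-row horizontal isomorphism then follows from naturality of Lazard under the quotient $\hat G \twoheadrightarrow \gammak$.

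For the base-change statements I would use the general principle that if $X$ is a finite-dimensional $F$-vector space with semi-linear $H$-action such that $F^H = K$, then Hilbert 90 yields $X^H \otimes_K F \simeq X$; applying this termwise to the Chevalley-Eilenberg complex shows that tensoring $H$-invariants with $F$ reproduces the full complex. Taking $F = \kpinfty$, $H = \gammak$ gives \eqref{qisopinfty}; taking $F = L$, $H = \hat G$ gives \eqref{qisoL}. The statement \eqref{qisoinfty} is subtler because $\kinfty/K$ is non-Galois; here I would first pass through \eqref{qisoL}, then combine with the bridging isomorphism \eqref{eqkpk} from Thm.~\ref{thm331kummersenmod} and Thm.~\ref{thmkummersenop} (which identifies $\frac{1}{\theta(u\lambda')} N_\nabla$ as the $\kinfty$-Sen operator) to descend from the 3-term $\hat G$-invariant complex of $(W^{G_L})^{\hat G\dla}$ to the 2-term $\kinfty$-Sen complex, finally base-changing to $\kinfty$. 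The main obstacle is precisely this last step: one must verify carefully that projecting the $\hat G$-invariant Chevalley-Eilenberg complex onto the $\gal(L/\kinfty)$-fixed, $N_\nabla$-direction piece produces exactly $[D_{\sen, \kinfty}(W) \xrightarrow{\phi_{\sen, \kinfty}} D_{\sen, \kinfty}(W)]$ after the normalization by $\theta(u\lambda')$, and this uses essentially the ``uniqueness of the killing operator'' given by Corollary~\ref{corkill}.
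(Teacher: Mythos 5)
Your proposal unwinds by hand what the paper's proof outsources wholesale: the paper checks that objects of $\rep_\gk(C)$ satisfy the Tate--Sen axioms (TS1)--(TS4) of \cite{Poratlav} and then quotes \cite[Thm.~1.5, Thm.~1.7]{RJRC} and \cite[Thm.~5.1]{Poratlav} for the entire diagram \eqref{diagqisom}, obtaining \eqref{qisopinfty} and \eqref{qisoL} by Galois descent and \eqref{qisoinfty} by taking $\gal(L/\kinfty)$-invariants of \eqref{qisoL}. Your decomposition --- perfectoid vanishing plus Hochschild--Serre for the top row, decompletion to locally analytic vectors for the middle verticals, Lazard's isomorphism for the bottom row --- is exactly the content of those cited theorems, so the two arguments agree in substance; yours is more self-contained but thereby takes on obligations the paper avoids. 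In particular, the quasi-isomorphism $\rgamma(\hat{G}, (W^{G_L})^{\hat{G}\dla})\simeq\rgamma(\hat{G}, W^{G_L})$ is the nontrivial ``no higher locally analytic vectors'' statement: this is precisely axiom (TS4) of \cite{Poratlav} combined with \cite{RJRC}, and is not available in \cite{GP21}, so your citation there should be replaced. On the other hand, your treatment of \eqref{qisoinfty} is more detailed than the paper's one-line reduction and correctly identifies Thm.~\ref{thmkummersenop} and Cor.~\ref{corkill} as the mechanism collapsing the two-dimensional Chevalley--Eilenberg complex onto the one-dimensional Kummer--Sen complex.

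One justification is wrong as stated: the vanishing $H^i(\gal(L/\kpinfty), W^{G_L})=0$ for $i\geq 1$ does \emph{not} follow from continuity of the $\Zp$-action on a Banach space (the trivial action of $\Zp$ on $\Qp$ already has nonzero $H^1$; more to the point, the $H^1$ in the $\tau$-direction is exactly where the Kummer--Sen operator lives after decompletion, so nothing vanishes ``automatically'' here). The vanishing is nevertheless true: apply Hochschild--Serre to $G_L\subset G_{\kpinfty}$ and use that $\rgamma(G_{\kpinfty},W)=W^{G_{\kpinfty}}$ is concentrated in degree $0$ because $\hatkpinfty$ is perfectoid --- the same perfectoid vanishing you already invoked for $G_L$, applied to the intermediate field. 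With that repair, and with the decompletion steps referred to the Tate--Sen/locally-analytic formalism of \cite{Poratlav, RJRC} rather than to \cite{GP21}, your argument goes through.
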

\begin{proof}
All the quasi-isomorphism in  diagram \eqref{diagqisom} (as well as \eqref{qisopinfty}) should be well-known to experts; but we cannot locate an explicit reference. Fortunately, these all follow from some recent general formalism developed by \cite{Poratlav} and \cite{RJRC}.

Note objects in $\rep_\gk(C)$  satisfy the axioms (TS1), (TS2), (TS3), (TS4) in \cite[\S 5.1]{Poratlav}, cf. the examples below  \cite[Cor. 5.4]{Poratlav}; here (TS1)-(TS3) are the usual Tate-Sen axioms introduced by Berger-Colmez, and the extra (TS4) ensures that $C$-representations have no ``\emph{higher locally analytic vectors}", cf. \emph{loc. cit.} for detailed discussions. Thus all the quasi-isomorphisms in diagram \eqref{diagqisom} follow from   \cite[Thm. 1.5, Thm. 1.7]{RJRC} and \cite[Thm. 5.1]{Poratlav} (as already observed in  \cite[Cor. 5.4]{Poratlav}).
(We are also informed by  Rodr\'{\i}guez Camargo that results and arguments in this paragraph are established in much broader generality in \cite{RC22}).

Finally, \eqref{qisopinfty} and \eqref{qisoL} follow from standard Galois descent; and \eqref{qisoinfty} follows by taking $\gal(L/\kinfty)$-invariant of \eqref{qisoL}.
\end{proof}

 \section{Hodge--Tate   crystals and (log-) nearly Hodge--Tate representations}\label{subsecHTsen}

In this section,  we classify rational Hodge--Tate   crystals by  (log-) nearly Hodge--Tate representations, and compare their cohomologies.
The main idea is to show the ``small endomorphism" corresponding to a Hodge--Tate crystal, after some normalization, matches with the $\kinfty$-Sen operator constructed in  \S \ref{seckummersen}.

\begin{notation} \label{notasecgalcoho}
Let $\ast \in \{ \emptyset, \log \}$.
Let $\bm \in \Vect((\calO_K)_{\Prism, \ast},\baropris[1/p]).$
Consider the evaluations
\[ M=\bM((\gs, E, \ast)) \]
In addition, one can associate a small endomorphism $(M, \phi_M)$ by Thm. \ref{Thm-HTCrystal}.
  Consider also
  \[ W= \bM((\ainf, (\xi), \ast)). \]
 We have $W \in \rep_\gk(C)$. By Thm. \ref{thmkummersenop}, we can associate \[ (D_{\Sen, \kinfty}(W), \phi_\kinfty) \in \End_\kinfty.\]
We shall consider $M$ as a subspace of $W$ via the identification \[M\otimes_{K } C \xrightarrow{\simeq} W\]
induced by the morphism
\[ (\gs, E, \ast) \to (\ainf, (\xi), \ast)\]
Finally recall we use
\begin{equation*}
a=
\begin{cases}
  -E'(\pi), &  \text{if } \ast=\emptyset \\
 -\pi E'(\pi), &  \text{if } \ast=\log
\end{cases}
\end{equation*} 
\end{notation}

\begin{thm}\label{thmMWSen}
Use Notation  \ref{notasecgalcoho}.  
\begin{enumerate}
\item Via the identification $M\otimes_{K } C = W$, we have $M \subset D_{\Sen, \kinfty}(W)$; furthermore, this inclusion induces an isomorphism
 \begin{equation}\label{eqMDsen}
  M\otimes_K  \kinfty \simeq D_{\Sen, \kinfty}(W) 
  \end{equation}
  
 \item  Scaling $\phi_M: M \to M$ by $\frac{1}{a}$ and extending $\kinfty$-linearly gives rise to
$$\frac{\phi_M}{a} : M\otimes_K  \kinfty \to M\otimes_K  \kinfty;$$
then this is the \emph{same} (via \eqref{eqMDsen}) as the $\kinfty$-Sen operator 
\begin{equation*} 
\phi_\kinfty=\frac{1}{\theta(u\lambda')}\cdot N_\nabla: D_{\Sen, \kinfty}(W) \to D_{\Sen, \kinfty}(W)
\end{equation*}
 defined in Thm. \ref{thmkummersenop}.
 \end{enumerate}
 \end{thm}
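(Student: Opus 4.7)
The plan is to prove both parts by directly unwinding the explicit Galois-cocycle formula of Proposition \ref{Prop-MatrixCocycle}. That formula, specialised to elements of the Kummer tower, will give a closed-form description of the $\tau$-action on $M \subset W$ which is manifestly analytic in the group parameter; differentiating at the identity will then identify $\phi_M/a$ with the $\kinfty$-Sen operator of Theorem \ref{thmkummersenop}. Since the role of $\fkt$, $u\lambda'$, $\mu_1-1$ in Definition \ref{defnablaforL} is only to strip off scalars in $C$, the bulk of the proof will boil down to a single scalar identity in $C$.

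For Part (1), I first observe that any $g \in G_\kinfty$ fixes $u \in \gs$ and hence $[\pi^\flat] \in \ainf$, so $c(g) = 0$ in the notation of Lemma \ref{image of X}. Plugging $c(g)=0$ into \eqref{Equ-MatrixCocycle} makes every $n\geq 1$ summand vanish, leaving $g(x) = x$ for all $x \in M$; this yields $M \subset W^{G_L}$ and shows $\gal(L/\kinfty)$ acts trivially on $M$. For local analyticity, using $c(\tau^i) = i\cdot c(\tau)$ the formula \eqref{Equ-MatrixCocycle} collapses to the closed form
\[
\tau^i(x) \;=\; (1+i\mathfrak{c}')^{\phi_M/a}(x), \qquad x \in M,\ i\in\zp,
\]
with $\mathfrak{c}' := c(\tau)\cdot\pi E'(\pi)\cdot\theta\bigl(\xi/E([\pi^\flat])\bigr)\cdot(\mu_1-1)\in C$. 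This is visibly analytic in $i$, so $M \subset D_{\Sen,\kinfty}(W)$. Comparing dimensions via Theorem \ref{thm331kummersenmod}, and noting that $M\otimes_K\kinfty \to D_{\Sen,\kinfty}(W)$ becomes $M\otimes_K C = W$ after base-change to $C$, forces this map to be an isomorphism.

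For Part (2), differentiating the displayed formula at $i=0$ gives
\[
\nabla_\tau(x) \;=\; \mathfrak{c}'\cdot\tfrac{\phi_M}{a}(x), \qquad x\in M.
\]
Applying the normalisations $N_\nabla = \nabla_\tau/(p\theta(\fkt))$ or $\nabla_\tau/(p^2\theta(\fkt))$ together with $\phi_\kinfty = N_\nabla/\theta(u\lambda')$ from Definition \ref{defnablaforL}, the desired equality $\phi_\kinfty(x) = \phi_M(x)/a$ on $M$ reduces to the single scalar identity
\[
\mathfrak{c}' \;=\; p\cdot\theta(u\lambda')\cdot\theta(\fkt)
\]
(with $p$ replaced by $p^2$ and $c(\tau)=2$ in the degenerate $p=2$ case, the two corrections cancelling). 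Once this is established, $\kinfty$-linear extension from $M$ to $M\otimes_K\kinfty = D_{\Sen,\kinfty}(W)$ completes the argument.

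The main technical obstacle will be verifying this last scalar identity. Although elementary, it requires carefully separating the simple zero of $E(u)/E(0)$ from the tail $A := \prod_{n\geq 1}\varphi^n(E(u)/E(0))$ of $\lambda$, whose image $\theta(A)$ is a unit in $\oc$. Using $\lambda = (E(u)/E(0))\cdot A$ to compute $\theta(u\lambda')$, the classical identity $\theta(t/\xi) = \mu_1-1$ to compute $\theta(\fkt)$, and the equality of ideals $(\xi) = (E([\pi^\flat]))$ in $\ainf$, the factors of $\theta(A)$ will cancel in the product $p\theta(u\lambda')\theta(\fkt)$ and leave exactly $\mathfrak{c}'$. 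Conceptually, this identity expresses that the same Eisenstein data defining the Breuil--Kisin prism also governs the normalising scalars of the Berger--Colmez Sen theory in Definition \ref{defnablaforL}.
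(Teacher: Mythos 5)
Your proposal is correct and follows essentially the same route as the paper's proof: both unwind the cocycle formula of Proposition \ref{Prop-MatrixCocycle} to get $\tau^i(x)=(1+i\mathfrak{c}')^{\phi_M/a}(x)$, deduce triviality of the $\gal(L/\kinfty)$-action and $\tau$-local analyticity for Part (1), then differentiate to obtain $\nabla_\tau=\mathfrak{c}'\cdot\frac{\phi_M}{a}$ and reduce Part (2) to the identical scalar identity $\mathfrak{c}'=p\,\theta(u\lambda')\theta(\fkt)$, which both arguments verify by the same manipulation of $\lambda=\frac{E}{E(0)}\varphi(\lambda)$ and $\theta(t/\xi)=\mu_1-1$. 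The only (harmless) deviations are that you close Part (1) by a dimension count plus base change to $C$ where the paper cites \cite[Prop.~3.1.6]{GP21}, and you package the scalar computation slightly differently.
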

  
\begin{proof}
Item (1).  Let $(\underline{e})$ be a basis of $M$.
The formula \eqref{Equ-MatrixCocycle} in Prop. \ref{Prop-MatrixCocycle} implies that   $\gal(L/\kinfty)$ acts trivially on $M$. In addition, that formula implies that \begin{equation}\label{new82}
        \tau^i(\underline{e}) = (\underline{e})  \sum_{n\geq 0} \left( \left(\prod_{i=0}^{n-1}( A_1-ia)\right) \cdot \left( c(\tau^i) \frac{\pi E'(\pi)}{a} \theta(\frac{\xi}{E})(\mu_1-1)  \right)^{[n]}\right).
    \end{equation}
    where $A_1$ is the matrix of $\phi_M$ with respect to $\underline{e}$, and
$$c(\tau^i)=i, \text{ resp. } 2i, \text{ when } \Kinfty \cap \Kpinfty=K, \text{ resp. }  K(\pi_1).$$
This implies that elements of $M$ are $\tau$-locally analytic. 
In summary, 
$$M \subset W^{\gamma=1, \tau\dla}$$
  Thus \cite[Prop. 3.1.6]{GP21} implies that
  \begin{equation*}
  D_{\Sen, \kinfty} =M\otimes_K  \kinfty.
  \end{equation*}
 
Item (2).  The formula \eqref{new82} can be re-written as
\begin{equation}
    \tau^i(\underline{e})=(\underline{e})\left(1+c(\tau^i) \pi E'(\pi)   \theta(\frac{\xi}{E})(\mu_1-1)\right)^{\frac{A_1}{a}}
\end{equation}
This implies that
\begin{equation*} \label{mwtau}
\nabla_\tau (\underline{e}) = (\underline{e})\frac{\pi E'(\pi)}{a} \theta(\frac{\xi}{E}) ( \mu_1-1) A_1, \quad \text{ resp. }  (\underline{e})2\frac{\pi E'(\pi)}{a} \theta(\frac{\xi}{E}) (\mu_1-1) A_1, 
\end{equation*}  
when $\Kinfty \cap \Kpinfty=K, \text{ resp. }  K(\pi_1)$.
This implies our $\kinfty$-Sen operator acts via
\begin{equation}
\frac{1}{\theta(u\lambda')}\cdot N_\nabla(\underline{e}) 
= (\underline{e})\frac{ \pi E'(\pi) \theta(\frac{\xi}{E}) ( \mu_1-1)A_1  }{pa\theta(u\lambda' \fkt)}
= (\underline{e}) \frac{ \pi E'(\pi) \theta(\frac{\xi}{E})( \mu_1-1) }{p\theta(u\lambda' \fkt)}\cdot \frac{A_1}{a}
\end{equation}
To finish the proof of this theorem, it suffices to check the ``scalar term" is $1$, namely:
\begin{equation*}
\frac{ \pi E'(\pi) \theta(\frac{\xi}{E})( \mu_1-1) }{p\theta(u\lambda' \fkt)}=1
\end{equation*}
Using $\theta(\frac{\xi}{E}) =\theta(\frac{\mu}{\varphi^{-1}(\mu)E} )$ where $\mu=[\epsilon]-1$,
and $\theta(\varphi^{-1}(\mu))=\mu_1-1$, the above identity simplifies as 
\begin{equation*}
\theta( \frac{\mu E'}{p\lambda' \fkt E}  )=1
\end{equation*}
Note $\theta(\frac{t}{\mu})=1$ using $t=\log(\mu+1)$, it suffices to check
\begin{equation}\label{eqwitht}
\theta( \frac{t E'}{p\lambda' \fkt E}  )=1
\end{equation}
 Apply multiplication rule to $\lambda'$, one sees that
 \begin{equation*}
 \theta(\lambda') =\frac{E'(\pi)}{E(0)} \theta(\varphi(\lambda))
 \end{equation*}
Hence \eqref{eqwitht} becomes
\begin{equation*}
\theta( \frac{t }{p\frac{E}{E(0)}  \varphi(\lambda) \fkt}  )=1
\end{equation*}
 The denominator is $p\frac{E}{E(0)}  \varphi(\lambda) \fkt=p\lambda \fkt =t$, thus we can conclude.
\end{proof}

The following theorem (together with Thm. \ref{rational crystal as representation}) completes the proof of all the equivalences in Thm. \ref{thmintroHT}(1).  Recall $\rep_\gk^{\ast-\mathrm{nHT}}(C)$ is defined in Def. \ref{defnnht}.

\begin{theorem} \label{thmnht}
The evaluation functor $\bM \mapsto W:=\bM((\ainf, (\xi), \ast))$  induces a bi-exact equivalence of categories:
$$\Vect((\calO_K)_{\Prism, \ast},\overline \calO_{\Prism}[ {1}/{p}]) \simeq \rep_\gk^{\ast-\mathrm{nHT}}(C).$$
\end{theorem}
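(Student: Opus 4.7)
The plan is to factor the functor $\bM \mapsto W := \bM((\ainf,(\xi),\ast))$ through the equivalences of Theorems \ref{Thm-HTCrystal} and \ref{rational crystal as representation}, and to pin down its essential image as $\rep_\gk^{\ast-\nht}(C)$ using the Sen-theoretic identifications of Theorems \ref{thmMWSen} and \ref{thmkummersenop}. Full faithfulness follows in two steps: the restriction $\Vect((\calO_K)_{\Prism,\ast},\baropris[1/p]) \to \Vect((\calO_K)^{\perf}_{\Prism,\ast},\baropris[1/p])$ is fully faithful because any crystal on the full site is determined by its values on the cover $(\ainf,(\xi),\ast)$ and its \v Cech nerve, all of which lie in the perfect subsite; and then Theorem \ref{rational crystal as representation} identifies the target with $\rep_\gk(C)$.

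The technical heart is a $p$-adic convergence lemma: for $\lambda \in \overline K$ and $a \in \overline K^\times$ with $v(a) \geq 0$,
\[\lim_{n \to \infty} \prod_{i=0}^{n-1}(\lambda - ia) = 0 \quad \Longleftrightarrow \quad \lambda \in a\bZ + \mathfrak m_{\overline K}.\]
Setting $\mu := \lambda/a$ and computing $v(\prod) = n\,v(a) + \sum_{i=0}^{n-1} v(\mu - i)$, the case $\mu \in \bZ_p$ is handled by Mahler's theorem which yields $v\left(\prod_{i=0}^{n-1}(\mu - i)\right) \geq v(n!) \to \infty$; otherwise $\alpha := \sup_{j \in \bZ} v(\mu - j)$ is finite and attained at some $j_0 \in \bZ$, one has $v(\mu - i) = \min(\alpha, v(j_0 - i))$, and the resulting asymptotics give a linear growth rate whose sign matches the condition $\mu \in \bZ + a^{-1}\mathfrak m_{\overline K}$.

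Granting the lemma, the image of $\Vect$ lies in $\rep_\gk^{\ast-\nht}(C)$: if $(M,\phi_M) \in \End_K^{\ast-\nht}$ is attached to $\bM$ via Theorem \ref{Thm-HTCrystal}, restricting $\prod(\phi_M - ia) \to 0$ to each generalized eigenspace of $\phi_M$ over $\overline K$ isolates the scalar part $\prod(\lambda - ia) \to 0$ for every eigenvalue $\lambda$, forcing $\lambda \in a\bZ + \mathfrak m_{\overline K}$; Theorems \ref{thmMWSen} and \ref{thmkummersenop} identify these $\lambda/a$ with the Sen weights of $W$. For essential surjectivity, given $W \in \rep_\gk^{\ast-\nht}(C)$, I would take $M$ to be the classical Sen module (the $K$-finite vectors in $W^{\gkpinfty}$, a canonical $K$-descent of $W$) and set $\phi_M := a \cdot \phi_\Sen^W$, where $\phi_\Sen^W$ is the classical Sen operator on $M$. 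The eigenvalues of $\phi_M$ over $\overline K$ are $a$ times the Sen weights, so they lie in $a\bZ + \mathfrak m_{\overline K}$; a Jordan-block computation, expanding $\prod_{i=0}^{n-1}((\lambda - ia)I + N) = \sum_k N^k\, e_{n-k}(c)$ and estimating each elementary symmetric polynomial $e_{n-k}$ in $c_i = \lambda - ia$ uniformly in $n$, upgrades the scalar lemma to $\prod(\phi_M - ia) \to 0$ in $\End_K(M)$, so $(M,\phi_M) \in \End_K^{\ast-\nht}$. Let $\bM$ be the corresponding crystal via Theorem \ref{Thm-HTCrystal} and $W' := \bM((\ainf,(\xi),\ast))$; Theorems \ref{thmMWSen} and \ref{thmkummersenop} identify the Sen operator on $W'$ with $\phi_M/a = \phi_\Sen^W$ under $W' = M \otimes_K C = W$, yielding $(W', \phi_\Sen^{W'}) \cong (W, \phi_\Sen^W)$ in $\End_C$; Proposition \ref{propisomobj} then gives $W' \cong W$ in $\rep_\gk(C)$.

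Bi-exactness is inherited from Theorems \ref{Thm-HTCrystal} and \ref{rational crystal as representation} together with the exactness of the classical Sen functor. The hardest step is the Jordan-block upgrade from the scalar eigenvalue lemma to operator-level convergence: one must bound $\max_{|S|=k}\sum_{i \in S} v(c_i)$ uniformly in $n$ for each fixed $k$, with degenerate cases where some $c_{i_0}$ vanishes (i.e.\ $\lambda = i_0 a$) being absorbed into the factorial growth $\sum_j v(i_0 - j) \sim n/(p-1)$ that dominates asymptotically.
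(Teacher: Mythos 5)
Your overall architecture (factor through Theorems \ref{Thm-HTCrystal} and \ref{rational crystal as representation}, identify $\phi_M/a$ with the Sen operator via Theorems \ref{thmMWSen} and \ref{thmkummersenop}, conclude with Proposition \ref{propisomobj}) matches the paper, and your convergence lemma plus the Jordan-block upgrade supply details the paper treats as implicit linear algebra. But there are two genuine gaps. First, your full-faithfulness argument fails: the \v Cech nerve of $(\ainf,(\xi),\ast)$ computed in the \emph{full} site $(\calO_K)_{\Prism,\ast}$ does \emph{not} lie in the perfect subsite --- the self-coproducts over $\ok$ are built from prismatic envelopes and are not perfect. Indeed, if they did lie in the perfect subsite, the two stratification categories would coincide and restriction $\Vect((\calO_K)_{\Prism,\ast},\baropris[1/p])\to\Vect((\calO_K)^{\perf}_{\Prism,\ast},\baropris[1/p])$ would be an \emph{equivalence} onto all of $\rep_\gk(C)$, contradicting the very statement you are proving (the essential image is the proper subcategory $\rep_\gk^{\ast-\nht}(C)$). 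Full faithfulness needs a real argument; the paper reduces it, via the tensor structure and internal Hom, to the identity $M^{\phi_M=0}=W^{G_K}$, which follows from Sen's theorem \cite[p.100, Thm.~6]{Sen80}.

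Second, in the essential surjectivity step, $D_{\Sen,\kpinfty}(W)$ is a $\kpinfty$-vector space of dimension $d$, not a ``canonical $K$-descent of $W$''; it is infinite-dimensional over $K$, so the pair $(D_{\Sen,\kpinfty}(W),\,a\phi_{\Sen}^W)$ is not an object of $\End_K^{\ast-\nht}$ and Theorem \ref{Thm-HTCrystal} cannot be applied to it. The missing input is Sen's rationality theorem \cite[p.100, Thm.~5]{Sen80}: there exists a (non-canonical) sub-$K$-vector space $N\subset D_{\Sen,\kpinfty}(W)$ of $K$-dimension $d$ stable under the Sen operator. The paper runs exactly your remaining argument with this $N$ in place of the full Sen module (and then invokes Proposition \ref{propisomobj} to get $W'\cong W$, being careful that the resulting identification is not itself $G_K$-equivariant). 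With these two repairs --- Sen's Theorem 5 for essential surjectivity and Sen's Theorem 6 for full faithfulness --- your proof becomes the paper's proof.
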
 
\begin{proof}
Let $\bM \in \Vect((\calO_K)_{\Prism, \ast},\overline \calO_{\Prism}[ {1}/{p}])$, we first show $W(\bM)$ is $\ast$-nearly Hodge--Tate.
Let $(M, \phi_M)$ be the associated Sen module, then Thm. \ref{thmMWSen} implies $\frac{\phi_M}{a}$ is the Sen operator; hence  the condition $\prod_{i=0}^{n-1}(\phi_M-ai)$ converges to zero translates to the fact that the eigenvalues of $\frac{\phi_M}{a}$ ---i.e., the Sen weights of $W(\bm)$--- are in the range as in Def. \ref{defnnht}. This shows $W(\bM)$ is $\ast$-nearly Hodge--Tate, and gives our desired functor
\begin{equation}\label{434functor}
\Vect((\calO_K)_{\Prism, \ast},\overline \calO_{\Prism}[ {1}/{p}]) \to \rep_\gk^{\ast-\mathrm{nHT}}(C).
\end{equation}

We now prove the functor is essentially surjective. 
For \emph{any}  $W \in \rep_\gk(C)$, let $\phi_W$ be the (classical) Sen operator on $D_{\Sen, \kpinfty}(W)$. 
By \cite[p.100, Thm. 5]{Sen80}, there exists a sub-$K$-vector space of full dimension 
\[N \subset D_{\Sen, \kpinfty}(W)\]
 which is stable under $\phi_W$. \footnote{Note this sub-$K$-vector space is in general not unique. For example, consider the trivial $C$-representation whose Sen operator is the zero map, then \emph{any} sub-$K$-vector space is stable under the zero map.}
If $W$ is in addition $\ast$- nearly Hodge--Tate, then the eigenvalues of $\phi_W|_N$ has to satisfy the condition in   Def. \ref{defnnht}, and hence  $(N, f=a \phi_W)$ is an object in $\mathrm{End}_K^{\ast-\nht}$. Via Thm. \ref{Thm-HTCrystal}, this gives rise to a rational Hodge--Tate crystal $\bm$.  Let $W' =\bm((\ainf, (\xi), \ast)) \in \rep_\gk(C)$ be the associated representation, one needs to check 
\[W'\simeq W.\]
 By Thm. \ref{thmMWSen},
$D_{\Sen, \kinfty}(W') =N\otimes_K \kinfty$, and its $\kinfty$-Sen operator is precisely $\frac{f}{a} = \phi_W|_N\otimes 1$; hence in particular, by Thm. \ref{thmkummersenop}, the $C$-linear  Sen   operator on $W'=N\otimes_K C$ is the \emph{same} as that on $W=N\otimes_K C$. Thus $W'$ is isomorphic to $W$ by Prop. \ref{propisomobj}. (We point to Rem. \ref{remconfusion} for a possible confusion in the construction above.)

We now prove the functor is fully faithful.
Let $\bm_1, \bm_2 \in \Vect((\calO_K)_{\Prism, \ast},\overline \calO_{\Prism}[{1}/{p}])$, 
let $(M_1, \phi_1), (M_2, \phi_2)$ be the corresponding Sen modules,
and let $W_1, W_2 \in \rep_\gk^{\ast-\mathrm{nHT}}(C)$ be the corresponding representations.
It suffices to show
$$\mathrm{Morph}((M_1, \phi_1), (M_2, \phi_2)) \to \mathrm{Morph}(W_1, W_2) $$
is a bijection. It is injective because $M_i$ contains a basis of $W_i$; it hence suffices to show both sides have the same $K$-dimension. 
Note that \eqref{434functor} is a tensor functor  and note $f_i$'s are (scaled) Sen operators. 
By considering the $C$-representation $\mathrm{Hom}_C(W_1, W_2)$, it suffices to show the following statement: given $\bm \in \Vect((\calO_K)_{\Prism, \ast},\overline \calO_{\Prism}[1/p])$ with corresponding $(M, \phi)$ and $W$, then we have
\begin{equation*}
M^{\phi=0} =  W^{G_K};
\end{equation*}
but this follows from \cite[p.100, Thm. 6]{Sen80} (which implies the above two spaces are isomorphic after tensoring with $C$).
\end{proof}
 
 
\begin{remark}\label{remconfusion}
 Let us point out a possible confusion in the proof of essential surjectivity above.
 In the proof, we obtained a $C$-linear isomorphim $W \to W'$ (of two vectors spaces \emph{without} considering Galois actions), which is further compatible with the $C$-linear Sen operators. Indeed, it comes from the following composite:
\begin{equation}\label{eqlinsen}
W \simeq N\otimes_K C \simeq \bm((\gs, E))\otimes_K C   \simeq W'
\end{equation}
   where the first isomorphism follows from Sen's original construction, the second comes from construction of $\bm$, and the third comes from definition of $W'$. Note however, the ``identity" map  $N\otimes_K C \simeq \bm((\gs, E))\otimes_K C $ is \emph{not} ``$G_K$-equivariant"! (Hence, in particular, the composite \eqref{eqlinsen} is \emph{not} $G_K$-equivariant!)
   Indeed, the $\tau$-action on $N$ is trivial under classical Sen theory; whereas the $\tau$-action on $\bm((\gs, E))$ is in general not trivial by Prop. \ref{Prop-MatrixCocycle}.
   The powerful aspect of Sen's theorem \cite[p. 101, Thm. 7]{Sen80}, is that the Sen-operator-equivariant map \eqref{eqlinsen} already guarantees existence of a $G_K$-equivariant isomorphism between $W$ and $W'$, although in an \emph{implicit} way!
\end{remark} 
 
 We prove the cohomology comparison in Thm. \ref{thmintroHT}(2).
 
 \begin{theorem} \label{thm-coho-pris-proetale}
 Let $\ast \in \{\emptyset, \log\}$. Let $\bm \in \Vect( \okprisast,\overline \calO_{\Prism}[1/p] )$ be a Hodge--Tate crystal, and let  $W \in \rep_\gk^{\ast-\nht}(C)$ be the associated representation. There exists a natural quasi-isomorphism 
\[   \rg( \okprisast, \bm) \simeq \rg(\gk, W) \]
where the right hand side is   Galois cohomology.
 \end{theorem}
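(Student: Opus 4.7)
The plan is to identify both sides with a common two-term ``Sen complex'' after base change to $\kinfty$, and then descend. By Theorem \ref{HTcohom}, evaluation on the Breuil--Kisin (log-) prism identifies $\rg(\okprisast,\bm)$ with the two-term complex $[M\xrightarrow{\phi_M} M]$, where $(M,\phi_M)$ is the Sen module of $\bm$. On the Galois side, the quasi-isomorphism \eqref{qisoinfty} of Theorem \ref{thm-Sen coho Galois coho} gives
\[\rg(\gk,W)\otimes_K\kinfty\;\simeq\;[\,D_{\Sen,\kinfty}(W)\xrightarrow{\phi_{\Sen,\kinfty}} D_{\Sen,\kinfty}(W)\,].\]

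The bridge between the two is Theorem \ref{thmMWSen}, which yields a natural $\kinfty$-linear isomorphism $M\otimes_K\kinfty\xrightarrow{\simeq} D_{\Sen,\kinfty}(W)$ carrying $\phi_M$ to $a\cdot\phi_{\Sen,\kinfty}$. Since $a\in K^\times$, rescaling the degree-$1$ term by $1/a$ produces an isomorphism of two-term complexes
\[[M\xrightarrow{\phi_M} M]\otimes_K\kinfty\;\simeq\;[\,D_{\Sen,\kinfty}(W)\xrightarrow{\phi_{\Sen,\kinfty}} D_{\Sen,\kinfty}(W)\,].\]
Combined with the previous step, this shows $\rg(\okprisast,\bm)$ and $\rg(\gk,W)$ are quasi-isomorphic after $-\otimes_K\kinfty$.

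To promote this to an actual natural quasi-isomorphism over $K$, I would construct a chain-level map $\rho_{\bm}:\rg(\okprisast,\bm)\to\rg(\gk,W)$ by restriction along the forgetful morphism of sites $\okprisperfast\to\okprisast$: the target of the restriction is computed by the \v Cech--Alexander complex of the Fontaine (log-) prism cover (Lemma \ref{Lem-BKCover-perf}), which is identified with the continuous $G_K$-cochain complex of $W$ via Lemma \ref{galois descent} combined with the semi-linear Galois descent already used in the proof of Theorem \ref{rational crystal as representation}. Applying $-\otimes_K\kinfty$ to $\rho_{\bm}$ and chasing through the identifications above, $\rho_{\bm}\otimes_K\kinfty$ realises precisely the composite isomorphism produced in the previous two steps, and hence is a quasi-isomorphism. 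Since $\kinfty/K$ is faithfully flat, $\rho_{\bm}$ itself is a quasi-isomorphism.

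The hard part will be verifying that $\rho_{\bm}\otimes_K\kinfty$ genuinely agrees with the composite built from Theorems \ref{HTcohom}, \ref{thmMWSen} and \ref{thm-Sen coho Galois coho}. All the ingredients are compatible on the level of cohomology---essentially by construction, via Prop. \ref{Prop-MatrixCocycle} which already computes the $G_K$-action on $W$ in terms of $\phi_M$---but matching them at the level of complexes requires fixing functorial models for both \v Cech--Alexander complexes and carefully tracking the scaling constant $a$ together with the normalising factor $\theta(u\lambda')$ appearing in the $\kinfty$-Sen operator (Thm. \ref{thmkummersenop}) through the diagram. Once this bookkeeping is carried out, naturality in $\bm$ is automatic from the fact that every step in the construction is functorial.
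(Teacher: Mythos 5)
Your proposal is correct and follows essentially the same route as the paper: the natural map is restriction to the perfect site composed with the Galois-descent identification of Lemma \ref{galois descent}, and it is shown to be a quasi-isomorphism after the faithfully flat base change $-\otimes_K\kinfty$ by matching the two Sen complexes via Theorems \ref{HTcohom}, \ref{thmMWSen} and \ref{thm-Sen coho Galois coho}. The paper is in fact terser than you are about the final compatibility check (it simply writes ``we can conclude''), so your explicit attention to the rescaling by $a$ and to identifying $\rho_{\bm}\otimes_K\kinfty$ with the composite is, if anything, more careful than the published argument.
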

 \begin{proof}
Restriction to the perfect site induces natural morphisms
\[  \rg( \okprisast, \bm) \to \rg(\okprisperfast, \bm) \simeq  \rg(\gk, W) \]
where the second quasi-isomorphism follows from Lem. \ref{galois descent}. It suffices to show the composite is also a quasi-isomorphism.
 Let $(M, \phi_M)$ be the corresponding Sen module. By Thm. \ref{HTcohom}, we have
\[   \rR\Gamma((\calO_K)_{\Prism,*},\bM) \simeq [M\xrightarrow{\phi_M}M].\]
Apply Thm. \ref{thmMWSen} and Thm. \ref{thm-Sen coho Galois coho} respectively, we have
\[ [M\xrightarrow{\phi_M}M]  \otimes_K \kinfty \simeq  [D_{\sen, \kinfty}(W) \xrightarrow{\phi_{\sen, \kinfty}} D_{\sen, \kinfty}(W)] 
\simeq \rgamma(G_K, W)\otimes_K \kinfty.\]
We can conclude.
 \end{proof}
 
\subsection{Comparison with a theorem of Sen} \label{subsec44}
In this subsection, we recall a (rather peculiar) theorem of Sen which also concerns about ``$K$-rationality" of the Sen operator, and make some  comparisons   with our result.

\begin{theorem} \label{thmsenthm10}
\cite[p.110, Thm. 10]{Sen80}
Suppose the residue field $k$ is \emph{algebraically closed}. Then there exists a  (\emph{non-canonical}) equivalence  between $\rep_\gk(C)$ and the category of $K$-vector spaces equipped with a linear operator. This non-canonical equivalence, which depend on choices for each simple object in $\rep_\gk(C)$, is \emph{never} compatible with tensor products.
\end{theorem}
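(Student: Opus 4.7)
The plan is to promote the bijection on isomorphism classes already established by Proposition \ref{propisomobj} (between $\rep_\gk(C)$ and the subcategory of $\End_C$ arising from Sen operators) to a full equivalence landing in $\End_K$, by systematically descending the $C$-linear Sen data to $K$-linear data. The essential input is the classical result of Sen (cited in the proof of Theorem \ref{thmnht}, originally \cite[p.~100, Thm.~5]{Sen80}): for every $W \in \rep_\gk(C)$ there exists a $\phi_W$-stable $K$-subspace $N_W \subset D_{\Sen, \kpinfty}(W)$ of full dimension $d = \dim_C W$, with $N_W \otimes_K \kpinfty \simeq D_{\Sen, \kpinfty}(W)$. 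The hypothesis $k = \overline{k}$ enters to ensure that such a descent is available over $K$ itself rather than only over a finite ramified extension, since Sen's construction of $K$-finite vectors needs a sufficiently large supply of ``constants'' in the base.

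Concretely, I would choose once and for all a descent $N_W$ for one representative in each isomorphism class of $\rep_\gk(C)$ and propagate along chosen isomorphisms, defining $F(W) = (N_W, \phi_W|_{N_W})$. A morphism $W_1 \to W_2$ in $\rep_\gk(C)$ induces a $\phi$-equivariant $\kpinfty$-linear map on Sen modules, which descends to a $K$-linear operator-preserving map after fixing intertwiners of the chosen $N$'s; faithfulness follows from the fact that the original map is recovered by tensoring up over $C$. For essential surjectivity, given $(M, f) \in \End_K$, I would decompose via the $K[T]$-module primary decomposition $M \cong \bigoplus_i K[T]/(P_i^{n_i})$ with $P_i$ irreducible, and realize each piece as the Sen module of an explicit $C$-representation: one-dimensional pieces with eigenvalue $\lambda \in \overline{K}$ correspond to continuous characters $\chi_\lambda : G_K \to C^\times$ with Sen weight $\lambda$ (Tate twists for integer $\lambda$, Lubin--Tate-type characters for the rest), which exist in sufficient variety precisely because $k = \overline{k}$, and higher primary pieces are built as iterated unipotent extensions whose obstructions lie in cohomology groups known to vanish in the relevant degree.

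For the incompatibility with tensor products, I would argue that any tensor-compatible equivalence $F$ would equip $\rep_\gk(C)$ with a $K$-linear tensor fiber functor via the composite of $F$ with the obvious forgetful functor $\End_K \to K\text{-Vect}$, thereby exhibiting $\rep_\gk(C)$ as a neutral Tannakian category over $K$. This would force a coherent family of $K$-rational descents satisfying $N_{W_1 \otimes W_2} = N_{W_1} \otimes_K N_{W_2}$ inside $D_{\Sen, \kpinfty}(W_1 \otimes W_2) = D_{\Sen, \kpinfty}(W_1) \otimes_{\kpinfty} D_{\Sen, \kpinfty}(W_2)$. Such a family cannot exist, as can be exhibited by an explicit obstruction in low dimension: taking $W$ of dimension two whose Sen operator mixes a non-integer eigenvalue with an integer one, any $K$-descent of $W^{\otimes 2}$ compatible with $N_W^{\otimes 2}$ is incompatible with the $G_K$-structure of the trivial summand inside $W \otimes W^*$, since the latter is ``detected'' by the $C$-linear Galois action but not by the $K$-rational operator data. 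The main obstacle in carrying out this plan is the essential surjectivity step: realizing every eigenvalue $\lambda \in \overline{K}$ as the Sen weight of an actual continuous character of $G_K$ is a delicate fact that uses the algebraic closedness of $k$ through the abundance of ramified characters, and without it one only obtains an equivalence after enlarging $K$ to some finite extension.
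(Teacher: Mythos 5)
The central difficulty of Sen's Theorem 10 is precisely the step you dispatch in one clause: making the assignment $W \mapsto (N_W, \phi_W|_{N_W})$ functorial. A $G_K$-equivariant map $h\colon W_1 \to W_2$ induces a $\phi$-equivariant, $\kpinfty$-linear map $D_{\Sen, \kpinfty}(W_1)\to D_{\Sen, \kpinfty}(W_2)$, i.e.\ an element of $\Hom_{\phi}(N_1,N_2)\otimes_K \kpinfty$; there is no reason for it to lie in the $K$-rational subspace $\Hom_{\phi}(N_1,N_2)$. Indeed $\Hom_{G_K}(W_1,W_2)$ and $\Hom_{\phi}(N_1,N_2)$ are in general two \emph{different} $K$-forms of the same space $\Hom_C(W_1,W_2)^{\phi=0}$: they have equal dimension by Sen's Theorem 6 (the fact invoked in the proof of Theorem \ref{thmnht}), but they need not coincide inside $\Hom_C(W_1,W_2)$. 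So ``descends to a $K$-linear operator-preserving map after fixing intertwiners of the chosen $N$'s'' is not a construction: fixing isomorphisms onto representatives of isomorphism classes does nothing to force an arbitrary morphism (a non-invertible one, or an endomorphism of a single object) to preserve the chosen $K$-structures, and no compatible-with-composition correction is available. This is exactly why Sen --- and Construction \ref{constSen}, which is all the paper offers by way of proof --- makes choices only for \emph{simple} objects, where Hom-spaces are rigid by Schur's lemma, and then extends to general objects through the semi-simple part; note that the theorem statement itself records that the choices are indexed by simple objects, not by isomorphism classes. Your Proposition \ref{propisomobj} input only gives a bijection on isomorphism classes, which is the easy half.

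On the remaining parts: your essential-surjectivity plan is in the spirit of Sen's Theorem 9 (which the paper simply cites), but as written it conflates ``one-dimensional piece'' with ``eigenvalue $\lambda\in\overline K$'' --- a $K[T]/(P)$ summand with $P$ irreducible of degree $>1$ is not realized by a character --- and the existence of characters with prescribed Sen weight together with the vanishing of the relevant extension obstructions is the technical heart of Sen's argument, not a step one can gesture at. Similarly, the Tannakian reformulation of the tensor-incompatibility is a reasonable framing, but the claimed low-dimensional obstruction is asserted rather than exhibited. Since the paper itself states this result as a citation accompanied only by the sketch in Construction \ref{constSen}, citing Sen for Theorems 5, 6 and 9 would be acceptable; the functoriality issue above, however, is a genuine error in your proposed construction rather than a missing citation.
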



\begin{construction}\label{constSen}
We give a very brief sketch of the rather technical proof of Thm. \ref{thmsenthm10}, pointing out where \emph{non-canonicity} comes from.
\begin{enumerate}
\item 
Recall, as we already used in the proof of our Thm. \ref{thmnht}, Sen shows in  \cite[p.100, Thm. 5]{Sen80} that for any $C$-representation, the Sen operator is   ``$K$-rational": that is, it is stable on a sub-$K$-vector space.

\item 
 Conversely, when the residue field $k$ is algebraically closed, \cite[p.104, Thm. 9]{Sen80} shows that any $K$-matrix can be recovered as a Sen operator for some $C$-representation.
Note however, as stated in  \cite[p.105, Thm. 9']{Sen80}, this only constructs a   \emph{set bijection} between   $H^1(G_K, \GL_n(C))$ and the set of similarity classes of $n\times n$ matrices over $K$. One still needs to build some \emph{functoriality} into this bijection. 

\item 
Sen then \emph{chooses}, for each \emph{simple} $C$-representation $U$, a $K$-vector space $U_0$ stable under Sen operator; this builds a functor for \emph{semi-simple} objects. To get the complete functor, one first map a general $C$-representation $W$ to its semi-simple part $W_S$, then one notes $(W_S)_0$ is still stable under the Sen operator of $W$ (not just that of $W_S$!), since Sen operator is Galois equivariant.
\end{enumerate}
\end{construction}

\begin{remark}\label{remfon214}
In the remark following \cite[Thm. 2.14]{Fon04}, Fontaine mentions that Thm. \ref{thmsenthm10} can also be obtained using his \emph{classification} of $C$-representations: this  also points to the  non-canonicity of Thm. \ref{thmsenthm10}.
\end{remark}

\begin{remark}
We give some yet further remarks about the non-canonicity of  Thm. \ref{thmsenthm10}, and compare it with our Thm. \ref{thmnht}.
In $p$-adic Hodge theory, many typical  theorems say that there is an equivalence of categories:
\[\mathrm{MOD} \simeq \mathrm{REP}\]
where $\mathrm{MOD}$ is certain ``module category" and $\mathrm{REP}$ is certain ``representation category". We invite the readers to consider the examples such as results of Bhatt-Scholze \cite{BS23}, of Cherbonnier-Colmez \cite{CC98}, or Thm. \ref{thmnht}.
In proofs of these results, it is usually easy to construct a  \emph{functor} from  $\mathrm{MOD}$ to $\mathrm{REP}$ (sometimes perhaps initially to a bigger representation category, such as in our  Thm. \ref{thmnht}). The most difficult part almost always is to show the functor is essentially surjective (or, to determine its essential image).

What happens with Sen's result Thm. \ref{thmsenthm10}, is that there is not even an obvious \emph{map} from $\mathrm{MOD}$ to $\mathrm{REP}$; 
Sen can still construct a such map, by the \emph{set bijection} mentioned in  \ref{constSen}(2): but then this is hopeless to be a \emph{functor}. This forces Sen to go the other way around, by constructing a \emph{functor} from  $\mathrm{REP}$ to $\mathrm{MOD}$, which then unfortunately relies on many choices, and cannot be canonical. 
We regard this strong contrast with our Thm. \ref{thmnht} as another hint (in addition to  Rem. \ref{remintro115}) that the (log)- nearly Hodge--Tate representations deserve to be further studied.
\end{remark}

\section{Cohomology vanishing: proof of Proposition \ref{Prop-Cohomology}(3)}\label{higher vanishing}

In this section, we prove Proposition \ref{Prop-Cohomology}(3). The main strategy is explained in Construction \ref{constr3stepssec9}; the two steps there are then carried out in the two subsections.

We start by introducing some notations.

 
\begin{notation} \label{nota9index}
   First we set up some notation on indices.
Let $s\geq 1$ (throughout this section).  For  $I=(i_1,\dots,i_s)\in\bN^s$, let $|I|:=i_1+i_2+\cdots+i_s.$ Let $E_q = (0,\dots,0,1,0,\dots,0)\in\bN^s$ with $1$   at the $q$-th component. Thus,
    \[I = i_1E_1+\cdots+i_sE_s.\]
For   $I=(i_1,\dots,i_s)\in\bN^s$ and   $J=(j_1,\dots,j_t)\in\bN^t$, we define $$(I,J):=(i_1,\dots,i_s,j_1,\dots,j_t)\in\bN^{s+t}.$$    
\end{notation}

\begin{notation}    
 From now on, we use notations in \S\ref{axiomcoho}. Consider two elements 
 \[f := \sum_{I=(i_1,\dots,i_s)\in\bN^s}m_I\underline X^{[I]}\in \rC^s\] and 
 \[g:=\sum_{J=(j_1,\dots,j_{s+1})\in\bN^{s+1}}n_J\underline X^{[J]} \in \rC^{s+1}.\] 
 If $\rd^s(f) = g$, then by
   (\ref{Equ-FaceMod}), we can write the equation  as follows
  \begin{equation}\label{Equ-DifferentialMod=0}
      \begin{split}
           \sum_{J\in\bN^{s+1}}n_J\underline X^{[J]}
          =&\sum_{I\in\bN^s}(1+aX_1)^{\frac{\phi_M}{a}-|I|}(m_I)(X_2-X_1)^{[i_1]}\cdots(X_{s+1}-X_1)^{[i_s]}\\
          &+\sum_{l=1}^{s+1}(-1)^l\sum_{I\in\bN^s}m_IX_1^{[i_1]}\dots X_{l-1}^{[i_{l-1}]}X_{l+1}^{[i_{l}]}\cdots X_{s+1}^{[i_{s}]}.
      \end{split}
  \end{equation}
\end{notation}

\begin{construction} \label{constr3stepssec9}
    Now given $g \in \Ker (\rd^{s+1})$, to prove Proposition \ref{Prop-Cohomology}(3),  we need to find an $f$ such that $\rd^s(f) = g$. The  proof will be roughly divided into the following  steps. 
\begin{enumerate}[leftmargin=0cm]
    \item[] \textbf{Step 1}:     In \S \ref{subsecstep1} (precisely, Prop. \ref{constructio of Lambda}),   we prove that if $g=\sum_{K}n_K\underline X^{[J]}\in \Ker(d^{s+1})$, then the coefficients $n_K$ of $g$ are determined by those whose indices are in $\Lambda^{s+1}$, a specific subset of $\bN^{s+1}$ (cf. Proposition \ref{constructio of Lambda}). To construct the subset $\Lambda^{s+1}$, we need to investigate the coefficients in the Equation (\ref{Equ-DifferentialMod=0}). The whole process will be divided into several parts with respect to the leading value $k_1$ of $K=(k_1,k_2,\cdots,k_{s+1})\in \bN^{s+1}$.

 (We warn that in \S \ref{subsecstep1}, we will prove much more than claimed in Step 1 above, cf. Construction \ref{consstep1}. We further warn there will be a (necessary) switch of superscripts between $s$ and $s+1$ there.)
    
    \item[] \textbf{Step 2}: cf. \S \ref{subsecstep2}. Based on the subset $\Lambda^{s+1}$, we will give a concrete construction of a candidate $f$.
   It remains to verify $\rd^s(f)=g$. Note both $\rd^s(f), g \in \mathrm{Ker}(\rd^{s+1})$, thus by Step 1, it suffices to compare their coefficients over indices in $\Lambda^{s+1}$.
    
\end{enumerate}
\end{construction}

We remark that although the construction of $f$ and the verification of $d^s(f)=g$ look quite complicated, it is indeed a direct generalisation of the pattern we found in the case of $s=1$. (Namely, we encourage the interested reader to try the computation for $s=1$.)

\subsection{Step 1: construction of $\Lambda^{s}$ (and $\Lambda^{s+1}$)  } \label{subsecstep1}

\begin{construction} \label{consstep1}
    \begin{enumerate}
        \item We shall construct some subset $\Lambda^s\subset\bN^s$ by solving the equation $\rd^s(f) = g$ (\ref{Equ-DifferentialMod=0}), and show that the coefficients of $f$---i.e., the $\{m_K\}_{K\in \bN^s}$'s in (\ref{Equ-DifferentialMod=0})---are uniquely determined by $\{m_I\}_{I\in\Lambda^s}$ and $\{n_J\}_{J\in \bN^{s+1}}$. We shall construct $\Lambda^s$ as a union of 3 parts, each part enough to \emph{determine some $m_K$}, depending on the  leading value $k_1$ of $K=(k_1,k_2,\cdots,k_{s})$; see Def. \ref{defnlambdat} for a summary of these parts.

        \item Note the above item achieves more than as stated in \textbf{Step 1} in Construction \ref{constr3stepssec9} above; cf. next item.

        \item Indeed, by letting $g=0$, then $\rd^s(f)= 0$ implies the coefficients of $f$ are uniquely determined by $\{m_I\}_{I\in\Lambda^s}$. (Note there is shift from $s+1$ to $s$ in this discussion; cf Prop. \ref{constructio of Lambda} and caution above Def. \ref{defnlambdat}.) 
    \end{enumerate}
\end{construction}

\begin{observation}\label{observation} The basic idea to achieve   Construction \ref{consstep1} is simple.     
    Let $$F = \sum_{I\in\bN^s}m_I\underline X^{[I]}, \quad G = \sum_{I\in\bN^s}n_I\underline X^{[I]}.$$
    \begin{enumerate}
        \item If we want to compare the coefficients $m_I$ with $n_I$ for $I = (i_1,i_2,\dots,i_s)$ such that $i_{t_1}=\cdots=i_{t_j}=0$, then it suffices to let $X_{i_{t_1}}=\cdots=X_{i_{t_j}}= 0$, and consider the coefficients of $Y:=\prod_{1\leq h\leq s, h\neq t_1,\dots,t_j}X_h^{[i_h]}$. 

        \item Similarly, if we want to compare $m_I$ with $n_I$ for $ I = (1,*)$ with the leading term being $1$, it suffices to consider the coefficients (including $X_2,\dots,X_s$ as well) of $X_1$ in the expansions.
    \end{enumerate}
   For example, in Lemma \ref{Lem-Set1}, we simply let $X_1 = 0$ in (\ref{Equ-DifferentialMod=0}); for another example, in Lemma \ref{Lem-Set2}, we compare coefficients of $X_1$. 
\end{observation}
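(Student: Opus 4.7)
The plan is to verify that the two reduction tricks stated in Observation \ref{observation} are valid consequences of the uniqueness of pd-expansions and the continuity of certain natural maps on the completed pd-polynomial ring. Since the ambient ring $A^s = R[X_1,\dots,X_s]^{\wedge}_{\pd}$ is the $p$-completed free pd-algebra, every element admits a \emph{unique} convergent expansion $\sum_{I\in\bN^s} m_I \underline X^{[I]}$ with $m_I \to 0$ (in the $p$-adic topology) as $|I|\to\infty$; this uniqueness is what ultimately legitimizes ``reading off'' coefficients, and so it will be the starting point of both verifications.

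For Item (1), I would introduce, for a fixed subset $T=\{t_1,\dots,t_j\}\subset\{1,\dots,s\}$, the $R$-algebra homomorphism
\[
\mathrm{ev}_T : A^s \longrightarrow A^{s-j}, \qquad X_{t_l}\mapsto 0,\ X_h\mapsto X_h\ (h\notin T).
\]
Since $\mathrm{ev}_T$ sends $X_{t_l}^{[n]}$ to $0$ for $n\ge 1$ and to $1$ for $n=0$, and is continuous for the $p$-adic topology, one has
\[
\mathrm{ev}_T(F) \;=\; \sum_{\substack{I\in\bN^s\\ i_{t_1}=\cdots=i_{t_j}=0}} m_I \!\!\prod_{h\notin T}\! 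X_h^{[i_h]},
\]
and analogously for $G$. Because the remaining monomials $\prod_{h\notin T} X_h^{[i_h]}$ form a topological basis of $A^{s-j}$, the coefficient of $Y=\prod_{h\notin T} X_h^{[i_h]}$ in $\mathrm{ev}_T(F)$ (resp.\ $\mathrm{ev}_T(G)$) is exactly $m_I$ (resp.\ $n_I$) for the index $I$ with $i_{t_l}=0$ and prescribed $i_h$ for $h\notin T$. Thus an equation relating $F$ and $G$ descends, under $\mathrm{ev}_T$, to an equation in $A^{s-j}$ whose coefficients directly identify those $m_I,n_I$ with $i_{t_l}=0$; this is precisely the content of (1).

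For Item (2), I would instead use the $R$-linear projection $\pi_{X_1}$ that extracts the $X_1$-linear part: explicitly, the uniqueness of the pd-expansion gives a decomposition $A^s = \bigoplus_{n\ge 0} X_1^{[n]}\cdot R[X_2,\dots,X_s]^{\wedge}_{\pd}$, and $\pi_{X_1}$ is the projection onto the $n=1$ summand followed by the isomorphism $X_1\cdot R[X_2,\dots,X_s]^{\wedge}_{\pd}\cong R[X_2,\dots,X_s]^{\wedge}_{\pd}$. Applying $\pi_{X_1}$ to $F$ yields
\[
\pi_{X_1}(F) \;=\; \sum_{(i_2,\dots,i_s)\in\bN^{s-1}} m_{(1,i_2,\dots,i_s)}\, X_2^{[i_2]}\cdots X_s^{[i_s]},
\]
and similarly for $G$. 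Since $\{X_2^{[i_2]}\cdots X_s^{[i_s]}\}$ is a topological basis of $R[X_2,\dots,X_s]^{\wedge}_{\pd}$, comparing coefficients of $X_2^{[i_2]}\cdots X_s^{[i_s]}$ in $\pi_{X_1}(F)$ and $\pi_{X_1}(G)$ reads off precisely the relation between $m_{(1,i_2,\dots,i_s)}$ and $n_{(1,i_2,\dots,i_s)}$, verifying (2).

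I do not anticipate a genuine obstacle: both claims reduce to uniqueness of pd-expansions plus the fact that the substitution map $\mathrm{ev}_T$ and the projection $\pi_{X_1}$ are well-defined and continuous on the $p$-completed pd-polynomial ring. The only point requiring a small sanity check is convergence: one must note that $\mathrm{ev}_T$ and $\pi_{X_1}$ do not enlarge any $p$-adic valuations, so that the images of the convergent series $F,G$ remain convergent in the relevant completions. Once this is in place, Observation \ref{observation} follows immediately, and can then be applied repeatedly in \S\ref{subsecstep1} (e.g.\ Lemmas \ref{Lem-Set1} and \ref{Lem-Set2}) to isolate individual coefficients of elements of $\rC^{s+1}$.
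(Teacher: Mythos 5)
Your proposal is correct and matches the justification the paper implicitly relies on: the Observation is stated without proof precisely because it reduces to the uniqueness of coefficients in the $p$-completed free pd-module $M\{X_1,\dots,X_s\}^{\wedge}$ together with the continuity of the evaluation map $X_{t_l}\mapsto 0$ and of the projection onto the $X_1^{[1]}$-component, which is exactly what you spell out. The only cosmetic difference is that you work with ring coefficients while the paper's series have coefficients $m_I$ in the $p$-complete module $M$; the argument is unchanged.
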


We embark on the mission sketched in Construction \ref{consstep1}(1).
The next lemma deals with the case where the index $K = (k_1,\dots,k_s)$ satisfies $k_1 = 0$. So we  let $X_1=0$ in (\ref{Equ-DifferentialMod=0}) and then compare   coefficients on both sides (cf. Observation \ref{observation}).
 
  \begin{lem}\label{Lem-Set1}
   Consider the following subset $\Lambda_1^s \subset \bN^s$ of indices:
   \begin{itemize}
       \item If $s \in \{1,2\}$, then $\Lambda_1^s = \{(0,\dots,0)\}$.

       \item If $s\geq 3$, then  
       $\Lambda^s_1:=\{(0,\dots,0,i_{2j+1},\dots,i_s)\in\bN^s\mid j\geq 1,i_{2j+1}\geq 1\}$.
   \end{itemize}
Let $K=(0,k_2,\dots,k_s)\in\bN^s$ be an index with the first term being $0$.
There are integers $$\{z^{(s)}_{K,I}\}_{I\in \Lambda^s_1} \text{ and } \epsilon^{(s)}_{K,K},  \text{ all in } \{\pm 1,0\}$$ 
depending only on $s $ and $ K$, such that if we have $\rd^s(f) = g$, then
\[m_K=\sum_{I\in\Lambda^s_1}z^{(s)}_{K,I}m_I+\epsilon^{(s)}_{K,K}n_{(0,K)}\] 
(Thus in particular, these $m_K$'s are determined by $m_I$'s with $I \in \Lambda_1^s$ and $n_{(0, K)}$).
Furthermore, for any $I\in\Lambda^s_1$, we have
      \begin{enumerate}
          \item[(1)] if $K\in \Lambda^s_1$, then $\epsilon^{(s)}_{K,K}=0$, and $z^{(s)}_{K,I} = 1$ for $K=I$ while $z^{(s)}_{K,I} = 0$ for $K\neq I$, and that
          \item[(2)] if $K = (0,\dots,0,k_{2i},\dots,k_s)$ for some $i\geq 1$ such that $k_{2i}\cdots k_s\neq 0$, then $\epsilon^{(s)}_{K,K}=1$ and $z^{(s)}_{K,I} = 0$, and
          \item[(3)] if $K = (0,\dots,0,k_{2i},\dots,k_s)$ for some $i\geq 1$ such that $k_{2i}\neq 0$ and that $k_{2i}\cdots k_s=0$, then $\epsilon^{(s)}_{K,K}=1$, and that
          
          \item[(4)] if $|K|\neq |I|$, then $z_{K,I}^{(s)} = 0$.
      \end{enumerate}
    \end{lem}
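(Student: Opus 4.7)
The plan is to implement Observation \ref{observation}(1) at the level of Equation (\ref{Equ-DifferentialMod=0}): substitute $X_1 = 0$ and compare coefficients of monomials in $X_2, \ldots, X_{s+1}$. Under this substitution the factor $(1+aX_1)^{\phi_M/a - |I|}$ collapses to the identity and each $X_{r+1} - X_1$ becomes $X_{r+1}$, so the $p_0$-contribution reduces to $\sum_I m_I X_2^{[i_1]}\cdots X_{s+1}^{[i_s]}$, which cancels exactly against the $l=1$ term of the second sum in (\ref{Equ-DifferentialMod=0}). For $l \geq 2$ the substitution kills every monomial with $i_1 \neq 0$, leaving the clean recurrence
\[
n_{(0, A)} \;=\; \sum_{l=2}^{s+1} (-1)^l\, \delta_{a_l, 0}\, m_{(0,\, a_2,\, \ldots,\, \widehat{a_l},\, \ldots,\, a_{s+1})}
\]
for every $A = (a_2, \ldots, a_{s+1}) \in \bN^s$, where the factor $\delta_{a_l,0}$ arises because $p_l$ omits the variable $X_l$.

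Given an arbitrary $K = (0, k_2, \ldots, k_s)$ not already in $\Lambda^s_1$, I would plug in $A = (0, k_2, \ldots, k_s)$; the $l=2$ term then reads $m_K$ with coefficient $+1$, so
\[
m_K \;=\; n_{(0, 0, k_2, \ldots, k_s)} \;+\; \sum_{\substack{3 \leq l \leq s+1 \\ k_{l-1}=0}} (-1)^{l+1}\, m_{(0,\, 0,\, k_2,\, \ldots,\, \widehat{k_{l-1}},\, \ldots,\, k_s)}.
\]
Each surviving correction term is an $m$-index with a strictly longer initial string of zeros than $K$ (the leading $(0,0,\cdot)$ instead of $(0,\cdot)$) and with one fewer zero among its later entries. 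I would then iterate this procedure, treating each correction $K'$ by the same formula. Case (2) of the lemma corresponds to the terminal situation where $k_2, \ldots, k_s$ contain no zeros: the correction sum is empty and $m_K = n_{(0, K)}$, giving $\epsilon^{(s)}_{K,K} = 1$ and $z^{(s)}_{K,I} = 0$. Case (3) is the generic situation; the correction sum is nonempty, but the leading $n$-term still appears with coefficient $+1$, so $\epsilon^{(s)}_{K,K} = 1$.

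The main obstacle I expect is proving that this iteration terminates inside $\Lambda^s_1$ (rather than producing an infinite descent or an index outside the allowed set). This requires a parity/pairing argument: each application of the formula inserts a leading zero and deletes a later zero, shifting the position of the first nonzero entry by exactly one; combined with the alternating signs $(-1)^{l+1}$, adjacent correction indices pair up and cancel whenever the first nonzero entry of $K$ sits at an odd position, and survive only to push that position to the next odd one. This dovetails precisely with the defining condition $i_{2j+1} \geq 1$ of $\Lambda^s_1$. Property (4) comes for free because the recurrence preserves total weight: every index $K'$ occurring in the correction sum satisfies $|K'| = |K|$, so only $I \in \Lambda^s_1$ with $|I| = |K|$ can appear with nonzero $z^{(s)}_{K,I}$. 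Once the combinatorial bookkeeping of signs is fixed, properties (1)--(3) are immediate from the construction.
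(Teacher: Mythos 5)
Your overall route is the same as the paper's: set $X_1=0$ in (\ref{Equ-DifferentialMod=0}) and compare coefficients of monomials in $X_2,\dots,X_{s+1}$. Your uniform recurrence with Kronecker deltas, evaluated at $A=(0,K)$, is precisely what the paper's equation (\ref{Equ-Set1-II}) encodes for the relevant choice of vanishing positions, and your weight-preservation argument for item (4) is correct.

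There is, however, a concrete error at exactly the point you flag as the main obstacle. It is not true that each correction term carries ``a strictly longer initial string of zeros than $K$'': if the deleted zero $k_{l-1}$ is one of the \emph{leading} zeros of $K$ (i.e.\ lies before the first nonzero entry $k_p$), then prepending a zero and deleting $k_{l-1}$ returns the index $K$ itself, so the recurrence is self-referential and your descent argument breaks down as stated. The fix is to sum the signs of these self-referential terms: their total coefficient is $\sum_{l=3}^{p}(-1)^{l+1}$, which is $0$ when $p$ is even and $1$ when $p$ is odd. Since $K\notin\Lambda_1^s$ (with $K\neq 0$) means exactly that $p$ is even, the $m_K$-terms on the right-hand side cancel — note your parity condition is stated backwards; the cancellation you need happens when the first nonzero entry sits at an \emph{even} position. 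Every remaining correction, obtained by deleting a zero located \emph{after} position $p$, has its first nonzero entry at position $p+1$, which is odd, hence already lies in $\Lambda_1^s$. So no iteration is needed at all: the procedure terminates in a single step, which is what the paper's one-shot multi-variable specialization achieves directly. (The same slip occurs in your case (2): for $K=(0,\dots,0,k_{2i},\dots,k_s)$ with $i\geq 2$ the correction sum is not empty — it consists entirely of self-referential terms — but its total coefficient is again $0$, so the conclusion $m_K=n_{(0,K)}$ still holds.)
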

    \begin{proof}
        As results hold trivially for $s=1$, now assume $s\geq 2$.
        
        If $K\in \Lambda_1^s$, then we define $z_{K,I}^{(s)}$ and $\epsilon_{K,K}^{(s)}$ as stated in Item (1).

        Now, we are going to construct $z_{K,I}^{(s)}$ and $\epsilon_{K,K}^{(s)}$ for $K\notin \Lambda_1^s$. In this case, we must have $K=(0,\dots,0,k_{2i},\dots,k_s)$ for some $i\geq 1$ with $k_{2i}\neq 0$.  
        
        By letting $X_1 = 0$ in (\ref{Equ-DifferentialMod=0}), we obtain that
        \begin{equation}\label{Equ-Set1-I}
            \begin{split}
                \sum_{I=(i_1,\dots,i_s)\in \bN^s}n_{(0,I)}X_2^{[i_1]}\cdots X_{s+1}^{[i_s]}=&\sum_{l=2}^{s+1}(-1)^l\sum_{I=(0,i_2,\dots,i_s)\in\bN^s}m_IX_2^{[i_2]}\dots X_{l-1}^{[i_{l-1}]}X_{l+1}^{[i_{l}]}\cdots X_{s+1}^{[i_{s}]}\\
                =&
                \sum_{l=2}^{s+1}(-1)^l\sum_{(0,i_1,\dots i_{l-2},i_l\dots,i_s)\in\bN^s}m_{(0,i_1,\dots i_{l-2},i_l\dots,i_s)}X_2^{[i_1]}\cdots X_{l-1}^{[i_{l-2}]}X_{l+1}^{[i_l]}\cdots X_{s+1}^{[i_s]}.
            \end{split}
        \end{equation}
        For any $2\leq t_1<t_2<\dots<t_r\leq s+1$ (with $1\leq r\leq s$), letting $X_{t_1} = \dots =X_{t_r} = 0$ in (\ref{Equ-Set1-I}) and considering the coefficients of $\prod_{2\leq t\leq s+1,t\neq t_1,\dots,t_r}X_{t}^{i_{t-1}}$ with all $i_{t-1}\geq 1$ (so we only need to consider $l\in\{t_1,\dots,t_r\}$ in the above summation for $l$), we have 
        \begin{equation}\label{Equ-Set1-II}
            \sum_{k=1}^r(-1)^{t_k}m_{(\sum_{2\leq q\leq t_{k}-1,q\neq t_1,\dots,t_{k-1}}i_{q-1}E_q+\sum_{t_k+1\leq q'\leq s+1,q'\neq t_{k+1},\dots,t_r}i_{q'-1}E_{q'-1})} = n_{(\sum_{2\leq h\leq s+1,h\neq t_1,\dots,t_r}i_{h-1}E_h)}.
        \end{equation}

    Now, assume $k_{2i}\cdots k_s\neq 0$.
    In (\ref{Equ-Set1-II}), let $r=2i-1$ with $i\leq 1$ and let $t_1=2,t_2=3,\dots,t_{2i-1}=2i\leq s+1$, and $i_{h-1}=k_{h-1}$ for all $2i+1\leq h\leq s+1$. Then we get 
    \[n_{(0,0,\dots,0,k_{2i},\dots,k_s)} = \sum_{k=2}^{2i}(-1)^km_{(\sum_{2i+1\leq q'\leq s+1}k_{q'-1}E_{q'-1})} = m_{(0,\dots,0,k_{2i},\dots,k_s)},\]
    as claimed in Item (2).

    Assume $k_{2i}\cdots k_s = 0$. In the spirit of Observation \ref{observation}, in the equation (\ref{Equ-Set1-II}) above, we need to choose $r\geq 2i$ with $i\geq 1$ and $t_1 = 2,\dots,t_{2i-1}=2i, 2i+2\leq t_{2i}<\dots<t_r\leq s+1$ such that for any $2i+2\leq h\leq s$, $k_h=0$ if and only if $h+1\in\{t_{2i},\dots,t_r\}$. Then we have 
    \[K=k_{2i}E_{2i}+\sum_{2i+2\leq q'\leq s+1,q'\neq t_{2i},\dots,t_r}k_{q'-1}E_{q'-1}.\]
    Let $i_{h-1}=k_{h-1}$ for all $h\in\{2i+1,\dots,s+1\}\setminus\{t_{2i},\dots,t_r\}$ in (\ref{Equ-Set1-II}), and then we get
    \[\begin{split}
        &n_{(k_{2i}E_{2i+1}+\sum_{2i+2\leq h\leq s+1,h\neq t_{2i},\dots,t_r}k_{h-1}E_h)} \\= & \sum_{l=2}^{2i}(-1)^lm_{(k_{2i}E_{2i}+\sum_{2i+2\leq q'\leq s+1,q'\neq t_{2i},\dots,t_r}k_{q'-1}E_{q'-1})}\\
        &+\sum_{h=2i}^{r}(-1)^{t_h}m_{(\sum_{2i+1\leq q\leq t_h-1,q\neq t_{2i},\dots,t_{h-1}}k_{q-1}E_q+\sum_{t_h+1\leq q'\leq s+1,q'\neq t_{h+1},\dots,t_r}k_{q'-1}E_{q'-1})}\\
        = & m_{(k_{2i}E_{2i}+\sum_{2i+2\leq q'\leq s+1,q'\neq t_{2i},\dots,t_r}k_{q'-1}E_{q'-1})}\\
        &+\sum_{h=2i}^{r}(-1)^{t_h}m_{(k_{2i}E_{2i+1}+\sum_{2i+2\leq q\leq t_h-1,q\neq t_{2i},\dots,t_{h-1}}k_{q-1}E_q+\sum_{t_h+1\leq q'\leq s+1,q'\neq t_{h+1},\dots,t_r}k_{q'-1}E_{q'-1})},
    \end{split}\]
    which implies that
    \begin{equation}\label{Equ-Set1-III}
    \begin{split}
        &m_{(k_{2i}E_{2i}+\sum_{2i+2\leq q'\leq s+1,q'\neq t_{2i},\dots,t_r}k_{q'-1}E_{q'-1})} \\
        =& \sum_{h=2i}^{r}(-1)^{t_h+1}m_{(k_{2i}E_{2i+1}+\sum_{2i+2\leq q\leq t_h-1,q\neq t_{2i},\dots,t_{h-1}}k_{q-1}E_q+\sum_{t_h+1\leq q'\leq s+1,q'\neq t_{h+1},\dots,t_r}k_{q'-1}E_{q'-1})}\\
        &+n_{(k_{2i}E_{2i+1}+\sum_{2i+2\leq q'\leq s+1,q'\neq t_{2i},\dots,t_r}k_{q'-1}E_{q'})}.
    \end{split}
    \end{equation}
    For any $2i\leq h\leq r$, define
    \[J_h:=k_{2i}E_{2i+1}+\sum_{2i+2\leq q\leq t_h-1,q\neq t_{2i},\dots,t_{h-1}}k_{q-1}E_q+\sum_{t_h+1\leq q'\leq s+1,q'\neq t_{h+1},\dots,t_r}k_{q'-1}E_{q'-1},\]
    and then $J_h\in\Lambda_1^s$ (as $k_{2i}\neq 0$) satisfying
    \[
    |K|
    =k_{2i}+\sum_{2i+1\leq q'\leq s,q'+1\neq t_{2i},\dots,t_r}k_{q'}
    =|J_h|.\]
    Using this, we see that (\ref{Equ-Set1-III}) amounts to
    \begin{equation}\label{Equ-Set1-IV}
        m_{K}=\sum_{h=2i}^r(-1)^{t_h+1}m_{J_h}+n_{(0,K)}.
    \end{equation}
    So one can construct $z_{K,I}^{(s)}$'s in the obvious way by applying (\ref{Equ-Set1-IV}) and obtain the Item (3).

    Finally, Item (4) follows from the above constructions directly.
 \end{proof}

 Now we consider the case where the index $K = (k_1,\dots,k_s)$ of $m_K$ satisfies $k_1 \geq 2$. It turns out that the $m_{k_1,\ast}$'s with $k_1\geq 2$ are uniquely determined by $m_{1,*}$'s and $n_{1,k_1-1,*}$. We will show this by comparing the coefficient of $X_1$ on both sides of (\ref{Equ-DifferentialMod=0}).

 \begin{lem}\label{Lem-Set2}
For any $s\geq 1$, consider the subset of indices with leading term being $1$, i.e.,  $$\Lambda_2^s=\{(1,i_2,\dots,i_s)\mid i_2,\dots,i_s\geq 0\}\subset\bN^s.$$
    Let $K=(k_1,\dots,k_s)\in\bN^s$ with $k_1\geq 2$.
For each $I\in\Lambda_2^{(s)}$, there exists
\[  z_{K,I}^{(s)} \in \{\pm 1,0\} \]
depending only on $s$ and $ K$, such that if $d^s(f)=g$, then 
\[m_{K}=\sum_{I\in \Lambda_2^s}z_{K,I}^{(s)}m_I+(\phi_M-a(|K|-1))m_{K-E_1}-\sum_{l=2}^sm_{K-E_1+E_l}-n_{(1,K-E_1)}.\]
    Furthermore, we have $z_{K,I}^{(s)}=0$ in the case either $k_2\cdots k_s\neq 0$ or $|K|\neq |I|$.
 \end{lem}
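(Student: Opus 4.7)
The plan is to compare the coefficient of the monomial $X_1\cdot X_2^{[k_1-1]}X_3^{[k_2]}\cdots X_{s+1}^{[k_s]}$ on both sides of the identity $\rd^s(f)=g$ as written out in (\ref{Equ-DifferentialMod=0}). This is the natural choice in the spirit of Observation \ref{observation}(2): to read off $m_K$ with leading index $k_1\geq 2$, one picks out the first-order contribution in $X_1$, because $m_K$ is hidden inside the $(X_{j+1}-X_1)^{[i_j]}$ factors on the right-hand side at $X_1$-linear order. The coefficient of this monomial on the LHS is precisely $n_{(1,K-E_1)}$.

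For the first sum, I would use the expansions $(1+aX_1)^{\phi_M/a-|I|}=\sum_{n\geq 0}\prod_{i=0}^{n-1}(\phi_M-(i+|I|)a)\,X_1^{[n]}$ and $(X_{j+1}-X_1)^{[i_j]}=\sum_{a_j=0}^{i_j}(-1)^{a_j}X_1^{[a_j]}X_{j+1}^{[i_j-a_j]}$. Matching target exponents forces $i_j=k_j+a_j$ for $j\geq 2$ and $i_1=k_1-1+a_1$, while the total $X_1$-degree forces $n+a_1+\cdots+a_s=1$. Two cases arise: (a) $n=1$ and all $a_j=0$, giving $I=K-E_1$ and contribution $(\phi_M-a(|K|-1))m_{K-E_1}$; (b) $n=0$ with exactly one $a_l=1$. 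Subcase $l=1$ gives $I=K$ with contribution $-m_K$, and subcase $l\geq 2$ gives $I=K-E_1+E_l$ with contribution $-m_{K-E_1+E_l}$.

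For the second sum, the $l$-th term $(-1)^l m_I\,X_1^{[i_1]}\cdots X_{l-1}^{[i_{l-1}]}X_{l+1}^{[i_l]}\cdots X_{s+1}^{[i_s]}$ omits the variable $X_l$, so a contribution requires the target exponent on $X_l$ to vanish. This rules out $l=1$ (no $X_1$), and rules out $l=2$ as well since $k_1-1\geq 1$. For $l\geq 3$ the term contributes exactly when $k_{l-1}=0$, in which case we obtain $(-1)^l m_{I_l}$ with $I_l=(1,k_1-1,k_2,\ldots,\widehat{k_{l-1}},\ldots,k_s)\in\Lambda_2^s$. A direct count gives $|I_l|=|K|-k_{l-1}=|K|$ under this condition, so every $I$ actually occurring satisfies $|I|=|K|$, which yields the degree clause; the condition $k_2\cdots k_s\neq 0$ is exactly the condition that no $k_{l-1}$ with $l\geq 3$ vanishes, forcing all $z_{K,I}^{(s)}$ to be zero in that case.

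Collecting everything and isolating $m_K$ yields the claimed formula, with $z_{K,I}^{(s)}=(-1)^l$ whenever $I=I_l$ for an admissible $l$ and zero otherwise. The remaining technical point is to verify $z_{K,I}^{(s)}\in\{\pm 1,0\}$ even in degenerate situations where $I_l=I_{l'}$ for distinct admissible $l<l'$: matching entries of $I_l$ and $I_{l'}$ forces $k_{l-1}=k_l=\cdots=k_{l'-1}=0$ together with further identifications among the intermediate $k_j$'s, so any coincidences arise from a consecutive block of vanishing indices, and the alternating signs $(-1)^l$ telescope to leave at most one surviving contribution. This sign-tracking is the fiddliest part of the argument, but it is purely combinatorial and parallels the case analysis already carried out in Lemma \ref{Lem-Set1}.
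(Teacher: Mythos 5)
Your proof is correct and follows essentially the same route as the paper's: extracting the coefficient of $X_1\cdot X_2^{[k_1-1]}\cdots X_{s+1}^{[k_s]}$ in (\ref{Equ-DifferentialMod=0}) is exactly the paper's two-step procedure of first comparing coefficients of $X_1$ (its Equation (\ref{Equ-Set2-I})) and then setting to zero the variables $X_{t}$ corresponding to the vanishing $k_{t-1}$'s before matching the remaining exponents. Your telescoping argument for $z_{K,I}^{(s)}\in\{\pm1,0\}$ when several $I_l$ coincide (which happens exactly for consecutive blocks of zeros in $K$) is in fact slightly more explicit than the paper, which defines the $z$'s ``in the obvious way'' without addressing such coincidences.
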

 \begin{proof}
     In (\ref{Equ-DifferentialMod=0}), comparing the coefficient of $X_1$, we see that
     \begin{equation}\label{Equ-Set2-I}
         \begin{split}
             &\sum_{I=(i_1,\dots,i_s)\in \bN^s}n_{(1,I)}X_2^{[i_1]}\cdots X_{s+1}^{[i_s]}\\
             =&\sum_{I=(i_1,\dots,i_s)\in\bN^s}((\phi_M-a|I|)(m_I)-\sum_{l=1}^sm_{I+E_l})X_2^{[i_1]}\cdots X_{s+1}^{[i_s]}+\sum_{l=2}^{s+1}(-1)^l\sum_{I=(1,i_2,\dots,i_s)}m_IX_2^{[i_2]}\cdots X_{l-1}^{[i_{l-1}]}X_{l+1}^{[i_{l}]}\cdots X_{s+1}^{[i_s]}.
         \end{split}
     \end{equation}
     
     Let $K=(k_1,k_2,\dots,k_s)$ satisfy $k_1\geq 2$.
     
     Assume $k_2\cdots k_s\neq 0$.
     In (\ref{Equ-Set2-I}), comparing the coefficients of $X_2^{[i_1]}\cdots X_{s+1}^{[i_{s}]}$ with $(i_1,\dots,i_s)=(k_1-1,\dots,k_s)$ (namely, $I=K-E_1$), we have 
     \[n_{(1,K-E_1)}=(\phi_M-a|K-E_1|)(m_{K-E_1})- \sum_{l=1}^sm_{K-E_1+E_l}.\]
     In other words, we conclude that
     \begin{equation}\label{Equ-Set2-II}
         m_{K} = (\phi_M-a(|K|-1))(m_{K-E_1})-\sum_{l=2}^sm_{K-E_1+E_l}-n_{(1,K-E_1)}.
     \end{equation}
     Now, we can define $z_{K,I}^{(s)}$'s in the obvious way by applying (\ref{Equ-Set2-II}), and conclude the ``furthermore'' part in this case.

     Now, assume $k_2\cdots k_s = 0$. Then there is an $r\geq 1$ and $3\leq t_1<\cdots<t_r\leq s+1$ such that $k_h = 0$ if and only if $h+1\in\{t_1,\dots,t_r\}$. In particular, we have
     \[K=E_1+\sum_{2\leq t\leq s+1,t\neq t_1,\dots,t_r}j_{t-1}E_{t-1}\]
     where $j_1=k_1-1$ and $j_{t-1} = k_{t-1}$ for any $t\notin\{2,t_1,\dots,t_r\}$. Thus, $j_{t-1}\geq 1$ for all $t$. 
     
     Letting $X_{t_1} = \cdots =X_{t_r} = 0$ in (\ref{Equ-Set2-I}) and considering the coefficient of $\prod_{2\leq t\leq s+1,t\neq t_1,\dots,t_r}X_{t}^{[j_{t-1}]}$ (cf. Observation \ref{observation}), we have
     \begin{equation}\label{Equ-Set2-III}
         \begin{split}
           &n_{(E_1+\sum_{2\leq h\leq s+1,h\neq t_1,\dots,t_r}j_{h-1}E_h)}\\
           =&(\phi_M-a(\sum_{2\leq t\leq s+1,t\neq t_1,\dots,t_r}j_{t-1}))(m_{(\sum_{2\leq t\leq s+1,t\neq t_1,\dots,t_r}j_{t-1}E_{t-1})})-\sum_{l=1}^sm_{(E_l+\sum_{2\leq t\leq s+1,t\neq t_1,\dots,t_r}j_{t-1}E_{t-1})}\\
           &+\sum_{k=1}^r(-1)^{t_k}m_{(E_1+\sum_{2\leq q\leq t_k-1,q\neq t_1,\dots,t_{k-1}}j_{q-1}E_q+\sum_{t_k+1\leq q'\leq s+1,q'\neq t_{k+1},\dots,t_r}j_{q'-1}E_{q'-1})},
         \end{split}
     \end{equation}
     which implies that
     \begin{equation}\label{Equ-Set2-III'}
         \begin{split}
             &m_{(E_1+\sum_{2\leq t\leq s+1,t\neq t_1,\dots,t_r}j_{t-1}E_{t-1})}\\
             =&(\phi_M-a(\sum_{2\leq t\leq s+1,t\neq t_1,\dots,t_r}j_{t-1}))(m_{(\sum_{2\leq t\leq s+1,t\neq t_1,\dots,t_r}j_{t-1}E_{t-1})})-\sum_{l=2}^sm_{(E_l+\sum_{2\leq t\leq s+1,t\neq t_1,\dots,t_r}j_{t-1}E_{t-1})}\\
             &+\sum_{k=1}^r(-1)^{t_k}m_{(E_1+\sum_{2\leq q\leq t_k-1,q\neq t_1,\dots,t_{k-1}}j_{q-1}E_q+\sum_{t_k+1\leq q'\leq s+1,q'\neq t_{k+1},\dots,t_r}j_{q'-1}E_{q'-1})}-n_{(E_1+\sum_{2\leq h\leq s+1,h\neq t_1,\dots,t_r}j_{h-1}E_h)}.
         \end{split}
     \end{equation}
     For any $1\leq k\leq r$, put
     \[J_k:= E_1+\sum_{2\leq q\leq t_k-1,q\neq t_1,\dots,t_{k-1}}j_{q-1}E_q+\sum_{t_k+1\leq q'\leq s+1,q'\neq t_{k+1},\dots,t_r}j_{q'-1}E_{q'-1}.\]
     and then for any $1\leq k\leq r$, we have $J_k\in \Lambda_2^s$ and
     \[|J_k|=1+j_1+\sum_{3\leq q\leq s+1,q\neq t_1,\dots,t_r}j_{q-1} = k_1+\sum_{3\leq q\leq s+1}k_{q-1} = |K|.\]
     Using this, we see that (\ref{Equ-Set2-III'}) amounts to 
     \begin{equation}\label{Equ-Set2-IV}
         \begin{split}
             m_{K}=&(\phi_M-a|K-E_1|)(m_{K-E_1})-\sum_{l=2}^sm_{E_l+K-E_1}+\sum_{k=1}^r(-1)^{t_k}m_{J_k}-n_{(1,K-E_1)}\\
             =&\sum_{k=1}^r(-1)^{t_k}m_{J_k}+(\phi_M-a(|K|-1))(m_{K-E_1})-\sum_{l=2}^sm_{E_l+K-E_1}-n_{(1,K-E_1)}.
         \end{split}
     \end{equation}
     Now, we can construct $z_{K,I}^{(s)}$'s in the obvious way by using (\ref{Equ-Set2-IV}), and conclude the ``furthermore'' part in this case.
 \end{proof}

As a corollary of Lemma \ref{Lem-Set2}, we have
 \begin{cor}\label{Cor-Set2}
     Let $\Lambda_2^s$ be as above. Then for any $K=(k_1,\dots,k_s)\in\bN^s$ with $k_1\geq 1$, there are integers
     \begin{itemize}
         \item $z_{K,I}^{(s)}$ for all $I\in\Lambda_2^{s}$ with $|I|\leq |K|$ and
         \item $\epsilon_{K,J}^{(s)}$ for all $J\in\Lambda_2^s$ with $|J|<|K|$ 
     \end{itemize}
      such that if $\rd^s(f) = g$, we have 
     \[m_K=\sum_{I\in\Lambda_2^s,|I|\leq |K|}z_{K,I}^{(s)}\prod_{i=|I|}^{|K|-1}(\phi_M-ia)m_I+\sum_{J\in\Lambda_2^s,|J|< |K|}\epsilon_{K,J}^{(s)}\prod_{i=|J|+1}^{|K|-1}(\phi_M-ia)n_{(1,J)},\]
     where by convention, $\prod_{i=|I|}^{|I|-1}(\phi_M-ia):=\id_M$.


     
             
     
 \end{cor}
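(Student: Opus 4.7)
The plan is to prove Corollary \ref{Cor-Set2} by induction on the leading coordinate $k_1$ of $K$, iterating Lemma \ref{Lem-Set2}. The base case is trivial: if $k_1 = 1$, then $K \in \Lambda_2^s$ itself, so we may take $z_{K,K}^{(s)} = 1$, all other $z_{K,I}^{(s)} = 0$, and no $\epsilon$-terms; the empty product convention makes the identity work.

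For the inductive step, assume $k_1 \geq 2$ and that the claim holds for all indices $K'$ with first coordinate $k_1' < k_1$. Apply Lemma \ref{Lem-Set2} to $K$ to obtain
\[
m_K \;=\; \sum_{I \in \Lambda_2^s} z_{K,I}^{(s)} m_I \;+\; (\phi_M - a(|K|-1)) m_{K-E_1} \;-\; \sum_{l=2}^s m_{K-E_1+E_l} \;-\; n_{(1,K-E_1)}.
\]
Each of the remaining $m$-terms on the right has first coordinate $k_1 - 1 \geq 1$, so the inductive hypothesis applies: $m_{K-E_1}$ has total degree $|K|-1$, while $m_{K-E_1+E_l}$ (for $l \geq 2$) has total degree $|K|$. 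The key bookkeeping is that multiplying the expansion of $m_{K-E_1}$ by the extra factor $(\phi_M - a(|K|-1))$ extends each product $\prod_{i=|I|}^{|K|-2}(\phi_M-ia)$ to $\prod_{i=|I|}^{|K|-1}(\phi_M-ia)$, matching the target form. For $m_{K-E_1+E_l}$, the induction directly yields products ending at $|K|-1$.

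Finally, the term $-n_{(1,K-E_1)}$ is already of the form $n_{(1,J)}$ with $J = K - E_1 \in \Lambda_2^s$ of weight $|J| = |K|-1 < |K|$; since the convention gives $\prod_{i=|K|}^{|K|-1}(\phi_M - ia) = \id_M$, this contributes a single summand with $\epsilon_{K,K-E_1}^{(s)}$ adjusted by $-1$ from the contributions of $m_{K-E_1}$ and $m_{K-E_1+E_l}$. Collecting everything and defining $z_{K,I}^{(s)}$ and $\epsilon_{K,J}^{(s)}$ by summing the contributions completes the induction.

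The only potential obstacle is the index bookkeeping: ensuring that every product $\prod_{i=|I|}^{|K|-1}(\phi_M - ia)$ lines up with the correct lower bound $|I|$ (rather than $|I|+1$), and that every $n$-contribution produces a product starting at $|J|+1$. This is a routine verification once one observes that the factor $(\phi_M - a(|K|-1))$ in Lemma \ref{Lem-Set2} precisely corresponds to appending $i = |K|-1$ to the product, so the exponent shifts align automatically. No genuine difficulty arises; the result is essentially a telescoping of Lemma \ref{Lem-Set2}.
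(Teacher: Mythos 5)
Your proof is correct and follows essentially the same route as the paper: induction on the leading coordinate $k_1$, with the base case $K\in\Lambda_2^s$ handled by the empty-product convention, and the inductive step obtained by feeding the inductive hypothesis for $m_{K-E_1}$ and $m_{K-E_1+E_l}$ into the recursion of Lemma \ref{Lem-Set2}, where the factor $(\phi_M-a(|K|-1))$ precisely appends the index $i=|K|-1$ to each product. The bookkeeping you flag (lower bounds $|I|$ versus $|J|+1$, and the degree-$|K|$ terms from Lemma \ref{Lem-Set2} entering with the identity product) works out exactly as in the paper's computation.
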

 \begin{proof}
     We have to construct integers $\{z^{(s)}_{K,I}\}_{I\in\Lambda_2^s,|I|\leq |K|}$ for any $K = (k_1,k_2,\dots,k_s)\in\bN^s$ with $k_1\geq 1$, which depends only on $K$. We do induction on $k_1$.

     When $k_1 = 1$, we put $\epsilon_{K,J}^{(s)} = 0$ for any $J\in\Lambda_s^2$ with $|J|<|K|$, and $z^{(s)}_{K,I} = 1$ if $I=K$ and otherwise $z^{(s)}_{K,I} = 0$. Then the result is obviously true in this case. 
     
     Now, assume $k_1\geq 2$, and then $K = E_1+L$ with $L=(l_1,k_2,\dots,k_s)$ such that $l_1\geq 1$. When $I\in\Lambda_2^s$ with $|I|=|K|$, let $x_{K,I}^{(s)}$ denote the corresponding $z_{K,I}^{(s)}$ in Lemma \ref{Lem-Set2}, and in particular, $x_{K,I}^{(s)} = 0$ as long as $|K|\neq |I|$. Then by Lemma \ref{Lem-Set2} again, we have
     \[\begin{split}
         m_K
         =&\sum_{J\in\Lambda_2^s,|J|=|K|}x_{K,J}^{(s)}m_J+(\phi_M-a|L|)m_L-\sum_{l=2}^sm_{L+E_l}-n_{(1,L)}\\
         =&\sum_{J\in\Lambda_2^s,|J|=|K|}x_{K,J}^{(s)}m_J+(\phi_M-a|L|)(\sum_{I\in\Lambda_2^s,|I|\leq |L|}z_{L,I}^{(s)}\prod_{h=|I|}^{|L|-1}(\phi_M-ah)m_I+\sum_{J\in\Lambda_2^s,|J|< |L|}\epsilon_{L,J}^{(s)}\prod_{i=|J|+1}^{|L|-1}(\phi_M-ia)n_{(1,J)})\\
         &-\sum_{l=2}^s(\sum_{I\in\Lambda_2^s,|I|\leq |L+E_l|=|K|}z_{L+E_l,I}^{(s)}\prod_{h=|I|}^{|K|-1}(\phi_M-ah)m_I+\sum_{J\in\Lambda_2^s,|J|< |K|}\epsilon_{L+E_l,J}^{(s)}\prod_{i=|J|+1}^{|K|-1}(\phi_M-ia)n_{(1,J)})\\
         &-n_{(1,L)}\\
         =&\sum_{J\in\Lambda_2^s,|J|=|K|}x_{K,J}^{(s)}m_J+\sum_{I\in\Lambda_2^s,|I|\leq |L|}z_{L,I}^{(s)}\prod_{h=|I|}^{|K|-1}(\phi_M-ah)m_I-\sum_{l=2}^s\sum_{I\in\Lambda_2^s,|I|\leq |K|}z_{L+E_l,I}^{(s)}\prod_{h=|I|}^{|K|-1}(\phi_M-ah)m_I\\
         &+\sum_{J\in\Lambda_2^s,|J|< |L|}\epsilon_{L,J}^{(s)}\prod_{h=|J|+1}^{|K|-1}(\phi_M-ah)m_{(1,J)}-\sum_{l=2}^s\sum_{J\in\Lambda_2^s,|J|<|K|}\epsilon_{L+E_l,J}^{(s)}\prod_{h=|J|+1}^{|K|-1}(\phi_M-ah)n_{(1,J)}-n_{(1,L)}\\
         =&\sum_{I\in\Lambda_2^s,|I|\leq |K|}z_{K,I}^{(s)}\prod_{h=|I|}^{|K|-1}(\phi_M-ah)m_I+\sum_{J\in\Lambda_2^s,|J|< |K|}\epsilon_{K,J}^{(s)}\prod_{h=|J|+1}^{|K|-1}(\phi_M-ah)n_{(1,J)},
     \end{split}\]
     where $z_{K,I}^{(s)}$'s and $\epsilon_{K,J}^{(s)}$'s are defined such that the last equality above holds true. The proof is complete.
 \end{proof}

  Thanks to Lemma \ref{Lem-Set2}, we are reduced to studying the $m_{K}$'s with $K=(1,k_2,*)$. The next lemma tells us if $k_2 = 0$, then $m_K$ is determined by $m_I$'s with $I\in\Lambda_3^s$ (which will be defined below) and $n_{1,*}$'s as long as $\rd^s(f) = g$.
 
 \begin{lem}\label{Lem-Set3}
     Consider the following subset  $\Lambda_3^s$ of indices: 
     \begin{itemize}
         \item If $1\leq s\leq 3$, let $\Lambda_3^s = \{(1,0,\dots,0)\}$.

         \item If $s\geq 4$, let $\Lambda_3^s:=\{(1,0,\dots,0,i_{2j+2},\dots,i_s)\in\bN^s\mid j\geq 1,i_{2j+2}\geq 1\}$.
     \end{itemize}
     Let $K=(1,0,\dots,0,k_{2j+1},\dots,k_s)$ with $j\geq 1$ and $k_{2j+1}\geq 1$. For each $I\in \Lambda_3^s$, there exists an integer $$z_{K,I}^{(s)}\in\{\pm 1,0\}$$ depending only on $s $ and $ K$ satisfying the following: If $\rd^s(f) = g$, then we have
     \[m_K=\sum_{I\in \Lambda_3^s}z_{K,I}^{(s)}m_I+(\phi_M-(|K|-1)a)m_{K-E_1}-\sum_{l=2}^sm_{K-E_1+E_l}-n_{(1,K-E_1)}.\]
     Furthermore, we have $z_{K,I}^{(s)} = 0$ in the case either $k_{2j+2}\cdots k_s\neq 0$ or $|K|\neq |I|$.
 \end{lem}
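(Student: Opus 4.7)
The plan is to mimic the strategy of Lemma \ref{Lem-Set2} by starting from equation \eqref{Equ-Set2-I} (which is obtained by comparing the coefficient of $X_1$ on both sides of \eqref{Equ-DifferentialMod=0}) and then, following Observation \ref{observation}, substituting $X_2=X_3=\cdots=X_{2j+1}=0$ in order to isolate only those terms relevant to the index $K-E_1=(0,\dots,0,k_{2j+1},\dots,k_s)$ whose first $2j$ components are zero. After this substitution, extracting the coefficient of $X_{2j+2}^{[k_{2j+1}]}\cdots X_{s+1}^{[k_s]}$ on the left side gives $n_{(1,K-E_1)}$, and on the first piece of the right side gives $(\phi_M-a|K-E_1|)(m_{K-E_1})-\sum_{l=1}^{s}m_{K-E_1+E_l}$ (as in Lemma \ref{Lem-Set2}).

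The main work is then to analyse the contribution from the second piece of the right side, namely the alternating sum $\sum_{l=2}^{s+1}(-1)^{l}\cdots$. This should be split into three sub-cases according to the position of the omitted variable $X_l$ relative to the block of zero-positions $X_2,\dots,X_{2j+1}$. For $2\le l\le 2j+1$, the missing variable lies inside the zero block, all substitution conditions force $i_2=\cdots=i_{2j}=0$, and the remaining term is the same $m_K$ for every such $l$; since the $l$'s run over the $2j$ (hence \emph{even}) indices $2,3,\dots,2j+1$, the alternating sum $\sum_{l=2}^{2j+1}(-1)^l m_K$ cancels to $0$. This cancellation is the structural reason for the choice of the subset $\Lambda_3^s$ (zeros up to position $2j+1$, i.e.\ an even number of zeros after the leading $1$). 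For $l=2j+2$ the variable $X_{2j+2}$ is absent from the monomial, so the coefficient of $X_{2j+2}^{[k_{2j+1}]}$ is $0$ because $k_{2j+1}\geq 1$. For $l\geq 2j+3$ the substitutions force $i_2=\cdots=i_{2j+1}=0$; matching remaining exponents yields a contribution $(-1)^l m_{I_l}$ where
\[
I_l=(1,0,\dots,0,k_{2j+1},\dots,k_{l-2},k_l,\dots,k_s),
\]
and this contribution survives only when $k_{l-1}=0$ (otherwise the missing $X_l$-slot forces the coefficient to vanish). By inspection $I_l\in\Lambda_3^s$ whenever it occurs.

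Putting these three sub-cases together and solving for $m_K=m_{(K-E_1)+E_1}$ yields exactly the claimed identity with $z_{K,I_l}^{(s)}=(-1)^l$ and all other $z_{K,I}^{(s)}=0$. The ``furthermore'' part then follows directly: if $k_{2j+2}\cdots k_s\neq 0$, no index $l\geq 2j+3$ satisfies $k_{l-1}=0$, so every $z_{K,I}^{(s)}$ vanishes; and a direct count shows $|I_l|=1+\sum_{h\ne l-1}k_h=|K|$ since $k_{l-1}=0$, which is why $z_{K,I}^{(s)}=0$ whenever $|I|\neq|K|$.

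The only real obstacle, once the substitution strategy is fixed, is the combinatorial bookkeeping: one must be careful about the ``shift'' between the index $i_t$ of $I$ and the variable $X_{t+1}$ it sits under, and about the break in this pattern caused by the missing $X_l$. The parity argument in the first sub-case is the key mechanism, and it is precisely what forces the definition of $\Lambda_3^s$ (zeros at positions $2,\dots,2j+1$) rather than a more naive choice; the rest of the argument is a direct specialisation of the computation already carried out in Lemma \ref{Lem-Set2}.
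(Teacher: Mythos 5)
Your proposal is correct and follows essentially the same route as the paper: both start from the coefficient of $X_1$ in \eqref{Equ-DifferentialMod=0}, kill the variables over the zero block of $K$, and use the even-length alternating sum $\sum_{l=2}^{2j+1}(-1)^l=0$ so that $m_K$ can be isolated from the $l=1$ term of $-\sum_{l=1}^{s}m_{K-E_1+E_l}$, with the surviving terms $(-1)^l m_{I_l}$ (for $l\geq 2j+3$ with $k_{l-1}=0$) landing in $\Lambda_3^s$. One small caveat, which the paper also leaves implicit: distinct $l$ lying in the same maximal block of consecutive zeros of $K$ produce the \emph{same} index $I_l$, so $z^{(s)}_{K,I}$ is really the alternating sum of $(-1)^l$ over that block — still an element of $\{0,\pm 1\}$, as required.
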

 \begin{proof}
     All results hold  trivially for $s=1, 2$. From now on, assume $s\geq 3$.
     
     Note that for any $2\leq t_1<t_2<\dots<t_r\leq s+1$ (with $1\leq r\leq s$), we still have (\ref{Equ-Set2-III}); that is, 
     \[\begin{split}
           &(\phi_M-a(\sum_{2\leq t\leq s+1,t\neq t_1,\dots,t_r}j_{t-1}))(m_{(\sum_{2\leq t\leq s+1,t\neq t_1,\dots,t_r}j_{t-1}E_{t-1})})-\sum_{l=1}^sm_{(E_l+\sum_{2\leq t\leq s+1,t\neq t_1,\dots,t_r}j_{t-1}E_{t-1})}\\
           +&\sum_{k=1}^r(-1)^{t_k}m_{(E_1+\sum_{2\leq q\leq t_k-1,q\neq t_1,\dots,t_{k-1}}j_{q-1}E_q+\sum_{t_k+1\leq q'\leq s+1,q'\neq t_{k+1},\dots,t_r}j_{q'-1}E_{q'-1})} = n_{(E_1+\sum_{2\leq h\leq s+1,h\neq t_1,\dots,t_r}i_{h-1}E_h)},
         \end{split}\]
     Assume $i\geq 1$. By letting $r\geq 2i$ and $t_1=2,\dots,t_{2i-1}=2i$, we get
     \begin{equation}\label{Lem-Set3-I}
         \begin{split}
            &n_{(E_1+\sum_{2i+1\leq h\leq s+1,h\neq t_{2i},\dots,t_r}j_{h-1}E_{h})}\\= &(\phi_M-a(\sum_{2i+1\leq t\leq s+1,t\neq t_{2i},\dots,t_r}j_{t-1}))(m_{(\sum_{2i+1\leq t\leq s+1,t\neq t_{2i},\dots,t_r}j_{t-1}E_{t-1})})\\
             &-\sum_{l=1}^sm_{(E_l+\sum_{2i+1\leq t\leq s+1,t\neq t_{2i},\dots,t_r}j_{t-1}E_{t-1})}
           +\sum_{l=2}^{2i}(-1)^{l}m_{(E_1+\sum_{2i+1\leq q'\leq s+1,q'\neq t_{2i},\dots,t_r}j_{q'-1}E_{q'-1})}\\
           &+\sum_{h=2i}^r(-1)^{t_h}m_{(E_1+\sum_{2i+1\leq q\leq t_h-1,q\neq t_{2i},\dots,t_{h-1}}j_{q-1}E_q+\sum_{t_h+1\leq q'\leq s+1,q'\neq t_{h+1},\dots,t_r}j_{q'-1}E_{q'-1})}\\
          =& (\phi_M-a(\sum_{2i+1\leq t\leq s+1,t\neq t_{2i},\dots,t_r}j_{t-1}))(m_{(\sum_{2i+1\leq t\leq s+1,t\neq t_{2i},\dots,t_r}j_{t-1}E_{t-1})})\\
             &-\sum_{l=1}^sm_{(E_l+\sum_{2i+1\leq t\leq s+1,t\neq t_{2i},\dots,t_r}j_{t-1}E_{t-1})}
           +m_{(E_1+\sum_{2i+1\leq q'\leq s+1,q'\neq t_{2i},\dots,t_r}j_{q'-1}E_{q'-1})}\\
           &+\sum_{h=2i}^r(-1)^{t_h}m_{(E_1+\sum_{2i+1\leq q\leq t_h-1,q\neq t_{2i},\dots,t_{h-1}}j_{q-1}E_q+\sum_{t_h+1\leq q'\leq s+1,q'\neq t_{h+1},\dots,t_r}j_{q'-1}E_{q'-1})}.
         \end{split}
     \end{equation}

     Let $K=(1,0,\dots,0,k_{2i+1},\dots,k_s)$ with $k_{2i+1}\neq 0$.

     Assume $k_{2i+1}\cdots k_s\neq 0$. Then let $r = 2i$ and $t_{2i} = 2i+1$ in (\ref{Lem-Set3-I}). For any $2i+2\leq h\leq s+1$, by letting $j_{h-1}=k_{h-1}$ for all $h$, we get
     \begin{equation*}
         \begin{split}
             (\phi_M-a(\sum_{t=2i+2}^sj_{t-1}))(m_{(\sum_{t=2i+2}^{s+1}j_{t-1}E_{t-1})})-\sum_{l=1}^sm_{(E_l+\sum_{t=2i+2}^{s+1}j_{t-1}E_{t-1})}=n_{(E_1+\sum_{h=2i+2}^{s+1}j_{h-1}E_{h})}.
         \end{split}
     \end{equation*}
     In other words, we have
     \begin{equation*}
         m_K = (\phi_M-(|K|-1)a)m_{K-E_1}-\sum_{l=2}^sm_{K-E_1+E_l}-n_{(1,K-E_1)}.
     \end{equation*}
     Now, one can construct $z_{K,I}^{(s)}$'s in the obvious way by applying the above equation and conclude the ``furthermore'' part in this case.

     Now, assume $k_{2i+2}\cdots k_s = 0$. Then in (\ref{Lem-Set3-I}), we put $r \geq 2i+1 $ and furthermore let $t_{2i} = 2i+1$ and $2i+3\leq t_{2i+1}<\cdots<t_r\leq s+1$ such that $k_h=0$ if and only if $h+1\in\{t_{2i+1},\dots,t_r\}$. Now, let $j_{h} = k_{h}$ for any $ h\in\{2i+1,\dots,s\}\setminus\{t_{2i+1}-1,\dots,t_{r}-1\}$ in (\ref{Lem-Set3-I}), and then we get
     \begin{equation*}
         \begin{split}
            &n_{(E_1+j_{2i+1}E_{2i+2}+\sum_{2i+3\leq t\leq s+1,t\neq t_{2i+1},\dots,t_r}j_{t-1}E_{t})}\\
            =& (\phi_M-a(\sum_{2i+2\leq t\leq s+1,t\neq t_{2i+1},\dots,t_r}j_{t-1}))(m_{(j_{2i+1}E_{2i+1}+\sum_{2i+3\leq t\leq s+1,t\neq t_{2i+1},\dots,t_r}j_{t-1}E_{t-1})})\\
            &-\sum_{l=1}^sm_{(E_l+j_{2i+1}E_{2i+1}+\sum_{2i+3\leq t\leq s+1,t\neq t_{2i+1},\dots,t_r}j_{t-1}E_{t-1})}\\
            &+\sum_{h=2i+1}^r(-1)^{t_h}m_{(E_1+j_{2i+1}E_{2i+2}+\sum_{2i+3\leq q\leq t_h-1,q\neq t_{2i+1},\dots,t_{h-1}}j_{q-1}E_q+\sum_{t_h+1\leq q'\leq s+1,q'\neq t_{h+1},\dots,t_r}j_{q'-1}E_{q'-1})}.
           \end{split}
     \end{equation*}
     In other words, we have
     \begin{equation}\label{Equ-Set3-II}
     \begin{split}
            &m_{(E_1+j_{2i+1}E_{2i+1}+\sum_{2i+3\leq t\leq s+1,t\neq t_{2i+1},\dots,t_r}j_{t-1}E_{t-1})}\\
            =& (\phi_M-a(\sum_{2i+2\leq t\leq s+1,t\neq t_{2i+1},\dots,t_r}j_{t-1}))(m_{(j_{2i+1}E_{2i+1}+\sum_{2i+3\leq t\leq s+1,t\neq t_{2i+1},\dots,t_r}j_{t-1}E_{t-1})})\\
            &-\sum_{l=2}^sm_{(E_l+j_{2i+1}E_{2i+1}+\sum_{2i+3\leq t\leq s+1,t\neq t_{2i+1},\dots,t_r}j_{t-1}E_{t-1})}\\
            &+\sum_{h=2i+1}^r(-1)^{t_h}m_{(E_1+j_{2i+1}E_{2i+2}+\sum_{2i+3\leq q\leq t_h-1,q\neq t_{2i+1},\dots,t_{h-1}}j_{q-1}E_q+\sum_{t_h+1\leq q'\leq s+1,q'\neq t_{h+1},\dots,t_r}j_{q'-1}E_{q'-1})}\\
            &-n_{(E_1+j_{2i+1}E_{2i+2}+\sum_{2i+3\leq t\leq s+1,t\neq t_{2i+1},\dots,t_r}j_{t-1}E_{t})}.
    \end{split}
    \end{equation}
           For any $h$, we define
           \[J_h:=E_1+j_{2i+1}E_{2i+2}+\sum_{2i+3\leq q\leq t_h-1,q\neq t_{2i+1},\dots,t_{h-1}}j_{q-1}E_q+\sum_{t_h+1\leq q'\leq s+1,q'\neq t_{h+1},\dots,t_r}j_{q'-1}E_{q'-1}.\]
           Then we have $J_h\in \Lambda_3^s$ (as $j_{2i+1}\neq 0$) satisfying
           \[|J_h| = 1+j_{2i+1}+\sum_{2i+3\leq t\leq s+1,t\neq t_{2i+1},\dots,t_r}j_{t-1}j_{t-1} = 1+\sum_{2i+1\leq h\leq s+1,h+1\neq t_{2i+1},\dots,t_r\}}k_h =|K|\]
           by noting that
           \[K=E_1+j_{2i+1}E_{2i+1}+\sum_{2i+3\leq t\leq s+1,t\neq t_{2i+1},\dots,t_r}j_{t-1}E_{t-1}.\] 
           Using this, (\ref{Equ-Set3-II}) amounts to that
           \begin{equation}\label{Equ-Set3-III}
               m_K=\sum_{h=2i+1}^r(-1)^{t_h}m_{J_h}+(\phi_M-a(|K|-1))m_{K-E_1}-\sum_{l=2}^sm_{K-E_1+E_l}-n_{(1,K-E_1)}.
           \end{equation}
        Now one can construct the desired $z_{K,I}^{(s)}$'s in the obvious way by applying (\ref{Equ-Set3-III}) and conclude the ``furthermore'' part in this case immediately.
 \end{proof}

 Now, we are able to construct the desired set $\Lambda^{s+1}$ as promised  in \textbf{Step 1} of Construction \ref{constr3stepssec9}. 
 To do so, we need to \emph{change the superscript from $s$ to $s+1$} (and let $n_J = 0$ for all $J$). To avoid confusion, we define these sets for any $t \geq 1$.

\begin{defn} \label{defnlambdat}
Let $t \geq 1$. 
    Let $\Lambda^{t}\subset \bN^t$ be the following union
    \[\Lambda^{t}:=\Lambda_1^{t}\cup\Lambda_3^{t}\cup\Lambda_4^{t}, \]
where $\Lambda_1^t$ and $\Lambda_3^t$ are defined in Lemmas \ref{Lem-Set1} and \ref{Lem-Set3} (for $s = t$) respectively, and $\Lambda_4^{t}$ is defined as follows:
\begin{itemize}
    \item If $t = 1$, let $\Lambda_4^t = \{1\}$.

    \item It $t\geq 2$, let $\Lambda_4^t = \{(1,i_2,\dots,i_t)\mid i_2\geq 1\}$.
\end{itemize}
We caution: the subset $\Lambda_2^t:=\{(1,i_2,\dots,i_t)\mid i_2,\dots,i_t\geq 0\}\subset\bN^t$ (cf. Lemma \ref{Lem-Set2}) is not related to the subset $\Lambda^t$; but it will be important for the construction of $f$ in Construction \ref{construction of f}.
\end{defn}

 The following proposition finishes Step 1 in Construction \ref{constr3stepssec9}, showing that an element $f$ in $\Ker(\rd^{s+1})$ is determined by its coefficients over indices in $\Lambda^{s+1}$.
 
 \begin{prop} \label{constructio of Lambda}
 Recall $s \geq 1$. Consider $\Lambda^{s+1}$ as in Def. \ref{defnlambdat}. 
     Then for any $I\in\bN^{s+1}$, there are unique integers $$\{z_{I,K}^{(s+1)}\}_{K\in\Lambda^{s+1},|K|\leq |I|}$$ depending only on $s$ and $I$ such that if $f=\sum_{I\in\bN^{s+1}}m_I\underline X^{[I]}\in\Ker(\rd^{s+1})$, i.e., 
     \[ \rd^{s+1}(f)=g=0, \]
     then 
     \[m_I=\sum_{K\in\Lambda^{s+1},|K|\leq |I|}z_{I,K}^{(s+1)}\prod_{i=|K|}^{|I|-1}(\phi_M-ia)m_K.\]
     Furthermore, when $I\in\Lambda^{s+1}$, we have 
     \begin{itemize}
         \item $z_{I,K}^{(s+1)} = 1$ if $K=I$, and

         \item $z_{I,K}^{(s+1)} = 0$ if $K\neq I$.
     \end{itemize}
 \end{prop}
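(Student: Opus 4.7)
The plan is to apply the three structural lemmas \ref{Lem-Set1}, \ref{Lem-Set2}/\ref{Cor-Set2}, and \ref{Lem-Set3} (with $s$ replaced by $s+1$ throughout) in the specialization $g = 0$, which kills all the $n$-terms in their conclusions, and then cascade the resulting reductions until every index appearing on the right lies in $\Lambda^{s+1} = \Lambda_1^{s+1} \cup \Lambda_3^{s+1} \cup \Lambda_4^{s+1}$. I would organize the argument by induction on $|I|$ together with a case split on the pair $(i_1, i_2)$ of leading components of $I = (i_1, \ldots, i_{s+1})$.

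In the inductive step, the four cases are: (a) $i_1 = 0$: Lemma~\ref{Lem-Set1} expresses $m_I$ as an integer combination of $m_K$ with $K \in \Lambda_1^{s+1}$ and $|K| = |I|$, so the $\phi_M$-factor is the empty product $\id_M$; (b) $i_1 = 1$ and $i_2 \geq 1$: then $I \in \Lambda_4^{s+1} \subset \Lambda^{s+1}$ and we simply set $z_{I,I}^{(s+1)} = 1$; (c) $i_1 = 1$ and $i_2 = 0$: Lemma~\ref{Lem-Set3} reduces $m_I$ to a combination of $m_K$ with $K \in \Lambda_3^{s+1}$, together with the side terms $(\phi_M - (|I|-1)a) m_{I - E_1}$ and $m_{I - E_1 + E_l}$ for $l \geq 2$, each of whose indices has first coordinate $0$ and hence falls under case~(a); (d) $i_1 \geq 2$: Corollary~\ref{Cor-Set2} expresses $m_I$ as a combination of $\prod_{h=|J|}^{|I|-1}(\phi_M - ha) m_J$ with $J \in \Lambda_2^{s+1}$ and $|J| \leq |I|$, and each such $J$ has $j_1 = 1$ and so falls into case~(b) or case~(c). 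Composing these reductions and collecting terms yields integers $z_{I,K}^{(s+1)}$ satisfying the claimed identity; integrality and the bound $|K| \leq |I|$ propagate directly from the analogous properties in the three lemmas.

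The \emph{furthermore} part is immediate: for $I \in \Lambda_1^{s+1}$, Lemma~\ref{Lem-Set1}(1) gives the trivial expression $m_I = m_I$; for $I \in \Lambda_4^{s+1}$ case~(b) does the same; and for $I \in \Lambda_3^{s+1}$ we do not apply Lemma~\ref{Lem-Set3} but simply declare $z_{I,I}^{(s+1)} = 1$. For uniqueness of the $z_{I,K}^{(s+1)}$, the cleanest route is to exhibit, for each prescribed $M$-valued tuple $(m_K)_{K \in \Lambda^{s+1}}$, an honest kernel element $f \in \Ker(\rd^{s+1})$ realizing these data; this is the content of Step~2 in \S\ref{subsecstep2}, where such an $f$ is constructed explicitly. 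The $(m_K)_{K \in \Lambda^{s+1}}$ then function as free parameters, and uniqueness of the integer coefficients follows formally: any two valid integer expressions would, by subtraction and specialization to such an $f$, force their difference to vanish identically.

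The main obstacle will be ensuring that the case~(c) reduction terminates cleanly: Lemma~\ref{Lem-Set3} produces side terms $m_{I - E_1 + E_l}$ with $|I - E_1 + E_l| = |I|$, so induction on $|I|$ alone does not suffice. One must verify that these side terms all have first coordinate $0$ (so they are handled by case~(a) and never feed back into case~(c)), and then carefully track the polynomials $\prod(\phi_M - ha)$ accumulated by each layer of reduction so that the final formula matches the normalized shape $\prod_{h=|K|}^{|I|-1}(\phi_M - ha) m_K$ in the proposition. The combinatorial bookkeeping required here is essentially the same as that already carried out in the proof of Corollary~\ref{Cor-Set2}, so no genuinely new idea is needed; the challenge is organizational.
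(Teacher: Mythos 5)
Your proposal follows essentially the same route as the paper: specialize Lemmas \ref{Lem-Set1}, \ref{Lem-Set3} and Corollary \ref{Cor-Set2} to $g=0$ with $s$ replaced by $s+1$, split on the leading coordinate $i_1$ (with the sub-split on whether $I$ already lies in $\Lambda^{s+1}$), and cascade the reductions, observing as you do that the side terms $m_{I-E_1}$ and $m_{I-E_1+E_l}$ produced in the $i_1=1$ case all have first coordinate $0$ and hence feed only into the $i_1=0$ case. The existence part and the \emph{furthermore} part are therefore sound and match the paper's argument.

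One caveat concerns your uniqueness argument. You propose to get uniqueness by exhibiting, for arbitrary prescribed data $(m_K)_{K\in\Lambda^{s+1}}$, a kernel element of $\rd^{s+1}$ realizing it, and you attribute this to Step~2 of \S\ref{subsecstep2}. That is a mischaracterization: Construction \ref{construction of f} takes a \emph{given} $g\in\Ker(\rd^{s+1})$ and produces a preimage $f\in\rC^s$ under $\rd^s$; it does not produce kernel elements of $\rd^{s+1}$ with prescribed coefficients on $\Lambda^{s+1}$, and no such realization statement is proved in the paper. So as written your uniqueness step would not go through. This is a minor point --- the paper itself constructs the $z_{I,K}^{(s+1)}$ without justifying the word ``unique,'' and only the existence of the expression (i.e., that kernel elements are determined by their $\Lambda^{s+1}$-coefficients) is used downstream in Construction \ref{constr final proof} --- but you should either supply an actual realization argument or drop the uniqueness claim.
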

 \begin{proof}
     Fix an $I=(i_1,\dots,i_{s+1})\in\bN^{s+1}$. If $I\in\Lambda^{s+1}$, then we define $z_{I,K}^{(s+1)}$ as stated. Now, we assume $I\notin\Lambda^{s+1}$.

     If $i_1=0$, we define $z_{I,K}^{(s+1)}$ by applying Lemma \ref{Lem-Set1} (after replacing $s$ by $s+1$ there) in the $g=0$ case for $K\notin\Lambda_1^{s+1}$ and $z_{I,K}^{(s+1)} = 0$ for $K\in\Lambda^{s+1}\setminus\Lambda_1^{s+1}$.

     If $i_1 = 1$, then there exists some $j\geq 1$ such that $i_2=\cdots=i_{2j}=0$ and $i_{2j+1}\neq 0$ (as $I\notin\Lambda^{s+1}$). Then we define $z_{I,K}^{(s+1)}$ by applying Lemma \ref{Lem-Set3} (after replacing $s$ by $s+1$ there) in the $g=0$ case for $K\in\Lambda_3^{s+1}$ and $z_{I,K}^{(s+1)} =0$ for $K\in\Lambda^{s+1}_4$. Then we have
     \[\begin{split}
     m_I=&\sum_{K\in\Lambda^{s+1}\setminus\Lambda^{s+1}_1,|K|\leq |I|}z_{I,K}^{(s+1)}\prod_{L=|K|}^{|I|-1}(\phi_M-la)(m_K)+(\phi_M-(|I|-1)a)m_{I-E_1}-\sum_{l=2}^{s+1}m_{I-E_1+E_l}\\
     =&\sum_{K\in\Lambda^{s+1}\setminus\Lambda^{s+1}_1,|K|\leq |I|}z_{I,K}^{(s+1)}\prod_{L=|K|}^{|I|-1}(\phi_M-la)(m_K)+(\phi_M-(|I|-1)a)\sum_{J\in\Lambda_1^{s+1},|J|\leq |I|-1}z_{I-E_1,J}^{(s+1)}\prod_{l=|J|}^{|I|-2}(\phi_M-la)m_{J}\\
     &-\sum_{l=2}^{s+1}\sum_{J_l\in\Lambda_1^{s+1},|J_l|\leq |I|}z_{I-E_1+E_l,J_l}^{(s+1)}\prod_{l=|J_l|}^{|I|-1}(\phi_M-la)m_{J_l}\\
     =&\sum_{K\in\Lambda^{s+1}\setminus\Lambda^{s+1}_1,|K|\leq |I|}z_{I,K}^{(s+1)}\prod_{L=|K|}^{|I|-1}(\phi_M-la)(m_K)+\sum_{J\in\Lambda_1^{s+1},|J|\leq |I|-1}z_{I-E_1,J}^{(s+1)}\prod_{l=|J|}^{|I|-1}(\phi_M-la)m_{J}\\
     &-\sum_{l=2}^{s+1}\sum_{J_l\in\Lambda_1^{s+1},|J_l|\leq |I|}z_{I-E_1+E_l,J_l}^{(s+1)}\prod_{l=|J_l|}^{|I|-1}(\phi_M-la)m_{J_l}\\
     =&\sum_{K\in\Lambda^{s+1}\setminus\Lambda^{s+1}_1,|K|\leq |I|}z_{I,K}^{(s+1)}\prod_{L=|K|}^{|I|-1}(\phi_M-la)(m_K)+\sum_{J\in\Lambda_1^{s+1},|J|\leq |I|}(z_{I-E_1,J}^{(s+1)}-\sum_{l=2}^{s+1}z_{I-E_1+E_l,J}^{(s+1)})\prod_{l=|J|}^{|I|-1}(\phi_M-la)m_{J},\end{split}\]
     where we put $z_{I-E_1,J}^{(s+1)} = 0$ when $|J|=|I|$.
     Put $z_{I,K}^{(s+1)}:=z_{I-E_1,K}^{(s+1)}-\sum_{l=2}^{s+1}z_{I-E_1+E_l,K}^{(s+1)}$ for $K\in\Lambda_{s+1}^1$ with $|K|\leq |I|$. Then the result holds true in this case.

     If $i_1\geq 2$, by Corollary \ref{Cor-Set2} in the $g=0$ case, there are integers $\{w^{(s)}_{I,J}\}_{J=(j_1,j_2,\dots,j_{s+1})\in\bN^{s+1},|J|\leq |I|,j_1=1}$ depending only on $s$ and $I$ such that if $\rd^{s+1}(f) = 0$, then
     \[m_I=\sum_{J=(1,j_2,\dots,j_{s+1})\in\bN^{s+1},|J|\leq |I|}w^{(s)}_{I,J}\prod_{l=|J|}^{|I|-1}(\phi_M-la)(m_J)
     .\]
     For any $J$ with $j_1 = 1$ and $|J|\leq |I|$, by what we have proved, we have
     \[m_J=\sum_{K\in\Lambda^{s+1},|K|\leq |J|}z_{J,K}^{(s+1)}\prod_{j=|K|}^{|J|-1}(\phi_M-ja)m_K.\]
     Now, one can construct the desired $z_{I,K}^{(s+1)}$'s in this case by proceeding as in the ``$i_1=1$'' case above.
 \end{proof}

\subsection{Step 2: vanishing of cocycles} \label{subsecstep2}
We proceed to carry out \textbf{Step 2} in Construction \ref{constr3stepssec9}. 
Here we will make use of both  $\Lambda^s$ and $\Lambda^{s+1}$.
Given $g\in\rC^{s+1}$ satisfying $\rd^{s+1}(g) = 0$,
\begin{itemize}
    \item we make use of $\Lambda^s$ to construct the desired $f\in\rC^s$, cf. Construction \ref{construction of f};
    \item  we then make use of  $\Lambda^{s+1}$ to check $\rd^s(f) = g$, cf. Construction \ref{constr final proof}.
\end{itemize}


 \begin{construction}\label{construction of f}
     Let $g = \sum_{I\in\bN^{s+1} }n_I\underline X^{[I]}\in\Ker(\rd^{s+1})$.
 We will define an element $f = \sum_{I\in\bN^s}m_I\underline X^{[I]}$ in the following; equivalently, we define elements $m_I$ for all $I=(i_1,\dots,i_s)$.
\begin{enumerate}
    \item[(a)] Suppose $i_1=0$. 
    \begin{itemize}
        \item If there exists some $j\geq 1$ such that $i_1=\cdots=i_{2j-1}=0$ and $i_{2j}\neq 0$ (equivalently, $(0,I)\in\Lambda^{s+1}_1$), then define $m_I = \epsilon_{I,I}^{(s)}n_{(0,I)}$ with $\epsilon_{I,I}^s$ defined in Lemma \ref{Lem-Set1} (which turns out to be $1$).
        \item  If there exists some $j\geq 1$ such that $i_1=\cdots=i_{2j}=0$ and $i_{2j+1}\neq 0$ (equivalently, $I\in\Lambda_1^s$), then define $m_I=0$.
    \end{itemize}

    \item[(b)] Suppose $i_1=1$. 
    \begin{itemize}
        \item If there exists some $j\geq 1$ such that $i_2=\cdots=i_{2j-1}=0$ and $i_{2j}\neq 0$, then define $m_I=0$. 
        \item If there exists some $j\geq 1$ such that $i_2=\cdots=i_{2j}=0$ and $i_{2j+1}\neq 0$ (equivalently, $(1,I-E_1)\in\Lambda^{s+1}_3$), then define 
    \[m_I=(\phi_M-(|I|-1)a)m_{I-E_1}-\left( \sum_{l=2}^sm_{I-E_1+E_l} \right)-n_{(1,I-E_1)}.\]
    Here, $m_{I-E_1}$ and $m_{I-E_1+E_l}$ are already defined in (a).
    \end{itemize}
    
    \item[(c)] Suppose $i_1\geq 2$ (equivalently, $(1,I-E_1)\in\Lambda_{4}^{s+1}$). 
    
    Suppose that we have constructed all $m_I$'s with $i_1\leq l$ for some $l\geq 1$. We are going to construct $m_I$'s with $i_1=l+1$. Note that we must have $I=E_1+J$ with $J=(j_1=l,j_2,\dots,j_s)$. Let $\{z_{I,K}^{(s)}\}_{K\in\Lambda_2^s}$ be the integers defined in Lemma \ref{Lem-Set2}. Then define 
    \[m_I= \left( \sum_{K\in\Lambda_3^s}z_{I,K}^{(s)}m_K \right) +(\phi_M-(|I|-1)a)m_{J}-\sum_{l=2}^sm_{J+E_l}-n_{(1,J)}.\]
    Here, $m_J$ and $m_{J+E_l}$ are defined by inductive hypothesis.
\end{enumerate}

 Now, we have constructed $m_I$ for all $I$, and hence obtain an element $f = \sum_{I\in\bN^s}m_I\underline X^{[I]}$. We have to show $f$ is a well-defined element in $\rC^s$; that is, $\lim_{|I|\to +\infty}m_I = 0$. 
 
 Let $I=(i_1,i_2,\dots,i_s)$. When $i_1=0, 1$, by Items (a) and (b) above, we can conclude by using $\lim_{|J|\to+\infty}n_{J} = 0$. Now, assume $i_1\geq 2$ and let $z_{I,J}^{(s)}$ and $\epsilon_{I,K}^{(s)}$ for $J,K\in \Lambda_2^s$ with $|J|,|K|+1\leq |I|$ be the integers defined in Corollary \ref{Cor-Set2}. Then the proof of Corollary \ref{Cor-Set2} together with Items (c) above implies that in this case, we have
    \[m_I=\sum_{J\in\Lambda_2^s,|J|\leq |I|}z_{I,J}^{(s)}\prod_{i=|J|}^{|I|-1}(\phi_M-ia)m_J+\sum_{K\in\Lambda_2^s,|K|< |I|}\epsilon_{I,K}^{(s)}\prod_{i=|K|+1}^{|I|-1}(\phi_M-ia)n_{(1,K)}.\]
As $\lim_{h\to+\infty}\prod_{i=m}^{h-1}(\phi_M-ia) = 0$ and $\lim_{|J|\to+\infty}n_J=0$, we see that $\lim_{|I|\to+\infty}m_I = 0$ as desired. So we see that $f$ is a well-defined element in $\rC^s$. 

 \end{construction}

\begin{construction}[Proof of Prop. \ref{Prop-Cohomology}(3)] \label{constr final proof}
    We now conclude the proof. Let $f$ be as in Construction \ref{construction of f}.
    Write $$g':=\rd^s(f) = \sum_{I\in\bN^{s+1}}n_I'\underline X^{[I]}.$$ It remains to prove $g'=g$.
    By Prop. \ref{constructio of Lambda}, it suffices to check
    \[ n_L =n_L', \forall L \in\Lambda^{s+1}. \]
    We will freely use the Items (a), (b), (c) in Construction \ref{construction of f} above.
\begin{itemize}[leftmargin=0cm]
    \item 
If $L=(0,I)\in\Lambda^{s+1}_1$ with $I=(i_1,\dots,i_s)$. Then there exists some $j\geq 1$ such that $i_1=\cdots=i_{2j-1} = 0$ and $i_{2j}\neq 0$. By Item (a) above, we have $n_L=\epsilon_{I,I}^{(s)}m_I$. On the other hand, by Lemma \ref{Lem-Set1}, we have $n'_L=\epsilon_{I,I}^{(s)}m_I$. Therefore, $n_L=n_L'$.
  \item 
If  $L=(1,I-E_1)\in \Lambda^{s+1}_3$ with $I=(1,i_2,\dots,i_s)$ such that $i_2=\cdots=i_{2j} =0$ and $i_{2j+1}\neq 0$ for some $j\geq 1$. By Lemma \ref{Lem-Set3} and Item (b) above, we have
\[n_L'=\sum_{J\in\Lambda_3^s}z_{I,J}^{(s)}m_J+(\phi_M-(|I|-1)a)m_{I-E_1}-\sum_{l=2}^sm_{I-E_1+E_l}-m_I=\sum_{J\in\Lambda_3^s}z_{I,J}^{(s)}m_J+n_L.\]
By Item (a)   above, we have $m_J = 0$, for any $J\in\Lambda_3^s$. Thus, we conclude that $n_L=n_L'$.
  \item 
If  $L=(1,I)\in\Lambda_4^{s+1}$ with $I=(i_1,\dots,i_s)$ such that $i_1\geq 1$, put $J=I+E_1$. Let $\{z_{J,K}^{(s)}\}_{K\in\Lambda_2^s,|K|=|J|}$ be the integers defined in Lemma \ref{Lem-Set2}. By Lemma \ref{Lem-Set2}, we have
\[n_L=\sum_{K\in\Lambda_2^s}z_{J,K}^{(s)}m_K+(\phi_M-(|J|-1)a)m_I-\sum_{l=1}^sm_{I+E_l}.\]
By Item (c), we see that for any $L\in\Lambda_4^{(s+1)}$, we have $n_L=n_L'$.
\end{itemize}

In conclusion, we have proved $\rd^s(f)=g$. This implies $\rH^{s+1}(\rC(M,\varepsilon_M)) = 0$ for all $s\geq 1$.
\end{construction}

  \bibliographystyle{alpha}

\end{document}